\numberwithin{paragraph}{section}
\numberwithin{equation}{section}
\DeclareMathOperator{\Spf}{Spf}
\DeclareMathOperator{\Spec}{Spec}
\DeclareMathOperator{\val}{val}
\DeclareMathOperator{\Div}{div}
\DeclareMathOperator{\red}{red}
\DeclareMathOperator{\supp}{supp}
\DeclareMathOperator{\Id}{Id}
\DeclareMathOperator{\MA}{MA}
\DeclareMathOperator{\Deg}{deg}
\DeclareMathOperator{\cyc}{cyc}
\DeclareMathOperator{\codim}{codim}
\DeclareMathOperator{\irr}{irr}
\DeclareMathOperator{\Int}{Int}
\DeclareMathOperator{\Quot}{Quot}
\def\quotient#1#2{\raise0.75ex\hbox{$\,#1$}\big/\lower0.75ex\hbox{$#2\,$}}
\title{A comparison of the real and non-archimedean Monge-Ampère operator}
\author[C.~Vilsmeier]{Christian Vilsmeier}
\address{C. Vilsmeier, Mathematik, Universit{\"a}t 
Regensburg, 93053 Regensburg, Germany}
\email{Christian.Vilsmeier@mathematik.uni-regensburg.de}
\begin{document}
	\theoremstyle{definition}
	\newtheorem{Def}{Definition}[section]
	\theoremstyle{definition}
	\newtheorem{Example}[Def]{Example}
	\theoremstyle{remark}
	\newtheorem{Remark}[Def]{Remark}
	\newtheorem{Construction}[Def]{Construction}
	\theoremstyle{plain}
	\newtheorem{Lemma}[Def]{Lemma}
	\newtheorem{Proposition}[Def]{Proposition}
	\newtheorem{Theorem}[Def]{Theorem}
	\newtheorem{Corollary}[Def]{Corollary}
	
	\selectlanguage{english}
	
	\let\thefootnote\relax\footnotetext{This work was partially supported by the collaborative research center 'SFB 1085: Higher Invariants' funded by the Deutsche Forschungsgemeinschaft.}
	
	\begin{abstract}
		Let $X$ be a proper algebraic variety over a non-archimedean, non-trivially valued field. We show that the non-archimedean Monge-Ampère measure of a metric arising from a convex function on an open face of some skeleton of $X^{\textup{an}}$ is equal to the real Monge-Ampère measure of that function up to multiplication by a constant. As a consequence we obtain a regularity result for solutions of the non-archimedean Monge-Ampère problem on curves. \\[.25cm]
		MSC: Primary 32P05; Secondary 32W20, 14G22, 14T05 \\[.25cm]
		Keywords: Monge-Ampère operator, Berkovich spaces, Metrics, Tropical geometry
	\end{abstract}
	
	\maketitle
  \tableofcontents
  
	\section{Introduction}
	The non-archimedean analogue of the Calabi conjecture is still an open problem in non-archimedean geometry. In the complex case it states that for a complex compact $n$-dimensional manifold $M$ with a Kähler form $\omega$ and $f\in C^\infty(M)$, $f>0$ such that $\int_M f\omega^n=\int_M \omega^n$ there exists a unique up to constant $\varphi\in C^\infty(M)$ such that $\omega+dd^c\varphi>0$ and $(\omega+dd^c\varphi)^n=f\omega^n$. This was solved by Calabi (uniqueness, \cite{C}) and Yau (existence, \cite{Yau}). A strategy to attack the non-archimedean Monge-Ampère equation was proposed by Kontsevich and Tschinkel in unpublished though influential notes dated around 2001. In the non-archimedean setting, we fix a non-archimedean, non-trivially valued field $K$ and a smooth projective variety $X$ over $K$ of dimension $n$ with a line bundle $L$ on $X$ and consider the corresponding $K$-analytic space $X^{\textup{an}}$ with the line bundle $L^{\textup{an}}$ in the sense of Berkovich. To any continuous semipositive metric $\|\cdot\|$ on $L^{\textup{an}}$ one can associate a positive Radon measure $c_1(L,\|\cdot\|)^n$ on $X^{\textup{an}}$, called the Monge-Ampère measure, which was introduced by Chambert-Loir in \cite{Ch}. In a non-archimedean analogue of the Calabi conjecture one asks for a solution of $c_1(L,\|\cdot\|)^n=\mu$ for a positive Radon measure $\mu$ on $X^{\textup{an}}$ of mass $L^n$ when $L$ is ample. The uniqueness up to addition of a constant of such a solution was proved by Yuan and Zhang in \cite{YZ}. The existence was proved by Liu in \cite{L} for the case of a totally degenerate abelian variety $X$ under some regularity assumptions on the measure by reducing to the complex case. The best known existence result is due to Boucksom, Favre and Jonsson \cite[Theorem A]{BFJ1}. They prove existence of a solution to the non-archimedean Monge-Ampère equation if $K$ is discretely valued of residue characteristic zero and $\mu$ is supported on the dual complex of some SNC model of $X$. Note that they assumed also an algebraicity condition which was later removed by Burgos Gil, Gubler, Jell, Künnemann and Martin \cite[Theorem D]{BGJKM}. As such a dual complex consists of faces which look like simplices in $\mathbb{R}^n$ it would be tempting to observe a connection of the non-archimedean Monge-Ampère operator with the real one. This is the aim of the paper at hand. In particular we will prove the following result (a precise definition of the occurring measures is given in section \ref{measures}.):
	\begin{Theorem}
		\label{main} Let $X$ be an $n$-dimensional proper algebraic variety over $K$, $\overline{L}=(L,\|\cdot\|)$ a formally metrized line bundle on $X^{\textup{an}}$ and $\tau$ an open face of dimension $n$ of a skeleton corresponding to a strictly semistable formal model $\mathfrak{X}$ of $X^{\textup{an}}$ on which $\overline{L}$ has a formal model $\mathfrak{L}$. Let $\varphi$ be a continuous function on $X^{\textup{an}}$ such that $\|\cdot\|e^{-\varphi}$ is a semipositive metric. Suppose that $\varphi$ factorizes through the retraction $p_{\mathfrak{X}}$ onto the skeleton. Then
		\[
			c_1(L,\|\cdot\|e^{-\varphi})^n=[\tilde{K}(S):\tilde{K}]\cdot n!\cdot\MA\left(\varphi\Big|_{\tau}\right)
		\]
		on $p_{\mathfrak{X}}^{-1}(\tau)$ where $\MA$ denotes the real Monge-Ampère operator on $\tau$ which is considered to be a measure on $p_{\mathfrak{X}}^{-1}(\tau)$ by pushforward via the inclusion and $S$ denotes the point in the special fibre of $\mathfrak{X}$ which is the image of $\tau$ under the reduction map.
	\end{Theorem}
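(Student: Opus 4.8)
The plan is to reduce the global identity to a local combinatorial computation on a refinement of $\mathfrak{X}$ and then pass to a limit.

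\emph{Reduction to piecewise affine functions.} Both sides are Radon measures on $p_{\mathfrak{X}}^{-1}(\tau)$. Since $\|\cdot\|e^{-\varphi}$ is semipositive, the restriction $\psi\defgl\varphi|_{\tau}$ (so that $\varphi=\psi\circ p_{\mathfrak{X}}$) is a convex function on the integral affine polytope $\tau$; this equivalence between semipositivity on $X^{\mathrm{an}}$ for a metric factoring through $p_{\mathfrak{X}}$ and convexity on the faces of the skeleton is the structural input. A convex function on $\tau$ can be approximated uniformly on compact subsets by piecewise affine convex functions with rational slopes, and after scaling the corresponding functions on the skeleton are model functions $\varphi_k$ with $\varphi_k\to\varphi$ uniformly. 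Using that the non-archimedean Monge-Amp\`ere operator is continuous for uniform convergence of semipositive metrics and is local on $X^{\mathrm{an}}$, together with the classical weak continuity of the real Monge-Amp\`ere operator under uniform convergence of convex functions, it suffices to treat the case where $\varphi=\psi\circ p_{\mathfrak{X}}$ with $\psi$ piecewise affine convex with integral slopes on $\tau$.

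\emph{Refinement and Chambert-Loir's formula.} Assume $\varphi$ is such a model function. Choose a unimodular combinatorial refinement $\mathfrak{X}'\to\mathfrak{X}$: this is again strictly semistable, its skeleton coincides with that of $\mathfrak{X}$ as a subset of $X^{\mathrm{an}}$ (with $p_{\mathfrak{X}}$ restricting to the identity on it), and its induced polyhedral decomposition refines that of $\mathfrak{X}$ so that $\psi$ becomes affine on each face. Then $\varphi$ is the model function of a vertical Cartier divisor on $\mathfrak{X}'$, the metric $\|\cdot\|e^{-\varphi}$ acquires a formal model $\mathfrak{L}'$ on $\mathfrak{X}'$, and by Chambert-Loir's description of the Monge-Amp\`ere measure of a model metric,
\[
c_1(L,\|\cdot\|e^{-\varphi})^n=\sum_{Y'}\deg_{\mathfrak{L}'_s}(Y')\,\delta_{\xi_{Y'}},
\]
where $Y'$ runs over the irreducible components of the special fibre $\mathfrak{X}'_s$ (all of multiplicity one, by strict semistability), the degree $\deg_{\mathfrak{L}'_s}(Y')=\int_{Y'}c_1(\mathfrak{L}'_s|_{Y'})^n$ is taken over $\tilde K$, and $\xi_{Y'}$ is the vertex of the skeleton reducing to the generic point of $Y'$. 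This measure is supported on the skeleton, so its restriction to $p_{\mathfrak{X}}^{-1}(\tau)$ equals its restriction to $\tau$ and only involves the vertices $v$ of the refined decomposition lying in the interior of $\tau$. For such a $v$ the component $Y'_v$ is exceptional over the $0$-dimensional stratum $S$, so $\mathfrak{L}_s|_{Y'_v}$ is trivial and $\mathfrak{L}'_s|_{Y'_v}$ depends only on $\psi$ near $v$; it is trivial, hence of degree zero, unless $v$ is a point of non-linearity of $\psi$.

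\emph{The local toric computation.} It remains to prove that for each interior vertex $v$ of $\tau$ at which $\psi$ is not affine,
\[
\deg_{\mathfrak{L}'_s}(Y'_v)=[\tilde K(S):\tilde K]\cdot n!\cdot\MA(\psi)(\{v\}).
\]
In an \'etale (or formal) neighbourhood of $S$ the strictly semistable model $\mathfrak{X}$ is a standard semistable model, so near $S$ its special fibre is a union of $n+1$ toric boundary divisors over $\tilde K(S)$ meeting at $S$, and the refinement $\mathfrak{X}'\to\mathfrak{X}$ over $\tau$ is induced by the unimodular subdivision of the corresponding cone that makes $\psi$ a support function. The component $Y'_v$ is then the smooth projective toric variety over $\tilde K(S)$ attached to the star of $v$, and $\mathfrak{L}'_s|_{Y'_v}$ is the nef toric line bundle whose lattice polytope is the subdifferential $\partial\psi(v)$. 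By the formula $\deg=n!\cdot\operatorname{vol}$ for projective toric varieties, the degree of $Y'_v$ over $\tilde K(S)$ equals $n!\cdot\operatorname{vol}(\partial\psi(v))=n!\cdot\MA(\psi)(\{v\})$, and passing from the degree over $\tilde K(S)$ to the degree over $\tilde K$ multiplies by $[\tilde K(S):\tilde K]$. Summing over $v$ gives the theorem. The main obstacle is precisely this last step: one must produce the local toric presentation of $\mathfrak{X}$ near the stratum $S$, show that the globally-defined Chambert-Loir measure is computed by these local toric intersection numbers after restriction to $p_{\mathfrak{X}}^{-1}(\tau)$, and keep exact track of the unimodularity/multiplicity bookkeeping and the residue-field degree so that precisely the constant $[\tilde K(S):\tilde K]\cdot n!$ emerges; the approximation step is the secondary difficulty, as it requires the piecewise affine approximants of $\psi$ to again define metrics that are semipositive near $p_{\mathfrak{X}}^{-1}(\tau)$.
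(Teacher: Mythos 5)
Your overall strategy mirrors the paper's: (i) use the fact that semipositivity of $\|\cdot\|e^{-\varphi}$ forces $\varphi|_\tau$ to be convex, (ii) approximate by rational piecewise affine convex functions and use locality plus weak continuity of both Monge-Amp\`ere operators, and (iii) at each vertex of a subdivision reduce the degree of the induced vertical Cartier divisor to a toric volume via $\Deg=n!\cdot\mathrm{vol}$. These correspond respectively to Corollary~\ref{semipositive->convex}, Corollary~\ref{mainresult} (together with Proposition~\ref{convergence} and Lemma~\ref{local}), and Theorem~\ref{MAvertex} in the paper.

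The substantive gap is in your refinement step. You posit a unimodular subdivision of the skeleton on which $\psi$ is affine, producing a strictly semistable formal model $\mathfrak X'$ whose special fibre has all components of multiplicity one. Neither claim is available in general. Over a non-discretely valued $K$ there need not be any $\Gamma$-rational unimodular subdivision; and even when $\Gamma=\mathbb Q$, the formal scheme $\mathfrak X''$ that a $\Gamma$-rational polytopal subdivision produces is in general neither strictly semistable nor even admissible over $K^\circ$ --- Construction~\ref{Formalmodel} only guarantees admissibility after base change to $\mathbb C_K^\circ$, and the special fibre is not snc. The paper therefore never assumes unimodularity: it works with an arbitrary $\Gamma$-rational subdivision $\mathfrak D$ suitable for $h$, and carries out the degree computation by intersection theory on formal schemes. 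The key algebraic input replacing your unimodular hypothesis is Lemma~\ref{numericallyequivalent}, which says the vertical Cartier divisor attached to an affine linear function is numerically trivial on curves lying over $\tau$; this permits the inductive replacement of $D$ by strata cycles in Theorem~\ref{MAvertex}, ending in a (possibly singular) complete toric variety over $\tilde K(S)$. Fulton's volume formula from \cite[3.4, 5.3]{Fu1} does not need smoothness, so unimodularity is never required. This is also precisely where the factor $\Deg(S)=[\tilde K(S):\tilde K]$ emerges: pushing the zero-dimensional strata cycle forward along the \'etale chart $\psi:\mathfrak U'\to\mathfrak X(\boldsymbol n,\boldsymbol a)$ onto the closed point $\tilde{\boldsymbol 0}$ produces the factor $\Deg(S)$ because $\psi^{-1}(\{\tilde{\boldsymbol 0}\})=S$ is reduced of degree $[\tilde K(S):\tilde K]$.

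You correctly flag, but do not resolve, two further points that the paper has to prove: that semipositivity of a metric factoring through $p_{\mathfrak X}$ implies convexity on the faces of the skeleton (Corollary~\ref{semipositive->convex}, via the approximation of $\varphi$ by $\tfrac1{d}\log|\mathfrak a|$ in Lemma~\ref{approx} and the convexity of $\log|f|$ on faces in Lemma~\ref{log|f|}), and that the piecewise affine approximants themselves give semipositive piecewise $\mathbb Q$-linear metrics on $p_{\mathfrak X}^{-1}(\tau)$ (Proposition~\ref{semipositive}, again relying on Lemma~\ref{numericallyequivalent} and the minimum-of-semipositive result Proposition~\ref{GM Proposition 3.12}). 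Without these, the reduction in your step (ii) is not justified. So the outline follows the paper, but the unimodular/semistable refinement you invoke does not exist in general, and the two "secondary" points you set aside are exactly where the paper's technical work is concentrated.
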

	The paper is organized as follows: In Section \ref{formalgeometry} we give an overview over basic concepts in formal geometry. We recall the definition of a strongly nondegenerate strictly polystable formal scheme and its associated skeleton introduced in \cite{Be2} and explain the stratum face correspondence developed in \cite{Gub2}. At the end of the section we construct a Cartier divisor from a piecewise affine linear function on the skeleton and prove an important lemma dealing with the degree with respect to this divisor in the case of an affine linear function. \par
	In Section \ref{Metrics} we collect basic definitions and facts on metrized line bundles. Following \cite{GM} we introduce piecewise linear, algebraic and formal metrics and the notion of semipositivity for them. We also recall some useful properties and the situations in which the definitions coincide. \par
	In Section \ref{measures} we recall the definitions of the real and non-archimedean Monge-Ampère measure but we define the latter locally on open subsets of the analytification of a separated scheme of finite type over the field $K$. In order to do so, we prove a local convergence result. This will allow us to formulate Theorem \ref{main} in a more general setting where everything is defined locally, see Corollary \ref{mainresult}. \par
	Section \ref{sectioncomparison} is subject to the proof of Theorem \ref{main}. In fact in Corollary \ref{mainresult}, we prove a local generalization of this result. It will follow from Lemma \ref{local} and Corollary \ref{semipositive->convex} that Corollary \ref{mainresult} implies Theorem \ref{main}. We will also generalize the local result in Corollary \ref{mainpolystable} to strongly nondegenerate polystable formal models of $X^{\textup{an}}$ i.e. we will prove:
	\begin{Theorem}
		Let $X$ be an $n$-dimensional proper algebraic variety over $K$ and $\mathfrak{X}$ a strongly nondegenerate polystable formal model of $X^{\textup{an}}$ over $K^\circ$ with associated skeleton $\Delta$. Let $\tau$ be an $n$-dimensional open face of $\Delta$ with associated point $S$ in the special fibre of $\mathfrak{X}$. Let $h$ be a convex function on $\tau$ and denote by $\overline{\mathcal{O}}^{h\circ p_{\mathfrak{X}}}$ the trivial line bundle on the strictly $K$-analytic space $p_{\mathfrak{X}}^{-1}(\tau)$ endowed with the metric given by $\|1\|=e^{-h\circ p_{\mathfrak{X}}}$. Then
		\[
			c_1\left(\overline{\mathcal{O}}^{h\circ p_{\mathfrak{X}}}\right)^n=[\tilde{K}(S):\tilde{K}]\cdot n!\cdot\MA(h)
		\]
		on $p_{\mathfrak{X}}^{-1}(\tau)$.
	\end{Theorem}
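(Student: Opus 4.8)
The plan is to run the argument that proves Theorem~\ref{main}, equivalently its local form Corollary~\ref{mainresult}, in the strictly semistable case, and to verify that each step survives for strongly nondegenerate strictly polystable models. First I would localize: by the local description of the non-archimedean Monge--Amp\`ere measure in Section~\ref{measures} it suffices to work over a formal open neighbourhood of the stratum $S$, and since $\tau$ has maximal dimension $n$, the stratum $S$ is a closed point of the special fibre, so that $[\tilde{K}(S):\tilde{K}]$ is finite. Next I would reduce to a combinatorially simple metric. The real Monge--Amp\`ere operator is continuous along uniformly convergent sequences of convex functions, and---by Corollary~\ref{semipositive->convex}---the metric $\|1\|=e^{-h\circ p_{\mathfrak{X}}}$ is semipositive, with $c_1(\,\cdot\,)^n$ continuous along uniformly convergent semipositive metrics; since every convex function on the polytope $\tau$ is a uniform limit of convex piecewise affine ones, I may assume $h$ is piecewise affine, and (rescaling $h$, which is harmless because $\MA(\lambda h)=\lambda^n\MA(h)$ and $c_1(\overline{\mathcal{O}}^{\lambda h\circ p_{\mathfrak{X}}})^n=\lambda^n\,c_1(\overline{\mathcal{O}}^{h\circ p_{\mathfrak{X}}})^n$) that its slopes lie in the integral affine structure on $\tau$ induced by $\mathfrak{X}$.

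For such an $h$, pick a polytopal subdivision of $\tau$ on whose closed cells $h$ is affine. The construction of Section~\ref{formalgeometry} produces from $h\circ p_{\mathfrak{X}}$ a vertical Cartier divisor $D$ on a formal model $\mathfrak{X}'$ of $X^{\textup{an}}$ dominating $\mathfrak{X}$ and inducing this subdivision of $\tau$, with $\overline{\mathcal{O}}(D)$ a formal model of $\overline{\mathcal{O}}^{h\circ p_{\mathfrak{X}}}$. By the local form of Chambert--Loir's formula, $c_1(\overline{\mathcal{O}}^{h\circ p_{\mathfrak{X}}})^n$ on $p_{\mathfrak{X}}^{-1}(\tau)$ equals the sum, over the zero-dimensional strata of the special fibre of $\mathfrak{X}'$ lying over $\tau$---equivalently over the vertices $v$ of the subdivision that lie in $\tau$---of Dirac masses at the corresponding divisorial points, weighted by the intersection numbers $D^n$ there. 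The heart of the proof is the evaluation of these intersection numbers: working cell by cell around each vertex and applying the degree lemma for divisors attached to affine linear functions from Section~\ref{formalgeometry}, one gets the value $[\tilde{K}(S):\tilde{K}]\cdot n!\cdot\operatorname{vol}\bigl(\partial h(v)\bigr)$, where the residue field degree is that of $S$ because the refinement does not enlarge the residue fields of the maximal strata over $\tau$ (strong nondegeneracy), and the factor $n!$ is the discrepancy between the lattice-normalized volume entering the intersection numbers and the Lebesgue volume $\operatorname{vol}$ of the subgradient $\partial h(v)$. Since $\operatorname{vol}(\partial h(v))$ is exactly the mass of $\MA(h)$ at $v$ and $\MA(h)=\sum_v\operatorname{vol}(\partial h(v))\,\delta_v$ for convex piecewise affine $h$, summing over $v$ yields the asserted identity; undoing the approximation and rescaling gives it for arbitrary convex $h$. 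Alternatively, one could refine the subdivision to a unimodular triangulation realized by a strictly semistable modification and apply Corollary~\ref{mainresult} directly on each top-dimensional simplex, summing with the help of the fact that $\MA(h)$ is independent of the triangulation.

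I expect the main obstacle to be the intersection-theoretic computation of $D^n$ near a vertex in the \emph{polystable} setting, where the faces of the skeleton are products of simplices rather than simplices: one must know that the stratum--face correspondence, the Cartier divisor construction, and in particular the degree lemma of Section~\ref{formalgeometry} are all available for strongly nondegenerate strictly polystable formal schemes, and that the residue fields of the maximal strata over $\tau$ are unaffected by the refinement. This is the only point where more than the strictly semistable case is genuinely needed; the reduction to piecewise affine $h$, the passage to a model metric, the local Chambert--Loir formula, and the combinatorial identification of the real Monge--Amp\`ere measure of a convex piecewise affine function with the vertex masses $\operatorname{vol}(\partial h(v))$ all proceed exactly as in the proof of Theorem~\ref{main}.
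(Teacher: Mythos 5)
Your outline correctly reproduces the paper's proof of the \emph{strictly} polystable case, i.e.\ the chain Theorem~\ref{MAvertex} $\rightarrow$ Corollary~\ref{Comparison} $\rightarrow$ Proposition~\ref{semipositive} $\rightarrow$ Corollary~\ref{mainresult}: reduction to rational piecewise affine convex $h$, the Cartier divisor of Proposition~\ref{CartierDiv} on the model attached to a subdivision, the toric intersection computation at the vertices giving $\Deg(S)\cdot n!\cdot\lambda(\nabla h(v))$, and the two continuity statements. Two small points there: $\tau$ is an \emph{open} face, so one must exhaust it by compact polytopes and use locally uniform convergence rather than uniform convergence on all of $\tau$; and the difficulty you flag about faces being products of simplices is exactly what Theorem~\ref{MAvertex} and Lemma~\ref{numericallyequivalent} already handle in the strictly polystable setting.

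The genuine gap is that the statement concerns strongly nondegenerate \emph{polystable} models, not strictly polystable ones, and for these none of the machinery you invoke is directly available on $\mathfrak{X}$: such a scheme is only the target of a surjective \'etale morphism $\varphi\colon\mathfrak{X}'\rightarrow\mathfrak{X}$ from a strictly polystable scheme, its skeleton is defined as the image of $S(\mathfrak{X}')$, and even the notion of a convex function on $\tau$ is defined through this cover (Definition~\ref{convex}); Construction~\ref{Formalmodel}, Proposition~\ref{CartierDiv} and Theorem~\ref{MAvertex} are not carried out for $\mathfrak{X}$ itself. The missing step is the descent argument of Corollary~\ref{mainpolystable}: one first base changes to $\mathbb{C}_K$, which splits $S$ into $[\tilde{K}(S):\tilde{K}]$ closed points and $\tau$ into as many isomorphic faces --- this, and not the local intersection computation, is where the factor $[\tilde{K}(S):\tilde{K}]$ comes from in the polystable case, since over $\mathbb{C}_K$ one has $\Deg(S)=1$. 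One then uses that $S$ is a closed point, so that $p_{\mathfrak{X}}^{-1}(\tau)=\red^{-1}(S)$ and $\varphi^{\textup{an}}$ restricts to an isomorphism $p_{\mathfrak{X}'}^{-1}(\tau')\tilde{\rightarrow}p_{\mathfrak{X}}^{-1}(\tau)$ of the tubes; both measures are transported along this isomorphism and the strictly polystable case (Corollary~\ref{Comparison}, together with Proposition~\ref{semipositiveII} for semipositivity and Proposition~\ref{convergence} for the limit) is applied upstairs. As written, your argument proves Corollary~\ref{mainresult} rather than the theorem as stated.
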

	The proof is inspired by the proof of \cite[Theorem 5.18]{Gub2}. In order to reduce to the toric situation, a key ingredient will be Lemma \ref{numericallyequivalent}, showing that affine linear functions on a closed face of a skeleton induce numerically trivial vertical Cartier divisors on a suitable part of the corresponding formal model. \par
	Finally, in Section \ref{application}, we apply Theorem \ref{main} to obtain two regularity results for solutions to the non-archimedean Calabi-Yau problem. For example we will prove in Proposition \ref{regularitycurves}:
	\begin{Proposition}
		\label{regularity} Let $X$ be a smooth projective curve, $\mu$ a positive Borel meausre on $X^{\textup{an}}$ and $\varphi$ a solution to the Monge-Ampère equation $c_1(L,\|\cdot\|e^{-\varphi})=\mu$. Let $\tau$ be an open face of a skeleton associated to a strictly semistable formal model of $X^{\textup{an}}$ on which $(L,\|\cdot\|)$ has a formal model. Suppose that $\mu$ is supported on that skeleton and is given on $\tau$ by $f\cdot\boldsymbol{dx}$ where $\boldsymbol{dx}$ denotes the Lebesgue measure on $\tau$. If $f\in C^k(\tau)$ then we have $\varphi\Big|_{\tau}\in C^{k+2}(\tau)$.
	\end{Proposition}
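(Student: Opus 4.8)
The plan is to use the case $n=1$ of Theorem \ref{main} to turn the equation $c_1(L,\|\cdot\|e^{-\varphi})=\mu$, restricted to $p_{\mathfrak{X}}^{-1}(\tau)$, into the ordinary differential equation $h''=c\cdot f$ for $h\defgl\varphi|_\tau$, with $c\defgl[\tilde{K}(S):\tilde{K}]^{-1}>0$, and then to read off the regularity of $h$ by integrating twice. By definition of a solution, $\varphi$ is continuous on $X^{\textup{an}}$ and $\|\cdot\|e^{-\varphi}$ is semipositive, so the only thing not already supplied by Theorem \ref{main} is that $\varphi$ factorizes through the retraction $p_{\mathfrak{X}}$; this is where the hypothesis that $\mu$ is supported on the skeleton $\Delta$ enters.

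To prove the factorization, note first that since $\|\cdot\|$ is a formal metric for $\mathfrak{X}$, the measure $c_1(L,\|\cdot\|)$ is a finite sum of Dirac masses at the divisorial points attached to the components of the special fibre, hence is supported on $\Delta$. Let $D$ be a connected component of $X^{\textup{an}}\setminus\Delta$. By the structure theory of the retraction onto the skeleton of a semistable curve, $D$ is an open disc contained in a single fibre of $p_{\mathfrak{X}}$, so $p_{\mathfrak{X}}$ is constant on $D$, equal to the point $x_D\in\Delta$ at which $D$ is rooted, and $D$ has the single boundary point $x_D$ in $X^{\textup{an}}$. Since $\mu$ and $c_1(L,\|\cdot\|)$ are both supported on $\Delta$, additivity of the curvature measure, $c_1(L,\|\cdot\|e^{-\varphi})=c_1(L,\|\cdot\|)+dd^c\varphi$, forces $dd^c\varphi=0$ on $D$, i.e.\ $\varphi$ is harmonic there. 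As $\varphi$ is continuous on $D\cup\{x_D\}$ and $D$ has the unique boundary point $x_D$, the maximum and minimum principles for harmonic functions on Berkovich discs give $\varphi\equiv\varphi(x_D)$ on $D$. Letting $D$ range over all components yields $\varphi=\varphi\circ p_{\mathfrak{X}}$ on all of $X^{\textup{an}}$; in particular $\varphi|_{p_{\mathfrak{X}}^{-1}(\tau)}=h\circ p_{\mathfrak{X}}$ with $h=\varphi|_\tau$ continuous, and $h$ is convex by Corollary \ref{semipositive->convex}.

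Thus Theorem \ref{main} applies with $n=1$ and gives, on $p_{\mathfrak{X}}^{-1}(\tau)$,
\[
	\mu=c_1(L,\|\cdot\|e^{-\varphi})=[\tilde{K}(S):\tilde{K}]\cdot\MA(h).
\]
Since $\mu$ is supported on $\Delta$ and $\Delta\cap p_{\mathfrak{X}}^{-1}(\tau)=\tau$, the left-hand side is the pushforward of $f\cdot\boldsymbol{dx}$ along $\tau\hookrightarrow p_{\mathfrak{X}}^{-1}(\tau)$; and in dimension one $\MA(h)$ is the second distributional derivative of $h$, which is a nonnegative measure since $h$ is convex. Comparing, we obtain $h''=c\,f$ with $c=[\tilde{K}(S):\tilde{K}]^{-1}$, as distributions on the interval $\tau$.

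Finally, assume $f\in C^k(\tau)$. Fixing $x_0\in\tau$, let $g$ be the twofold antiderivative of $c\,f$ based at $x_0$; integrating a $C^k$ function twice yields $g\in C^{k+2}(\tau)$ with $g''=c\,f$. Then $(h-g)''=0$ as a distribution on the interval $\tau$, so $h-g$ is affine, whence $h=\varphi|_\tau\in C^{k+2}(\tau)$, which is the assertion. The substantive step is the factorization of $\varphi$ through $p_{\mathfrak{X}}$ in the second paragraph: it combines the description of the curvature measure of a formal metric, the fibre structure of the retraction of a semistable curve, and the maximum principle from potential theory on Berkovich discs. Once this is in place, the proposition is just the $n=1$ instance of the comparison theorem together with an elementary integration argument.
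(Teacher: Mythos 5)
Your proposal follows the same overall route as the paper: verify that $\varphi$ factorizes through the retraction $p_{\mathfrak{X}}$, invoke the comparison theorem with $n=1$ to turn the equation into $\Deg(S)\cdot\MA(\varphi|_\tau)=f\cdot\boldsymbol{dx}$ on $\tau$, and conclude by observing that the one-dimensional real Monge-Amp\`ere equation is solved by a second antiderivative of $f$, unique up to an affine function. The one genuine difference is the factorization step: the paper simply cites \cite[Proposition 1.2]{GJKM} for $\varphi=\varphi\circ p_{\mathfrak{X}}$, whereas you supply a direct potential-theoretic argument (the complement of the skeleton is a disjoint union of open discs, $\varphi$ is harmonic on each since both $\mu$ and $c_1(L,\|\cdot\|)$ live on the skeleton, and the maximum and minimum principles force $\varphi$ to be constant on each disc). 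This sketch is correct in substance and is essentially how the cited result is proved for curves, but be aware that it leans on machinery the paper never develops -- the decomposition $c_1(L,\|\cdot\|e^{-\varphi})=c_1(L,\|\cdot\|)+dd^c\varphi$ for a merely continuous semipositive perturbation, and the maximum principle for harmonic functions on Berkovich discs -- so as written it is an appeal to Thuillier-style potential theory rather than to anything internal to the paper; citing the external result, as the paper does, is the cleaner way to close that step. The remainder (convexity of $\varphi|_\tau$ via Corollary \ref{semipositive->convex}, identification of $\MA(h)$ with the distributional second derivative, and the twofold integration showing $h$ differs from a $C^{k+2}$ function by an affine one) matches the paper's argument.
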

	Here $C^k(\tau)$ is the space of $k$ times continuously differentiable functions on $\tau$. Theorem \ref{regularity} follows from Theorem \ref{main} and regularity of the real Monge-Ampère equation. \\[2pt]
	\textbf{Terminology.} In the following, $K$ denotes a complete, non-archimedean, non-trivially valued field and $K^\circ$ its corresponding valuation ring with maximal ideal $K^{\circ\circ}$. All schemes are assumed to be locally of finite type. \\[2pt]
	\textbf{Acknowledgements.} I thank Walter Gubler for his constant advice and many helpful discussions. I am also grateful to Sébastien Boucksom for helpful discussions and to Antoine Ducros for suggesting a generalization of \cite[Lemme 6.5.1]{ChD}. Furthermore I would like to thank Klaus Künnemann and Antoine Chambert-Loir for helpful comments, Thomas Fenzl for answering my questions about skeletons and Florent Martin and Walter Gubler for the permission to use their unpublished notes on convexity of psh-functions.
	\section{Skeletons, formal models and divisors}
	\label{formalgeometry} In this section we first define formal schemes and their generic and special fibres. For details we refer to \cite[II.7, II.8.3]{Bo}. Then we recall the concept of skeletons associated to strongly nondegenerate strictly polystable formal schemes introduced by Berkovich in \cite{Be2}. To a subdivision of the skeleton, one can associate a formal analytic structure as in \cite[Proposition 5.5]{Gub2}. We generalize the subsequent results of \cite[§5]{Gub2} concerning the stratum face correspondence by dropping the condition of algebraically closedness of the base field. Finally we explain how a piecewise affine linear function on the skeleton induces a Cartier divisor on the formal scheme corresponding to a suitable subdivision of the skeleton.
	\begin{Def}
		Let $Y$ be a reduced scheme of locally finite type over a field $\kappa$. Set $Y^{(0)}:=Y$ and let $Y^{(i+1)}$ be the complement of the set of normal points in $Y^{(i)}$. The irreducible components of $Y^{(i)}\setminus Y^{(i+1)}$ are called \textit{strata} of $Y$. There is a partial ordering on the set of strata given by $R_1\leq R_2$ if and only if $\overline{R_1}\subseteq\overline{R_2}$. A cycle $Z\in Z(Y)$ is called a \textit{strata cycle} if there are strata $S_1,...,S_n$ of $Y$ such that $Z=\sum m_i\overline{S_i}$ with $m_i\in\mathbb{R}$. 
	\end{Def}
	\begin{Def}
		A topological ring $A$ is called \textit{adic} if there is an ideal $\mathfrak{a}\subseteq A$ such that the ideals $(\mathfrak{a}^n)_{n\in\mathbb{N}}$ form a neighbourhood basis for $0$. We call $\mathfrak{a}$ a \textit{defining ideal}. Let $A$ be an adic, complete, separated ring with finitely generated defining ideal $\mathfrak{a}$. The \textit{affine formal scheme} of $A$ is the locally topologically ringed space $\Spf(A)=(\mathfrak{X},\mathcal{O}_{\mathfrak{X}})$ where $\mathfrak{X}$ and $\mathcal{O}_{\mathfrak{X}}$ are defined as follows: $\mathfrak{X}$ is the set of all open prime ideals of $A$. As a prime ideal is open if and only if it contains $\mathfrak{a}$, we may identify $\mathfrak{X}$ with $\Spec(A/\mathfrak{a})\subseteq\Spec(A)$ and we endow $\mathfrak{X}$ with the topology induced by the Zariski topology on $\Spec(A)$. Moreover we define
		\[
			\mathcal{O}_{\mathfrak{X}}:=\underset{\leftarrow}{\lim}\;\mathcal{O}_{\Spec(A/\mathfrak{a}^n)}.
		\]
		A \textit{formal scheme} is a locally topologically ringed space $(\mathfrak{X},\mathcal{O}_{\mathfrak{X}})$ such that for each $x\in\mathfrak{X}$ there is an open neighbourhood $\mathfrak{U}$ of $x$ with $\left(\mathfrak{U},\mathcal{O}_{\mathfrak{X}}\Big|_{\mathfrak{U}}\right)$ isomorphic to an affine formal scheme.
		
		Now let $\mathfrak{a}$ be a defining ideal of $K^\circ$. A topological $K^\circ$-algebra $A$ is called \textit{admissible}, if $\left\{a\in A\;\Big|\;\mathfrak{a}^n\cdot a=0\text{ for some }n\in\mathbb{N}\right\}=\{0\}$ i.e. $A$ does not have $K^\circ$-torsion and if $A$ is isomorphic to a $K^\circ$-algebra of the form $K^\circ\langle\zeta_1,...,\zeta_n\rangle/(a_1,...,a_m)$ endowed with the $\mathfrak{a}$-adic topology. A formal $K^\circ$-scheme $\mathfrak{X}$ is called \textit{admissible} if there is a locally finite open cover $(\mathfrak{U}_i)_{i\in I}$ of $\mathfrak{X}$ with $\mathfrak{U}_i=\Spf(A_i)$ for admissible $K^\circ$-algebras $A_i$.
		
		Let $\mathfrak{X}=\Spf(A)$ be an admissible formal affine $K^\circ$-scheme. The \textit{analytic generic fibre} of $\mathfrak{X}$ is defined as $\mathfrak{X}^{\textup{an}}:=\mathcal{M}(A\otimes_{K^\circ} K)$, where $\mathcal{M}(\cdot)$ denotes the Berkovich spectrum (cf. \cite[1.2]{Be1}). The special fibre of $\mathfrak{X}$ is given by $\tilde{\mathfrak{X}}:=\Spec(A\otimes_{K^\circ} k)$, where $k:=K^\circ/K^{\circ\circ}$ is the residue field of $K$. For an admissible formal $K^\circ$-scheme $\mathfrak{X}$ one obtains the generic and the special fibre by a gluing process. There is a canonical surjective reduction map $\red:\mathfrak{X}^{\textup{an}}\rightarrow\tilde{\mathfrak{X}}$, see \cite[§2.13]{GRW}.
	\end{Def}
	\begin{Def}
		\label{Defstrictlysemistable} For $n\in\mathbb{N}_{>0}$ and $a\in K^{\circ\circ}$ we define 	
		\[
			\mathfrak{X}(n,a):=\Spf(K^\circ\langle x_0,...,x_n\rangle/(x_0...x_n-a)).
		\]
		For tuples $\boldsymbol{n}=(n_0,...,n_p)\in\mathbb{N}_{>0}^{p+1}$ and $\boldsymbol{a}=(a_0,...,a_p)\in (K^{\circ\circ})^{p+1}$ we define $\mathfrak{X}(\boldsymbol{n},\boldsymbol{a}):=\mathfrak{X}(n_0,a_0)\times_{K^{\circ}}...\times_{K^{\circ}}\mathfrak{X}(n_p,a_p)$ and for $m\in\mathbb{N}$ we set $\mathfrak{X}(m):=\mathfrak{X}(m,1)$. A \textit{strictly polystable} formal scheme over $K^\circ$ is an admissible formal scheme $\mathfrak{X}$ over $K^\circ$ which can be covered by formal open sets $\mathfrak{U}$ with étale morphisms 
		\[	\psi:\mathfrak{U}\rightarrow\mathfrak{X}(\boldsymbol{n},\boldsymbol{a},m):=\mathfrak{X}(\boldsymbol{n},\boldsymbol{a})\times_{K^\circ}\mathfrak{X}(m)
		\]
		where $\boldsymbol{n}$, $\boldsymbol{a}$ and $m$ may depend on $\mathfrak{U}$. We say that $\mathfrak{X}$ is \textit{strongly nondegenerate strictly polystable} if all $a_i$ can be chosen nonzero.
		
		To a strongly nondegenerate strictly polystable formal scheme $\mathfrak{X}$ over $K^\circ$ Berkovich introduced in \cite{Be2} a canonical polytopal subset $S(\mathfrak{X})$ of $\mathfrak{X}^{\textup{an}}$ called the \textit{skeleton}. It is a closed subset of $\mathfrak{X}^{\textup{an}}$ which is locally given by canonical polysimplices and can be described as follows. Let $\psi:\mathfrak{U}\rightarrow\mathfrak{X}(\boldsymbol{n},\boldsymbol{a},m)$ be an étale morphism as above. The generic fibre of the right hand side is given as $\mathfrak{X}(\boldsymbol{n},\boldsymbol{a},m)^{\textup{an}}=\mathscr{M}(A)$ where $A=(K\langle T_0^{\pm},...,T_m^{\pm}\rangle)\langle T_{00},...,T_{p,n_p}\rangle/(T_{00}...T_{0,n_0}-a_0,...,T_{p0}...T_{p,n_p}-a_p)$. The elements of $A$ can be expressed as $\sum_\mu a_\mu T^\mu$ with $a_\mu\in K\langle T_0^{\pm},...,T_m^{\pm}\rangle$ and $a_\mu=0$ if there is an $i\in\{0,...,p\}$ such that $\mu_{i,k}\geq 1$ for all $k\in\{0,...,n_i\}$. Now to an element $\boldsymbol{t}$ in the polysimplex $\left\{\boldsymbol{t}\in\mathbb{R}_{\geq 0}^{\boldsymbol{n}+\boldsymbol{1}}\;\Big|\;t_{i0}+...+t_{in_i}=-\log(|a_i|), 0\leq i\leq p\right\}$ we associate a seminorm on $A$ by sending a power series as above to $\max_\mu\{|a_\mu|\exp(-\boldsymbol{t}\cdot\mu)\}$. This gives an embedding of the polysimplex into $\mathscr{M}(A)$ whose image is denoted by $\Delta$. The skeleton $S(\mathfrak{U})$ of $\mathfrak{U}$ is defined to be $(\psi^{\textup{an}})^{-1}(\Delta)$. One can show that $\psi^{\textup{an}}$ induces a homeomorphism from $(\psi^{\textup{an}})^{-1}(\Delta)$ to $\Delta$ if $\mathfrak{U}$ has a unique minimal stratum which maps to the minimal stratum of $\mathfrak{X}(\boldsymbol{n},\boldsymbol{a},m)$. The skeleton $S(\mathfrak{X})$ of $\mathfrak{X}$ is the union of all $S(\mathfrak{U})$ and is independent of all choices.
		
		To a stratum $S$ of $\mathfrak{X}$ one can associate a canonical polysimplex $\Delta_S$ in the skeleton such that the interiors of the $\Delta_T$ form a disjoint cover of $S(\mathfrak{X})$ where $T$ ranges over all strata of $\tilde{\mathfrak{X}}$. In order to do so, we choose a refinement of the cover of $\mathfrak{X}$ as described in the Proposition below and choose $\mathfrak{U}$ such that $S$ is its distinguished stratum. We then define $\Delta_S:=S(\mathfrak{U})$.
		
		An admissible formal scheme $\mathfrak{X}$ is called \textit{strongly nondegenerate polystable} if there exists a strongly nondegenerate strictly polystable formal scheme $\mathfrak{X}'$ and a surjective étale morphism $\mathfrak{X}'\rightarrow\mathfrak{X}$. The skeleton of $\mathfrak{X}$ is defined to be the image of the skeleton of $\mathfrak{X}'$ under the map $\mathfrak{X}'^{\textup{an}}\rightarrow\mathfrak{X}^{\textup{an}}$.
		
		One can endow the skeleton with a piecewise linear structure, see \cite[§6]{Be3}. We will define piecewise affine linear functions on the skeleton of a strongly nondegenerate strictly polystable formal scheme in Definition \ref{Defpiecewiseaffinelinear}.
		There is a canonical continuous retraction map $p_{\mathfrak{X}}:\mathfrak{X}^{\textup{an}}\rightarrow S(\mathfrak{X})$ which restricts to the identity on $S(\mathfrak{X})$. For details see \cite[§4]{Be2}, \cite[§4]{Be3} or \cite[5.3]{Gub2}.
	\end{Def}
	We have the following stratum face correspondence due to Berkovich:
	\begin{Proposition}
		\label{SFCBer} Let $\mathfrak{X}$ be a strongly nondegenerate polystable formal scheme with skeleton $\Delta$. There is a bijective correspondence between the open faces of $\Delta$ and the strata of $\tilde{\mathfrak{X}}$ given by
		\[
		  R=\red(p_{\mathfrak{X}}^{-1}(\tau)), \hspace{2cm} \tau=p_{\mathfrak{X}}(\red^{-1}(R)).
		\]
	\end{Proposition}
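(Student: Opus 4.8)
The plan is to reduce the statement to the standard local models $\mathfrak{X}(\boldsymbol{n},\boldsymbol{a},m)$, where the reduction map and the retraction are given by explicit formulas in the coordinates, and then to propagate it using the functoriality of the skeleton, the reduction map and the retraction under \'etale morphisms and under gluing. As a first step I would pass to the strictly polystable case: choose a surjective \'etale morphism $q\colon\mathfrak{X}'\to\mathfrak{X}$ with $\mathfrak{X}'$ strongly nondegenerate strictly polystable, as in Definition~\ref{Defstrictlysemistable}. By construction $S(\mathfrak{X})=q^{\textup{an}}(S(\mathfrak{X}'))$, one has $\red\circ q^{\textup{an}}=\tilde{q}\circ\red$ and $p_{\mathfrak{X}}\circ q^{\textup{an}}=q^{\textup{an}}\circ p_{\mathfrak{X}'}$, and $\tilde{q}$, being \'etale and surjective, pulls the stratification of $\tilde{\mathfrak{X}}$ back to that of $\tilde{\mathfrak{X}'}$ and maps it onto it. Granting the correspondence for $\mathfrak{X}'$, one obtains the assignments $\tau\mapsto\red(p_{\mathfrak{X}}^{-1}(\tau))$ and $R\mapsto p_{\mathfrak{X}}(\red^{-1}(R))$ for $\mathfrak{X}$ by pushing them forward along $q$, and the two compatibility identities show that these are well defined and mutually inverse.

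For a strictly polystable $\mathfrak{X}'$ I would cover it by formal opens $\mathfrak{U}$ carrying \'etale morphisms $\psi\colon\mathfrak{U}\to\mathfrak{X}(\boldsymbol{n},\boldsymbol{a},m)$ and, after refining the cover so that each chosen $\mathfrak{U}$ has a unique minimal stratum mapping to the minimal stratum of the target (as in the construction of the faces $\Delta_S$), use that $\psi^{\textup{an}}$ restricts to a homeomorphism of $S(\mathfrak{U})$ onto the polysimplex $\Delta$ which is compatible with $\red$ and, by the very definition of $p_{\mathfrak{U}}$, with $p$. Since the open faces of $S(\mathfrak{X}')$ are by construction the interiors of the polysimplices $\Delta_S$, and $\Delta_S=S(\mathfrak{U})$ when $S$ is the distinguished stratum of $\mathfrak{U}$, this reduces the claim to the model $\mathfrak{X}(\boldsymbol{n},\boldsymbol{a},m)$.

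On the model both maps are transparent in the coordinates $T_{ij}$: for $x$ in the generic fibre one has $|T_{ij}(x)|\leq 1$ with $\sum_k\bigl(-\log|T_{ik}(x)|\bigr)=-\log|a_i|$, the retraction sends $x$ to the point of the polysimplex with coordinates $t_{ij}=-\log|T_{ij}(x)|$, and $\widetilde{T_{ij}}$ vanishes at $\red(x)$ exactly when $|T_{ij}(x)|<1$. Given an index set $I$ meeting every block $\{i0,\dots,in_i\}$, with associated open face $\sigma_I=\{\boldsymbol{t}\mid t_{ij}>0\iff ij\in I\}$ and associated stratum $R_I$ cut out by $x_{ij}=0$ for $ij\in I$ and $x_{ij}\neq 0$ otherwise, one reads off from these descriptions the identity $p^{-1}(\sigma_I)=\red^{-1}(R_I)$. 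Since $\red$ is surjective and $p$ restricts to the identity on the polysimplex, this yields $\red(p^{-1}(\sigma_I))=R_I$ and $p(\red^{-1}(R_I))=\sigma_I$; as $I$ varies, the $\sigma_I$ exhaust the open faces of $\Delta$, the $R_I$ exhaust the strata of the special fibre, and the two assignments are visibly mutually inverse. Gluing the charts is then formal, again by the \'etale compatibilities of $\red$ and $p$, and independence of all choices is built into the definitions of the skeleton and of the faces $\Delta_S$.

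The step I expect to be the real obstacle is the input used throughout, namely that the reduction map, the retraction and the stratification are compatible with \'etale morphisms of strongly nondegenerate polystable formal schemes over an \emph{arbitrary} complete non-archimedean field $K$; this is precisely the place where the algebraic closedness of $K$ assumed in \cite[\S5]{Gub2} has to be removed, either by verifying that the relevant assertions are insensitive to the base change to $\widehat{\overline{K}}$ or by reworking the arguments of \cite[\S5]{Gub2} directly. Once these compatibilities are secured, the computation on $\mathfrak{X}(\boldsymbol{n},\boldsymbol{a},m)$ is routine.
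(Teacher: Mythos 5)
The paper does not prove this proposition at all: it is attributed to Berkovich and the proof is a bare citation of \cite[Theorem 5.2 (iv), Theorem 5.4]{Be2}. Your proposal is therefore an attempt to reprove Berkovich's theorem, and the outline you give --- explicit computation on the local models $\mathfrak{X}(\boldsymbol{n},\boldsymbol{a},m)$ followed by \'etale descent and gluing --- is indeed the strategy of Berkovich's own proof. Your local computation is correct: on the model, $p(x)$ lies in the open face $\sigma_I$ exactly when $|T_{ij}(x)|<1$ for $ij\in I$, which is exactly the condition that $\red(x)$ lies in the stratum $R_I$, so $p^{-1}(\sigma_I)=\red^{-1}(R_I)$ and the two assignments are inverse on the model.

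The gap is that everything load-bearing is assumed rather than proved. First, the identity $p_{\mathfrak{X}}\circ q^{\textup{an}}=q^{\textup{an}}\circ p_{\mathfrak{X}'}$ and, more fundamentally, the existence of a well-defined retraction $p_{\mathfrak{X}}$ compatible with \'etale morphisms is the main theorem of \cite[\S 5]{Be2} (Theorems 5.1--5.2), proved by a delicate induction using formal torus actions; you correctly flag this as ``the real obstacle,'' but without it the descent steps are circular. Second, the passage from the local model to a chart $\mathfrak{U}$ is not a pure pullback: since $\tilde{\psi}$ is \'etale, the strata of $\tilde{\mathfrak{U}}$ are the irreducible components of $\tilde{\psi}^{-1}$ of the strata of the model, and these preimages can be disconnected; matching them with the faces of $S(\mathfrak{X}')$ across overlapping charts is exactly where the refinement by charts with a unique minimal stratum and \cite[Lemma 2.2]{Be2} are needed, and ``gluing the charts is then formal'' elides this. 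Finally, your worry about algebraic closedness is misplaced for this particular statement: Berkovich's results, which the paper cites, already hold over an arbitrary complete non-archimedean field; the base-field issue arises in this paper only for the generalization to subdivisions (Proposition \ref{Stratumface}), whose reference is \cite{Gub2}, not here. In short: right skeleton of an argument, but the proposition's actual content lives in the steps you defer.
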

	\begin{proof}
		\cite[Theorem 5.2 (iv), Theorem 5.4]{Be2}.
	\end{proof}
	\begin{Proposition}
		\label{Gub2 Proposition 5.2} Let $\mathfrak{X}$ be a strongly nondegenerate strictly polystable formal scheme over $K^\circ$. Any formal open covering of $\mathfrak{X}$ admits a refinement $\{\mathfrak{U}'\}$ by formal open subsets $\mathfrak{U}'$ as in Definition \ref{Defstrictlysemistable} such that
		\begin{enumerate}
			\item Every $\mathfrak{U}'$ is a formal affine open subscheme of $\mathfrak{X}$,
			\item there is a distinguished stratum $S$ of $\tilde{\mathfrak{X}}$ associated to $\mathfrak{U}'$ such that for any stratum $T$ of $\tilde{\mathfrak{X}}$, we have $S\subseteq\overline{T}$ if and only if $\tilde{\mathfrak{U}'}\cap\overline{T}\neq\emptyset$,
			\item $\tilde{\psi}^{-1}(\{\tilde{\boldsymbol{0}}\}\times\widetilde{\mathfrak{X}(m)})$ is the stratum of $\tilde{\mathfrak{U}'}$ which is equal to $\tilde{\mathfrak{U}'}\cap S$ for the distinguished stratum $S$ associated to $\mathfrak{U}'$,
			\item every stratum of $\tilde{\mathfrak{X}}$ is the distinguished stratum of a suitable $\mathfrak{U}'$.
		\end{enumerate}
	\end{Proposition}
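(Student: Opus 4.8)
The plan is to follow the proof of \cite[Proposition~5.2]{Gub2}, which is stated there under the hypothesis that $K$ is algebraically closed, and to verify that this hypothesis is never used. That it should be dispensable is clear from the fact that the four conditions concern only the special fibre $\tilde{\mathfrak{X}}$ and the reductions $\tilde{\psi}$ of the charts, so the generic-fibre input where algebraic closedness usually enters (good behaviour of the reduction map, the homeomorphism of $S(\mathfrak{U}')$ onto a polysimplex) does not intervene here.

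First I would record three observations, all insensitive to the base field. (1) For a reduced scheme locally of finite type over a field, the stratification $Y\supseteq Y^{(1)}\supseteq\cdots$ satisfies: the closure of a stratum is a union of strata, and two distinct strata of the same level are disjoint --- the latter because $Y^{(i)}\setminus Y^{(i+1)}$ is the normal locus of $Y^{(i)}$, hence a normal scheme, whose irreducible components are open in it and pairwise disjoint. (2) An \'etale morphism $f\colon Y\to Z$ of such schemes satisfies $f^{-1}(Z^{(i)})=Y^{(i)}$ for every $i$, since normality is \'etale-local on source and target and descends along faithfully flat maps; consequently the strata of $Y$ are exactly the irreducible components of preimages of strata of $Z$, and each stratum of $Y$ is mapped into a single stratum of $Z$ of the same level. (3) The local models are unchanged: $\widetilde{\mathfrak{X}(n,a)}$ is the union of the $n+1$ coordinate hyperplanes in $\mathbb{A}^{n+1}_k$, whose strata are the loci $\{x_i=0\iff i\in I\}$ for $\emptyset\neq I\subseteq\{0,\dots,n\}$ (the minimal one being $I=\{0,\dots,n\}$), and $\{x_n\neq 0\}\subseteq\mathfrak{X}(n,a)$ is isomorphic as a formal $K^\circ$-scheme to $\mathfrak{X}(n-1,a)\times_{K^\circ}\mathfrak{X}(1)$.

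Now fix a point $x$ of $\tilde{\mathfrak{X}}$ and let $S$ be the stratum containing it. Choose a member $\mathfrak{W}$ of the given covering with $x\in\mathfrak{W}$. By Definition~\ref{Defstrictlysemistable} and the isomorphism in (3) there is a formal open $\mathfrak{V}$ with $x\in\mathfrak{V}\subseteq\mathfrak{W}$ and an \'etale morphism $\psi\colon\mathfrak{V}\to\mathfrak{X}(\boldsymbol{n},\boldsymbol{a},m)$ for which $\tilde{\psi}(x)$ lies in the minimal stratum $\{\tilde{\boldsymbol{0}}\}\times\widetilde{\mathfrak{X}(m)}$ (take a polystable chart $\mathfrak{V}_0\ni x$ inside $\mathfrak{W}$ as in Definition~\ref{Defstrictlysemistable}, pass to the open subset where the coordinates not vanishing at the image of $x$ are invertible, and fold the resulting $\mathfrak{X}(1)$-factors into the $\mathfrak{X}(m)$-factor); by (1) and (2) the whole stratum $S$ is then mapped into that minimal stratum and shares its level. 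Next I delete from $\mathfrak{V}$ the closure of every stratum of $\tilde{\mathfrak{X}}$ that does not contain $S$ in its closure --- a closed set which by (1) misses $S$, hence still contains $x$ --- and shrink to a formal affine open neighbourhood $\mathfrak{U}'$ of $x$ inside the result. Then $\mathfrak{U}'$, equipped with the restriction of $\psi$, is the required chart: on $\tilde{\mathfrak{U}'}$ the stratum $S\cap\tilde{\mathfrak{U}'}$ is now the unique minimal stratum, which is (ii); any stratum of $\tilde{\mathfrak{U}'}$ that $\tilde{\psi}$ maps into $\{\tilde{\boldsymbol{0}}\}\times\widetilde{\mathfrak{X}(m)}$ has the level of $S$ and is $\geq S$, hence equals $S$ (two strata of the same level, one lying in the closure of the other, coincide), and this together with $S\subseteq\tilde{\psi}^{-1}(\{\tilde{\boldsymbol{0}}\}\times\widetilde{\mathfrak{X}(m)})$ and the closedness of the minimal stratum of the target gives (iii); (i) is the last shrinking; and, letting $x$ range over $\tilde{\mathfrak{X}}$, the family of all these $\mathfrak{U}'$ is a covering of $\mathfrak{X}$ refining the given one in which every stratum occurs as a distinguished stratum, which is (iv).

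The only step that requires genuine thought is the construction of the chart $\psi$ that forces $S$ into the minimal stratum of the target, together with the bookkeeping that the two subsequent shrinkings keep $S$ minimal and do not destroy (iii). These are precisely the manipulations in the proof of \cite[Proposition~5.2]{Gub2}; the point of the present statement is that they use nothing beyond the explicit combinatorics of the models $\mathfrak{X}(\boldsymbol{n},\boldsymbol{a},m)$ and \'etale-local normality recorded in (1)--(3), neither of which is sensitive to whether $K$ is algebraically closed.
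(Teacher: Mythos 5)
Your proposal is correct and takes the same route as the paper: the paper's entire proof is the one-sentence assertion that the arguments of \cite[Proposition 5.2]{Gub2} apply verbatim over a not-necessarily-algebraically-closed base field, and your write-up is precisely a detailed verification of that claim, reconstructing Gubler's chart-localization and stratum-closure-deletion argument and checking that only the \'etale-local combinatorics of the standard models $\mathfrak{X}(\boldsymbol{n},\boldsymbol{a},m)$ enter.
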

	\begin{proof}
		The very same arguments as in \cite[Proposition 5.2]{Gub2} apply to our situation.
	\end{proof}
	From now on let $\mathfrak{X}$ be a strongly nondegenerate strictly polystable formal scheme over $K^\circ$ and denote by $\Gamma$ the value group of $K$. For the basic notions of convex geometry we refer to \cite[Appendix A]{Gub3}. We will work with $\Gamma$-rational \textit{polytopal subdivisions} $\mathfrak{D}$ of $S(\mathfrak{X})$, i.e. $\mathfrak{D}$ is a family of $\Gamma$-rational polytopes contained in a canonical polysimplex such that for every stratum $S$ of $\tilde{\mathfrak{X}}$ the set $\left\{\Delta\in\mathfrak{D}\;\Big|\;\Delta\subseteq\Delta_S\right\}$ is a polytopal decomposition of $\Delta_S$. Here a \textit{polytopal decomposition} means a finite family of polytopes covering $\Delta_S$ which is closed under taking faces and such that the intersection of two polytopes in the family is a face of both and a $\Gamma$\textit{-rational polytope} means a polytope which is defined by inequalities of the form $\boldsymbol{m}\boldsymbol{x}+c\geq 0$ with $\boldsymbol{m}\in\mathbb{Z}^r,c\in\Gamma$.
	\begin{Construction}
		\label{Formalmodel} Let $\mathfrak{D}$ be such a subdivision. We will construct a canonical formal scheme $\mathfrak{X}''$ over $K^\circ$ associated to $\mathfrak{D}$ together with a morphism $\iota:\mathfrak{X}''\rightarrow\mathfrak{X}$ which induces the identity on the generic fibre such that there is a one to one correspondence between the open faces of $\mathfrak{D}$ and the strata of $\tilde{\mathfrak{X}}''$. First of all we choose a covering of $\mathfrak{X}$ as in Proposition \ref{Gub2 Proposition 5.2}. Let $\mathfrak{U}$ be a member of this covering with an étale morphism $\psi:\mathfrak{U}\rightarrow\mathfrak{X}(\boldsymbol{n},\boldsymbol{a},m)$ and let $S$ be the distinguished stratum of $\mathfrak{U}$. For $\Delta\in\mathfrak{D}\cap\Delta_S$ we set 
		\[
			A':=\left\{\sum_\mu a_\mu T^\mu\in K((T_{00},...,T_{p,n_p}))\;\Big|\;\forall_{u\in\Delta}:\;\lim v(a_\mu)+u\cdot\mu=\infty\right\}
		\]
		and $A:=A'/(T_{00}...T_{0,n_0}-a_0,...,T_{p0}...T_{p,n_p}-a_p)$ and define 
		\[
			A^\Delta:=\left\{\sum_\mu a_\mu T^\mu\in A\;\Big|\;\forall_{u\in\Delta,\mu\in\mathbb{Z}^{\boldsymbol{n}+\boldsymbol{1}}}:\;v(a_\mu)+\mu\cdot u\geq 0\right\}
		\] 
		and $\mathfrak{U}_\Delta:=\Spf A^\Delta$. If $\Delta_1,\Delta_2\in\mathfrak{D}\cap\Delta_S$ then $\Delta_1\cap\Delta_2$ is a face of both and by transferring the arguments in \cite[Proposition 6.12]{Gub3} to the analytic situation, we obtain that the canonical morphisms $\mathfrak{U}_{\Delta_1\cap\Delta_2}\rightarrow\mathfrak{U}_{\Delta_i}$ are open immersions. Hence we can glue the $\mathfrak{U}_{\Delta}$ along this data to obtain a formal scheme which we denote by $\mathfrak{X}(\boldsymbol{n},\boldsymbol{a})'$ together with a morphism $\iota':\mathfrak{X}(\boldsymbol{n},\boldsymbol{a})'\rightarrow\mathfrak{X}(\boldsymbol{n},\boldsymbol{a})$. Let $\psi':\mathfrak{U}''\rightarrow\mathfrak{X}(\boldsymbol{n},\boldsymbol{a})'\times\mathfrak{X}(m)$ be the base change of $\psi$ with respect to $\iota'\times\Id$. The construction of $\mathfrak{U}''$ does not depend on the choice of $\psi$ up to isomorphism: Let $\rho:\mathfrak{U}\rightarrow\mathfrak{X}(\boldsymbol{n},\boldsymbol{a},m)$ be another étale morphism. Then up to reordering the coordinates, $\rho^{\ast}x_i=u_i\psi^{\ast}x_i$ for some $u_i\in\mathcal{O}(\mathfrak{U})^\times$. Then we have canonical $K^\circ$-algebra isomorphisms:
		\begin{align*}
			\mathcal{O}(\mathfrak{U})\hat{\otimes}_{\psi^{\ast}}A^\Delta&\rightarrow\mathcal{O}(\mathfrak{U})\hat{\otimes}_{\rho^{\ast}}A^\Delta, \\
			a\otimes x_i &\mapsto u_i a\otimes x_i,	
		\end{align*}
		which yield an isomorphism of the $\mathfrak{U}''$ constructed with $\psi$ respectively $\rho$. \\
		We glue the $\mathfrak{U}''$ to obtain our formal scheme $\mathfrak{X}''$. Although $\mathfrak{X}''$ might not be admissible, we can define its generic fibre and reduction map in the usual way as the algebras $A^\Delta\otimes_{K^\circ}K$ are strictly $K$-affinoid (see \cite[Proposition 6.17]{Gub3}). Then $\iota$ induces the identity on the generic fibres and we set $p_{\mathfrak{X}''}:=p_{\mathfrak{X}}$. Note that $\mathfrak{X}''$ is admissible if the vertices of the polytopes in $\mathfrak{D}$ are $\Gamma$-rational, in particular the base change of $\mathfrak{X}''$ to the valuation ring of the completion of an algebraic closure of $K$ is admissible, see \cite[Proposition 6.7]{Gub3}.
	\end{Construction}
	\begin{Remark}
		If $\mathfrak{D}$ is trivial i.e. $\Delta\in\mathfrak{D}$ only if $\Delta=\Delta_S$ for some stratum $S$ of $\tilde{\mathfrak{X}}$ then it is an immediate consequence from the construction that $\mathfrak{X}''=\mathfrak{X}$. 
	\end{Remark}
	We will frequently use the following generalization of \cite[Proposition 5.7]{Gub2} which is a stratum face correspondence for the $\mathfrak{X}''$ constructed above.
	\begin{Proposition}
		\label{Stratumface} Let $\mathfrak{X}$ be a strongly nondegenerate strictly polystable formal scheme with skeleton $\Delta$ and $\mathfrak{D}$ a subdivision of $\Delta$ with associated formal structure $\mathfrak{X}''$. Then there is a bijective correspondence between the open faces of $\mathfrak{D}$ and the strata of $\tilde{\mathfrak{X}}''$ given by 
		\[
			R=\red(p_{\mathfrak{X}''}^{-1}(\tau)), \hspace{2cm} \tau=p_{\mathfrak{X}''}(\red^{-1}(R)).
		\]
		Furthermore, in the second equality, $R$ can be replaced by any nonempty subset of $R$. 
	\end{Proposition}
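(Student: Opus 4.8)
The plan is to reduce the statement to the known stratum-face correspondence of Berkovich (Proposition \ref{SFCBer}) applied to the formal scheme $\mathfrak{X}''$, combined with a careful local analysis of the strata of $\tilde{\mathfrak{X}}''$ using the explicit toric description in Construction \ref{Formalmodel}. First I would reduce to the local situation: since both the skeleton, the retraction $p_{\mathfrak{X}''}=p_{\mathfrak{X}}$, and the reduction map are compatible with the formal open cover $\{\mathfrak{U}''\}$ coming from a cover $\{\mathfrak{U}\}$ as in Proposition \ref{Gub2 Proposition 5.2}, it suffices to prove the correspondence on each $\mathfrak{U}''$, where $\mathfrak{U}$ has a distinguished stratum $S$ and an étale morphism $\psi:\mathfrak{U}\to\mathfrak{X}(\boldsymbol{n},\boldsymbol{a},m)$. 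By étaleness of $\psi'$ (base change of $\psi$), strata of $\tilde{\mathfrak{U}''}$ correspond to strata of $\widetilde{\mathfrak{X}(\boldsymbol{n},\boldsymbol{a})'\times\mathfrak{X}(m)}$ lying over the minimal one, and likewise the skeleton and retraction are pulled back from the target. So the whole problem becomes a concrete computation on $\mathfrak{X}(\boldsymbol{n},\boldsymbol{a})'\times\mathfrak{X}(m)$.

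Next I would analyze the toric model $\mathfrak{X}(\boldsymbol{n},\boldsymbol{a})'$ explicitly. Each chart $\mathfrak{U}_\Delta=\Spf A^\Delta$ for $\Delta\in\mathfrak{D}\cap\Delta_S$ is, after passing to the generic fibre being a polytopal affinoid domain, a formal model whose special fibre is a toric variety associated to the normal fan of $\Delta$ (this is exactly the content of transferring \cite[Proposition 6.12, 6.17]{Gub3} to the analytic setting, as done in Construction \ref{Formalmodel}). The strata of $\widetilde{\mathfrak{U}_\Delta}$ are then the torus orbits, which are in bijection with the faces of $\Delta$; gluing over all $\Delta\in\mathfrak{D}\cap\Delta_S$ produces exactly the open faces of the subdivision $\mathfrak{D}$ restricted to $\Delta_S$. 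Tracking how the reduction map $\red:\mathfrak{U}_\Delta^{\textup{an}}\to\widetilde{\mathfrak{U}_\Delta}$ and the retraction $p_{\mathfrak{X}}$ interact with this toric picture — using the definition of the seminorms parametrizing $\Delta$ and the valuation-theoretic description of $A^\Delta$ — gives that $\red(p_{\mathfrak{X}''}^{-1}(\tau))$ is precisely the torus orbit (= stratum) corresponding to the open face $\tau$, and conversely $p_{\mathfrak{X}''}(\red^{-1}(R))=\tau$. The "furthermore" clause, that $R$ may be replaced by any nonempty subset, follows because $\red^{-1}(x)$ for a single point $x$ in the stratum already retracts onto all of $\tau$: this is the analogue of \cite[Proposition 5.7]{Gub2} and uses that the fibre of $\red$ over any point of an orbit, intersected with $p_{\mathfrak{X}}^{-1}(\Delta)$, surjects onto the relative interior of the corresponding face.

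Finally I would assemble the local statements into the global one: the bijection is forced to be compatible on overlaps because both sides are described canonically (Proposition \ref{Gub2 Proposition 5.2} ensures the distinguished strata glue correctly and every stratum is distinguished in some chart), and the equalities $R=\red(p_{\mathfrak{X}''}^{-1}(\tau))$, $\tau=p_{\mathfrak{X}''}(\red^{-1}(R))$ are checked chart by chart. The main obstacle I expect is the bookkeeping in the toric translation: one must verify carefully that the subdivision $\mathfrak{D}$ of the polysimplex $\Delta_S$ corresponds, orbit by orbit, to the stratification of $\tilde{\mathfrak{X}}''$ with the correct incidence relations, and that this is genuinely independent of the choice of étale chart $\psi$ — the latter being handled by the isomorphism $\mathcal{O}(\mathfrak{U})\hat{\otimes}_{\psi^{\ast}}A^\Delta\xrightarrow{\sim}\mathcal{O}(\mathfrak{U})\hat{\otimes}_{\rho^{\ast}}A^\Delta$ already exhibited in Construction \ref{Formalmodel}, which one checks is compatible with reduction and retraction. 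Since \cite[Proposition 5.7]{Gub2} already proves this when the base field is algebraically closed, the remaining work is to see that dropping algebraic closedness changes nothing essential — the only place the base field enters is through $\tilde{K}$, and the strata and skeleton constructions are insensitive to ground field extension in the relevant sense, so the argument of loc. cit. carries over verbatim once the local toric picture is in place.
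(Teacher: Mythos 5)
Your overall strategy --- reduce via the cover of Proposition \ref{Gub2 Proposition 5.2} to a chart $\mathfrak{U}$ with distinguished stratum $S$ and \'etale map $\psi$, transfer the problem to the toric model $\mathfrak{X}(\boldsymbol{n},\boldsymbol{a})'$, and pull the correspondence back along the base change of $\psi$ --- is exactly the paper's (the paper simply cites \cite[Proposition 6.22]{Gub3} for the toric case instead of redoing the orbit computation). But two steps you treat as routine are precisely where the paper has to work, and as written they are gaps.

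First, your claim that ``by \'etaleness of $\psi'$, strata of $\tilde{\mathfrak{U}}''$ correspond to strata of $\widetilde{\mathfrak{X}(\boldsymbol{n},\boldsymbol{a})'\times\mathfrak{X}(m)}$ lying over the minimal one'' is not automatic: the preimage of an irreducible stratum $T$ under an \'etale map is a strata subset (\cite[Lemma 2.2]{Be2}) but has no a priori reason to be irreducible, and irreducibility is exactly what makes $R=\red(p_{\mathfrak{X}''}^{-1}(\tau))$ a single stratum rather than a union of strata. The paper proves this by identifying $\tilde{\psi}_1^{\prime -1}(T)\cong(T\times\tilde{\mathfrak{X}}(m))\times_{\{\tilde{0}\}\times\tilde{\mathfrak{X}}(m)}\tilde{\psi}^{-1}(\{\tilde{0}\}\times\tilde{\mathfrak{X}}(m))\cong T\times S$ and invoking irreducibility of such a product; this is where condition (iii) of Proposition \ref{Gub2 Proposition 5.2} (the distinguished stratum is the full preimage of the minimal one) actually enters. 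You need to supply this argument.

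Second, your final paragraph asserts that dropping algebraic closedness ``changes nothing essential'' because strata and skeleta are ``insensitive to ground field extension.'' That is false in the relevant sense: under base change to $\mathbb{C}_K$ a stratum $R$ can split into several strata, and several open faces of $S(\mathfrak{X}''_{\mathbb{C}_K})$ can lie over $\tau$ --- this splitting is exactly why the factor $[\tilde{K}(S):\tilde{K}]$ appears in Theorem \ref{main} and Corollary \ref{mainpolystable}. Consequently the ``furthermore'' clause does not descend verbatim: given a nonempty $Y\subseteq R$, the paper must choose a lift $Y'$ inside the union $R'$ of strata over $R$, apply \cite[Proposition 5.7]{Gub2} upstairs, and then run a diagram chase showing that $\pi$ restricted to $p_{\mathfrak{X}''_{\mathbb{C}_K}}(\red^{-1}(Y'))$ factors through $p_{\mathfrak{X}''}(\red^{-1}(Y))$, using $\pi\circ p_{\mathfrak{X}''_{\mathbb{C}_K}}=p_{\mathfrak{X}''}\circ\pi$ together with the surjection $p_{\mathfrak{X}''_{\mathbb{C}_K}}(\red^{-1}(R'))\twoheadrightarrow p_{\mathfrak{X}''}(\red^{-1}(R))$. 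This descent argument is a genuine piece of the proof that your proposal omits.
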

	\begin{proof}
		We follow the proof of \cite[Proposition 5.7]{Gub2} but in order to establish the result for an arbitrary non-archimedean field $K$ (not necessarily algebraically closed), we use \cite[Proposition 6.22]{Gub3} instead of \cite[Proposition 4.4]{Gub1}. Let $\tau$ be an open face of $\mathfrak{D}$. We prove first that $R:=\red(p_{\mathfrak{X}''}^{-1}(\tau))$ is a stratum of $\tilde{\mathfrak{X}}''$. There is a unique stratum $S$ of $\tilde{\mathfrak{X}}$ such that $\tau$ is contained in the interior of $\Delta_S$. Let $\mathfrak{U}$ be a formal open subset of $\mathfrak{X}$ such that $S$ is the distinguished stratum of $\mathfrak{U}$ (Proposition \ref{Gub2 Proposition 5.2}). As strata are compatible with localization we may assume $\mathfrak{X}=\mathfrak{U}$. Let $\psi_1':\mathfrak{X}''\rightarrow\mathfrak{X}(\boldsymbol{n},\boldsymbol{a})'$ be the base change of the composition of the étale map $\psi:\mathfrak{X}\rightarrow\mathfrak{X}(\boldsymbol{n},\boldsymbol{a},m)$ with the projection on the first factor $\mathfrak{X}(\boldsymbol{n},\boldsymbol{a})$. By \cite[Proposition 6.22]{Gub3} the first part of the proposition holds for $\mathfrak{X}(\boldsymbol{n},\boldsymbol{a})'$. Let $T$ be the stratum of $\mathfrak{X}(\boldsymbol{n},\boldsymbol{a})'$ corresponding to $\tau$, i.e.
		\begin{align}
			\label{tau=} \tau=p_{\mathfrak{X}(\boldsymbol{n},\boldsymbol{a})'}(\red^{-1}(T))
		\end{align}
		and
		\begin{align}
			\label{T=} T=\red(p_{\mathfrak{X}(\boldsymbol{n},\boldsymbol{a})'}^{-1}(\tau)).
		\end{align}
		where $p_{\mathfrak{X}(\boldsymbol{n},\boldsymbol{a})'}:\mathfrak{X}(\boldsymbol{n},\boldsymbol{a})^{\prime an}\rightarrow\Delta_S$ is the retraction map. We prove $R=\tilde{\psi}_1^{\prime -1}(T)$. First we observe that 
		\[
			\red(({\psi'}_1^{\textup{an}})^{-1}(p_{\mathfrak{X}(\boldsymbol{n},\boldsymbol{a})'}^{-1}(\tau)))=\tilde{\psi}_1^{\prime -1}(\red(p_{\mathfrak{X}(\boldsymbol{n},\boldsymbol{a})'}^{-1}(\tau))).
		\]
		The inclusion $\subseteq$ is clear because $\red\circ{\psi'}_1^{\textup{an}}=\tilde{\psi}'_1\circ\red$. The other inclusion follows from this fact and an application of \cite[Proposition 6.22]{Gub3}. For details we refer to the proof of \cite[Proposition 5.7]{Gub2}. We conclude
		\[
			R=\red(p_{\mathfrak{X}''}^{-1}(\tau))=\red(({\psi'}_1^{\textup{an}})^{-1}(p_{\mathfrak{X}(\boldsymbol{n},\boldsymbol{a})'}^{-1}(\tau)))=\tilde{\psi}_1^{\prime -1}(\red(p_{\mathfrak{X}(\boldsymbol{n},\boldsymbol{a})'}^{-1}(\tau)))\overset{(\ref{T=})}{=}\tilde{\psi}_1^{\prime -1}(T).
		\]
		By \cite[Lemma 2.2]{Be2} $R$ is a strata subset. To see that $R$ is indeed a stratum it is enough to show that $R$ is irreducible. But this follows from
		\[
			\tilde{\psi}_1^{\prime -1}(T)=(T\times\tilde{\mathfrak{X}}(m))\times_{\tilde{\mathfrak{X}}(\boldsymbol{n},\boldsymbol{a},m)'}\tilde{\mathfrak{X}}''\cong (T\times\tilde{\mathfrak{X}}(m))\times_{\{\tilde{0}\}\times\tilde{\mathfrak{X}}(m)}\tilde{\psi}^{-1}(\{\tilde{0}\}\times\tilde{\mathfrak{X}}(m))\cong T\times S,
		\]
		where the latter is irreducible by \cite[Corollaire 4.5.8 (i)]{EGAIV2}.
		As the open faces of $\mathfrak{D}$ cover $\Delta$, every stratum of $\tilde{\mathfrak{X}}''$ is obtained this way. It remains to prove that we can recover $\tau$ from $R$. First note that 
		\[
			p_{\mathfrak{X}''}(({\psi'}_1^{\textup{an}})^{-1}(\red^{-1}(T)))=p_{\mathfrak{X}(\boldsymbol{n},\boldsymbol{a})'}(\red^{-1}(T)).
		\]
		The inclusion $\subseteq$ is clear because $p_{\mathfrak{X}''}=p_{\mathfrak{X}(\boldsymbol{n},\boldsymbol{a})'}\circ{\psi'}_1^{\textup{an}}$. For the other inclusion, let $x\in p_{\mathfrak{X}(\boldsymbol{n},\boldsymbol{a})'}(\red^{-1}(T))=\tau$. As the sets $\red^{-1}(T')$ with $T'$ varying over the strata of $\tilde{\mathfrak{X}}(\boldsymbol{n},\boldsymbol{a})'$ cover ${\mathfrak{X}(\boldsymbol{n},\boldsymbol{a})'}^{\textup{an}}$ and using \cite[Proposition 6.22]{Gub3} and the fact the $p_{\mathfrak{X}(\boldsymbol{n},\boldsymbol{a})'}$ restricts to the identity on $\Delta$ we deduce $x\in\red^{-1}(T)$. Hence $x$ is an element of the left hand side which proves the equality claimed in the display. Now the rest is an easy calculation:
		\begin{align*}
			p_{\mathfrak{X}''}(\red^{-1}(R))&=p_{\mathfrak{X}''}(\red^{-1}(\tilde{\psi}_1^{\prime -1}(T))) \\
			&=p_{\mathfrak{X}''}(({\psi'}_1^{\textup{an}})^{-1}(\red^{-1}(T))) \\
			&=p_{\mathfrak{X}(\boldsymbol{n},\boldsymbol{a})'}(\red^{-1}(T)) \\
			&\overset{(\ref{tau=})}{=}\tau.
		\end{align*}
		Finally we want to show that $R$ may be replaced by a nonempty subset $Y$ of $R$. Clearly, the arguments in \cite[Proposition 5.7]{Gub2} generalize to the polystable situation, so we presume the claim for $K$ algebraically closed and show how to drop this assumption. Let $\mathbb{C}_K$ be the completion of an algebraic closure of $K$. We denote by $\pi:\mathfrak{X}''_{\mathbb{C}_K}\rightarrow\mathfrak{X}''$ the base change of $\mathfrak{X}''$ to $\mathbb{C}_K^\circ$. Let $R'$ be the union of the strata of $\tilde{\mathfrak{X}}''_{\mathbb{C}_K}$ lying over $R$. Then $\pi$ induces a surjection $p_{\mathfrak{X}''_{\mathbb{C}_K}}(\red^{-1}(R'))\twoheadrightarrow p_{\mathfrak{X}''}(\red^{-1}(R))$ as the strata in $R'$ correspond to open faces lying over $\tau$. Let $Y'$ be a lift of $Y$ in $R'$. By \cite[Proposition 5.7]{Gub2} we have $p_{\mathfrak{X}''_{\mathbb{C}_K}}(\red^{-1}(Y'))=p_{\mathfrak{X}''_{\mathbb{C}_K}}(\red^{-1}(R'))$. Clearly $p_{\mathfrak{X}''}(\red^{-1}(Y))\subseteq p_{\mathfrak{X}''}(\red^{-1}(R))$ and hence it is enough to show that the restriction of $\pi$ to $p_{\mathfrak{X}''_{\mathbb{C}_K}}(\red^{-1}(Y'))$ factors through $p_{\mathfrak{X}''}(\red^{-1}(Y))$. We have the following commutative diagram:
		\[
			\xymatrix{
				S(\mathfrak{X}''_{\mathbb{C}_K}) \ar@{->>}[d]_\pi & p_{\mathfrak{X}''_{\mathbb{C}_K}}(\red^{-1}(Y')) \ar@{_{(}->}[l] & \red^{-1}(Y') \ar@{->>}[l]_/-0.3cm/{p_{\mathfrak{X}''_{\mathbb{C}_K}}} \ar@{.>}[d]_\pi \ar[r]^/0.2cm/\red & Y' \ar@{->>}[d]_\pi \\
				S(\mathfrak{X}'') & & \red^{-1}(Y) \ar[ll]_{p_{\mathfrak{X}''}} \ar@{->>}[r]^/0.2cm/\red & Y
			}
		\]
		Let $x\in p_{\mathfrak{X}''_{\mathbb{C}_K}}(\red^{-1}(Y'))$ and $y \in\red^{-1}(Y')$ with $p_{\mathfrak{X}''_{\mathbb{C}_K}}(y)=x$ then $\pi(x)=\pi(p_{\mathfrak{X}''_{\mathbb{C}_K}}(y))=p_{\mathfrak{X}''}(\pi(y))\in p_{\mathfrak{X}''}(\red^{-1}(Y))$. This proves the claim.
	\end{proof}
	\begin{Corollary}
		\label{Corstratumface} Let $R$ be a stratum of $\tilde{\mathfrak{X}}''$ corresponding to the open face $\tau$ of $\mathfrak{D}$.
		\begin{enumerate}[label=(\alph*)]
			\item $\dim(\tau)=\codim(R,\tilde{\mathfrak{X}}'')$.
			\item $S:=\tilde{\iota}(R)$ is a stratum of $\tilde{\mathfrak{X}}$.
			\item $R\overset{\tilde{\iota}}{\rightarrow} S$ is a fibre bundle with fibre $T$ where $T$ is the $\dim(R)-\dim(S)$ dimensional torus orbit from the proof of Proposition \ref{Stratumface}.
			\item Every stratum of $\tilde{\mathfrak{X}}''$ is smooth.
			\item The closure $\bar{R}$ is the union of all strata of $\tilde{\mathfrak{X}}''$ corresponding to open faces $\sigma$ of $\mathfrak{D}$ with $\tau\subseteq\bar{\sigma}$.
			\item For an irreducible component $Y$ of $\tilde{\mathfrak{X}}''$, let $\zeta_Y$ be the unique point of $\mathfrak{X}^{\textup{an}}$ with reduction equal to the generic point of $Y$. Then $Y\mapsto\zeta_Y$ is a bijection between the irreducible components of $\tilde{\mathfrak{X}}''$ and the vertices of $\mathfrak{D}$. 
		\end{enumerate}
	\end{Corollary}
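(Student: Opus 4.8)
The plan is to reduce the whole statement to the local toric picture from the proof of Proposition \ref{Stratumface}. I would fix a formal open covering of $\mathfrak{X}$ as in Proposition \ref{Gub2 Proposition 5.2}; since the properties in question can be checked on such a covering, it is enough to treat, for each member $\mathfrak{U}$, the part of $R$ lying in $\tilde{\mathfrak{U}}''$. Choosing $\mathfrak{U}$ so that $\tau$ lies in the interior of $\Delta_S$ for the distinguished stratum $S$ of $\mathfrak{U}$, the proof of Proposition \ref{Stratumface} provides $R\cap\tilde{\mathfrak{U}}''=\tilde{\psi}_1^{\prime -1}(T)$ and an identification
\[
	R\cap\tilde{\mathfrak{U}}''\;\cong\;\bigl(T\times\tilde{\mathfrak{X}}(m)\bigr)\times_{\{\tilde{0}\}\times\tilde{\mathfrak{X}}(m)}\bigl(\tilde{\mathfrak{U}}\cap S\bigr)\;\cong\;T\times\bigl(\tilde{\mathfrak{U}}\cap S\bigr),
\]
where $T$ is the torus orbit of the toric special fibre $\tilde{\mathfrak{X}}(\boldsymbol{n},\boldsymbol{a})'$ attached to $\tau$ by \cite[Proposition 6.22]{Gub3}, and $\tilde{\mathfrak{U}}\cap S=\tilde{\psi}^{-1}(\{\tilde{0}\}\times\tilde{\mathfrak{X}}(m))$ by Proposition \ref{Gub2 Proposition 5.2}(iii). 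Everything should then follow by combining toric geometry on $\tilde{\mathfrak{X}}(\boldsymbol{n},\boldsymbol{a})'$ with the fact that $\mathfrak{U}''\to\mathfrak{X}(\boldsymbol{n},\boldsymbol{a})'\times\mathfrak{X}(m)$, being a base change of the étale map $\psi$, is étale, and with the gluing of Construction \ref{Formalmodel}.

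For (a) I would note that the toric correspondence gives $\dim\tau=\codim(T,\tilde{\mathfrak{X}}(\boldsymbol{n},\boldsymbol{a})')$; since $\tilde{\mathfrak{U}}''\to\tilde{\mathfrak{X}}(\boldsymbol{n},\boldsymbol{a})'\times\tilde{\mathfrak{X}}(m)$ is étale and $R\cap\tilde{\mathfrak{U}}''$ is the preimage of $T\times\tilde{\mathfrak{X}}(m)$, the codimension is preserved, so $\codim(R,\tilde{\mathfrak{X}}'')=\dim\tau$. For (d), a torus orbit is smooth and $\tilde{\mathfrak{U}}\cap S$ is étale over a product of tori in $\tilde{\mathfrak{X}}(\boldsymbol{n},\boldsymbol{a},m)$, hence smooth; so the product, and therefore $R$, is smooth.

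For (b) and (c), the displayed identification realises $\tilde{\iota}|_R$ locally as the projection $T\times(\tilde{\mathfrak{U}}\cap S)\to\tilde{\mathfrak{U}}\cap S$: under $\iota'$ the orbit $T$ maps onto the orbit of $\tilde{\mathfrak{X}}(\boldsymbol{n},\boldsymbol{a})$ corresponding to the smallest face of $\Delta_S$ whose relative interior contains $\tau$ — which is $\Delta_S$ itself — and that orbit pulls back through $\tilde{\psi}$ to $\tilde{\mathfrak{U}}\cap S$. Gluing these trivialisations over the covering should exhibit $\tilde{\iota}$ as restricting to a fibre bundle $R\to S$ with fibre $T$; in particular $\tilde{\iota}(R)=S$, which is a stratum of $\tilde{\mathfrak{X}}$ and equals $\red(p_{\mathfrak{X}}^{-1}(\tau))$ by $\iota^{\textup{an}}=\id$, $p_{\mathfrak{X}''}=p_{\mathfrak{X}}$ and Proposition \ref{SFCBer}. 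This gives (b) and (c).

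For (e), the orbit--face dictionary for $\tilde{\mathfrak{X}}(\boldsymbol{n},\boldsymbol{a})'$ gives $\overline{T}=\bigcup_{\tau\subseteq\bar{\sigma}}T_{\sigma}$, and pulling back through the flat map $\tilde{\psi}_1'$ (whose preimage of $T_{\sigma}$ is $R_{\sigma}\cap\tilde{\mathfrak{U}}''$) should yield $\overline{R}=\bigcup_{\tau\subseteq\bar{\sigma}}R_{\sigma}$, as in \cite[Proposition 5.7]{Gub2}, which also records the compatibility with the partial order on strata. For (f), part (a) shows that the strata of $\tilde{\mathfrak{X}}''$ of maximal dimension are exactly those attached to $0$-dimensional faces of $\mathfrak{D}$, i.e. to vertices, and their closures are precisely the irreducible components; conversely, for $Y=\overline{R_{\tau}}$ with $\tau$ a vertex, $\red^{-1}(\eta_Y)$ is a single point by a standard property of the reduction map, and $\eta_Y\in R_{\tau}$ together with Proposition \ref{Stratumface} forces $p_{\mathfrak{X}''}(\red^{-1}(\eta_Y))=\{\tau\}$, whence $\zeta_Y=\tau$, giving the bijection. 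I expect the main obstacle to be the bookkeeping involved in passing from the local toric charts to the global statements in (c), (e) and (f): checking that the relevant charts cover $R$ (resp. $\overline{R}$), that the product trivialisations agree on overlaps, and that closures and the stratum order are respected by the gluing of Construction \ref{Formalmodel} — the toric input itself being routine once set up correctly.
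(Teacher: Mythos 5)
Your proposal is correct and follows essentially the same route as the paper, which simply defers to \cite[Corollary 5.9]{Gub2} (substituting \cite[Propositions 6.14, 6.22]{Gub3} for the references that require an algebraically closed base field); your reduction to the local toric picture $R\cap\tilde{\mathfrak{U}}''\cong T\times(\tilde{\mathfrak{U}}\cap S)$ via the proof of Proposition \ref{Stratumface} is exactly the argument of that reference. The only point worth tightening is in (f): from $p_{\mathfrak{X}''}(\red^{-1}(\eta_Y))=\{u\}$ you still need that the unique preimage $\zeta_Y$ of $\eta_Y$ actually lies in the skeleton (it is a Shilov point of the affinoid chart $\mathfrak{U}_\Delta^{\textup{an}}$), so that $\zeta_Y=p_{\mathfrak{X}''}(\zeta_Y)=u$; this is precisely what \cite[Proposition 6.14]{Gub3} supplies.
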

	\begin{proof}
		The statements can be proven the same way as in \cite[Corollary 5.9]{Gub2}. In order to bypass the algebraically closedness of the base field one can use \cite[Proposition 6.22]{Gub3} instead of \cite[Proposition 4.4]{Gub1} for (a), \cite[Proposition 6.22]{Gub3} instead of \cite[Remark 4.8]{Gub1} for (e) and \cite[Proposition 6.14]{Gub3} instead of \cite[Proposition 4.7]{Gub1} for (f).
	\end{proof}
	\begin{Def}
		\label{Defpiecewiseaffinelinear} Let $\Delta$ be a skeleton associated to a strongly nondegenerate strictly polystable formal scheme $\mathfrak{X}'$ over $K^{\circ}$. A continuous function $h:\Delta\rightarrow\mathbb{R}$ is called \textit{piecewise affine linear} if there exists a $\Gamma$-rational polytopal subdivision $\mathfrak{D}$ of $\Delta$ such that for any canonical polysimplex $\Delta_{S}$ of $\Delta$, any formal open subset $\psi:\mathfrak{U}\rightarrow\mathfrak{X}(\boldsymbol{n},\boldsymbol{a},m)$ of $\mathfrak{X}'$ whose distinguished stratum is $S$ and any $\Delta'\in\mathfrak{D}$ with $\Delta'\subseteq\Delta_S$, there exist $\boldsymbol{m}\in\mathbb{Z}^{\boldsymbol{n}+\boldsymbol{1}}$ and $\alpha\in K^\times$ such that $h\Big|_{\Delta'}=(\boldsymbol{m}\cdot\boldsymbol{x}+v(\alpha))\circ\psi^{\textup{an}}\Big|_{\Delta'}$ (see Definition \ref{Defstrictlysemistable} for the notation and setting).
	\end{Def}
	\begin{Proposition}
		\label{CartierDiv}
		Let $\mathfrak{X}'$ be a strongly nondegenerate strictly polystable formal scheme over $K^{\circ}$ with associated skeleton $S(\mathfrak{X}')$ and $h$ a piecewise affine linear function on $S(\mathfrak{X}')$. Let $\mathfrak{D}$ be a $\Gamma$-rational polytopal subdivision of $S(\mathfrak{X}')$ suitable for $h$ as in Definition \ref{Defpiecewiseaffinelinear} and $\iota:\mathfrak{X}''\rightarrow\mathfrak{X}'$ be the canonical formal scheme over $\mathfrak{X}'$ associated to $\mathfrak{D}$ (see Construction \ref{Formalmodel}). Then $h$ induces a canonical Cartier divisor $D$ on $\mathfrak{X}''$ which is trivial on the generic fibre. If $\mathfrak{X}''$ is admissible, then $D$ has the property that $\|1\|_{\mathcal{O}(D)}=e^{-h\circ p_{\mathfrak{X}'}}$ where $\|\cdot\|_{\mathcal{O}(D)}$ is the formal metric on $\mathcal{O}_{\mathfrak{X}'^{\textup{an}}}$ given by the formal model $\mathcal{O}(D)$ of $\mathcal{O}_{\mathfrak{X}'^{\textup{an}}}$ (see Definition \ref{metrics}).
	\end{Proposition}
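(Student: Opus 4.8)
The plan is to build $D$ chart by chart out of the monomials that represent $h$ on the pieces of $\mathfrak{D}$, and then to read the metric off from the toric description of the retraction. Keep the covering $\{\mathfrak{U}\}$ of $\mathfrak{X}'$ used in Construction~\ref{Formalmodel}, with \'etale charts $\psi\colon\mathfrak{U}\to\mathfrak{X}(\boldsymbol{n},\boldsymbol{a},m)$ and distinguished strata $S$, so that $\mathfrak{X}''$ is glued from formal affines $\mathfrak{U}_{\Delta'}=\Spf A^{\Delta'}$, one for each $\Delta'\in\mathfrak{D}$ with $\Delta'\subseteq\Delta_S$. For such a $\Delta'$, Definition~\ref{Defpiecewiseaffinelinear} provides $\boldsymbol{m}_{\Delta'}\in\mathbb{Z}^{\boldsymbol{n}+\boldsymbol{1}}$ and $\alpha_{\Delta'}\in K^{\times}$ with $h|_{\Delta'}=(\boldsymbol{m}_{\Delta'}\cdot\boldsymbol{x}+v(\alpha_{\Delta'}))\circ\psi^{\textup{an}}|_{\Delta'}$, and I would declare $f_{\Delta'}\defgl\alpha_{\Delta'}\cdot\psi^{\ast}(T^{\boldsymbol{m}_{\Delta'}})$ to be the local equation of $D$ on $\mathfrak{U}_{\Delta'}$. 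The first point to settle is that $f_{\Delta'}$ really is an invertible element of the total ring of fractions of $A^{\Delta'}$: the monomial $T^{\boldsymbol{m}}$ is a unit of $A^{\Delta'}\otimes_{K^{\circ}}K$ because $\prod_{k}T_{ik}=a_i\in K^{\times}$, and writing $\boldsymbol{m}_{\Delta'}=\boldsymbol{m}^{+}-\boldsymbol{m}^{-}$ and $\alpha_{\Delta'}=\beta/\gamma$ with $\beta,\gamma\in K^{\circ}$ exhibits $f_{\Delta'}$ as a ratio of the two elements $\beta\,\psi^{\ast}T^{\boldsymbol{m}^{+}}$ and $\gamma\,\psi^{\ast}T^{\boldsymbol{m}^{-}}$ of $A^{\Delta'}$ (they lie in $A^{\Delta'}$ since $\boldsymbol{m}^{\pm}\cdot u\ge0$ on $\Delta'\subseteq\mathbb{R}^{\boldsymbol{n}+\boldsymbol{1}}_{\ge0}$, and are non-zero-divisors, being units after $\otimes\,K$). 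Since the $\mathfrak{U}_{\Delta'}$ cover $\mathfrak{X}''$ and each $f_{\Delta'}$ is a unit on the generic fibre (all $\psi^{\ast}T_{ik}$ are invertible there and $\alpha_{\Delta'}\in K^{\times}$), the divisor $D$ will be trivial on the generic fibre, so $\mathcal{O}(D)$ becomes a formal model of $\mathcal{O}_{\mathfrak{X}'^{\textup{an}}}$ with canonical section $s_D$ corresponding to $1$.

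Next I would check the cocycle condition, using the elementary observation that a monomial $\beta\,\psi^{\ast}T^{\boldsymbol{l}}$ lies in $A^{\Delta'}$ once $v(\beta)+\boldsymbol{l}\cdot u\ge0$ for all $u\in\Delta'$, and is a unit of $A^{\Delta'}$ once $v(\beta)+\boldsymbol{l}\cdot u\equiv0$ on $\Delta'$. For $\Delta'_1,\Delta'_2\in\mathfrak{D}$ inside the same $\Delta_S$ the overlap $\mathfrak{U}_{\Delta'_1}\cap\mathfrak{U}_{\Delta'_2}$ equals $\mathfrak{U}_{\Delta'_1\cap\Delta'_2}$ by Construction~\ref{Formalmodel}, continuity of $h$ forces the two affine presentations to agree on $\Delta'_1\cap\Delta'_2$, and hence $f_{\Delta'_1}/f_{\Delta'_2}$ is a monomial with identically vanishing $v$-value there, i.e.\ a unit. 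For two charts $\psi,\rho$ the relation $\rho^{\ast}x_i=u_i\psi^{\ast}x_i$ from Construction~\ref{Formalmodel} has $u_i\in\mathcal{O}(\mathfrak{U})^{\times}$, so $u_i$ and $u_i^{-1}$ have supremum norm $\le1$ on the generic fibre and thus $|u_i|\equiv1$ there; hence the affine presentation of $h$ computed from $\rho$ differs from the one computed from $\psi$ only by replacing $\psi^{\ast}T^{\boldsymbol{m}}$ with $\rho^{\ast}T^{\boldsymbol{m}}$, and the two candidate local equations differ by the unit $\prod_i u_i^{m_i}$. The mixed case, with $\Delta'_1,\Delta'_2$ in different $\Delta_{S_1},\Delta_{S_2}$, reduces to these by passing to the common face, which lies in some $\Delta_S$ common to both. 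This yields the Cartier divisor $D$; independence of the chosen covering (and of the \'etale charts) follows from the same computations, which is what ``canonical'' refers to.

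For the metric formula, assume $\mathfrak{X}''$ admissible. Over $\mathfrak{U}_{\Delta'}$ the bundle $\mathcal{O}(D)$ is trivialized by the frame $f_{\Delta'}^{-1}$, in which $s_D=f_{\Delta'}\cdot f_{\Delta'}^{-1}$, so the definition of the formal metric gives $\|1\|_{\mathcal{O}(D)}(x)=|f_{\Delta'}(x)|=|\alpha_{\Delta'}|\cdot|\psi^{\ast}T^{\boldsymbol{m}_{\Delta'}}(x)|$ for $x$ in the generic fibre of $\mathfrak{U}_{\Delta'}$. That generic fibre is exactly the part of $\mathfrak{U}^{\textup{an}}$ that $p_{\mathfrak{X}'}$ sends into $\Delta'$, where $h$ agrees with the affine function above; moreover the retraction is compatible with the \'etale map $\psi$, and on the standard model $\mathfrak{X}(\boldsymbol{n},\boldsymbol{a},m)$ the retraction of a seminorm $y$ is the polysimplex point with coordinates $-\log|T_{ik}(y)|$, directly from the description in Definition~\ref{Defstrictlysemistable}. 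Chaining these identities,
\[
  -\log\|1\|_{\mathcal{O}(D)}(x)=v(\alpha_{\Delta'})+\sum_{i,k}m_{\Delta',ik}\bigl(-\log|\psi^{\ast}T_{ik}(x)|\bigr)=\bigl(\boldsymbol{m}_{\Delta'}\cdot\boldsymbol{x}+v(\alpha_{\Delta'})\bigr)\!\left(\psi^{\textup{an}}(p_{\mathfrak{X}'}(x))\right)=h(p_{\mathfrak{X}'}(x)),
\]
which is the assertion $\|1\|_{\mathcal{O}(D)}=e^{-h\circ p_{\mathfrak{X}'}}$.

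The \textbf{main obstacle} is the gluing bookkeeping in the middle step: one has to transport the combinatorics of the toric-over-a-valuation-ring picture of \cite[\S6]{Gub3} to the formal--analytic charts $\mathfrak{U}_{\Delta'}$, keeping track of the open immersions $\mathfrak{U}_{\Delta'_1\cap\Delta'_2}\hookrightarrow\mathfrak{U}_{\Delta'_i}$, of the \'etale coordinate changes between overlapping charts, and of the fact that the representing monomial $\alpha_{\Delta'}\,\psi^{\ast}T^{\boldsymbol{m}_{\Delta'}}$ is only well-defined modulo the sublattice generated by the $\sum_{k}e_{ik}$ (corresponding to $\prod_k T_{ik}=a_i$) and modulo units. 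None of this is deep, but all of it is needed to make the Cartier datum genuinely consistent and the divisor independent of choices. A second, smaller, point requiring care is verifying that $f_{\Delta'}$ is invertible meromorphic on the \emph{formal} scheme $\mathfrak{U}_{\Delta'}$, not merely on its generic fibre; this is where the explicit shape of $A^{\Delta'}$ and the fact that the $T_{ik}$ are non-zero-divisors there enter.
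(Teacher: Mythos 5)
Your proposal is correct and follows essentially the same route as the paper: the divisor is defined chart by chart as $\psi'^{\ast}(\alpha_{\Delta'}\cdot\boldsymbol{x}^{\boldsymbol{m}_{\Delta'}})$, the cocycle condition is reduced to the equality of the affine representations of $h$ on overlapping polytopes, and triviality on the generic fibre comes from the monomial being a unit there. You are in fact somewhat more thorough than the paper, which omits the explicit verification of $\|1\|_{\mathcal{O}(D)}=e^{-h\circ p_{\mathfrak{X}'}}$ via the coordinate description of the retraction and the invertibility of $f_{\Delta'}$ as a meromorphic section on the formal scheme; both of these added checks are correct.
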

	\begin{proof}
		As in Construction \ref{Formalmodel}, we cover $\mathfrak{X}'$ by étale maps $\psi:\mathfrak{U}\rightarrow\mathfrak{X}(\boldsymbol{n},\boldsymbol{a},m)=\mathfrak{X}(\boldsymbol{n},\boldsymbol{a})\times\mathfrak{X}(m)$ and for each $\mathfrak{U}$ and $\Delta\in\mathfrak{D}$ with $\Delta\subseteq\mathfrak{U}^{\textup{an}}$ we obtain the affine formal scheme $\mathfrak{U}_\Delta$. We write $\psi':\mathfrak{U}''_\Delta\rightarrow\mathfrak{U}_\Delta$ for the base change with respect to $\psi$ and obtain a cover of $\mathfrak{X}''$. On $\Delta\in\mathfrak{D}$, $h$ is given by $\boldsymbol{m}\boldsymbol{x}+v(\alpha)$ with $\boldsymbol{m}\in\mathbb{Z}^{\boldsymbol{n}+\boldsymbol{1}}$, $\alpha\in K^\times$. We define $D$ locally on $\mathfrak{U}''_{\Delta'}$ by $\psi^{\prime\ast}(\alpha\cdot\boldsymbol{x}^{\boldsymbol{m}})$. Then $D$ is indeed a Cartier Divisor on $\mathfrak{X}''$ as for $\mathfrak{U}_1,\mathfrak{U}_2,\Delta_1,\Delta_2$ as above and $\mathfrak{U}:=\mathfrak{U}_1\cap\mathfrak{U}_2$ we have $\alpha_1 \cdot\boldsymbol{x}^{\boldsymbol{m}_1}/\alpha_2 \cdot\boldsymbol{x}^{\boldsymbol{m}_2}\in\mathcal{O}(\mathfrak{U}_{\Delta_1\cap\Delta_2})^\times$ since $\boldsymbol{m}_1\boldsymbol{x}+v(\alpha_1)=\boldsymbol{m}_2\boldsymbol{x}+v(\alpha_2)$ on $\Delta_1\cap\Delta_2$. Hence
\begin{align*}						   
	\psi_1^{\prime\ast}(\alpha_1\cdot\boldsymbol{x}^{\boldsymbol{m}_1})/\psi_2^{\prime\ast}(\alpha_2\cdot\boldsymbol{x}^{\boldsymbol{m}_2})\Big|_{\mathfrak{U}''_{\Delta_1\cap\Delta_2}}=\psi^{\prime\ast}(\alpha_1\cdot\boldsymbol{x}^{\boldsymbol{m}_1}/\alpha_2\cdot\boldsymbol{x}^{\boldsymbol{m}_2})\in\mathcal{O}(\mathfrak{U}''_{\Delta_1\cap\Delta_2})^\times
\end{align*}
and therefore $\psi_1^{\prime\ast}(\alpha_1\boldsymbol{x}^{\boldsymbol{m}_1})/\psi_2^{\prime\ast}(\alpha_2\boldsymbol{x}^{\boldsymbol{m}_2})\in\mathcal{O}(\mathfrak{U}''_{1,\Delta_1}\cap\mathfrak{U}''_{2,\Delta_2})^\times$.
Furthermore $D$ is trivial on the generic fibre, as $\alpha\cdot\boldsymbol{x}^{\boldsymbol{m}}\in\mathcal{O}(\mathfrak{X}(\boldsymbol{n},\boldsymbol{a})^{\textup{an}})^\times$.
	\end{proof}
	\begin{Remark}
		Note that we can ensure that $\mathfrak{X}''$ is admissible and hence a formal model by performing base change to the completion of an algebraic closure of $K$ (see Construction \ref{Formalmodel}) which will be enough for our purposes.
	\end{Remark}
	\begin{Lemma}
		\label{numericallyequivalent} In the situation of Proposition \ref{CartierDiv} let $\tau$ be an open face of the skeleton $\Delta$ of dimension equal to the dimension of $\mathfrak{X}^{\prime an}$ and assume that $h$ is affine linear on $\bar{\tau}$. Let $D$ be the induced Cartier divisor on $\mathfrak{X}''$ and $Y$ a proper curve in $\tilde{\mathfrak{X}}''$ with $Y\subseteq\red_{\mathfrak{X}''}(p_{\mathfrak{X}''}^{-1}(\tau))$ e.g. if $Y$ lies inside an irreducible component of $\tilde{\mathfrak{X}}''$ corresponding to a vertex $u\in\tau$ of $\mathfrak{D}$. Then $\Deg(D.Y)=0$.
	\end{Lemma}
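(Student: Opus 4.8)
The plan is to exhibit an open formal subscheme $\mathfrak{V}$ of $\mathfrak{X}''$ whose special fibre contains $Y$ and on which $D$ is principal; then $\mathcal{O}(D)|_Y$ is trivial and $\Deg(D.Y)=0$ follows. Since $\dim\tau=n=\dim\mathfrak{X}^{\prime\textup{an}}$, the open face $\tau$ is the relative interior of a maximal canonical polysimplex $\Delta_S$ of $\Delta$, where $S$ is the corresponding (necessarily minimal) stratum of $\tilde{\mathfrak{X}}'$ under Proposition \ref{SFCBer}. Using Proposition \ref{Gub2 Proposition 5.2}(iv) I would choose, inside the covering of $\mathfrak{X}'$ fixed in Construction \ref{Formalmodel}, a member $\psi\colon\mathfrak{U}\to\mathfrak{X}(\boldsymbol{n},\boldsymbol{a},m)$ whose distinguished stratum is $S$, so that $S(\mathfrak{U})=\Delta_S=\overline{\tau}$, and set $\mathfrak{V}\defgl\iota^{-1}(\mathfrak{U})$; by Construction \ref{Formalmodel} this $\mathfrak{V}$ is covered by the affine pieces $\mathfrak{U}''_{\Delta'}$ with $\Delta'\in\mathfrak{D}$, $\Delta'\subseteq\Delta_S$, and $\mathfrak{V}^{\textup{an}}=\mathfrak{U}^{\textup{an}}$.

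First I would verify $Y\subseteq\tilde{\mathfrak{V}}$. By Proposition \ref{SFCBer} applied to $\mathfrak{X}'$ we have $\red(p_{\mathfrak{X}'}^{-1}(\tau))=S\subseteq\tilde{\mathfrak{U}}$, so $p_{\mathfrak{X}'}^{-1}(\tau)\subseteq\red^{-1}(\tilde{\mathfrak{U}})=\mathfrak{U}^{\textup{an}}$. As $\iota$ is the identity on generic fibres and $p_{\mathfrak{X}''}=p_{\mathfrak{X}'}$, this yields $p_{\mathfrak{X}''}^{-1}(\tau)\subseteq\mathfrak{V}^{\textup{an}}$, hence $Y\subseteq\red_{\mathfrak{X}''}(p_{\mathfrak{X}''}^{-1}(\tau))\subseteq\red_{\mathfrak{X}''}(\mathfrak{V}^{\textup{an}})\subseteq\tilde{\mathfrak{V}}$, the last inclusion because $\red_{\mathfrak{X}''}^{-1}(\tilde{\mathfrak{V}})=\mathfrak{V}^{\textup{an}}$.

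The main point is that $D|_{\mathfrak{V}}$ is principal. Since $h$ is affine linear on $\overline{\tau}=\Delta_S$, I can pick a single $\boldsymbol{m}\in\mathbb{Z}^{\boldsymbol{n}+\boldsymbol{1}}$ and $\alpha\in K^\times$ with $h|_{\Delta_S}=(\boldsymbol{m}\cdot\boldsymbol{x}+v(\alpha))\circ\psi^{\textup{an}}|_{\Delta_S}$. As observed in the proof of Proposition \ref{CartierDiv}, $\alpha\cdot\boldsymbol{x}^{\boldsymbol{m}}$ is a unit of $\mathcal{O}(\mathfrak{X}(\boldsymbol{n},\boldsymbol{a})^{\textup{an}})$, so $f\defgl\psi^{\prime\ast}(\alpha\cdot\boldsymbol{x}^{\boldsymbol{m}})$ is a unit of $\mathcal{O}(\mathfrak{V}^{\textup{an}})$ and defines a principal Cartier divisor $\Div(f)$ on $\mathfrak{V}$. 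On each $\mathfrak{U}''_{\Delta'}$ the divisor $D$ is given by $\psi^{\prime\ast}(\alpha_{\Delta'}\cdot\boldsymbol{x}^{\boldsymbol{m}_{\Delta'}})$ with $h|_{\Delta'}=\boldsymbol{m}_{\Delta'}\cdot\boldsymbol{x}+v(\alpha_{\Delta'})$; since also $h|_{\Delta'}=(\boldsymbol{m}\cdot\boldsymbol{x}+v(\alpha))|_{\Delta'}$, the computation carried out for the well-definedness of $D$ in the proof of Proposition \ref{CartierDiv} (now with $\Delta_1=\Delta'$, $\Delta_2=\Delta_S$) shows $\alpha_{\Delta'}\cdot\boldsymbol{x}^{\boldsymbol{m}_{\Delta'}}/(\alpha\cdot\boldsymbol{x}^{\boldsymbol{m}})\in\mathcal{O}(\mathfrak{U}''_{\Delta'})^{\times}$. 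Hence $D$ and $\Div(f)$ agree on all $\mathfrak{U}''_{\Delta'}$, so $D|_{\mathfrak{V}}=\Div(f)$ and $\mathcal{O}(D)|_{\mathfrak{V}}\cong\mathcal{O}_{\mathfrak{V}}$. Restricting to the special fibre and then to $Y\subseteq\tilde{\mathfrak{V}}$ gives $\mathcal{O}(D)|_Y\cong\mathcal{O}_Y$, and since $\Deg(D.Y)=\deg_Y(\mathcal{O}(D)|_Y)$ for the vertical Cartier divisor $D$ and the proper curve $Y$, we conclude $\Deg(D.Y)=0$.

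I expect the inclusion $Y\subseteq\tilde{\mathfrak{V}}$ to be the delicate part: one has to recognise the ``suitable part'' of $\mathfrak{X}''$ as exactly the preimage of the chart $\mathfrak{U}$ whose distinguished stratum corresponds to $\tau$, which requires threading the stratum--face correspondences (Propositions \ref{SFCBer}, \ref{Stratumface}) together with the compatibility of $\red$ and $p_{\mathfrak{X}''}$ with the local charts. A secondary point is that even when $\mathfrak{D}$ genuinely subdivides $\Delta_S$ (the situation behind the ``e.g.'' in the statement, $u\in\tau$ a vertex of $\mathfrak{D}$) the local equations of $D$ over $\mathfrak{V}$ still differ only by units from the single global function $f$ -- which is nothing beyond the gluing check already done in Proposition \ref{CartierDiv}. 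If one wishes to keep $\mathfrak{X}''$ admissible throughout, the whole argument may be run after base change to the completion of an algebraic closure of $K$, which does not change the intersection number.
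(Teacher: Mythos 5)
Your proof is correct, but it takes a genuinely different route from the paper's. The paper observes that, since $h$ is affine linear on $\bar{\tau}$, the trivial subdivision of $\bar{\tau}$ is already suitable for $h$, so $h$ induces a Cartier divisor $D'$ on $\mathfrak{X}'$ itself (restricted to the charts with $S(\mathfrak{U})=\bar{\tau}$) with $D=\iota^{\ast}D'$; it then invokes properness of $\iota$ (via Temkin, with the same admissibility caveat you mention) and the projection formula to get $\Deg(D.Y)=\Deg(D'.\iota_{\ast}Y)$, and finishes by noting that $\iota(Y)\subseteq\red_{\mathfrak{X}'}(p_{\mathfrak{X}'}^{-1}(\tau))$, which is the stratum corresponding to $\tau$ and hence -- precisely because $\dim\tau=n$ -- a closed point, so $\iota_{\ast}Y=0$. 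You instead trivialize $\mathcal{O}(D)$ on an explicit formal open neighbourhood of $Y$: a single affine representative of $h$ on all of $\bar{\tau}$ makes the local equations of $D$ over $\iota^{-1}(\mathfrak{U})$ differ from the one global monomial $\psi'^{\ast}(\alpha\cdot\boldsymbol{x}^{\boldsymbol{m}})$ only by units, so $D$ is principal there and its degree on any proper curve in $\tilde{\mathfrak{V}}$ vanishes. Your version is more elementary and self-contained (no properness of $\iota$, no projection formula), while the paper's is shorter given the machinery and makes the role of $\dim\tau=n$ immediately visible. In your argument that hypothesis enters only through the inclusion $S\subseteq\tilde{\mathfrak{U}}$ -- the distinguished stratum is a priori only guaranteed to \emph{meet} $\tilde{\mathfrak{U}}$ by Proposition \ref{Gub2 Proposition 5.2}, but being a closed point it is then contained in it -- and this is exactly what places all of $Y$ inside the single chart $\tilde{\mathfrak{V}}$; for a lower-dimensional $\tau$ your argument as written would not go through, whereas the hypothesis on the dimension is of course part of the statement, so this is not a gap.
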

	\begin{proof}
	Note that we do not assume $\bar{\tau}\in\mathfrak{D}$. But by passing to the formal open subscheme of $\mathfrak{X}'$ consisting of the formal open subsets $\mathfrak{U}$ with $S(\mathfrak{U})=\bar{\tau}$, we may assume $\Delta=\bar{\tau}$ and then the polytopal subdivision $\mathfrak{D}'$ consisting the polytope $\bar{\tau}$ and its faces is suitable for $h$. The corresponding formal scheme is $\mathfrak{X}'$. Let $D'$ be the Cartier divisor on $\mathfrak{X}'$ induced by $h$ as in Proposition \ref{CartierDiv}. Notice that by construction we have $D=\iota^{\ast}D'$. Now $\iota$ is proper by \cite[Corollary 4.4]{T1} (the result requires $\mathfrak{X}''$ to be admissible but by \cite[Proposition 2.7.1]{EGAIV2} it is enough to check properness after base change to the completion of an algebraic closure of $K$, after which $\mathfrak{X}''$ is always admissible, see Construction \ref{Formalmodel}). Hence the projection formula yields $\Deg(D.Y)=\Deg(D'.\iota_{\ast}Y)$. Now
	\[
\iota(Y)\subseteq\iota(\red_{\mathfrak{X}''}(p_{\mathfrak{X}''}^{-1}(\tau)))=\red_{\mathfrak{X}'}(p_{\mathfrak{X}'}^{-1}(\tau)),
\]
where the latter is the stratum in $\tilde{\mathfrak{X}}'$ corresponding to $\tau$ and hence a point. Therefore $D'.\iota_{\ast}Y=0$.
	\end{proof}
	\section{Metrics}
	\label{Metrics} In this section we introduce metrics on line bundles on strictly $K$-analytic spaces. This includes piecewise linear, algebraic and formal metrics. We will see that under certain conditions they are all the same. The main reference is \cite{GM}. 
	\begin{Def}
		\label{metrics} Let $X$ be a strictly $K$-analytic space and $L$ a line bundle on $X$, i.e. a locally free sheaf of rank 1 on the G-topology. A \textit{continuous metric} $\|\cdot\|$ on $L$ is a function which asserts to any admissible open subset $U\subseteq X$ and any section $s\in\Gamma(U,L)$ a continuous (with respect to the Berkovich topology) function $\|s(\cdot)\|:U\rightarrow\mathbb{R}_{\geq 0}$ such that:
		\begin{enumerate}
			\item For an admissible open subset $V\subseteq U$ we have $\left\|s\Big|_V(\cdot)\right\|=\|s(\cdot)\|\Big|_V$,
			\item for $f\in\Gamma(U,\mathcal{O}_{X})$ we have $\|fs(\cdot)\|=|f(\cdot)|\|s(\cdot)\|$,
			\item for $p\in U$ we have $\|s(p)\|=0$ if and only if $s(p)=0$.
		\end{enumerate}
		Given a formal model $(\mathfrak{X},\mathfrak{L})$ of $(X,L)$ one can define an associated so called \textit{formal metric} $\|\cdot\|_{\mathfrak{L}}$ on $L$ in the following way: If $s$ is a local frame of $\mathfrak{L}$ on a formal open subset $\mathfrak{U}\subseteq\mathfrak{X}$ we define $\|fs(\cdot)\|_{\mathfrak{L}}=|f(\cdot)|$ on $\mathfrak{U}^{\textup{an}}$ for any $f\in\Gamma(\mathfrak{U}^{\textup{an}},\mathcal{O}_{\mathfrak{X}}^{\textup{an}})$. As this is independent of the choice of $s$ and $\mathfrak{X}^{\textup{an}}$ is covered by such sets, this gives a well-defined metric on $L$. 
	\end{Def}
	\begin{Remark}
		\label{paracompact} We will work with paracompact (i.e. Hausdorff and every open cover has a locally finite refinement) strictly $K$-analytic spaces. As discussed in \cite[2.2]{GM} the category of these spaces is equivalent to the category of quasiseparated rigid analytic varieties over $K$ with a strictly $K$-affinoid G-covering of finite type (\cite[1.6]{Be4}). This allows us to apply Raynaud's theorem (\cite[Theorem 8.4.3]{Bo}) which shows that formal $K^\circ$-models of paracompact strictly $K$-analytic spaces exist and that the set of isomorphism classes of formal $K^\circ$-models is directed. 
	\end{Remark}
	\begin{Proposition}
		Let $X$ be a paracompact strictly $K$-analytic space, $L$ a line bundle on $X$ and $W$ a compact strictly $K$-analytic domain of $X$. Then every formal metric on $L\Big|_W$ extends to a formal metric on $L$.
	\end{Proposition}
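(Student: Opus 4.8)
The plan is to reduce the extension problem to Raynaud's theorem and the directedness of the system of formal models, exactly as set up in Remark \ref{paracompact}. Let $\|\cdot\|_W$ be the given formal metric on $L|_W$; by definition it comes from a formal model $(\mathfrak{W},\mathfrak{L}_W)$ of $(W,L|_W)$. Since $W$ is a compact strictly $K$-analytic domain in the paracompact space $X$, $W$ itself is paracompact and admits formal models, and the inclusion $W\hookrightarrow X$ is a morphism of strictly $K$-analytic spaces. First I would invoke Raynaud's theorem to pick \emph{some} formal $K^\circ$-model $\mathfrak{X}_0$ of $X$ together with a formal model $\mathfrak{L}_0$ of $L$; this exists by Remark \ref{paracompact}. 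The formal metric $\|\cdot\|_{\mathfrak{L}_0}$ restricts to a formal metric on $L|_W$, which need not agree with $\|\cdot\|_W$, but both are formal metrics on the same line bundle over the same compact space.

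The key step is then a compatibility/flattening argument. Because the set of isomorphism classes of formal models of $W$ is directed (Remark \ref{paracompact}), there is a common refinement $\mathfrak{W}'$ dominating both $\mathfrak{W}$ and the restriction of $\mathfrak{X}_0$ to $W$, and on $\mathfrak{W}'$ the two line bundle models $\mathfrak{L}_W$ and $\mathfrak{L}_0|_W$ differ by a vertical Cartier divisor, i.e.\ by a line bundle that is trivial on the generic fibre $W$. The task is to spread this correction out: I would extend the admissible formal blow-up $\mathfrak{W}'\to \mathfrak{W}$ (respectively the relevant modification of $\mathfrak{X}_0|_W$) to an admissible formal blow-up of $\mathfrak{X}_0$ along a coherent ideal supported over $W$, using that $W$, being a compact analytic domain, is the generic fibre of a quasi-compact open formal subscheme $\mathfrak{U}_0\subseteq\mathfrak{X}_0$ (after possibly a further blow-up making $W$ a formal open, as in the standard correspondence between compact analytic domains and formal models), so that the ideal on $\mathfrak{U}_0$ can be taken to be trivial near the boundary and glued with the unit ideal on $\mathfrak{X}_0\setminus(\text{something})$. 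This yields a formal model $\mathfrak{X}_1$ of $X$ refining $\mathfrak{X}_0$ whose restriction to $W$ dominates $\mathfrak{W}'$.

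Finally, on $\mathfrak{X}_1$ I would modify the line bundle model $\mathfrak{L}_0$ (pulled back to $\mathfrak{X}_1$) by the Cartier divisor correction, which again extends from $\mathfrak{U}_1$ (the part of $\mathfrak{X}_1$ lying over $W$) to all of $\mathfrak{X}_1$ by declaring it trivial outside, glued along the overlap where it is already trivial. Call the result $\mathfrak{L}_1$. By construction the formal metric $\|\cdot\|_{\mathfrak{L}_1}$ on $L$ restricts on $W$ to the formal metric of $\mathfrak{L}_1|_W$, which by the above dominates and hence equals $\|\cdot\|_W$ (two line bundle models that become isomorphic on a dominating blow-up define the same formal metric). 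Hence $\|\cdot\|_{\mathfrak{L}_1}$ is the desired extension.

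The main obstacle is the gluing in the second paragraph: one must check that the admissible blow-up and the twisting Cartier divisor, which a priori are only defined over the open part $\mathfrak{U}_0$ of the model corresponding to $W$, can genuinely be extended across the boundary to global data on $\mathfrak{X}_0$ (resp.\ $\mathfrak{X}_1$). This is where compactness of $W$ and quasi-compactness of the formal models are essential — it guarantees the supporting ideals are coherent and that, after a preliminary blow-up realising $W$ as an open formal subscheme with the correct reduction, the correction data are already trivial in a neighbourhood of $\partial W$, so the naive gluing with the unit ideal / trivial divisor is well defined. Once this extension step is in place the rest is bookkeeping with Raynaud's theorem and the directedness of the model system.
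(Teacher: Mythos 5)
Your overall strategy---use Raynaud's theorem and directedness to realise $W$ as the generic fibre of a quasi-compact open formal subscheme $\mathfrak{U}$ of a model $\mathfrak{X}_0$ carrying a model $\mathfrak{L}_0$ of $L$, and then reduce to extending the vertical correction (a formal metric on $\mathcal{O}_W$, equivalently a vertical Cartier divisor $D$ on $\mathfrak{U}$) across the boundary---is exactly the strategy of \cite[Proposition 2.7]{GM}, which is all the paper offers as proof. The reduction steps are fine. But the step you yourself flag as the main obstacle is where your argument genuinely breaks: it is \emph{not} true, even after preliminary admissible blow-ups, that the correction divisor (or the blow-up ideal) is trivial in a neighbourhood of the boundary of $\mathfrak{U}$, so gluing it with the trivial divisor (resp.\ the unit ideal) on the complement is not well defined. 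Already the constant formal metric $\|1\|=|\pi|$ on $\mathcal{O}_W$ with $0<|\pi|<1$, whose model is $\pi^{-1}\mathcal{O}_{\mathfrak{U}}$, gives a vertical divisor that is nowhere trivial on $\mathfrak{U}$; it certainly extends to $X$ (by the same constant metric), but not by your gluing recipe, and no blow-up changes this.

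The correct mechanism needs no triviality near the boundary. Since $\mathfrak{U}$ is quasi-compact and $D$ is vertical, $D+\Div(\pi^N)$ is effective for $N\gg 0$, so $D=E-\Div(\pi^N)$ with $E$ effective; $E$ corresponds to an invertible open coherent ideal $\mathcal{I}\subseteq\mathcal{O}_{\mathfrak{U}}$ with $(\pi^M)\subseteq\mathcal{I}$ for some $M$, i.e.\ to a coherent subsheaf of $\mathcal{O}_{\mathfrak{X}_0}/(\pi^M)$ on the open subscheme underlying $\mathfrak{U}$. By the standard extension theorem for coherent subsheaves (applied on the reductions, locally finitely since $\mathfrak{X}_0$ is only quasi-paracompact), this extends to a coherent open ideal $\mathcal{J}$ on all of $\mathfrak{X}_0$ with $\mathcal{J}|_{\mathfrak{U}}=\mathcal{I}$ (replace $\mathcal{J}$ by $\mathcal{J}+(\pi^M)$ to keep it open everywhere). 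The admissible blow-up of $\mathfrak{X}_0$ along $\mathcal{J}$ is an isomorphism over $\mathfrak{U}$ and makes $\mathcal{J}$ invertible, so $E$, hence $D$, hence the line bundle model $\mathfrak{L}_0\otimes\mathcal{O}(D)$, extends; its formal metric restricts to the given one on $W$. With this replacement for your gluing step, your outline becomes a complete proof and coincides with the cited one.
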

	\begin{proof}
		\cite[Proposition 2.7]{GM}.
	\end{proof}
	\begin{Def}
		Let $X$ be a proper scheme over $K$ and $L$ a line bundle on $X$. An algebraic $K^\circ$-model of $X$ is a proper flat scheme $\mathscr{X}$ over $K^\circ$ with a fixed isomorphism from the generic fibre $\mathscr{X}_\eta$ to $X$. An algebraic $K^\circ$-model of $(X,L)$ is a pair $(\mathscr{X},\mathscr{L})$ where $\mathscr{X}$ is an algebraic $K^\circ$-model of $X$ and $\mathscr{L}$ is a line bundle on $\mathscr{X}$ with a fixed isomorphism from $\mathscr{L}\Big|_X$ to $L$. An algebraic $K^\circ$-model of $(X,L)$ gives rise to a formal $K^\circ$-model of $(X^{\textup{an}},L^{\textup{an}})$ by formal completion. Hence by the above, an algebraic model of $(X,L)$ induces a formal metric on $L^{\textup{an}}$. We call such metrics \textit{algebraic metrics}.
	\end{Def}
	\begin{Proposition}
		Let $X$ be a proper scheme over $K$ and $L$ a line bundle on $X$. Then a formal metric on $L^{\textup{an}}$ is the same as an algebraic metric.
	\end{Proposition}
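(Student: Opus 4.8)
The implication ``algebraic $\Rightarrow$ formal'' is built into the definitions: the formal completion of an algebraic $K^\circ$-model $(\mathscr{X},\mathscr{L})$ of $(X,L)$ is a formal $K^\circ$-model of $(X^{\textup{an}},L^{\textup{an}})$, and the algebraic metric attached to $(\mathscr{X},\mathscr{L})$ is by construction the formal metric of that completion. So the only content is the converse: every formal metric on $L^{\textup{an}}$ is induced by an algebraic $K^\circ$-model of $(X,L)$; this is also recorded in \cite{GM}, and I sketch a proof.

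The plan is to fix a formal model $(\mathfrak{X},\mathfrak{L})$ of $(X^{\textup{an}},L^{\textup{an}})$ inducing the given metric and to compare it with an algebraic model. Since $X$ is proper over $K$ it admits an algebraic $K^\circ$-model $\mathscr{X}_0$ (a standard fact; immediate when $X$ is projective by taking the closure in a $K^\circ$-model of an ambient projective space). Its formal completion $\widehat{\mathscr{X}_0}$ is again a formal model of $X^{\textup{an}}$, so by Raynaud's theorem (see Remark \ref{paracompact}; the system of formal models is directed) there is a formal model $\mathfrak{X}'$ that is an admissible formal blow-up both of $\mathfrak{X}$ and of $\widehat{\mathscr{X}_0}$, with both structure morphisms inducing the identity on $X^{\textup{an}}$. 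Pulling $\mathfrak{L}$ back to $\mathfrak{X}'$ changes neither the analytic generic fibre nor the induced formal metric, since a local frame pulls back to a local frame and the model morphism is an isomorphism on analytic generic fibres. Hence we may assume that $\mathfrak{X}$ itself is the admissible formal blow-up of an open coherent ideal $\mathfrak{a}\subseteq\mathcal{O}_{\widehat{\mathscr{X}_0}}$.

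The core of the argument is then an algebraization. As $\mathfrak{a}$ is open, it agrees with $\mathcal{O}$ away from the special fibre, so $\mathcal{O}/\mathfrak{a}$ is killed by a power $\pi^{N}$ of a pseudo-uniformizer and is a coherent sheaf on $\mathscr{X}_{0}\otimes_{K^\circ}K^\circ/\pi^{N}$, a scheme proper over $K^\circ/\pi^{N}$. I would algebraize it to a coherent vertical ideal $I\subseteq\mathcal{O}_{\mathscr{X}_0}$ (so $I|_X=\mathcal{O}_X$), and let $\mathscr{X}$ be the $K^\circ$-torsion-free quotient of $\mathrm{Bl}_I\mathscr{X}_0$; this is a proper flat $K^\circ$-model of $X$ whose formal completion is $\mathfrak{X}$. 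Next, $\mathfrak{L}$ is a line bundle on $\mathfrak{X}=\widehat{\mathscr{X}}$, hence of the form $\widehat{\mathscr{L}}$ for a line bundle $\mathscr{L}$ on the proper $K^\circ$-scheme $\mathscr{X}$, by Grothendieck's existence theorem. Restricting to the generic fibre, $(\mathscr{L}|_X)^{\textup{an}}\cong\mathfrak{L}|_{X^{\textup{an}}}=L^{\textup{an}}$, and GAGA over the field $K$, i.e.\ the isomorphism $\Pic(X)\xrightarrow{\sim}\Pic(X^{\textup{an}})$, forces $\mathscr{L}|_X\cong L$. Thus $(\mathscr{X},\mathscr{L})$ is an algebraic $K^\circ$-model of $(X,L)$ with completion $(\mathfrak{X},\mathfrak{L})$, and its algebraic metric is the prescribed formal metric.

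The step I expect to be the main obstacle is precisely this algebraization, because $K^\circ$ is in general not Noetherian and the classical Grothendieck existence theorem does not apply verbatim. When $K$ is discretely valued everything in sight is Noetherian and one is done immediately. In general one reduces modulo $\pi^{N}$ and works with coherent sheaves and line bundles on schemes proper over the non-Noetherian ring $K^\circ/\pi^{N}$, invoking the corresponding algebraization results in that setting; one also has to check that the algebraized ideal and line bundle are compatible with the generic fibre, which here is automatic because the entire modification $\mathfrak{X}\to\widehat{\mathscr{X}_0}$ lives over the special fibre.
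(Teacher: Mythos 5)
The paper offers no argument of its own here; it simply cites \cite[Proposition 8.13]{GK2} (see also \cite[Remark 2.6]{GM}). Your sketch reconstructs essentially the proof given in that reference --- dominate the given formal model by an admissible formal blow-up of the completion of an algebraic model, algebraize the open (vertical) ideal and then the line bundle, and conclude with non-archimedean GAGA on the generic fibre --- and you correctly isolate the one genuine technical point, namely that the blow-up/Grothendieck-existence algebraization must be invoked in its non-Noetherian form since $K^\circ$ need not be Noetherian.
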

	\begin{proof}
		\cite[Proposition 8.13]{GK2}, see also \cite[Remark 2.6]{GM}.
	\end{proof}
	\begin{Def}
		Let $X$ be a strictly $K$-analytic space and $L$ a line bundle on $X$. A metric $\|\cdot\|$ on $L$ is called \textit{piecewise linear} if there is a G-covering $(V_i)_{i\in I}$ and frames $s_i$ of $L$ over $V_i$ for every $i\in I$ such that $\|s_i(\cdot)\|=1$ on $V_i$.
	\end{Def}
	\begin{Proposition}
		Let $X$ be a strictly $K$-analytic space and $L$ a line bundle on $X$. Then
		\begin{enumerate}
			\item the isometry classes of piecewise linear metrics on line bundles on $X$ form an abelian group with respect to $\otimes$.
			\item the pull-back $f^\ast\|\cdot\|$ of a piecewise linear metric $\|\cdot\|$ on $L$ with respect to a morphism $f:Y\rightarrow X$ of strictly $K$-analytic spaces is a piecewise linear metric on $f^{\ast}L$.
			\item the minimum and the maximum of two piecewise linear metrics on $L$ are again piecewise linear metrics on $L$.
		\end{enumerate}
		\begin{proof}
			\cite[Proposition 2.12]{GM} (the proof does not use paracompactness).
		\end{proof}
	\end{Proposition}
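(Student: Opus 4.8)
All three parts are local assertions in the G-topology, so the plan is to reduce each one to a pointwise manipulation of metrics after passing to a suitable G-covering; in particular paracompactness will play no role.

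For part (1), I would start with piecewise linear metrics on $L$ and $L'$, trivialized by frames $s_i$ over $V_i$ and $s_j'$ over $V_j'$ with $\|s_i\|\equiv 1$ and $\|s_j'\|'\equiv 1$, and pass to the common refinement $\{V_i\cap V_j'\}$, which is again a G-covering since finite intersections of admissible opens are admissible. Over $V_i\cap V_j'$ the section $s_i\otimes s_j'$ is a frame of $L\otimes L'$ whose tensor-metric norm is $\|s_i\|\cdot\|s_j'\|'\equiv 1$, so $L\otimes L'$ carries a piecewise linear metric; the operation descends to isometry classes and inherits associativity, commutativity and the unit (the trivially metrized structure sheaf $\mathcal{O}_X$) from $\otimes$ of line bundles. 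For inverses I would use that the dual metric on $L^\vee$ is trivialized by the $s_i^\vee$ (with $\|s_i^\vee\|^\vee\equiv 1$) and that the evaluation pairing identifies $L\otimes L^\vee$ isometrically with the trivially metrized structure sheaf. Part (2) is the quickest: $\{f^{-1}(V_i)\}$ is a G-covering of $Y$, the $f^\ast s_i$ are frames of $f^\ast L$ over it, and $\|f^\ast s_i\|_{f^\ast\|\cdot\|}=\|s_i\|\circ f\equiv 1$, so $f^\ast\|\cdot\|$ is piecewise linear.

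For part (3), I would first check that $\min$ and $\max$ of two continuous metrics on $L$ are again continuous metrics; the three axioms and continuity are all verified pointwise. Then, after passing to a common refinement, I may assume a single G-covering $\{V_i\}$ carries frames $s_i^{(1)},s_i^{(2)}$ of $L$ with $\|s_i^{(k)}\|_k\equiv 1$ for $k=1,2$; writing $s_i^{(1)}=u_i s_i^{(2)}$ with $u_i\in\mathcal{O}(V_i)^\times$ gives $\|s_i^{(2)}\|_1=|u_i|^{-1}$. The key move is to subdivide $V_i$ into the Laurent domains $V_i^-=\{\,|u_i|\le 1\,\}$ and $V_i^+=\{\,|u_i|\ge 1\,\}$: on $V_i^-$ the frame $s_i^{(2)}$ trivializes the minimum of the two metrics, while on $V_i^+$ the frame $s_i^{(1)}$ does, since there the minimum of $\|\cdot\|_1$ and $\|\cdot\|_2$ applied to $s_i^{(1)}$ equals $|u_i|\cdot|u_i|^{-1}=1$; exchanging the roles of $s_i^{(1)}$ and $s_i^{(2)}$ handles the maximum. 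As the $V_i^\pm$ form a G-covering, both $\min$ and $\max$ are piecewise linear.

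The hard part is not any single computation but keeping every refinement inside the G-topology: namely that common refinements of two G-coverings, the preimage covering under $f$, and the Laurent subdivisions $\{\,|u_i|\le 1\,\},\{\,|u_i|\ge 1\,\}$ of each $V_i$ are admissible coverings. Once these standard facts about the G-topology on strictly $K$-analytic spaces are in hand, all three statements follow from the pointwise algebra sketched above, which is precisely why the argument does not use paracompactness.
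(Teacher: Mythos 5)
Your proof is correct and is exactly the standard argument that the paper delegates to the cited reference [GM, Proposition 2.12]: tensoring frames after a common refinement for the group structure, pulling back the trivializing covering for functoriality, and splitting each $V_i$ into the Laurent domains $\{|u_i|\le 1\}$ and $\{|u_i|\ge 1\}$ determined by the transition unit $u_i$ for $\min$ and $\max$. The only blemish is the justification on $V_i^+$, where the correct computation is $\min(\|s_i^{(1)}\|_1,\|s_i^{(1)}\|_2)=\min(1,|u_i|)=1$ because $|u_i|\ge 1$ there, rather than the stated ``$|u_i|\cdot|u_i|^{-1}$''; this does not affect the argument.
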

	\begin{Proposition}
		Let $X$ be a paracompact strictly $K$-analytic space and $L$ a line bundle on $X$. Then a piecewise linear metric on $L$ is the same as a formal metric.
	\end{Proposition}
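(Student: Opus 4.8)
The plan is to prove the two implications separately. One direction is essentially definitional: given a formal model $(\mathfrak{X},\mathfrak{L})$ of $(X,L)$, cover $\mathfrak{X}$ by formal affine opens $\mathfrak{U}_i$ on which $\mathfrak{L}$ has a frame $s_i$. The generic fibres $V_i:=\mathfrak{U}_i^{\textup{an}}$ form a G-covering of $X$ by strictly $K$-affinoid domains, $s_i$ restricts to a frame of $L$ over $V_i$, and by the very definition of $\|\cdot\|_{\mathfrak{L}}$ in Definition \ref{metrics} we have $\|s_i(\cdot)\|_{\mathfrak{L}}=1$ on $V_i$. Hence every formal metric is piecewise linear.

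For the converse, let $\|\cdot\|$ be a piecewise linear metric with G-covering $(V_i)_{i\in I}$ and frames $s_i$ with $\|s_i(\cdot)\|\equiv 1$. Using paracompactness I may assume the $V_i$ are strictly $K$-affinoid domains and that the covering is locally finite, refining it further whenever convenient. On overlaps write $s_i=g_{ij}\,s_j$ with $g_{ij}\in\Gamma(V_i\cap V_j,\mathcal{O}_X)^\times$; from $\|s_i(\cdot)\|=|g_{ij}(\cdot)|\,\|s_j(\cdot)\|$ and $\|s_i(\cdot)\|=\|s_j(\cdot)\|=1$ one gets $|g_{ij}(\cdot)|\equiv 1$ on $V_i\cap V_j$, so both $g_{ij}$ and $g_{ij}^{-1}$ are power-bounded. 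By Raynaud's theorem (Remark \ref{paracompact}) there is a formal $K^\circ$-model $\mathfrak{X}$ of $X$ whose associated G-covering refines $(V_i)$, so that after passing to such a refinement each $V_i$ is the generic fibre of a formal open $\mathfrak{U}_i\subseteq\mathfrak{X}$; after replacing $\mathfrak{X}$ by a suitable formal blow-up — using that the isomorphism classes of formal models are directed — I may moreover assume that each $g_{ij}$ and each $g_{ij}^{-1}$ extends to a section of $\mathcal{O}_{\mathfrak{X}}$ over $\mathfrak{U}_i\cap\mathfrak{U}_j$, hence to a unit there. The cocycle $(g_{ij})$ then glues the free rank one sheaves $\mathcal{O}_{\mathfrak{U}_i}\cdot s_i$ into a line bundle $\mathfrak{L}$ on $\mathfrak{X}$ whose restriction to the generic fibre is $L$, so $(\mathfrak{X},\mathfrak{L})$ is a formal model of $(X,L)$. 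By construction $s_i$ is a frame of $\mathfrak{L}$ over $\mathfrak{U}_i$, so the associated formal metric satisfies $\|s_i(\cdot)\|_{\mathfrak{L}}=1=\|s_i(\cdot)\|$ on $V_i$, and since the $V_i$ cover $X$ the two metrics agree. Checking that the resulting identification is well defined (independent of the chosen covering, frames and model) and compatible with tensor products reduces to the standard comparison of cocycles.

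The main obstacle is the reverse direction, and more precisely the step of making the transition functions $g_{ij}$, which a priori are only power-bounded units of $\mathcal{O}(V_i\cap V_j)$ of absolute value $1$, into genuine units of the \emph{integral} structure sheaf on a single formal model. This is exactly where one must exploit the directedness of the system of formal models from Raynaud's theorem: one passes to a model large enough that $g_{ij}$ and $g_{ij}^{-1}$ simultaneously become integral, and does so coherently for all the finitely many local pieces meeting a given quasi-compact part of $X$, before gluing. Everything else — the compatibility of G-coverings with formal open subsets, the gluing of the formal line bundle, and the identification of the metrics — is routine.
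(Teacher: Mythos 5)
Your proof is correct and follows essentially the same route as the paper, whose own proof is just the citation of \cite[Proposition 2.10]{GM}: the forward direction is definitional, and the converse combines Raynaud's theorem with the fact that a function of absolute value identically $1$ on an affinoid is power-bounded and becomes a unit of the integral structure sheaf after a suitable admissible formal blow-up, which is exactly the argument in the cited reference. You correctly identify the one delicate point, namely that $|g_{ij}|\equiv 1$ only gives power-boundedness and not integrality on the initially chosen model, and you resolve it in the standard way using the directedness of the system of formal models.
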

	\begin{proof}
		\cite[Proposition 2.10]{GM}.
	\end{proof}
	\begin{Def}
		Let $X$ be a strictly $K$-analytic space and $L$ a line bundle on $X$. A piecewise linear metric on $L$ is called \textit{semipositive} in $x\in X$ if there exists a compact strictly $K$-analytic domain $W$ which is a neighbourhood of $x$ such that there is a formal model $(\mathfrak{W},\mathfrak{L})$ of $\left(W,L\Big|_W\right)$ inducing the metric on $W$ and satisfying $\Deg_{\mathfrak{L}}(C)\geq0$ for every proper closed curve $C$ in the special fibre of $\mathfrak{W}$. The metric on $L$ is called semipositive in a subset $V\subseteq X$ if it is semipositive in every $x\in V$. It is called semipositive if it is semipositive in $X$.
	\end{Def}
	\begin{Proposition}
		Let $X$ be a paracompact strictly $K$-analytic space and $L$ a line bundle on $X$. A formal metric $\|\cdot\|$ on $L$ is semipositive in every $x\in X$ if and only if there exists a nef formal $K^\circ$-model $\mathfrak{L}$ of $L$ inducing $\|\cdot\|$. In particular we regain the original global definition of semipositivity by Zhang (\cite{Z}).
	\end{Proposition}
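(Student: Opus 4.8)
The statement is an equivalence, so I would prove the two implications separately; the first is soft, while the second carries essentially all of the weight.

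\textbf{A nef model yields semipositivity everywhere.} Suppose $(\mathfrak{X},\mathfrak{L})$ is a nef formal $K^\circ$-model of $L$ inducing $\|\cdot\|$, and fix $x\in X$. The plan is to choose a compact strictly $K$-analytic domain $W$ which is a neighbourhood of $x$ (such $W$ form a neighbourhood basis) and to realise the inclusion $W\hookrightarrow X$, by Raynaud's theorem (Remark \ref{paracompact}), as the generic fibre of a morphism $\mathfrak{j}\colon\mathfrak{W}\to\mathfrak{X}'$ of formal models, where $\mathfrak{W}$ is a formal model of $W$ and $\mathfrak{X}'\to\mathfrak{X}$ is an admissible blow-up. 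Putting $\mathfrak{M}:=\mathfrak{j}^{*}\mathfrak{L}'$ with $\mathfrak{L}'$ the pull-back of $\mathfrak{L}$, functoriality of formal metrics shows that $(\mathfrak{W},\mathfrak{M})$ induces $\|\cdot\|$ on $W$; and for a proper closed curve $C$ in $\tilde{\mathfrak{W}}$ its image $\tilde{\mathfrak{j}}(C)$ is closed, hence either a point, so $\Deg_{\mathfrak{M}}(C)=0$, or a proper curve, in which case the projection formula together with nefness of $\tilde{\mathfrak{L}}'$ (a pull-back of the nef sheaf $\tilde{\mathfrak{L}}$) gives $\Deg_{\mathfrak{M}}(C)\geq 0$. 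Hence $\|\cdot\|$ is semipositive at $x$, and $x$ was arbitrary.

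\textbf{Local semipositivity yields a nef model.} For the converse I would begin with the remark that, $\|\cdot\|$ being a formal metric, it is induced by some global formal model $(\mathfrak{X}_{0},\mathfrak{L}_{0})$ of $(X,L)$, and that, since any two formal models inducing $\|\cdot\|$ agree after a common refinement, it suffices to show that on a sufficiently fine admissible blow-up $\mathfrak{X}\to\mathfrak{X}_{0}$ the pull-back $\mathfrak{L}$ of $\mathfrak{L}_{0}$ satisfies $\Deg_{\mathfrak{L}}(C)\geq 0$ for every proper closed curve $C$ of $\tilde{\mathfrak{X}}$. Using paracompactness I would pick a locally finite cover $(W_{i})$ of $X$ by compact strictly $K$-analytic domains carrying nef formal models $(\mathfrak{W}_{i},\mathfrak{M}_{i})$ inducing $\|\cdot\|$ on $W_{i}$, and then (by directedness of formal models, gluing the relevant local blow-ups) refine $\mathfrak{X}_{0}$ to a model $\mathfrak{X}$ in which each $W_{i}$ is the analytic generic fibre of a quasi-compact formal open $\mathfrak{V}_{i}\subseteq\mathfrak{X}$ equipped with a morphism $g_{i}\colon\mathfrak{V}_{i}\to\mathfrak{W}_{i}$ inducing the identity on their common generic fibre $W_{i}$. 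On $\mathfrak{V}_{i}$ the line bundles $\mathfrak{L}|_{\mathfrak{V}_{i}}$ and $g_{i}^{*}\mathfrak{M}_{i}$ induce the same metric, hence differ by a line bundle trivial on the generic fibre and carrying the trivial metric, which is therefore numerically trivial on the special fibre; thus $\mathfrak{L}|_{\mathfrak{V}_{i}}$ is nef. Since $W_{i}=\red^{-1}(\tilde{\mathfrak{V}}_{i})$ and $\red$ is surjective, the $\tilde{\mathfrak{V}}_{i}$ cover $\tilde{\mathfrak{X}}$, so $\mathfrak{L}$ is nef along every member of this open cover.

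\textbf{The main obstacle.} The remaining step — upgrading ``nefness along an open cover of $\tilde{\mathfrak{X}}$'' to genuine nefness of $\mathfrak{L}$ — is the heart of the matter and, I expect, not formal: a proper closed curve $C$ of $\tilde{\mathfrak{X}}$ need not be contained in a single $\tilde{\mathfrak{V}}_{i}$, and the hypothesis only constrains the proper curves of the local special fibres $\tilde{\mathfrak{W}}_{i}$. My plan for this would be to treat $C$ directly: choose $x\in X$ with $\red_{\mathfrak{X}}(x)$ equal to the generic point of $C$, pass to the tube $\red_{\mathfrak{X}}^{-1}(C)$ — a compact strictly $K$-analytic domain neighbourhood of $x$, the reduction map being anticontinuous, and one whose formal models see all of $C$ in their special fibres — and prove, by a compactness-and-gluing argument, that semipositivity at every point of this tube produces a nef formal model over it; then $\Deg_{\mathfrak{L}}(C)$ is computed by a nef model and is $\geq 0$. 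This comparison of the local and the global notion of semipositivity for formal metrics is precisely the substance of \cite{GM}, and for $X$ projective it recovers Zhang's original global definition \cite{Z}; rather than reprove it I would invoke those results.
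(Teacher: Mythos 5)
Your first implication is fine, and your reduction of the converse to a local-to-global statement correctly isolates where the difficulty lies. But the proposal ultimately resolves that difficulty by invoking \cite{GM} as a black box, and that is exactly where it falls short of the statement as given. The paper's proof is also just a reduction to \cite[Proposition 3.11]{GM}; the entire point of restating the proposition here, however, is that the proof in \cite{GM} carries an additional hypothesis on $X$, which is used there only in order to invoke \cite[Lemme 6.5.1]{ChD} --- the characterization of interior points $x$ of $X$ in terms of properness of the closure of $\red(x)$ in the special fibre of a formal model. The new content of the proposition is that this hypothesis can be dropped, and the paper supplies the missing ingredient in the appendix as Corollary \ref{CDLemma}, proved via Temkin's theory of reduction of germs. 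Citing \cite{GM} wholesale therefore only yields the statement under that extra hypothesis, not for an arbitrary paracompact strictly $K$-analytic space.

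Your own sketch of the hard step does not fill this gap: ``prove, by a compactness-and-gluing argument, that semipositivity at every point of this tube produces a nef formal model over it'' is essentially a restatement of the implication to be proved, restricted to the tube $\red^{-1}(C)$. What the argument actually needs at this point is precisely the germ-theoretic input above: given a proper curve $C$ in $\tilde{\mathfrak{X}}$ and a point $x$ reducing to its generic point, one must know that $x$ is an interior point of $X$ (equivalently, of the compact neighbourhood $W$ furnished by semipositivity at $x$), so that the closure of $\red_{\mathfrak{W}}(x)$ in $\tilde{\mathfrak{W}}$ is again a proper curve mapping onto $C$; only then do the projection formula and nefness of the local model give $\Deg_{\mathfrak{L}}(C)\geq 0$. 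That equivalence is Corollary \ref{CDLemma}, and without it (or the extra hypothesis under which \cite[Lemme 6.5.1]{ChD} applies) the argument does not close.
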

	\begin{proof}
		This is proved in \cite[Proposition 3.11]{GM} under the additional assumption that $X$ is separable, which was necessary in order to be able to use \cite[Lemme 6.5.1]{ChD}. Replacing this with Corollary \ref{CDLemma}, the same proof applies to the more general case.
	\end{proof}
	\begin{Proposition}
		\label{GM Proposition 3.12} Let $X$ be a proper scheme over $K$ and $L$ a line bundle on $X$. Let $\|\cdot\|_1,\|\cdot\|_2$ be two piecewise linear metrics on $L^{\textup{an}}$ which are semipositive in $x\in X^{\textup{an}}$. Then $\|\cdot\|:=\min(\|\cdot\|_1,\|\cdot\|_2)$ is semipositive in $x$.
	\end{Proposition}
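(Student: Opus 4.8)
The plan is to reduce the assertion to the case of two \emph{nef} formal line bundles on one common formal model, and then to realize the minimum metric as the metric induced by a line bundle on an explicit admissible formal blow-up of that model.

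\textbf{Step 1: reduction to a common model.} By the definition of semipositivity at $x$ there are compact strictly $K$-analytic domains $W_1,W_2\subseteq X^{\textup{an}}$, each a neighbourhood of $x$, and formal models $(\mathfrak{W}_i,\mathfrak{L}_i)$ of $(W_i,L^{\textup{an}}|_{W_i})$ inducing $\|\cdot\|_i|_{W_i}$ with $\Deg_{\mathfrak{L}_i}(C)\geq 0$ for every proper closed curve $C\subseteq\tilde{\mathfrak{W}}_i$. I would put $W:=W_1\cap W_2$, again a compact strictly $K$-analytic domain and a neighbourhood of $x$. By Raynaud's theorem (cf.\ Remark \ref{paracompact}) each inclusion $W\hookrightarrow W_i$ is modelled by a morphism $\mathfrak{W}_i''\to\mathfrak{W}_i'$ with $\mathfrak{W}_i''$ a formal model of $W$ and $\mathfrak{W}_i'$ an admissible formal blow-up of $\mathfrak{W}_i$; passing to a common formal model $\mathfrak{W}$ of $W$ dominating $\mathfrak{W}_1''$ and $\mathfrak{W}_2''$ yields morphisms $\pi_i\colon\mathfrak{W}\to\mathfrak{W}_i'$ and line bundles $\mathfrak{L}_i'$ on $\mathfrak{W}$ (pull back $\mathfrak{L}_i$ to $\mathfrak{W}_i'$ and then along $\pi_i$) inducing $\|\cdot\|_i|_W$. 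For a proper closed curve $C\subseteq\tilde{\mathfrak{W}}$ the restriction of $\tilde\pi_i$ to $C$ is a proper morphism to $\tilde{\mathfrak{W}}_i'$, so the projection formula gives $\Deg_{\mathfrak{L}_i'}(C)=0$ if $\tilde\pi_i(C)$ is a point and $\Deg_{\mathfrak{L}_i'}(C)$ equals a positive multiple of $\Deg_{\mathfrak{L}_i}(\tilde\pi_i(C))\geq 0$ otherwise (and likewise for the blow-ups $\mathfrak{W}_i'\to\mathfrak{W}_i$). Hence $\mathfrak{L}_1',\mathfrak{L}_2'$ are both nef, and we may assume that $\|\cdot\|_1,\|\cdot\|_2$ are the formal metrics of nef line bundles $\mathfrak{L}_1,\mathfrak{L}_2$ on a single formal model $\mathfrak{W}$ of the neighbourhood $W$ of $x$.

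\textbf{Step 2: the model realizing the minimum.} Locally on $\mathfrak{W}$ I would write $\mathfrak{L}_1=\mathcal{O}_{\mathfrak{W}}e_1$, $\mathfrak{L}_2=\mathcal{O}_{\mathfrak{W}}e_2$ with $e_1=u\,e_2$ for a unit $u$ on the generic fibre, and $u=f_1/f_2$ with $f_1,f_2\in\mathcal{O}(\mathfrak{W})$; the ideal $\mathcal{I}=(f_1,f_2)$ contains a power of a uniformizer, so its blow-up $\rho\colon\mathfrak{W}'\to\mathfrak{W}$ is an admissible formal blow-up on which the coherent subsheaf $\mathfrak{L}_1+\mathfrak{L}_2\subseteq L^{\textup{an}}$ becomes an invertible sheaf $\mathfrak{L}'$. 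On the chart where $\rho^{*}f_1\mid\rho^{*}f_2$ one computes $\mathfrak{L}'=\rho^{*}\mathfrak{L}_1$, and on the chart where $\rho^{*}f_2\mid\rho^{*}f_1$ one gets $\mathfrak{L}'=\rho^{*}\mathfrak{L}_2$; comparing the induced metrics on the analytic domains $\{|f_2|\leq|f_1|\}$ and $\{|f_1|\leq|f_2|\}$ shows that $\mathfrak{L}'$ induces $\min(\|\cdot\|_1,\|\cdot\|_2)$ on $W$ (so this minimum is again a formal metric, which also follows from the earlier Propositions on piecewise linear metrics). Writing
\[
	\mathfrak{L}'=\rho^{*}\mathfrak{L}_1\otimes\mathcal{O}(F_1)=\rho^{*}\mathfrak{L}_2\otimes\mathcal{O}(F_2),
\]
the $F_i$ are effective Cartier divisors (since $\min(\|\cdot\|_1,\|\cdot\|_2)\leq\|\cdot\|_i$ forces $\mathfrak{L}'\supseteq\rho^{*}\mathfrak{L}_i$), supported on the exceptional locus, and $F_1$ vanishes on the chart $\{\rho^{*}f_1\mid\rho^{*}f_2\}$ while $F_2$ vanishes on $\{\rho^{*}f_2\mid\rho^{*}f_1\}$; as these two charts cover $\mathfrak{W}'$, the supports of $F_1$ and $F_2$ are disjoint.

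\textbf{Step 3: nefness of $\mathfrak{L}'$ and conclusion.} Let $C\subseteq\tilde{\mathfrak{W}}'$ be a proper closed curve. Since $\supp(F_1)\cap\supp(F_2)=\emptyset$, the curve $C$ is not contained in at least one of the two supports, say $C\not\subseteq\supp(F_1)$. Then $(F_1\cdot C)\geq 0$ because $F_1$ is effective, while $\Deg_{\rho^{*}\mathfrak{L}_1}(C)=\Deg_{\mathfrak{L}_1}(\rho_{*}[C])$ equals $0$ if $\rho$ contracts $C$ and is a positive multiple of $\Deg_{\mathfrak{L}_1}(\rho(C))\geq 0$ otherwise, using that $\mathfrak{L}_1$ is nef and $\rho(C)$ is a proper closed curve in $\tilde{\mathfrak{W}}$. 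Adding, $\Deg_{\mathfrak{L}'}(C)\geq 0$, so $\mathfrak{L}'$ is nef; since $\mathfrak{W}'$ is a formal model of the neighbourhood $W$ of $x$ inducing $\min(\|\cdot\|_1,\|\cdot\|_2)|_W$, the metric $\|\cdot\|$ is semipositive in $x$. I expect Step~2 — identifying the model of the minimum metric and, above all, establishing that the exceptional divisors $F_1$ and $F_2$ have disjoint supports — to be the main obstacle; it is exactly this disjointness that lets every special-fibre curve be estimated against whichever of the two nef bundles it is not exceptional against. The reduction in Step~1 and the degree estimate in Step~3 are then routine applications of Raynaud's theory and of the projection formula.
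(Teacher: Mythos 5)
The paper does not prove this statement itself but simply cites \cite[Proposition 3.12]{GM}; your argument is correct and is essentially the standard proof underlying that reference: reduce to two nef formal models $\mathfrak{L}_1,\mathfrak{L}_2$ on a common model of a compact neighbourhood of $x$, realize $\min(\|\cdot\|_1,\|\cdot\|_2)$ on an admissible formal blow-up as $\rho^{*}\mathfrak{L}_i\otimes\mathcal{O}(F_i)$ with $F_1,F_2$ effective vertical divisors of disjoint support, and test each proper closed curve against whichever of the two bundles it is not exceptional for. The only points left implicit are routine: the local ideals $(f_1,f_2)$ should be replaced by the globally defined coherent open ideal $a\cdot\bigl((\mathfrak{L}_1+\mathfrak{L}_2)\otimes\mathfrak{L}_2^{-1}\bigr)$ for a suitable $a\in K^{\circ}\setminus\{0\}$ (quasi-compactness of the model yields a uniform $a$, and one checks as you implicitly do that this ideal is independent of the chosen frames), and ``power of a uniformizer'' should read ``nonzero element of $K^{\circ\circ}$'', since $K$ is not assumed discretely valued.
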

	\begin{proof}
		\cite[Proposition 3.12]{GM}.
	\end{proof}
	\begin{Def}
		Let $X$ be a strictly $K$-analytic space and $L$ a line bundle on $X$. A metric $\|\cdot\|$ on $L$ is called \textit{piecewise }$\mathbb{Q}$\textit{-linear} if for every $x\in X$ there is an open neighbourhood $W$ of $x$ and a non-zero $n\in\mathbb{N}$ such that $\|\cdot\|^{\otimes n}\Big|_W$ is a piecewise linear metric on $L^{\otimes n}\Big|_W$. \\
		A piecewise $\mathbb{Q}$-linear metric on $L$ is called \textit{semipositive} in $x\in X$ if in the above $\|\cdot\|^{\otimes n}\Big|_W$ is semipositive in $x$.
	\end{Def}
	\begin{Proposition}
		\label{dense} Let $X$ be a paracompact strictly $K$-analytic space and $L$ a line bundle on $X$. Any continuous metric on $L$ can be uniformly approximated by piecewise $\mathbb{Q}$-linear metrics on $L$.
	\end{Proposition}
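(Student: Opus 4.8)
The plan is to reduce to the case of a strictly $K$-affinoid space and a trivial line bundle, settle that case by a lattice form of the Stone--Weierstrass theorem, and then patch, since being piecewise $\mathbb{Q}$-linear is a local condition. Concretely, I would cover $X$ by compact strictly $K$-affinoid domains $W$ trivializing $L$; fixing a frame $s$ of $L$ over such a $W$, the given metric reads $\|s\| = e^{-\varphi}$ for some $\varphi \in C^0(W,\mathbb{R})$, and on $W$ the task becomes the uniform approximation of $\varphi$ by functions $\psi$ for which $e^{-\psi}$ (in the frame $s$) is a piecewise $\mathbb{Q}$-linear metric on $\mathcal{O}_W$. Call $\mathcal{L}(W)$ the set of such $\psi$. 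One first records that $\psi \in \mathcal{L}(W)$ precisely when $\psi$ is continuous and locally, in the $G$-topology, of the form $\tfrac1n \log|f|$ with $n \in \mathbb{N}_{>0}$ and $f \in \mathcal{O}_W^\times$: from such a local presentation the section $f \cdot s^{\otimes n}$ is a frame of norm $1$ for $\|\cdot\|^{\otimes n}$, and conversely a local norm-$1$ frame of a power of the metric exhibits such an $f$.

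The first key step is the density of $\mathcal{L}(W)$ in $C^0(W,\mathbb{R})$, which I would obtain from the Kakutani--Krein lattice version of Stone--Weierstrass, $W$ being compact Hausdorff. Indeed $\mathcal{L}(W)$ is a $\mathbb{Q}$-vector space; it contains every constant in $\mathbb{Q}\cdot\log|K^\times|$ (take $f \in K^\times \subseteq \mathcal{O}_W^\times$); and it is stable under $\max$ and $\min$: after a common denominator, $\max\bigl(\tfrac1N\log|f|, \tfrac1N\log|g|\bigr) = \tfrac1N\log\max(|f|,|g|)$ equals $\tfrac1N\log|f|$ on the affinoid subdomain $\{|f|\ge|g|\}$ and $\tfrac1N\log|g|$ on $\{|f|\le|g|\}$, with $f$, $g$ still units there, and similarly for $\min$. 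Finally, $\mathcal{L}(W)$ separates points with prescribed values: given distinct $x, y \in W$, viewed as distinct bounded multiplicative seminorms on $A$, some $f \in A$ has $|f(x)| \ne |f(y)|$, and replacing $\log|f|$ by $\max(\log|f|, -C)$ for a suitable $C \in \mathbb{Q}\cdot\log|K^\times|$ (so that $f$ is a unit on $\{|f|\ge e^{-C}\}$ and the truncation still separates $x$ and $y$) yields an everywhere-finite element of $\mathcal{L}(W)$ separating $x$ and $y$. Here it is essential that the valuation is non-trivial, so that $\mathbb{Q}\cdot\log|K^\times|$ is dense in $\mathbb{R}$ and $\mathbb{Q}$-linear combinations of such a separating function with these constants can match any target values at $x$ and $y$. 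Hence $\mathcal{L}(W) = C^0(W,\mathbb{R})$.

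It then remains to globalize. Using paracompactness I would take a locally finite cover of $X$ by compact strictly $K$-affinoid domains $W_i$ trivializing $L$ and, for a given $\varepsilon > 0$, apply the affinoid case to obtain on each $W_i$ a piecewise $\mathbb{Q}$-linear metric that differs from $\|\cdot\|$ by a factor in $[e^{-\varepsilon}, e^{\varepsilon}]$ on $W_i$. These are then assembled into a single piecewise $\mathbb{Q}$-linear metric on $L$ lying within the same factor of $\|\cdot\|$, by combining (i) the extension of formal (equivalently piecewise linear) metrics from compact analytic domains to all of $X$ --- together with Raynaud's theorem (Remark \ref{paracompact}) so that formal models exist at all --- with (ii) the stability of piecewise $\mathbb{Q}$-linear metrics under $\max$ and $\min$, which lets one clip each extended local approximation so that, off $W_i$, it is dominated by its neighbours without spoiling the estimate on $W_i$. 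I expect this patching --- constructing piecewise $\mathbb{Q}$-linear cut-off metrics and running the locally finite bookkeeping, especially when $X$ is non-compact --- to be the only genuinely delicate point; the rest is either standard model-theoretic formalism or direct manipulation of piecewise $\mathbb{Q}$-linear metrics.
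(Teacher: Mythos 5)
The paper does not prove this statement itself but simply cites \cite[Theorem 2.17]{GM}; your affinoid-local argument is exactly the Stone--Weierstrass mechanism underlying that result (going back to Gubler's density theorem for model functions), and that part of your proposal is correct and essentially complete: on $W=\mathscr{M}(A)$ the set $\mathcal{L}(W)$ is a $\mathbb{Q}$-vector space and a lattice, contains a dense set of constants because the valuation is non-trivial, and separates points via the truncations $\max(\log|f|,-C)$, so the Kakutani--Krein theorem applies. (A minor point: an affinoid domain need not trivialize $L$, since $\mathrm{Pic}$ of an affinoid algebra can be non-trivial; one must refine to a G-cover on which $L$ is actually free.)

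The genuine gap is the globalization, which you flag but do not carry out, and which is where the content of the paracompact --- and already of the compact non-affinoid --- case lies. Two concrete problems. First, your patching amounts to taking a $\min$ (or $\max$) over the index set $I$ of globally extended metrics of the shape $\tilde\psi_i+\theta_i$; when $I$ is infinite this need not be piecewise linear, continuous, or even finite-valued, because each extension $\tilde\psi_i$ supplied by \cite[Proposition 2.7]{GM} is completely uncontrolled off $W_i$ and remains present in the $\min$ near every point of $X$ --- local finiteness of the cover $(W_i)$ does not make the family of extensions locally finite. To dominate the $i$-th term off $W_i$ you need a cutoff $\theta_i\ge \varphi-\tilde\psi_i+O(\varepsilon)$ there, and on non-compact $X$ that difference can be unbounded. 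Second, even with finitely many charts you must actually construct the cutoffs: $\mathbb{Q}$-PL functions vanishing on a shrunken domain $W_i'$ and exceeding a prescribed bound outside $W_i$. Such PL Urysohn-type functions (equivalently, the separation of two points that do not lie in a common affinoid chart) are not a formal consequence of the single-affinoid case; in the standard treatment this is where formal models and vertical ideal sheaves enter. So the architecture is right and matches the cited proof, but the gluing step is a missing idea rather than bookkeeping.
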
	
	\begin{proof}
		\cite[Theorem 2.17]{GM}.
	\end{proof}
	\section{Measures}
	\label{measures} We recall the real Monge-Ampère operator which associates to a convex function a positive Borel measure. Then we introduce the Chambert-Loir measure on the generic fibres of admissible formal schemes and on paracompact strictly $K$-analytic spaces. Chambert-Loir introduced these measures in \cite{Ch} on the analytification $X^{\textup{an}}$ of a proper variety $X$ over $K$ under the assumption that $K$ has a countable dense subfield and associates to a family of semipositive metrized line bundles a positive Radon measure. This was later extended by Gubler to the case of an algebraically closed base field in \cite{Gub1}. Using the local approach to metrics from section \ref{Metrics}, it is now possible to define Monge-Ampère measures locally. Note that there is also a local approach by Chambert-Loir and Ducros in \cite{ChD} which associates a measure to a metric which is locally psh-approximable. However it is not known whether a semipositive metric is locally psh-approximable. In this section we assume that the non-archimedean complete base field $K$ is algebraically closed which is no restriction as one can always reduce to this case by base change (see Remark \ref{extend}).
	\begin{Def}
		Let $\Omega\subseteq\mathbb{R}^n$ be bounded, open and convex and denote by $\lambda$ the standard Lebesgue measure on $\mathbb{R}^n$ and by $\langle\cdot,\cdot\rangle$ the standard scalar product on $\mathbb{R}^n$. Let $h$ be a convex function on $\Omega$ and $x_0\in\Omega$. We define the \textit{gradient image} of $x_0$ under $h$ to be
		\[
			\nabla h(x_0):=\left\{p\in\mathbb{R}^n\;\Big|\;\forall x\in\Omega\;:\;h(x_0)+\langle x-x_0,p\rangle\leq h(x)\right\}
		\]
		and for $E\subseteq\Omega$
		\[
			\nabla h(E):=\bigcup_{x_0\in E}\nabla h(x_0).
		\]
		Note that if $E$ is a Borel set, the same is true for $\nabla h(E)$. Finally we define the Monge-Ampère measure associated to $h$ by 
		\[
			\MA(h)(E):=\lambda(\nabla h(E))
		\]
		for all Borel sets $E\subseteq\Omega$. It is indeed a measure on the Borel $\sigma$-algebra, for details see \cite[Section 2]{RT}. The real Monge-Ampère operator is continuous in the sense that if $(u_n)_{n\in\mathbb{N}}$ is a sequence of convex functions on $\Omega$ converging pointwise to a convex function $u$ then $(\MA(u_n))_{n\in\mathbb{N}}$ converges weakly to $\MA(u)$. If $h$ is two times continuously differentiable then $\MA(h)=\det D^2 h \cdot\lambda$.
		\end{Def}
		\begin{Def}
			\label{irreduciblecomponents} In \cite[Definition 2.2.2]{Co} Conrad defined the notion of irreducibility for analytic spaces which we recall here. Let $X$ be a paracompact strictly $K$-analytic space and $p:\tilde{X}\rightarrow X$ the normalization of $X$ (\cite[2.1]{Co}). Then the irreducible components of $X$ are defined to be the sets $X_i:=p(\tilde{X}_i)$ where $\tilde{X}_i$ are the connected components of $\tilde{X}$. The space $X$ is said to be irreducible if it has a unique irreducible component. By \cite[Lemma 2.2.3]{Co} $X$ is irreducible if and only if it can not non trivially be written as a union of two closed strictly $K$-analytic subsets. \\
			Let $Y$ be an irreducible component of $X$ and $V=\mathscr{M}(\mathscr{A})$ an affinoid domain with $Y\cap V\neq\emptyset$. Then by \cite[Corollary 2.2.9]{Co} there is an irreducible component $Y'$ of $V$ which is contained in $V\cap Y$. Then $Y'$ corresponds to a minimal prime ideal $\mathfrak{p}$ of $\mathscr{A}$ and hence to an irreducible component of $\Spec(\mathscr{A})$. We define the \textit{multiplicity of $Y$} to be the multiplicity of this component. Note that this does not depend on the choice of $V$ and $Y'$: If $V'=\mathscr{M}(\mathscr{B})\subseteq V$ and $\mathfrak{p}'$ is a minimal prime ideal of $\mathscr{B}$ lying over $\mathfrak{p}$ then $\mathscr{B}/\mathfrak{p}\mathscr{B}$ is reduced by \cite[Corollary 7.3.2/10]{BGR} as it induces an affinoid domain in $\mathscr{M}(\mathscr{A}/\mathfrak{p})$ which is reduced. Hence also $\mathscr{B}_{\mathfrak{p}'}/\mathfrak{p}\mathscr{B}_{\mathfrak{p}'}$ is reduced and since $\mathscr{B}_{\mathfrak{p}'}$ is a local ring of dimension 0, this implies $\mathfrak{p}'\mathscr{B}_{\mathfrak{p}'}=\mathfrak{p}\mathscr{B}_{\mathfrak{p}'}$. Hence by \cite[Lemma A.4.1]{Fu2} the multiplicity of the irreducible component corresponding to $\mathfrak{p}$ is equal to that of the irreducible component corresponding to $\mathfrak{p}'$. \\
			Let $\varphi:X\rightarrow Y$ be a proper surjective morphism of irreducible and reduced strictly $K$-analytic spaces. If $\dim(Y)<\dim(X)$ we set $\deg(\varphi)=0$. Otherwise $\varphi$ is a finite morphism outside a lower dimensional analytic subset $W$ of $Y$. Let $\mathscr{M}(\mathscr{A}')$ be an affinoid domain in $Y\setminus W$, $V$ an irreducible component of $\Spec(\mathscr{A}')$ and $\mathscr{M}(\mathscr{A}):=\varphi^{-1}(\mathscr{M}(\mathscr{A}'))$ then $\Spec(\mathscr{A})\rightarrow\Spec(\mathscr{A}')$ is finite and we define $\deg(\varphi)$ to be the sum of the degrees of the irreducible components of $\Spec(\mathscr{A})$ over $V$. As explained in \cite[2.6]{Gub4} this again does not depend on the choices.
		\end{Def}
		\refstepcounter{Def}
	\textit{\theDef\;Monge-Ampère measure for line bundles on admissible formal schemes} \\ 
		 \label{measure} Let $\mathfrak{X}$ be an admissible formal scheme over $K^\circ$ of dimension $n+1$ with generic fibre $X$. Our goal is to introduce a Monge-Ampère measure on $X$ for formal line bundles $\mathfrak{L}_1,...,\mathfrak{L}_n$ on $\mathfrak{X}$. We assume first that $X$ is irreducible and reduced and that the special fibre of $\mathfrak{X}$ is reduced. Then the non-archimedean Monge-Ampère measure on $X$ with respect to these metrized line bundles is defined as
		\[
			c_1(\mathfrak{L}_1)\wedge...\wedge c_1(\mathfrak{L}_n):=\sum_{\substack{Y\in\irr(\tilde{\mathfrak{X}}) \\ Y\text{ proper}}} \Deg_{\mathfrak{L}_1,...,\mathfrak{L}_n}(Y)\cdot \delta_{\zeta_Y},	
		\]
		where $\delta_{\zeta_Y}$ denotes the Dirac-measure at the unique point $\zeta_Y$ which is mapped to the generic point of the proper irreducible component $Y$ under the reduction map (cf. \cite[Proposition 2.4.4]{Be1}).
		
		If $\mathfrak{X}$ has irreducible and reduced generic fibre but no longer reduced special fibre, there is a canonical admissible formal model $\mathfrak{X}'$ of $X$ with reduced special fibre together with a finite morphism $\iota:\mathfrak{X}'\rightarrow\mathfrak{X}$ which restricts to the identity on $X$ which can be constructed as follows (cf. \cite[Definition 3.10]{Gub4}). Choose a cover $(\mathfrak{U}_i=\Spf(A_i))_{i\in I}$ of $\mathfrak{X}$ by affine formal subschemes. Define $\mathscr{A}_i:=A\otimes_{K^\circ} K$. If $\Spf(B)\subseteq\Spf(A_i)$ is a formal open subscheme for some $i\in I$ then $A_i\rightarrow B$ induces a morphism $\mathscr{A}_i^\circ\rightarrow\mathscr{B}^\circ$ for $\mathscr{B}:=B\otimes_{K^\circ}K$. Hence by standard arguments we can glue the $\Spf(\mathscr{A}_i^\circ)$ to obtain $\mathfrak{X}'$ and the canonical morphisms $A_i\rightarrow\mathscr{A}_i^\circ$ induce the morphism $\mathfrak{X}'\rightarrow\mathfrak{X}$. We then define
		\[
			c_1(\mathfrak{L}_1)\wedge...\wedge c_1(\mathfrak{L}_n):=(\iota^{\textup{an}})_{\ast}(c_1(\iota^{\ast}\mathfrak{L}_1)\wedge...\wedge c_1(\iota^{\ast}\mathfrak{L}_n)).
		\]		
		In the general case, let $X=\sum_j m_jX_j$ be the decomposition of the generic fibre into prime cycles. By \cite[Proposition 3.3]{Gub4} the closure $\overline{X}_j$ of $X_j$ in $\mathfrak{X}$ is an admissible formal scheme with irreducible and reduced generic fibre $X_j$. We define
		\[
			c_1(\mathfrak{L}_1)\wedge...\wedge c_1(\mathfrak{L}_n):=\sum_j m_j\cdot c_1\left(\mathfrak{L}_1\Big|_{\overline{X}_j}\right)\wedge...\wedge c_1\left(\mathfrak{L}_n\Big|_{\overline{X}_j}\right)
		\]
		as a measure on $X$.
	\begin{Remark}
		There is a close connection of the Monge-Ampère measure with the intersection product on formal schemes as defined in \cite{Gub4}: Assume that $\mathfrak{X}$ has irreducible, reduced and boundaryless generic fibre and reduced special fibre. In addition to $\mathfrak{L}_1,...,\mathfrak{L}_n$ let $\mathfrak{L}_0$ be a formal line bundle on $\mathfrak{X}$ which is trivial on the generic fibre and set $f:=-\log\|1\|$ where $\|\cdot\|$ is the formal metric induced by $\mathfrak{L}_0$. Suppose that $f$ has compact support and let $D:=\Div(1)$ be the Cartier divisor on $\mathfrak{X}$ induced by $1$ as in \cite[Remark 3.1]{Gub4}. We examine the Weil divisor $\cyc(D)$ associated to $D$ as defined in \cite[§3]{Gub4}. Since $\mathfrak{L}_0$ is trivial on the generic fibre, the horizontal part of $\cyc(D)$ is zero while the vertical part is by definition (\cite[3.8]{Gub4}) given by $\sum_{Y\in\irr(\tilde{\mathfrak{X}})}f(\zeta_Y)\cdot Y$. Now since $\mathfrak{X}^{\textup{an}}$ has no boundary, every irreducible component of $\tilde{\mathfrak{X}}$ is proper by Corollary \ref{CDLemma} and together with the definition of the intersection product (\cite[§4]{Gub4}) we obtain
		\[
			\int_{\mathfrak{X}^{\textup{an}}}f c_1(\mathfrak{L}_1)\wedge...\wedge c_1(\mathfrak{L}_n)=\sum_{Y\in\irr(\tilde{\mathfrak{X}})}f(\zeta_Y)\cdot\Deg_{\mathfrak{L}_1,...,\mathfrak{L}_n}(Y)=\Deg_{\mathfrak{L}_1,...,\mathfrak{L}_n}(\cyc(D)).
		\]	
	\end{Remark}
	\begin{Proposition}
		\label{firstproperties} The measure defined above has the following properties:
		\begin{enumerate}
			\item $c_1(\mathfrak{L}_1)\wedge...\wedge c_1(\mathfrak{L}_n)$ is a discrete measure (i.e. of the form $\sum_{x\in S} \lambda_x\delta_x$ with $S\subseteq X$ a closed discrete subset, $\lambda_x\in\mathbb{R}$ and $\delta_x$ the Dirac-measure at $x$) whose support is contained in the relative interior of $X$ over $K$ (in the sense of \cite[1.5]{Be4}).
			\item $c_1(\mathfrak{L}_1)\wedge...\wedge c_1(\mathfrak{L}_n)$ is multilinear and symmetric in $\mathfrak{L}_1,...,\mathfrak{L}_n$.
			\item Let $\varphi:\mathfrak{X}'\rightarrow \mathfrak{X}$ be a proper morphism of admissible formal schemes over $K^\circ$ with irreducible and reduced generic fibres of dimension $n$ such that the induced morphism on the generic fibres is surjective. Then for formal line bundles $\mathfrak{L}_1,...,\mathfrak{L}_n$ on $\mathfrak{X}$ we have
			\[
				(\varphi^{\textup{an}})_\ast \left(c_1(\varphi^{\ast}\mathfrak{L}_1)\wedge...\wedge c_1(\varphi^{\ast}\mathfrak{L}_n)\right)=\Deg(\varphi^{\textup{an}})c_1(\mathfrak{L}_1)\wedge...\wedge c_1(\mathfrak{L}_n).
			\]
		\end{enumerate}
	\end{Proposition}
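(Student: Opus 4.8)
The plan is to reduce all three assertions to the \emph{basic case}, where the generic fibre $X$ is irreducible and reduced and the special fibre $\tilde{\mathfrak{X}}$ is reduced, so that by definition $c_1(\mathfrak{L}_1)\wedge\dots\wedge c_1(\mathfrak{L}_n)=\sum_{Y}\Deg_{\mathfrak{L}_1,\dots,\mathfrak{L}_n}(Y)\,\delta_{\zeta_Y}$ with $Y$ running over the proper components of $\tilde{\mathfrak{X}}$, and then to carry the statements through the two reduction steps used in \ref{measure}: the passage from $\mathfrak{X}$ to the model $\mathfrak{X}'$ with reduced special fibre via the finite morphism $\iota\colon\mathfrak{X}'\to\mathfrak{X}$, and the decomposition $X=\sum_j m_jX_j$ of the generic fibre. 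For the latter it suffices that the operations $(\iota^{\textup{an}})_\ast$ and $\sum_j m_j(\,\cdot\,)$ preserve each of the three properties, using that $\iota^{\textup{an}}$ is the identity on $X$ and that the closed immersions $\overline{X}_j\hookrightarrow X$ are compatible with relative interiors.

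For (i), fix a formal affine open $\mathfrak{U}=\Spf A$ of $\mathfrak{X}$. Since $A\otimes_{K^\circ}k$ is Noetherian, $\tilde{\mathfrak{U}}$ has only finitely many irreducible components, so $\{\zeta_Y\mid Y\in\irr(\tilde{\mathfrak{X}})\}$ meets each $\mathfrak{U}^{\textup{an}}$ in finitely many points; as these cover $X$, this set is closed and discrete in $X$, which is exactly discreteness of the measure. That its support lies in the relative interior $\Int(X/K)$ follows because a point whose reduction is the generic point of a \emph{proper} component of $\tilde{\mathfrak{X}}$ lies in $\Int(X/K)$ (cf.\ \cite{Gub1}, \cite[1.5]{Be4}; see also Corollary \ref{CDLemma}). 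Assertion (ii) is clear in the basic case, because $\Deg_{\mathfrak{L}_1,\dots,\mathfrak{L}_n}(Y)$ is a top-dimensional intersection number on the proper $n$-dimensional $k$-scheme $Y$ and hence multilinear and symmetric in the $\mathfrak{L}_i$; these properties pass through $\iota^\ast$ (additive on line bundles), through $(\iota^{\textup{an}})_\ast$, and through $\sum_j m_j(\,\cdot\,)$.

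For (iii) I would first reduce, with the necessary care about compatibility of $\varphi$ with the reduction steps (replacing $\mathfrak{X}'$ by a dominating admissible model if needed), to the basic case for both $\mathfrak{X}$ and $\mathfrak{X}'$; this leaves $\varphi^{\textup{an}}$, hence $\Deg(\varphi^{\textup{an}})$, unchanged (cf.\ \ref{irreduciblecomponents}). In the basic case, for each proper component $Y'$ of $\tilde{\mathfrak{X}}'$ one has $(\varphi^{\textup{an}})_\ast\delta_{\zeta_{Y'}}=\delta_{\varphi^{\textup{an}}(\zeta_{Y'})}$, and $\varphi^{\textup{an}}(\zeta_{Y'})$ reduces to the generic point of $\overline{\tilde{\varphi}(Y')}$. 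If $\dim\tilde{\varphi}(Y')<n$, then $\varphi^\ast\mathfrak{L}_1|_{Y'},\dots,\varphi^\ast\mathfrak{L}_n|_{Y'}$ are all pulled back from a $k$-scheme of dimension $<n$, so $\Deg_{\varphi^\ast\mathfrak{L}_1,\dots,\varphi^\ast\mathfrak{L}_n}(Y')=0$; if instead $\tilde{\varphi}(Y')$ is an $n$-dimensional component $Y$ of $\tilde{\mathfrak{X}}$ (necessarily proper, since $\tilde{\varphi}$ is proper and $Y'$ is), then $\varphi^{\textup{an}}(\zeta_{Y'})=\zeta_Y$ and the projection formula for intersection numbers gives $\Deg_{\varphi^\ast\mathfrak{L}_1,\dots,\varphi^\ast\mathfrak{L}_n}(Y')=[\kappa(Y')\colon\kappa(Y)]\cdot\Deg_{\mathfrak{L}_1,\dots,\mathfrak{L}_n}(Y)$. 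Summing over the proper components $Y'$ of $\tilde{\mathfrak{X}}'$ therefore yields
\[
  (\varphi^{\textup{an}})_\ast\bigl(c_1(\varphi^\ast\mathfrak{L}_1)\wedge\dots\wedge c_1(\varphi^\ast\mathfrak{L}_n)\bigr)=\sum_{Y}\Bigl(\sum_{\tilde{\varphi}(Y')=Y}[\kappa(Y')\colon\kappa(Y)]\Bigr)\,\Deg_{\mathfrak{L}_1,\dots,\mathfrak{L}_n}(Y)\,\delta_{\zeta_Y},
\]
with $Y$ ranging over the proper components of $\tilde{\mathfrak{X}}$. The main obstacle is to identify, for each such $Y$, the inner sum with $\Deg(\varphi^{\textup{an}})$: one has to know that the generic degree of $\tilde{\varphi}$ over every top-dimensional component of $\tilde{\mathfrak{X}}$ is one and the same number and equals the degree of $\varphi^{\textup{an}}$ on the generic fibres — a statement that really uses that $\mathfrak{X}$ and $\mathfrak{X}'$ are models of $X$ and $X'$ and that $\varphi$ is proper, and which I would extract from the degree formalism of \cite{Gub4} (resp.\ \cite{Gub1}). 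Granting it, (iii) follows in the basic case, and the general case by propagating through the reduction steps of the first paragraph.
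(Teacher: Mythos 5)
Your overall strategy is the paper's: reduce to the case where the generic fibre is irreducible and reduced and the special fibre is reduced, get (ii) from multilinearity and symmetry of intersection numbers, and prove (iii) componentwise via the projection formula, importing the identity $\sum_{Y'}[\kappa(Y'):\kappa(Y)]=\Deg(\varphi^{\textup{an}})$ from Gubler's degree formalism (the paper cites equation (3) in the proof of \cite[Proposition 4.5]{Gub4} for exactly this), so deferring that identity is consistent with the paper's level of detail.

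Two steps need repair. First, in (i) the inference ``$\{\zeta_Y\}$ meets each $\mathfrak{U}^{\textup{an}}$ in finitely many points and these cover $X$, hence the set is closed and discrete'' does not follow: the $\mathfrak{U}^{\textup{an}}$ are compact, not open, and a set meeting every member of a cover by compacts in finitely many points need not be closed (cover $[0,1]$ by $\{0\}$ and the intervals $[1/(m+1),1/m]$, and take $S=\{1/m\;|\;m\geq 1\}$). You would need local finiteness of the cover $(\mathfrak{U}^{\textup{an}})$ in the Berkovich topology, which is not immediate since the reduction map is anticontinuous. The paper's argument avoids this: $\red^{-1}(Y)$ is \emph{open} (preimages of Zariski-closed sets under $\red$ are open), contains $\zeta_Y$, and contains no other point of the support; these open sets cover $X$ and each meets the support in at most one point, which yields closedness and discreteness directly. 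Second, in (iii) your final sum runs over the \emph{proper} components $Y'$ of $\tilde{\mathfrak{X}}'$, but for the inner sum over \emph{all} $Y'$ with $\tilde{\varphi}(Y')=Y$ to compute $\Deg(\varphi^{\textup{an}})$ you must also know that when $Y$ is proper, every component $Y'$ lying over it is proper (it is closed in the proper scheme $\tilde{\varphi}^{-1}(Y)$); you only state the converse direction. With these two repairs your argument coincides with the paper's.
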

	\begin{proof}
		 ii) follows from symmetry and multilinearity of the intersection product (\cite[Proposition 2.5]{Fu2}). For iii) we reduce first to the case where $\mathfrak{X}'$ and $\mathfrak{X}$ have reduced special fibre. Let $\mathfrak{Y}'$ respectively $\mathfrak{Y}$ be the canonical formal models with reduced special fibre as in \ref{measure}. This construction is functorial and we obtain a commutative diagram
		 \[
		 	\xymatrix{
				\mathfrak{Y}' \ar[d]_{\iota'} \ar[r]^{\varphi'} & \mathfrak{Y} \ar[d]_{\iota} \\
				\mathfrak{X}' \ar[r]_{\varphi} & \mathfrak{X}
			}
		 \]
		 Assuming that we know the claim for reduced special fibres we obtain
		 \begin{align*}
		 	\Deg(\varphi^{\textup{an}})c_1(\mathfrak{L}_1)\wedge...\wedge c_1(\mathfrak{L}_n)&=\Deg(\varphi^{\prime an}) (\iota^{\textup{an}})_\ast (c_1(\iota^{\ast}\mathfrak{L}_1)\wedge...\wedge c_1(\iota^{\ast}\mathfrak{L}_n)) \\
		 	&=(\iota^{\textup{an}})_{\ast}(\varphi^{\prime an})_{\ast}(c_1(\varphi^{\prime \ast}\iota^{\ast}\mathfrak{L}_1)\wedge...\wedge c_1(\varphi^{\prime \ast}\iota^{\ast}\mathfrak{L}_n)) \\
		 	&=(\varphi^{\textup{an}})_{\ast}(\iota^{\prime an})_{\ast}(c_1(\iota^{\prime \ast}\varphi^{\ast}\mathfrak{L}_1)\wedge...\wedge c_1(\iota^{\prime \ast}\varphi^{\ast}\mathfrak{L}_n)) \\
		 	&=(\varphi^{\textup{an}})_{\ast} (c_1(\varphi^{\ast}\mathfrak{L}_1)\wedge...\wedge c_1(\varphi^{\ast}\mathfrak{L}_n)).
		 \end{align*} 
		 So from now on assume that $\mathfrak{X}'$ and $\mathfrak{X}$ have reduced special fibre. Let $Y$ be an irreducible component of $\tilde{\mathfrak{X}}$ with corresponding Shilov point $\zeta_Y$. Let $\zeta_1,...,\zeta_r$ be the preimages of $\zeta_Y$ under $\varphi^{\textup{an}}$ with corresponding irreducible components $Y_1,...,Y_r$ of $\tilde{\mathfrak{X}'}$. If $Y$ is proper then clearly all the $Y_i$ are proper. If on the other hand one of the $Y_i$ is proper then $Y$ is proper by \cite[Proposition 12.59]{GW}. In this case we can use the projection formula to calculate:
		 \begin{align*}
		 		(\varphi^{\textup{an}})_{\ast}\left(c_1(\varphi^{\ast}\mathfrak{L}_1)\wedge...\wedge c_1(\varphi^{\ast}\mathfrak{L}_n)\right)(\zeta_Y)&=\sum_{i=1}^r c_1(\varphi^{\ast}\mathfrak{L}_1)\wedge...\wedge c_1(\varphi^{\ast}\mathfrak{L}_n)(\zeta_i) \\
		 		&=\sum_{i=1}^r \Deg_{\mathfrak{L}_1,...,\mathfrak{L}_n}(\tilde{\varphi}_{\ast}Y_i) \\
		 		&=\sum_{i=1}^r \Deg_{\mathfrak{L}_1,...,\mathfrak{L}_n}(Y)\cdot [\tilde{K}(Y_i):\tilde{K}(Y)]
		 \end{align*}
		 As already mentioned in Definition \ref{irreduciblecomponents}, $\varphi^{\textup{an}}$ is finite outside a lower dimensional analytic subset. Hence we may apply equation (3) in the proof of \cite[Proposition 4.5]{Gub4} to see that the last term in the display equals $\Deg(\varphi^{\textup{an}})\cdot c_1(\mathfrak{L}_1)\wedge...\wedge c_1(\mathfrak{L}_n)(\zeta_Y)$. \\
		 On the other hand, if $Y$ is an irreducible component of $\tilde{\mathfrak{X}}'$ whose image is not an irreducible component of $\tilde{\mathfrak{X}}$ then its degree with respect to the line bundles $\varphi^{\ast}\mathfrak{L}_1,...,\varphi^{\ast}\mathfrak{L}_n$ is $0$ by the projection formula, as the image is of lower dimension. This proves iii). \\ 
		 For i) let $\mathfrak{X}^{\textup{an}}=\sum_j m_j X_j$ be the decomposition into prime cycles. It is then enough to prove the claim for each $X_j$ and by definition of the measure we may hence assume that $\mathfrak{X}$ has irreducible and reduced generic fibre and reduced special fibre. Let $S$ be the set of all $\zeta_Y$ where $Y$ is a proper irreducible component of $\tilde{\mathfrak{X}}$ with $\Deg_{\mathfrak{L}_1,...,\mathfrak{L}_n}(Y)\neq 0$. Then $S$ is discrete as $\red^{-1}(Y)$ is an open neighbourhood of $\zeta_Y$ which does not contain any other points of $S$. Furthermore $X$ is the union of all $\red^{-1}(Y)$ where $Y$ runs over all irreducible components of $\tilde{\mathfrak{X}}$ and as all of these sets contain at most one point of $S$ and by paracompactness of $X$, every $x\notin S$ has an open neighbourhood which does not intersect $S$ and hence $S$ is closed. By definition $c_1(\mathfrak{L}_1)\wedge...\wedge c_1(\mathfrak{L}_n)$ is of the desired form and its support is contained in the relative interior of $X$ over $K$ by Corollary \ref{CDLemma}.
	\end{proof}
	\begin{Lemma}
		\label{switch} Let $\mathfrak{X}$ be an admissible formal scheme over $K^\circ$ of dimension $n+1$ with boundaryless generic fibre $\mathfrak{X}^{\textup{an}}$ and $L_0,...,L_n$ line bundles on $\mathfrak{X}^{\textup{an}}$ endowed with formal metrics corresponding to the models $\mathfrak{L}_0,...,\mathfrak{L}_n$ on $\mathfrak{X}$. Suppose that $L_0=L_1=\mathcal{O}_{\mathfrak{X}^{\textup{an}}}$, denote by $\|\cdot\|_0$ and $\|\cdot\|_1$ the metrics on $L_0$ respectively $L_1$ and set $f_0:=-\log\|1\|_0$, $f_1:=-\log\|1\|_1$. Suppose that $f_0$ and $f_1$ have compact support. Then
		\[
			\int_{\mathfrak{X}^{\textup{an}}} f_0 \;c_1(\mathfrak{L_1})\wedge...\wedge c_1(\mathfrak{L}_n)=\int_{\mathfrak{X}^{\textup{an}}} f_1 \;c_1(\mathfrak{L}_0)\wedge c_1(\mathfrak{L}_2)\wedge...\wedge c_1(\mathfrak{L}_n).
		\]
	\end{Lemma}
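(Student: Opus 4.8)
The plan is to reduce to the situation of the Remark preceding Proposition~\ref{firstproperties} and then to invoke the commutativity of the intersection product of Cartier divisors with cycles on admissible formal schemes from \cite{Gub4}. Throughout, write $\mathfrak{L}_0,\mathfrak{L}_1$ for the formal models inducing $\|\cdot\|_0,\|\cdot\|_1$.

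First I would carry out two reductions. Since both sides of the asserted identity are additive in $\mathfrak{X}$ under the decomposition $\mathfrak{X}^{\textup{an}}=\sum_j m_j X_j$ into prime cycles (the measures decompose as in \ref{measure}, the functions $f_0,f_1$ merely restrict to the $X_j$ keeping compact support, and each $X_j$ stays boundaryless), I may assume $\mathfrak{X}^{\textup{an}}$ is irreducible and reduced. Next, using the canonical finite morphism $\iota\colon\mathfrak{X}'\to\mathfrak{X}$ onto a formal model with reduced special fibre which restricts to the identity on the generic fibre (see \ref{measure}), the compatibility $c_1(\mathfrak{L}_i)\wedge\dots=(\iota^{\textup{an}})_{\ast}\bigl(c_1(\iota^{\ast}\mathfrak{L}_i)\wedge\dots\bigr)$, and $f_i\circ\iota^{\textup{an}}=-\log\|1\|_{\iota^{\ast}\mathfrak{L}_i}$ (as $\iota^{\textup{an}}=\id$), I may further assume $\tilde{\mathfrak{X}}$ is reduced. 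In particular every irreducible component of $\tilde{\mathfrak{X}}$ is now proper by Corollary~\ref{CDLemma}.

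Let $D_0:=\Div(1)$ and $D_1:=\Div(1)$ be the vertical Cartier divisors on $\mathfrak{X}$ obtained by viewing the section $1$ of $L_0=L_1=\mathcal{O}_{\mathfrak{X}^{\textup{an}}}$ as a meromorphic section of $\mathfrak{L}_0$, resp.\ $\mathfrak{L}_1$; then $\mathcal{O}(D_i)\cong\mathfrak{L}_i$ and $f_i(\zeta_Y)=\ord_Y(D_i)$ for every $Y\in\irr(\tilde{\mathfrak{X}})$, so $\cyc(D_i)=\sum_Y f_i(\zeta_Y)\,Y$. Applying the Remark with the bundle trivial on the generic fibre taken to be $\mathfrak{L}_0$ and the remaining tuple $\mathfrak{L}_1,\dots,\mathfrak{L}_n$, and then using $\mathfrak{L}_1\cong\mathcal{O}(D_1)$ together with the definition of the degree with respect to a line bundle as intersection with a representing Cartier divisor, gives
\[
\int_{\mathfrak{X}^{\textup{an}}} f_0\, c_1(\mathfrak{L}_1)\wedge\dots\wedge c_1(\mathfrak{L}_n)=\Deg_{\mathfrak{L}_1,\dots,\mathfrak{L}_n}(\cyc(D_0))=\Deg_{\mathfrak{L}_2,\dots,\mathfrak{L}_n}\bigl(D_1.\cyc(D_0)\bigr).
\]
Symmetrically, applying the Remark with the trivial-on-generic-fibre bundle taken to be $\mathfrak{L}_1$ and the tuple $\mathfrak{L}_0,\mathfrak{L}_2,\dots,\mathfrak{L}_n$, and using $\mathfrak{L}_0\cong\mathcal{O}(D_0)$, yields
\[
\int_{\mathfrak{X}^{\textup{an}}} f_1\, c_1(\mathfrak{L}_0)\wedge c_1(\mathfrak{L}_2)\wedge\dots\wedge c_1(\mathfrak{L}_n)=\Deg_{\mathfrak{L}_0,\mathfrak{L}_2,\dots,\mathfrak{L}_n}(\cyc(D_1))=\Deg_{\mathfrak{L}_2,\dots,\mathfrak{L}_n}\bigl(D_0.\cyc(D_1)\bigr).
\]

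It then remains to prove $\Deg_{\mathfrak{L}_2,\dots,\mathfrak{L}_n}(D_1.\cyc(D_0))=\Deg_{\mathfrak{L}_2,\dots,\mathfrak{L}_n}(D_0.\cyc(D_1))$. Writing $\cyc(D_i)=D_i.[\mathfrak{X}]$, this follows from the associativity and commutativity of the refined intersection product of Cartier divisors with cycles on the admissible formal scheme $\mathfrak{X}$, which identify $D_1.(D_0.[\mathfrak{X}])$ with $D_0.(D_1.[\mathfrak{X}])$ up to rational equivalence in the Chow group of $\tilde{\mathfrak{X}}$ (see \cite{Gub4}), together with the fact that $\Deg_{\mathfrak{L}_2,\dots,\mathfrak{L}_n}$ factors through rational equivalence. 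I expect the main obstacle to be purely the bookkeeping: matching the conventions of \cite{Gub4} for $\cyc(-)$ and for the refined intersection product, checking that the commutativity statement is available in the possibly non-regular formal setting, and verifying carefully that the two reductions above preserve boundarylessness of the generic fibre and the compact-support hypothesis on $f_0,f_1$; once these points are settled the identity is formal.
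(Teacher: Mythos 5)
Your proposal is correct and follows essentially the same route as the paper: reduce to irreducible reduced generic fibre and reduced special fibre, use Corollary \ref{CDLemma} to get properness of the irreducible components, identify each integral with $\Deg_{\mathfrak{L}_\bullet}(\cyc(\Div_{\mathfrak{L}_i}(1)))$ as in the Remark, and conclude by commutativity of the intersection product from \cite[Theorem 5.9]{Gub4}. The only difference is cosmetic: you unpack the commutativity step as $\Deg_{\mathfrak{L}_2,\dots,\mathfrak{L}_n}(D_1.\cyc(D_0))=\Deg_{\mathfrak{L}_2,\dots,\mathfrak{L}_n}(D_0.\cyc(D_1))$, whereas the paper cites the symmetry of the degree in the two vertical divisors directly.
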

	\begin{proof}
		Let $\mathfrak{X}^{\textup{an}}=\sum_j m_j X_j$ be the decomposition into prime cycles. It is enough to prove the claim for the closures $\overline{X}_j$ of $X_j$ in $\mathfrak{X}$. We may hence assume that $\mathfrak{X}^{\textup{an}}$ is irreducible and reduced. Furthermore by passing to a dominating model as in \ref{measure}, we may assume that the special fibre $\tilde{\mathfrak{X}}$ of $\mathfrak{X}$ is reduced. As $\mathfrak{X}^{\textup{an}}$ has no boundary, every irreducible component of $\tilde{\mathfrak{X}}$ is proper by Corollary \ref{CDLemma} and hence using commutativity of the intersection product (\cite[Theorem 5.9]{Gub4}) we obtain
		\begin{align*}
			\int_{\mathfrak{X}^{\textup{an}}} f_0\;c_1(\mathfrak{L}_1)\wedge ... \wedge c_1(\mathfrak{L}_n)&=\sum_{\substack{Y\in\irr(\tilde{\mathfrak{X}})}} f_0(\zeta_Y)\cdot \Deg_{\mathfrak{L}_1,...,\mathfrak{L}_n}(Y) \\
			&= \Deg_{\mathfrak{L}_1,...,\mathfrak{L}_n}\left(\cyc(\Div_{\mathfrak{L}_0}(1))\right) \\
			&= \Deg_{\mathfrak{L}_0, \mathfrak{L}_2,...,\mathfrak{L}_n}\left(\cyc(\Div_{\mathfrak{L}_1}(1))\right) \\
			&=\sum_{\substack{Y\in\irr(\tilde{\mathfrak{X}})}} f_1(\zeta_Y)\cdot \Deg_{\mathfrak{L}_0,\mathfrak{L}_2,...,\mathfrak{L}_n}(Y) \\
			&=\int_{\mathfrak{X}^{\textup{an}}} f_1\;c_1(\mathfrak{L}_0)\wedge c_1(\mathfrak{L}_2)\wedge...\wedge c_1(\mathfrak{L}_n).
		\end{align*}
	\end{proof}
	\begin{Def}
		Let $X$ be an $n$-dimensional paracompact strictly $K$-analytic space and $\overline{L}_1,...,\overline{L}_n$ formally metrized line bundles on $X$. Let $\mathfrak{X}$ be a formal model of $X$ on which there exist formal models $\mathfrak{L}_1,...,\mathfrak{L}_n$ of $\overline{L}_1,...,\overline{L}_n$. The existence of such a formal model follows from Remark \ref{paracompact}. We then define
		\[
			c_1(\overline{L}_1)\wedge...\wedge c_1(\overline{L}_n):=c_1(\mathfrak{L}_1)\wedge...\wedge c_1(\mathfrak{L}_n).
		\]
		Note that this definition is independent of the choice of $\mathfrak{X}$ and $\mathfrak{L}_1,...,\mathfrak{L}_n$ by the projection formula. If the metrics on $\overline{L}_1,...,\overline{L}_n$ are semipositive then $c_1(\overline{L}_1)\wedge...\wedge c_1(\overline{L}_n)$ is a positive measure.
	\end{Def}
	\begin{Lemma}
		\label{local} Let $W_2$ be a paracompact strictly $K$-analytic space of dimension $n$ and $W_1\subseteq W_2$ a paracompact strictly $K$-analytic subdomain of $W_2$. Then for formally metrized line bundles $\overline{L}_1,...,\overline{L}_n$ on $W_2$, we have $c_1(\overline{L}_1)\wedge...\wedge c_1(\overline{L}_n)=c_1\left(\overline{L}_1\Big|_{W_1}\right)\wedge...\wedge c_1\left(\overline{L}_n\Big|_{W_1}\right)$ in the topological interior $\overset{\circ}{W_1}$ of $W_1$ in $W_2$.
	\end{Lemma}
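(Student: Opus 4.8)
The plan is to prove the equality locally on $\overset{\circ}{W_1}$ by comparing both measures to a common third measure attached to a small formal open subscheme. Since both sides are discrete measures by Proposition~\ref{firstproperties}, it is enough to show that every $x\in\overset{\circ}{W_1}$ has an open neighbourhood on which they agree. First I would reduce to the case of a formal open subscheme: choose a formal model $\mathfrak{X}_2$ of $W_2$ on which $\overline{L}_1,\dots,\overline{L}_n$ have formal models $\mathfrak{L}_1,\dots,\mathfrak{L}_n$ (Remark~\ref{paracompact}) and a compact strictly $K$-analytic domain $V\subseteq\overset{\circ}{W_1}$ which is a neighbourhood of $x$. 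After an admissible formal blow-up of $\mathfrak{X}_2$ --- which, being proper and an isomorphism on generic fibres, changes neither the metrized bundles nor the measure $c_1(\overline{L}_1)\wedge\dots\wedge c_1(\overline{L}_n)$ by the projection formula of Proposition~\ref{firstproperties}(iii) --- I may assume $V=\mathfrak{U}^{\textup{an}}$ for a formal open subscheme $\mathfrak{U}\subseteq\mathfrak{X}_2$ (Raynaud's theorem, cf.\ Remark~\ref{paracompact}). As $V\subseteq\overset{\circ}{W_1}$ is a neighbourhood of $x$ and $\overset{\circ}{W_1}$ is open in $W_2$, the topological interior $\overset{\circ}{V}$ of $V$ in $W_2$ contains $x$ and agrees with the interior of $V$ in $W_1$, so it suffices to prove
\[
	\bigl(c_1(\overline{L}_1)\wedge\dots\wedge c_1(\overline{L}_n)\bigr)\big|_{\overset{\circ}{V}}=\bigl(c_1(\overline{L}_1|_V)\wedge\dots\wedge c_1(\overline{L}_n|_V)\bigr)\big|_{\overset{\circ}{V}}
\]
once for $(W_2,\mathfrak{X}_2)$ and once, by the identical argument applied to a formal model of $W_1$, with $W_1$ in place of $W_2$; the two statements together yield the Lemma.

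To establish this equality I would unwind the definition in \ref{measure}. Reducing to the case where $\mathfrak{X}_2$ has irreducible and reduced generic fibre and reduced special fibre, the left-hand measure is $\sum_Y\Deg_{\mathfrak{L}_1,\dots,\mathfrak{L}_n}(Y)\,\delta_{\zeta_Y}$, the sum running over the proper irreducible components $Y$ of $\tilde{\mathfrak{X}}_2$. The restrictions $\mathfrak{L}_i|_{\mathfrak{U}}$ are formal models of $(V,L_i|_V)$, and since $\tilde{\mathfrak{U}}$ is Zariski open in $\tilde{\mathfrak{X}}_2$ and an open dense subscheme of a proper scheme which is itself proper over the residue field must be the whole scheme, the proper irreducible components of $\tilde{\mathfrak{U}}$ are exactly those proper $Y$ with $Y\subseteq\tilde{\mathfrak{U}}$, carrying the same point $\zeta_Y$ and the same degree. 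Hence the two measures agree except possibly in the terms $\Deg(Y)\,\delta_{\zeta_Y}$ for proper $Y$ with $\emptyset\neq Y\cap\tilde{\mathfrak{U}}\subsetneq Y$, and it remains to check that for such $Y$ the Shilov point $\zeta_Y$ is not interior to $\mathfrak{U}^{\textup{an}}$ in $W_2$; equivalently, writing $Z:=\tilde{\mathfrak{X}}_2\setminus\tilde{\mathfrak{U}}$, that $\zeta_Y\in\overline{\red_{\mathfrak{X}_2}^{-1}(Z)}$.

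This last point is the crux. Since $Z\cap Y$ is a non-empty proper Zariski closed subset of the irreducible component $Y$, it suffices to show that $\zeta_Y$ --- the unique point of $W_2$ reducing to the generic point of $Y$ --- lies in the closure of $\red_{\mathfrak{X}_2}^{-1}(Z\cap Y)$. I expect this to follow, by a local computation around a point of $Z\cap Y$ (passing to a curve in $Y$ through that point and lifting it to a flat formal subscheme of $\mathfrak{X}_2$), from the elementary observation that the Gauss point of an analytic disc lies in the closure of every residue tube, propagated back to $\zeta_Y$ by a limiting argument. The main obstacles are precisely this topological statement about Berkovich reduction maps and the step realizing the domain $V$ as a formal open subscheme of a blow-up of the fixed model $\mathfrak{X}_2$; both belong to the standard toolkit of formal and rigid geometry, but have to be carried out over an arbitrary (not necessarily algebraically closed) base field and without regularity hypotheses on the model. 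Everything else is bookkeeping with the definition of the measure and the projection formula.
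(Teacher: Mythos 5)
Your overall architecture (reduce to comparing the measure on $W_2$ with the measure on a small compact domain $V$ realized as $\mathfrak{U}^{\textup{an}}$ for a formal open $\mathfrak{U}$ in a blow-up of the model, then match proper irreducible components) is close in spirit to the paper's, and your bookkeeping with the definition of the measure is sound: the only possible discrepancies are indeed the points $\zeta_Y$ for proper components $Y$ with $\emptyset\neq Y\cap\tilde{\mathfrak{U}}\subsetneq Y$. But the step you yourself flag as the crux --- that for such $Y$ the point $\zeta_Y$ lies in $\overline{\red^{-1}(Z)}$ with $Z=\tilde{\mathfrak{X}}_2\setminus\tilde{\mathfrak{U}}$, so that $\zeta_Y\notin\overset{\circ}{V}$ --- is left entirely unproven, and the sketch you offer (lifting a curve through a point of $Z\cap Y$ to a flat formal subscheme and invoking a Gauss-point observation on discs) is not an argument; it is precisely the hard topological content of the lemma. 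Until that closure statement about residue tubes is established over an arbitrary non-archimedean field and for an arbitrary admissible model, the proof is incomplete at its central point.

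The paper sidesteps this issue entirely, and it is worth seeing how, because the detour is the real content of its proof. Instead of the topological interior of the tube, it works with the \emph{relative interior} $\Int(W_i)$ of $W_i$ over $K$: by Proposition \ref{firstproperties}(i) the measures are supported there, by Corollary \ref{CDLemma} (Temkin's reduction of germs, proved in the appendix) a point $x$ lies in $\Int(W_i)$ if and only if the closure of $\red(x)$ in the special fibre of \emph{any} model is proper, and an elementary argument shows $x\in\Int(W_1)\Leftrightarrow x\in\Int(W_2)$ for $x\in\overset{\circ}{W_1}$. Thus for $x\notin\Int(W_2)$ both measures vanish at $x$ with no boundary analysis needed, and for $x\in\Int(W_2)$ one applies \cite[Corollary 5.4]{BL} to quasi-compact opens $\mathfrak{U}\subseteq\mathfrak{X}_1$, $\mathfrak{V}\subseteq\mathfrak{X}_2$ around $\red(x)$ to make $\mathfrak{U}'\rightarrow\mathfrak{V}'$ an open immersion; properness of both $Y'$ and $Y'\cap\tilde{\mathfrak{U}}'$ (again from Corollary \ref{CDLemma}) forces $Y'=Y'\cap\tilde{\mathfrak{U}}'$, identifying the components and their degrees. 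If you want to salvage your route, the cleanest fix is to replace your topological claim by exactly this criterion: show that your exceptional $\zeta_Y$ is not in the relative interior of $W_2$ over $K$ (its reduction in the model of $V$ has non-proper closure), rather than trying to prove it is not in the topological interior of $V$.
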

	\begin{proof}
		Let $W_2=\sum_j m_j X_j$ be the decomposition of $W_2$ into prime cycles and for each $j$ let $(X_{ij})_{i\in I_j}$ be the irreducible components of $W_1$ with $X_{ij}\subseteq X_j\cap W_1$. Then $W_1=\sum_{j,i} m_jX_{ij}$ is the decomposition of $W_1$ into prime cycles. Furthermore, the intersection of any two irreducible components of $W_1$ does not contain a Shilov point as it is of lower dimension and hence does not meet the support of the measures of interest. By linearity in the irreducible components we may therefore assume that $W_1$ and $W_2$ are irreducible and reduced. Let $\mathfrak{X}_2$ be a formal model of $W_2$ with reduced special fibre on which there exist formal models of $\overline{L}_1,...,\overline{L}_n$. Let $\mathfrak{X}_1$ be a formal model of $W_1$ which exists by paracompactness of $W_1$, see Remark \ref{paracompact}. After possibly blowing up, the inclusion $W_1\hookrightarrow W_2$ induces a morphism $\iota:\mathfrak{X}_1\rightarrow\mathfrak{X}_2$ (\cite[Theorem 8.4.3]{Bo}). Let $x\in\overset{\circ}{W_1}$. As both measures are discrete it is enough to show that they have the same mass at $x$. Let $\Int(W_i)$ denote the relative interior of $W_i$ over $K$ in the sense of \cite[1.5]{Be4}. If $x\in\Int(W_2)$ then $x\in\Int(W_1)$ by \cite[Proposition 1.5.5 (ii)]{Be4}. Conversely if $x\in\Int(W_1)$ then there exists an affinoid neighbourhood $V$ of $x$ in $W_1$ such that $x$ is in the relative interior of $V$ over $K$. But $V$ is also a neighbourhood of $x$ in $W_2$ as $x\in\overset{\circ}{W_1}$ and therefore $x\in\Int(W_2)$. Hence $x\in\Int(W_1)$ if and only if $x\in\Int(W_2)$. If this is not the case then by definition of the measures and Corollary \ref{CDLemma}, both of them are zero at $x$. So assume that $x\in\Int(W_1)$. Choose a locally finite cover $(\mathfrak{U}_i)_{i\in I}$ of $\mathfrak{X}_1$ by open affine formal subschemes and let $\mathfrak{U}$ be the union of all $\mathfrak{U}_i$ which contain $\red(x)$. Then $\mathfrak{U}$ is an open and quasi-compact formal subscheme of $\mathfrak{X}_1$. Analogously choose a cover $(\mathfrak{V}_i)_{i\in J}$ of $\mathfrak{X}_2$ by open affine formal subschemes. As $\iota(\mathfrak{U})$ is quasi-compact, there is a finite subcover of it. Let $\mathfrak{V}$ be the union of the sets in this subcover and add all $\mathfrak{V}_i$ with $\red(x)\in \mathfrak{V}_i$. Then also $\mathfrak{V}$ is an open and quasi-compact formal subscheme of $\mathfrak{X}_2$ and $\iota$ induces a morphism $\mathfrak{U}\rightarrow\mathfrak{V}$. By \cite[Corollary 5.4]{BL} there is an admissible formal blowing up $\mathfrak{V}'\rightarrow\mathfrak{V}$ such that the induced morphism $\mathfrak{U}'\rightarrow\mathfrak{V}'$ is an open immersion. \\
		Let $Y$ be an irreducible component of $\tilde{\mathfrak{X}}_2$ with corresponding divisorial point $\zeta_{Y}=x$. Then $Y\subseteq\tilde{\mathfrak{V}}$ by definition and hence we may calculate the mass of $c_1(\overline{L}_1)\wedge...\wedge c_1(\overline{L}_n)$ at $x$ using $\mathfrak{V}$. By Proposition \ref{firstproperties} iii) we may also use $\mathfrak{V}'$. So let $Y'$ be the irreducible component of $\tilde{\mathfrak{V}}'$ corresponding to $x$. Since $\red(x)\in\tilde{\mathfrak{U}}'$ we see that $Y\cap\tilde{\mathfrak{U}}'$ is an irreducible component of $\tilde{\mathfrak{U}}'$. Additionally, by Corollary \ref{CDLemma}, $Y'$ and $Y'\cap\tilde{\mathfrak{U}}'$ are proper and hence $Y'=Y'\cap\tilde{\mathfrak{U}}'$ and it is an irreducible component of $\tilde{\mathfrak{U}}'$. It's image in $\tilde{\mathfrak{U}}$ is a proper irreducible component of $\tilde{\mathfrak{U}}$ and hence also an irreducible component of $\tilde{\mathfrak{X}}_1$. By the same argumentation as above we may use $\mathfrak{U}'$ instead of $\mathfrak{X}_1$ to calculate the mass of $c_1\left(\overline{L}_1\Big|_{W_1}\right)\wedge...\wedge c_1\left(\overline{L}_n\Big|_{W_1}\right)$ at $x$. This shows that the mass of the two measures is equal at $x$ in this case. \\
		Conversely, if $Y$ is an irreducible component of $\tilde{\mathfrak{X}}_1$ with corresponding divisorial point $\zeta_Y=x$ then $Y\subseteq\tilde{\mathfrak{U}}$ by definition. Again we may use $\mathfrak{U}'$ to calculate the mass at $x$ and we denote the corresponding irreducible component by $Y'$. Then the closure $\overline{Y}'$ of $Y'$ in $\tilde{\mathfrak{V}}'$ is an irreducible component of $\tilde{\mathfrak{V}}'$ with corresponding divisorial point $\zeta_{\overline{Y}'}=x$ and hence by the above $\overline{Y}'=Y'$. Therefore $c_1(\overline{L}_1)\wedge...\wedge c_1(\overline{L}_n)$ and $c_1\left(\overline{L}_1\Big|_{W_1}\right)\wedge...\wedge c_1\left(\overline{L}_n\Big|_{W_1}\right)$ coincide at $x$.
	\end{proof}
	\begin{Def}
		Let $X$ be a Hausdorff topological space. A measure $\mu$ on the $\sigma$-algebra of Borel sets of $X$ is called a \textit{Radon measure} if
		\begin{enumerate}
			\item for every $x\in X$ there exists an open neighbourhood $U$ of $X$ with $\mu(U)<\infty$,
			\item for every open set $U\subseteq X$ we have $\mu(U)=\sup\left\{\mu(K)\;\Big|\;K\subseteq U,\;K\text{ compact}\right\}$,
			\item for every Borel set $B$ of $X$ we have $\mu(B)=\inf\left\{\mu(U)\;\Big|\;B\subseteq U,\;U\text{ open}\right\}$.
		\end{enumerate}
	\end{Def}
	\begin{Remark}
		It follows from Proposition \ref{firstproperties} i) that the measure defined in \ref{measure} is a Radon measure.
	\end{Remark}
	\begin{Def}
		\label{measureopen} Let $V$ be a strictly $K$-analytic Hausdorff space of dimension $n$ and $\overline{L}_1,...,\overline{L}_n$ semipositive piecewise $\mathbb{Q}$-linear metrized line bundles on $V$. The assignment
		\begin{align*}
			C_c(V)&\rightarrow\mathbb{R}_{\geq 0}, \\
			f&\mapsto\frac{1}{e_1\cdot...\cdot e_n}\int_{W} f\; c_1\left(\overline{L}_1^{e_1}\Big|_{W}\right)\wedge...\wedge c_1\left(\overline{L}_n^{e_n}\Big|_{W}\right)
		\end{align*}
		where $W$ is a compact strictly $K$-analytic domain with $\supp(f)\subseteq \overset{\circ}{W}$ and $e_1,...,e_n\in\mathbb{N}$ are non-zero integers such that $\overline{L}_i^{e_i}\Big|_{W}$ is a formally metrized line bundle, yields a positive linear functional on the space $C_c(V)$ of continuous functions with compact support in $V$ and hence by the Riesz Representation Theorem (see \cite[Theorem 2.14]{Ru}) a positive Radon measure on $V$ which we again denote by $c_1(\overline{L}_1)\wedge...\wedge c_1(\overline{L}_n)$. Note that the integral does neither depend on the choice of $W$ by Lemma \ref{local} nor on the choice of the $e_i$ by Proposition \ref{firstproperties} and that we can always find such a $W$ together with the $e_i$ by choosing for every point in $\supp(f)$ a compact strictly $K$-analytic neighbourhood where some powers of the $\overline{L}_i$ are formally metrized and using compactness of $\supp(f)$.
	\end{Def}
	\begin{Remark}
		It is easy to see that Proposition \ref{firstproperties}, Lemma \ref{switch} and Lemma \ref{local} remain true if we replace formal metrics by piecewise $\mathbb{Q}$-linear metrics.
	\end{Remark}
	\begin{Proposition}
		\label{convergence}
		Let $X$ be a separated scheme of finite type over $K$ of dimension $n$ with line bundles $L_1,...,L_n$ on $X$. Let $V$ be an open subset of $X^{\textup{an}}$ and $\|\cdot\|_i$ a continuous metric on $L_i^{\textup{an}}\Big|_{V}$ for each $i$. Denote by $\overline{L}_1,...,\overline{L}_n$ the line bundles $L_1^{\textup{an}}\Big|_V,...,L_n^{\textup{an}}\Big|_V$, endowed with these metrics. For $i\in\{1,...,n\}$ let $(\|\cdot\|_{i,k})_{k\in\mathbb{N}}$ be piecewise $\mathbb{Q}$-linear metrics on $L_i\Big|_V$ converging uniformly to the continuous metric $\|\cdot\|_i$ on $L_i\Big|_{V}$. Suppose that all $\|\cdot\|_{i,k}$ are semipositive in $V$. Denote by $\overline{L}_{i,k}$ the line bundle $L_i^{\textup{an}}\Big|_{V}$ endowed with the metric $\|\cdot\|_{i,k}$. Then the measures $c_1\left(\overline{L}_{1,k}\right)\wedge ...\wedge c_1\left(\overline{L}_{n,k}\right)$ converge weakly to a positive Radon measure on $V$.
	\end{Proposition}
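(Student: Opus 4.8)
The plan is to show that for each $f\in C_c(V)$ the numbers
\[
T_k(f):=\int_V f\;c_1(\overline{L}_{1,k})\wedge\cdots\wedge c_1(\overline{L}_{n,k})
\]
form a Cauchy sequence, that the limit functional $T$ is positive and locally bounded on $C_c(V)$, and then to invoke the Riesz representation theorem (\cite[Theorem 2.14]{Ru}). Positivity of every $T_k$, and hence of $T$, is immediate from Definition \ref{measureopen}, since each $\|\cdot\|_{i,k}$ is semipositive. Writing $\varepsilon_{k,l}:=\max_i\sup_V\bigl|\log\|\cdot\|_{i,k}-\log\|\cdot\|_{i,l}\bigr|$, uniform convergence gives $\varepsilon_{k,l}\to 0$, so it suffices to produce, for each fixed $f$, constants $C_f,C_W$ with $|T_k(f)-T_l(f)|\le C_f\,\varepsilon_{k,l}$ and $|T_k(f)|\le C_W\|f\|_\infty$ uniformly in $k$; the second bound supplies the equicontinuity needed to pass from a dense set of test functions to all of $C_c(V)$.

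First I would localize: choose compact strictly $K$-analytic domains $W_1\subseteq W_2$ with $\supp f\subseteq\overset{\circ}{W_1}\subseteq W_1\subseteq\overset{\circ}{W_2}\subseteq W_2\subseteq V$ and, using quasi-compactness of $W_2$ and the $G$-locality of piecewise $\mathbb{Q}$-linearity, integers $e_{i,k}$ so that $\overline{L}_{i,k}^{\,e_{i,k}}\big|_{W_2}$ is formally metrized; by Lemma \ref{local} and Definition \ref{measureopen} the quantity $T_k(f)$ may then be computed with these formal metrics on $\overset{\circ}{W_1}$. By multilinearity of the Monge-Amp\`ere measure in the metrized line bundles (Proposition \ref{firstproperties} ii), valid also for piecewise $\mathbb{Q}$-linear metrics) I would telescope
\[
c_1(\overline{L}_{1,k})\wedge\cdots\wedge c_1(\overline{L}_{n,k})-c_1(\overline{L}_{1,l})\wedge\cdots\wedge c_1(\overline{L}_{n,l})=\sum_{j=1}^{n}\nu_j\wedge c_1\bigl(\overline{\mathcal{O}}^{\,g_j}\bigr),
\]
where $\nu_j:=c_1(\overline{L}_{1,l})\wedge\cdots\wedge c_1(\overline{L}_{j-1,l})\wedge c_1(\overline{L}_{j+1,k})\wedge\cdots\wedge c_1(\overline{L}_{n,k})$ is a positive measure, $g_j:=\log\bigl(\|\cdot\|_{j,l}/\|\cdot\|_{j,k}\bigr)$ has $\|g_j\|_\infty\le\varepsilon_{k,l}$, and $c_1(\overline{\mathcal{O}}^{\,g_j})=c_1(\overline{L}_{j,k})-c_1(\overline{L}_{j,l})$ is the difference of two positive measures. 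To extract the factor $\varepsilon_{k,l}$ I would move $g_j$ out of the wedge by the integration-by-parts identity underlying Lemma \ref{switch}: first reduce $f$ to a piecewise $\mathbb{Q}$-linear model function supported in $\overset{\circ}{W_1}$ (legitimate, since such functions are sup-norm dense in $C_c(\overset{\circ}{W_1})$ by Proposition \ref{dense} applied to the trivial bundle, combined with a lattice-and-cutoff argument using that piecewise $\mathbb{Q}$-linear metrics on $\mathcal{O}$ are stable under $\max$, $\min$ and tensor product); then both $f$ and $g_j$ correspond to vertical Cartier divisors on a common formal model of $W_2$, their supports reduce into interior proper components of the special fibre, and the symmetry of the intersection product (as in the proof of Lemma \ref{switch}) yields
\[
\int_V f\;\nu_j\wedge c_1\bigl(\overline{\mathcal{O}}^{\,g_j}\bigr)=\int_V g_j\;\nu_j\wedge c_1\bigl(\overline{\mathcal{O}}^{\,f}\bigr),
\]
so that $|T_k(f)-T_l(f)|\le\varepsilon_{k,l}\sum_j\bigl\|\nu_j\wedge c_1(\overline{\mathcal{O}}^{\,f})\bigr\|_{\mathrm{TV}}$.

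The remaining task, which I expect to be the main obstacle, is a bound on $\bigl\|\nu_j\wedge c_1(\overline{\mathcal{O}}^{\,f})\bigr\|_{\mathrm{TV}}$ uniform in $k,l$; equivalently, a uniform upper bound over $W_1$ for the masses of the mixed Monge-Amp\`ere measures assembled from the varying semipositive metrics and the fixed model metric $\overline{\mathcal{O}}^{\,f}$. Since the $\|\cdot\|_{i,k}$ are eventually confined to a bounded multiplicative neighbourhood of the fixed limit metrics, such a bound should follow by comparison with those fixed metrics together with the relative-interior and properness machinery (the generalization of \cite[Lemme 6.5.1]{ChD} and Corollary \ref{CDLemma}): only interior, proper components $Y$ of the special fibre of a formal model of $W_1$ contribute, their local degrees depend only on the numerical classes of the line bundles involved, and $c_1(\overline{\mathcal{O}}^{\,f})$, having vertical divisor $\sum_i f(\zeta_{Y_i})Y_i$ with $\|f\|_\infty$ fixed, perturbs these numerically in a controlled way; alternatively one could compactify $X$ to a proper variety, extend $L_i$ and the metrics near $W_1$, and invoke the classical convergence theorem of Chambert-Loir and Gubler on a proper model. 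Granting this mass bound, the estimate $|T_k(f)|\le\|f\|_\infty\cdot\bigl(c_1(\overline{L}_{1,k})\wedge\cdots\wedge c_1(\overline{L}_{n,k})\bigr)\bigl(\overset{\circ}{W_1}\bigr)\le C_W\|f\|_\infty$ is the needed uniform bound, so $(T_k(f))$ is Cauchy for $f$ in the dense set of compactly supported model functions, the uniform bound spreads convergence to all $f\in C_c(V)$, and the limit functional $T$ is positive and locally bounded; Riesz then produces the asserted positive Radon measure on $V$.
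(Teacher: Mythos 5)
Your overall architecture --- telescoping the difference of wedge products one factor at a time, using the symmetry of Lemma \ref{switch} to trade the test function $f$ for the small function $g_j=\log(\|\cdot\|_{j,l}/\|\cdot\|_{j,k})$, and then feeding the resulting Cauchy estimate into the Riesz representation theorem --- is exactly the paper's strategy. But the step you yourself flag as ``the main obstacle,'' the bound on the total variation of $\nu_j\wedge c_1(\overline{\mathcal{O}}^{\,f})$ uniform in $k,l$, is left unresolved, and neither of your two suggested routes closes it. The ``numerical classes'' suggestion does not work because the formal models $\mathfrak{L}_{i,k}$ live on formal models of $X^{\textup{an}}$ that change with $k$, so there is no fixed ambient intersection theory in which to compare them; and the ``invoke the global Chambert-Loir--Gubler theorem'' suggestion does not apply because after extending the metrics from $W$ to a compactification they are only semipositive on $V$, not globally, so the classical mass/convergence results for globally semipositive metrics are unavailable.

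The paper's resolution of precisely this point has two ingredients you are missing. First, a reverse induction on the number $m$ of factors whose metric sequence is held constant in $k$: the measures $\nu_j$ with one additional fixed factor converge weakly by the induction hypothesis, and testing them against a fixed compactly supported function dominating $\mathbf{1}_{\supp f}$ bounds their mass uniformly in $k,l$. Second, to handle the signed factor $c_1(\overline{\mathcal{O}}^{\,f})$, the paper compactifies ($X$ proper by Vojta--Nagata, then projective by Chow and the projection formula), uses that every formal model is dominated by a projective one, and applies Serre's theorem on the special fibre to write the formal metric $e^{-f}$ as a quotient $\|\cdot\|_+/\|\cdot\|_-$ of two \emph{semipositive} formal metrics; then $\bigl|\int g_j\,\nu_j\wedge c_1(\overline{\mathcal{O}}^{\,f})\bigr|\le 2\sup|g_j|\cdot\max_{s\in\{+,-\}}\bigl(\nu_j\wedge c_1(\overline{L}_s)\bigr)(\supp f)$, and each $\nu_j\wedge c_1(\overline{L}_s)$ is a \emph{positive} measure covered by the induction hypothesis. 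Without decomposing $\overline{\mathcal{O}}^{\,f}$ into semipositive pieces and without the induction on the number of varying factors, the mass bound is circular: controlling $\|\nu_j\wedge c_1(\overline{\mathcal{O}}^{\,f})\|_{\mathrm{TV}}$ requires convergence of mixed measures with one fewer varying factor, which is the very statement being proved. A secondary issue: you apply the switch identity on the compact domain $W_2$, but Lemma \ref{switch} requires a boundaryless generic fibre (otherwise non-proper components of the special fibre obstruct the intersection-theoretic symmetry); the paper avoids this by extending all metrics to the proper space $X^{\textup{an}}$ before switching, which you should do as well.
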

	\begin{proof}
		By Vojta's version of Nagata's compactification theorem (\cite[Theorem 5.7]{V}) we may assume that $X$ is proper. We show by reverse induction over $m\in\{0,...,n\}$ that the claim holds when for some choice of pairwise different $i_1,...,i_n\in\{1,...,n\}$ the sequences $\Big(\|\cdot\|_{i_1,k}\Big)_{k\in\mathbb{N}},...,\Big(\|\cdot\|_{i_m,k}\Big)_{k\in\mathbb{N}}$ are constant with respect to $k$. The case $m=n$ is clear. So let $0\leq m < n$ and assume that the claim holds for $m+1$. For $j\in\{m+1,...,n\}$ we can write $\|\cdot\|_{i_j,k}=\|\cdot\|_{i_j,1}\otimes \|\cdot\|'_{j,k}$ for a sequence of piecewise $\mathbb{Q}$-linear metrics $\Big(\|\cdot\|'_{j,k}\Big)_{k\in\mathbb{N}}$ on $\mathcal{O}_{X^{\textup{an}}}\Big|_{V}$ converging uniformly to a continuous metric $\|\cdot\|'_j$ on $\mathcal{O}_{X^{\textup{an}}}\Big|_{V}$. Denote by $\overline{\mathcal{O}}_{j,k}$ the line bundle $\mathcal{O}_{X^{\textup{an}}}\Big|_{V}$ endowed with the metric $\|\cdot\|'_{j,k}$. We show that 
		\[
			\left(\mu_{m,k}:=c_1(\overline{L}_{i_1,1})\wedge...\wedge c_1(\overline{L}_{i_m,1})\wedge c_1(\overline{L}_{i_{m+1},k})\wedge...\wedge c_1(\overline{L}_{i_n,k})\right)_{k\in\mathbb{N}}
		\]
		is a Cauchy sequence with respect to the weak topology on the space of Borel-measures on $V$. Thus we have to show that for all continuous functions $f$ on $X$ with compact support in $V$:
		\[
			\left|\int_{V} f\;\mu_{m,k} - \int_{V} f\;\mu_{m,k'}\right| \underset{k,k'\rightarrow\infty}{\longrightarrow} 0.
		\]
		Let $W$ be a compact strictly $K$-analytic domain with $\supp(f)\subseteq\overset{\circ}{W}$ and $W\subseteq V$. By \cite[Proposition 2.7]{GM} we may extend the metrics from $W$ to $X^{\textup{an}}$ and hence assume that they are defined on the whole space. Hence by Chow's lemma and the projection formula we may assume that $X$ is projective. Then by \cite[Proposition 10.5]{Gub5} any formal model of $X$ is dominated by a projective model. Any formal line bundle on this model becomes semipositive after tensoring with $\mathcal{O}(n)$ for $n$ big enough by using Serre's theorem (\cite[Theorem II.5.17]{Ha}) on the special fibre. As a consequence one can write any formal metric on any line bundle on $X$ as a quotient of two semipositive formal metrics (on possibly different line bundles).
		We will see below, that $\mu_{m,k}(Z)$ is bounded with respect to $k$ for every compact subset $Z\subseteq V$. Hence, as the set of piecewise $\mathbb{Q}$-linear metrics is dense in the space of continuous metrics on $\mathcal{O}_{X^{\textup{an}}}$ with respect to uniform convergence (Proposition \ref{dense}), we may assume that $f=-\log\|1\|$ for a formal metric $\|\cdot\|$ on $\mathcal{O}_{X^{\textup{an}}}$. Then we can write $\|\cdot\|=\|\cdot\|_+/\|\cdot\|_-$ for two semipositive formal metrics $\|\cdot\|_+,\|\cdot\|_-$ on some line bundles $L_+$ respectively $L_-$ on $X^{\textup{an}}$. In fact $L_+=L_-$ but we will use the notation $\overline{L}_+$ and $\overline{L}_-$ to distinguish between the two metrics. Write $\overline{\mathcal{O}}_X^f$ for the line bundle $\mathcal{O}_{X^{\textup{an}}}\Big|_{V}$ endowed with the metric $\|1\|=e^{-f}$ and to shorten notation $\mu_m:=c_1(\overline{L}_{i_1,1})\wedge...\wedge c_1(\overline{L}_{i_m,1})$ which is a purely formal notation. Furthermore without loss of generality assume $i_1=1,...,i_m=m$. We have
		\begin{align*}
			\Big|&\int_{V} f\;\mu_{m,k} - \int_{V} f\;\mu_{m,k'}\Big| \\
			&= \Big|\sum_{i=1}^{n-m} \int_{V} f\; \mu_m\wedge c_1\left(\overline{L}_{m+1,k}\right)\wedge...\wedge c_1\left(\overline{L}_{m+i,k}\right)\wedge c_1\left(\overline{L}_{m+i+1,k'}\right)\wedge...\wedge c_1\left(\overline{L}_{n,k'}\right) \\
			&\phantom{\Big|\sum_{i=1}^{n}}- \int_{V} f\; \mu_m\wedge c_1\left(\overline{L}_{m+1,k}\right)\wedge...\wedge c_1\left(\overline{L}_{m+i-1,k}\right)\wedge c_1\left(\overline{L}_{m+i,k'}\right)\wedge...\wedge c_1\left(\overline{L}_{n,k'}\right)\Big| \\
			&= \Big|\sum_{i=1}^{n-m} \int_{V} f\; \mu_m\wedge...\wedge c_1\left(\overline{L}_{m+i-1,k}\right)\wedge c_1\left(\overline{L}_{m+i,1}\otimes\overline{\mathcal{O}}_{m+i,k}\right)\wedge c_1\left(\overline{L}_{m+i+1,k'}\right)\wedge... \\
			&\phantom{\Big|\sum_{i=1}^{n}}- \int_{V} f\; \mu_m\wedge...\wedge c_1\left(\overline{L}_{m+i-1,k}\right)\wedge c_1\left(\overline{L}_{m+i,1}\otimes\overline{\mathcal{O}}_{m+i,k'}\right)\wedge c_1\left(\overline{L}_{m+i+1,k'}\right)\wedge...\Big| \\
			&= \Big|\sum_{i=1}^{n-m} \int_{V} f\; \mu_m\wedge...\wedge c_1\left(\overline{L}_{m+i-1,k}\right)\wedge c_1\left(\overline{\mathcal{O}}_{m+i,k}\right)\wedge c_1\left(\overline{L}_{m+i+1,k'}\right)\wedge... \\
			&\phantom{\Big|\sum_{i=1}^{n}}- \int_{V} f\; \mu_m\wedge...\wedge c_1\left(\overline{L}_{m+i-1,k}\right)\wedge c_1\left(\overline{\mathcal{O}}_{m+i,k'}\right)\wedge c_1\left(\overline{L}_{m+i+1,k'}\right)\wedge...\Big| \\
		\end{align*}
		Since the support of $f$ is contained in $V$ and by Lemma \ref{local} these last integrals depend only on the restrictions of the metrics to $V$. Hence we may instead consider them as integrals over $X^{\textup{an}}$ which allows us to use Lemma \ref{switch} as $X^{\textup{an}}$ has no boundary (\cite[Theorem 3.4.1]{Be1}). In combination with an index shift, the last term amounts to
		\begin{align*}
			\Big|&\sum_{i=m+1}^{n} \int_{X^{\textup{an}}} -\log\|1\|'_{i,k}\; \mu_m\wedge...\wedge c_1\left(\overline{L}_{i-1,k}\right)\wedge c_1\left(\overline{\mathcal{O}}_X^f\right)\wedge c_1\left(\overline{L}_{i+1,k'}\right)\wedge... \\
			&- \int_{X^{\textup{an}}} -\log\|1\|'_{i,k'}\; \mu_m\wedge...\wedge c_1\left(\overline{L}_{i-1,k}\right)\wedge c_1\left(\overline{\mathcal{O}}_X^f\right)\wedge c_1\left(\overline{L}_{i+1,k'}\right)\wedge...\Big|
		\end{align*}
		As any point in $X^{\textup{an}}\setminus\supp(f)$ has a strictly $K$-analytic neighbourhood on which $f$ vanishes, the support of $\mu_m\wedge...\wedge c_1\left(\overline{L}_{i-1,k}\right)\wedge c_1\left(\overline{\mathcal{O}}_X^f\right)\wedge c_1\left(\overline{L}_{i+1,k'}\right)\wedge...$ is contained in $\supp(f)$ by Lemma \ref{local}. So the last display equals
		\begin{align*}
			\Big|&\sum_{i=m+1}^{n} \int_{\supp(f)} -\log\|1\|'_{i,k}\; \mu_m\wedge...\wedge c_1\left(\overline{L}_{i-1,k}\right)\wedge c_1\left(\overline{\mathcal{O}}_X^f\right)\wedge c_1\left(\overline{L}_{i+1,k'}\right)\wedge... \\
			&- \int_{\supp(f)} -\log\|1\|'_{i,k'}\; \mu_m\wedge...\wedge c_1\left(\overline{L}_{i-1,k}\right)\wedge c_1\left(\overline{\mathcal{O}}_X^f\right)\wedge c_1\left(\overline{L}_{i+1,k'}\right)\wedge...\Big| \\
			&= \Big|\sum_{i=m+1}^{n} \int_{\supp(f)} \log\Big(\|1\|'_{i,k'}/\|1\|'_{i,k}\Big)\; ...\wedge c_1\left(\overline{L}_{i-1,k}\right)\wedge c_1\left(\overline{\mathcal{O}}_X^f\right)\wedge c_1\left(\overline{L}_{i+1,k'}\right)\wedge...\Big| \\
			&\leq 2\cdot\sum_{i=m+1}^n \sup_{x\in \supp(f)}\left|\log\Big(\|1\|'_{i,k}(x)/\|1\|'_{i,k'}(x)\Big)\right| \\
			&\phantom{\leq\sum_{i=m+}^n}\cdot\max_{s\in\{+,-\}}\mu_m\wedge...\wedge c_1\left(\overline{L}_{i-1,k}\right)\wedge c_1\left(\overline{L}_s\right)\wedge c_1\left(\overline{L}_{i+1,k'}\right)\wedge...(\supp(f)) \underset{k,k'\rightarrow\infty}{\longrightarrow} 0.
		\end{align*}
		Here the last term converges to zero as $\sup_{x\in\supp(f)}\left|\log\Big(\|1\|'_{i,k}(x)/\|1\|'_{i,k'}(x)\Big)\right|$ tends to zero by uniform convergence of $\|\cdot\|'_{i,k}$ and compactness of $\supp(f)$ and $\mu_m\wedge c_1\left(\overline{L}_{m+1,k}\right)\wedge...\wedge c_1\left(\overline{L}_{i-1,k}\right)\wedge c_1\left(\overline{L}_s\right)\wedge c_1\left(\overline{L}_{i+1,k'}\right)\wedge...\wedge c_1\left(\overline{L}_{n,k'}\right)$ are positive measures on $V$ which converge by the induction hypothesis weakly to a positive Radon measure which implies that their mass of $\supp(f)$ is bounded with respect to $k,k'$. To go into more detail, let $g$ be a continuous non-negative function on $V$ with compact support such that $g(x)>1$ for all $x\in \supp(f)$. The existence of such a function follows for example from a partition of unity argument (\cite[1.5.1]{F}) applied to the open cover $\{\overline{V}\setminus \supp(f), V\}$ of the closure $\overline{V}$ of $V$ (note that $\overline{V}$ is compact as $X$ is proper over $K$). Then
		\begin{align*}
			\mu_m&\wedge c_1\left(\overline{L}_{m+1,k}\right)\wedge...\wedge c_1\left(\overline{L}_{i-1,k}\right)\wedge c_1\left(\overline{L}_s\right)\wedge c_1\left(\overline{L}_{i+1,k'}\right)\wedge...\wedge c_1\left(\overline{L}_{n,k'}\right)(\supp(f)) \\
			&\leq\int g \;\mu_m\wedge c_1\left(\overline{L}_{m+1,k}\right)\wedge...\wedge c_1\left(\overline{L}_{i-1,k}\right)\wedge c_1\left(\overline{L}_s\right)\wedge c_1\left(\overline{L}_{i+1,k'}\right)\wedge...\wedge c_1\left(\overline{L}_{n,k'}\right)
		\end{align*}
		where the last term converges for $k,k'\rightarrow\infty$ and is hence bounded with respect to $k,k'$. \\
		We now define a positive linear functional on the space of continuous functions with compact support in $V$ by
		\begin{align*}
			C_c(V)&\rightarrow\mathbb{R}_{\geq 0}, \\
			f&\mapsto \lim_{k\rightarrow\infty}\int_V f \;\mu_{m,k}.
		\end{align*} 
		By the Riesz Representation Theorem (\cite[Theorem 2.14]{Ru}) this corresponds to a positive Radon measure $\mu$ on $V$ and we have $\mu_{m,k}\rightarrow \mu$ weakly for $k\rightarrow\infty$. \\
		It remains to show that $\mu_{m,k}(Z)$ is bounded with respect to $k$ for every compact subset $Z\subseteq V$. So let $Z\subseteq V$ be compact and $f$ a continuous non-negative function on $V$ with compact support such that $f(x)>1$ for all $x\in Z$. As above the existence of such a function follows from a partition of unity argument (\cite[1.5.1]{F}) applied to the open cover $\{\overline{V}\setminus Z, V\}$ of the closure $\overline{V}$ of $V$. Again we may assume that $f$ is a model function, i.e. of the from $-\log\|\cdot\|$ for a piecewise $\mathbb{Q}$-linear metric $\|\cdot\|$ on $\mathcal{O}_{X^{\textup{an}}}$ (we can even assume that $\|\cdot\|$ is a formal metric) and we use the same notation as above. To be more precise, let $\epsilon>0$ such that $f(x)>1+\epsilon$ for all $x\in Z$. First extend $f$ to $X^{\textup{an}}$ by zero and then define a new function $\tilde{f}$ by $\tilde{f}(x)=f(x)-\epsilon/2$. By Proposition \ref{dense} we may approximate $\tilde{f}$ by a model function $\phi$ such that $|\phi(x)-\tilde{f}(x)|<\epsilon/2$ for all $x\in X^{\textup{an}}$. Then by \cite[Proposition 2.12 (d)]{GM}, $\max\{0,\phi\}$ is a model function on $X^{\textup{an}}$ with compact support in $V$ which is greater than one at $Z$. We have
		\begin{align*}
			\sup_{k\in\mathbb{N}}\mu_{m,k}(Z)&\leq\sup_{k\in\mathbb{N}}\int_{V} f\;\mu_{m,k} \\
			&=\sup_{k\in\mathbb{N}}\int_{\supp(f)} f\;\mu_m\wedge c_1(\overline{L}_{m+1,k})\wedge...\wedge c_1(\overline{L}_{n,k}) \\
			&=\sup_{k\in\mathbb{N}}\int_{\supp(f)} f\;\mu_m\wedge c_1(\overline{L}_{m+1,1}\otimes\overline{\mathcal{O}}_{m+1,k})\wedge c_1(\overline{L}_{m+2,k})\wedge...\wedge c_1(\overline{L}_{n,k}) \\
			&=\sup_{k\in\mathbb{N}}\int_{\supp(f)} f\;\mu_m\wedge c_1(\overline{L}_{m+1,1})\wedge c_1(\overline{L}_{m+2,k})\wedge...\wedge c_1(\overline{L}_{n,k}) \\
			&\phantom{\lim_{k\rightarrow\infty}} + \int_{\supp(f)} f\;\mu_m\wedge c_1(\overline{\mathcal{O}}_{m+1,k})\wedge c_1(\overline{L}_{m+2,k})\wedge...\wedge c_1(\overline{L}_{n,k})
		\end{align*}
		Again using Lemma \ref{switch} and the same argumentation as above for the second summand this amounts to
		\begin{align*}
			\sup_{k\in\mathbb{N}}&\int_{\supp(f)} f\;\mu_m\wedge c_1(\overline{L}_{m+1,1})\wedge c_1(\overline{L}_{m+2,k})\wedge...\wedge c_1(\overline{L}_{n,k}) \\
			&\phantom{\lim_{k\rightarrow\infty}} + \int_{\supp(f)} -\log\|1\|'_{m+1,k}\;\mu_m\wedge c_1(\overline{\mathcal{O}}_X^{f})\wedge c_1(\overline{L}_{m+2,k})\wedge ...\wedge c_1(\overline{L}_{n,k}) \\
			&\leq \sup_{k\in\mathbb{N}} \sup_{x\in V} f(x) \cdot \mu_m\wedge c_1(\overline{L}_{m+1,1})\wedge c_1(\overline{L}_{m+2,k})\wedge...\wedge c_1(\overline{L}_{n,k})(\supp(f)) \\
			&\phantom{\lim_{k\rightarrow\infty}} + \sup_{k\in\mathbb{N}}\sup_{x\in\supp(f)} \left|\log\Big(\|1\|'_{m+1,k}(x)\Big)\right| \\
			&\phantom{\lim_{k\rightarrow\infty} + }\cdot 2\cdot\max_{s\in\{+,-\}} \mu_m\wedge c_1(\overline{L}_s)\wedge c_1(\overline{L}_{m+2,k})\wedge ...\wedge c_1(\overline{L}_{n,k})(\supp(f))
		\end{align*}
		By the induction hypothesis all measures appearing in this last term converge for $k\rightarrow\infty$. Hence the measure of $\supp(f)$ is bounded with respect to $k$. Furthermore $\sup_{x\in V} f(x)<\infty$ as $f$ has compact support in $V$ and $\sup_{x\in\supp(f)}\left|\log\Big(\|1\|'_{m+1,k}(x)\Big)\right|$ is bounded with respect to $k$ by uniform convergence of $\Big(\|\cdot\|'_{m+1,k}\Big)_{k\in\mathbb{N}}$ and compactness of $\supp(f)$. We conclude that the last term is bounded with respect to $k$. This proves the induction step. The claim is then the case $m=0$.
	\end{proof}
	\begin{Remark}
		\label{independent} In the situation of Proposition \ref{convergence}, the limit depends only on the metrics $\|\cdot\|_i$ but not on the sequences $(\|\cdot\|_{i,k})_{k\in\mathbb{N}}$. Namely, if $(\|\cdot\|'_{i,k})_{k\in\mathbb{N}}$ are other sequences converging uniformly to $\|\cdot\|_i$ then the sequences $(\|\cdot\|^{\prime \prime}_{i,k})_{k\in\mathbb{N}}$ defined by
		\[
			\|\cdot\|^{\prime \prime}_{i,k}:=\begin{cases} \|\cdot\|_{i,\frac{k}{2}}, \;k\text{ even} \\ \|\cdot\|'_{i,\frac{k-1}{2}}, \;k\text{ odd} \end{cases}
		\]
		converge uniformly to $\|\cdot\|_i$. As $(\|\cdot\|_{i,k})_{k\in\mathbb{N}}$ and $(\|\cdot\|'_{i,k})_{k\in\mathbb{N}}$ are subsequences of $\|\cdot\|^{\prime\prime}_{i,k}$ the limit of the measures is the same. We denote the measure corresponding to the metrics $\|\cdot\|_i$ by $c_1(\overline{L}_1)\wedge...\wedge c_1(\overline{L}_n)$.
	\end{Remark}
	\begin{Corollary}
		Let $X$ be a separated scheme of finite type over $K$ of dimension $n$. Let $M_1,...,M_n$ be line bundles on $X^{\textup{an}}$, $V$ an open subset of $X^{\textup{an}}$ and for $i\in\{1,...,n\}$ let $(\|\cdot\|_{i,k})_{k\in\mathbb{N}}$ be piecewise $\mathbb{Q}$-linear metrics on $M_1\Big|_V,...,M_n\Big|_V$ converging uniformly to a continuous metric $\|\cdot\|_i$ on $M_i\Big|_V$. Suppose that all $\|\cdot\|_{i,k}$ are semipositive in $V$. Write $\overline{M}_{i,k}:=\left(M_i,\|\cdot\|_{i,k}\right)$ and let $\overline{L}_1,...,\overline{L}_n$ be line bundles on $X^{\textup{an}}$ endowed with piecewise $\mathbb{Q}$-linear metrics on $V$.
		Then the measures $c_1\left(\overline{L}_1\otimes\overline{M}_{1,k}\right)\wedge ...\wedge c_1\left(\overline{L}_n\otimes\overline{M}_{n,k}\right)$ converge weakly to a Radon measure on $V$ denoted by $c_1(\overline{L}_1\otimes(M_1,\|\cdot\|_1))\wedge...\wedge c_1(\overline{L}_n\otimes(M_n,\|\cdot\|_n))$ (as above this measure does not depend on the choice of the $\|\cdot\|_{i,k}$).
	\end{Corollary}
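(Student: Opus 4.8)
The plan is to reduce the assertion to Proposition \ref{convergence} by multilinearity of the Monge--Amp\`ere measure, after writing each fixed metrized line bundle $\overline{L}_i$ as a formal difference of two line bundles carrying semipositive piecewise $\mathbb{Q}$-linear metrics.

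First I would reduce to the case where $X$ is projective and all metrics occurring are defined on all of $X^{\textup{an}}$, exactly as in the proof of Proposition \ref{convergence}: using Vojta's version of Nagata's theorem (\cite[Theorem 5.7]{V}) and Chow's lemma one replaces $X$ by a projective compactification, and using \cite[Proposition 2.7]{GM} one extends all the piecewise $\mathbb{Q}$-linear metrics in play (on $\overline{L}_i$ and on the $\overline{M}_{i,k}$) from a compact strictly $K$-analytic domain $W$ with $\supp(f)\subseteq\overset{\circ}{W}\subseteq W\subseteq V$ to $X^{\textup{an}}$; by Lemma \ref{local} this changes none of the measures in the statement on a neighbourhood of $\supp(f)$, which is all that is needed to compute $\int_V f\,\mu_k$ for a given test function $f\in C_c(V)$. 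Once $X$ is projective, GAGA identifies the analytic line bundles $M_i$ with algebraic ones, and, again as in the proof of Proposition \ref{convergence} (dominate a formal model by a projective one and twist with $\mathcal{O}(m)$ for $m\gg 0$, using Serre's theorem on the special fibre), every piecewise $\mathbb{Q}$-linear metric on $\overline{L}_i$ can be written as $\overline{L}_i=\overline{A}_i\otimes\overline{B}_i^{\otimes -1}$ with $\overline{A}_i,\overline{B}_i$ line bundles on $X$ carrying semipositive piecewise $\mathbb{Q}$-linear metrics.

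Now set $\overline{C}_{i,+}:=\overline{A}_i$ and $\overline{C}_{i,-}:=\overline{B}_i$. Multilinearity of the Monge--Amp\`ere measure in each factor (Proposition \ref{firstproperties}~ii), which by the remark after Definition \ref{measureopen} also holds for piecewise $\mathbb{Q}$-linear metrics) gives
\[
  c_1\bigl(\overline{L}_1\otimes\overline{M}_{1,k}\bigr)\wedge\dots\wedge c_1\bigl(\overline{L}_n\otimes\overline{M}_{n,k}\bigr)=\sum_{s\in\{+,-\}^n}(-1)^{\#\{i\,:\,s_i=-\}}\,c_1\bigl(\overline{C}_{1,s_1}\otimes\overline{M}_{1,k}\bigr)\wedge\dots\wedge c_1\bigl(\overline{C}_{n,s_n}\otimes\overline{M}_{n,k}\bigr).
\]
For each fixed sign vector $s$ the metrics on $\bigl(C_{i,s_i}\otimes M_i\bigr)\big|_V$ obtained by tensoring the fixed semipositive metric on $\overline{C}_{i,s_i}$ with $\|\cdot\|_{i,k}$ are piecewise $\mathbb{Q}$-linear, converge uniformly as $k\to\infty$ to the tensor product with $\|\cdot\|_i$, and are semipositive in $V$; indeed semipositivity is tested by non-negativity of $\Deg$ of proper curves in special fibres of formal models, and $\Deg$ is additive in the line bundle, so a tensor product of two semipositive piecewise $\mathbb{Q}$-linear metrics is again semipositive. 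Hence Proposition \ref{convergence} applies to each summand and shows that $c_1(\overline{C}_{1,s_1}\otimes\overline{M}_{1,k})\wedge\dots\wedge c_1(\overline{C}_{n,s_n}\otimes\overline{M}_{n,k})$ converges weakly to a positive Radon measure on $V$. Summing over $s$ with the indicated signs, the measures in the statement converge weakly; equivalently, by the Riesz representation theorem (\cite[Theorem 2.14]{Ru}), the functional $f\mapsto\lim_{k\to\infty}\int_V f\,\mu_k$ on $C_c(V)$ is well defined and is represented by a Radon measure, which we denote by $c_1(\overline{L}_1\otimes(M_1,\|\cdot\|_1))\wedge\dots\wedge c_1(\overline{L}_n\otimes(M_n,\|\cdot\|_n))$. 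Independence of this limit from the chosen approximating sequences follows from the interlacing argument of Remark \ref{independent}.

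The only genuinely nontrivial point is the decomposition $\overline{L}_i=\overline{A}_i\otimes\overline{B}_i^{\otimes -1}$ into semipositive pieces: it is not canonical and forces the passage to a projective compactification together with the extension of all metrics to $X^{\textup{an}}$, after which one must invoke Lemma \ref{local} to see that this enlargement affects none of the masses relevant on $V$. Everything after that is bookkeeping with multilinearity together with the already established Proposition \ref{convergence}.
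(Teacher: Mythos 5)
Your argument is correct and follows the same route as the paper's own (very terse) proof: reduce to $X$ projective, write each $\overline{L}_i$ as a quotient of two semipositively metrized line bundles, and conclude by multilinearity from Proposition \ref{convergence}. You have simply spelled out the details (the extension of metrics via \cite[Proposition 2.7]{GM}, the signed sum over $\{+,-\}^n$, and the semipositivity of the tensor products) that the paper leaves implicit.
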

	\begin{proof}
		As in the proof of Proposition \ref{convergence}, we may assume that $X$ is projective and write the metrics of the $L_i$ as a quotient of two semipositive metrics. Using multilinearity, this is now a direct consequence of Proposition \ref{convergence}.
	\end{proof}
	\begin{Remark}
		\label{extend} To extend the theory to the case where $K$ is not algebraically closed, choose an algebraic closure of $K$ and denote its completion by $\mathbb{C}_K$. Then we define the Monge-Ampère measure as the push-forward of the previously defined Monge-Ampère measure on the base change to $\mathbb{C}_K$. We explain it here in the situation of Definition \ref{measureopen}. Let $V$ be a strictly $K$-analytic Hausdorff space of dimension $n$, $\overline{L}_1,...,\overline{L}_n$ \textit{potentially} semipositive piecewise linear metrized line bundles on $V$ (i.e. metrized line bundles on $V$ which become semipositive piecewise linear metrized line bundles after base change to $\mathbb{C}_K$) and $\pi:V_{\mathbb{C}_K}\rightarrow V$ the base change. We can then define a measure on $V_{\mathbb{C}_K}$ with respect to the pull-backs of the line bundles $\overline{L}_1,...,\overline{L}_n$ by Definition \ref{measureopen} and push the resulting measure forward to $V$ via $\pi$. To make this well defined we show that $\pi$ is a proper map of topological spaces. So let $C\subseteq V$ be compact. Then we can cover $C$ by finitely many affinoid subdomains $U_1,...,U_r$. Then $\pi^{-1}(C)=\pi^{-1}\left(\bigcup C\cap U_i\right)=\bigcup\pi^{-1}(C\cap U_i)$ and it is enough to show that $\pi^{-1}(C\cap U_i)$ is compact for any $i$ so we may assume that $V$ is affinoid. But then $\pi$ is a continuous map between compact Hausdorff spaces and hence proper which yields the claim. We denote this measure again by $c_1(\overline{L}_1)\wedge...\wedge c_1(\overline{L}_n)$. One can check that all the results of this section remain true in this more general situation.
	\end{Remark}
	\begin{Def}
		\label{locallypotentiallysemipositive} Let $K$ be a complete, non-archimedean, non-trivially valued field, $V$ a strictly $K$-analytic space and $L$ a line bundle on $V$. A continuous metric $\|\cdot\|$ on $L$ is called \textit{locally semipositive} if for any $x\in V$ there is an open neighbourhood $U$ of $x$ such that $\|\cdot\|\Big|_U$ is a uniform limit of semipositive piecewise $\mathbb{Q}$-linear metrics on $L\Big|_U$. It is called \textit{locally potentially semipositive} if its base change to the completion of an algebraic closure of $K$ is locally semipositive. If $V$ is an open subset of $X^{\textup{an}}$ for a separated scheme $X$ of finite type over $K$ then using the Remarks \ref{independent} and \ref{extend} we define the Monge-Ampère measure $c_1(\overline{L}_1)\wedge...\wedge c_1(\overline{L}_n)$ for locally potentially semipositive metrized line bundles $\overline{L}_1,...,\overline{L}_n$ on $V$.
	\end{Def}
	\begin{Remark}
		The measures defined in this section are invariant under base change. In the spirit of Remark \ref{extend} this allows to define them in the trivially valued case for line bundles which become semipositive after base change to a non-trivially valued field. Such metrics and their measures are important for example in \cite{BJ}.
	\end{Remark}
	\section{Comparison of the real and non-archimedean Monge-Ampère operator}
	\label{sectioncomparison} In this section we want to compare the two measures introduced in the last section. In order to make sense of this, we start with a convex function $h$ on a closed face of some skeleton. Then one can associate to it a metric on the trivial line bundle which will turn out to be semipositive in the interior of the closed face. Thus we can associate to $h$ two measures, namely the real Monge-Ampère measure and the Chambert-Loir measure, sometimes also called the non-archimedean Monge-Ampère measure. In Corollary \ref{mainresult} we will see that they are equal up to scaling. In the following $K$ denotes a non-archimedean non-trivially valued field.
	\begin{Remark}
		\label{Notation} Let $\mathfrak{X}$ be a strongly nondegenerate strictly polystable formal scheme over $K^{\circ}$ of dimension $n+1$ with associated skeleton $\Delta$. Consider an $n$-dimensional closed face $\bar{\tau}$ of $\Delta$ with interior $\tau$ and the formal open subscheme $\mathfrak{X}'$ of $\mathfrak{X}$ consisting of all formal open subsets $\mathfrak{U}$ with $S(\mathfrak{U})=\bar{\tau}$. Let $h$ be a piecewise affine linear convex function (see Definition \ref{Defpiecewiseaffinelinear}) on $\bar{\tau}$ and $\mathfrak{D}$ a subdivision of $\bar{\tau}$ such that $h\Big|_{\Delta'}$ is affine linear for all $\Delta'\in\mathfrak{D}$. Let $\iota:\mathfrak{X}''\rightarrow\mathfrak{X}'$ be the corresponding formal scheme (cf. Construction \ref{Formalmodel}). We have seen in Proposition \ref{CartierDiv} that $h$ induces a Cartier divisor $D$ on $\mathfrak{X}''$. We set $\mathcal{O}(h\circ p_{\mathfrak{X}'}):=\mathcal{O}(D)$ where $p_{\mathfrak{X}'}:\mathfrak{X}^{\prime an}\rightarrow\overline{\tau}$ is the restriction of the contraction $p_{\mathfrak{X}}:\mathfrak{X}^{\textup{an}}\rightarrow\Delta$. For a line bundle $\mathfrak{L}$ on a formal scheme, we will denote by $c_1(\mathfrak{L})$ the first Chern class of the special fibre of $\mathfrak{L}$.
	\end{Remark}
	\begin{Theorem}
		\label{MAvertex} In the situation of Remark \ref{Notation} let $u\in\tau$ be a vertex of $\mathfrak{D}$ with corresponding irreducible component $Y\subseteq\tilde{\mathfrak{X}}''$ as in Corollary \ref{Corstratumface} (f). Let $S$ be the closed point in the special fibre of $\mathfrak{X}$ corresponding to $\tau$. Then
		\[
			\Deg\left(c_1\left(\mathcal{O}(h\circ p_{\mathfrak{X}'})\right)^n.Y\right)=\Deg(S)\cdot n!\cdot\MA(h)(u).
		\]
	\end{Theorem}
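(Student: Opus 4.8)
The plan is to move the statement, via an étale local model, onto a complete toric variety, where it becomes the classical identity $D^{n}=n!\cdot\mathrm{vol}(P_D)$ for nef toric Cartier divisors. Since $\bar\tau$ is the whole skeleton of $\mathfrak{X}'$ and has dimension $n$, any chart $\psi:\mathfrak{U}\rightarrow\mathfrak{X}(\boldsymbol{n},\boldsymbol{a},m)$ of the standard covering has $m=0$ and $\sum_i n_i=n$; since $u$ is interior to $\bar\tau$ it lies in $S(\mathfrak{U})$, and as strata are compatible with localization we may replace $\mathfrak{X}'$ by such a $\mathfrak{U}$. Base changing $\psi$ as in Construction \ref{Formalmodel} (with $m=0$) gives an étale morphism $\psi_1':\mathfrak{X}''\rightarrow\mathfrak{X}(\boldsymbol{n},\boldsymbol{a})'$ along which $D$ is the pullback of the Cartier divisor $D^{\mathrm{tor}}$ attached to $h$ by Proposition \ref{CartierDiv}, read on the polysimplex. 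By Corollary \ref{Corstratumface}(f) the vertex $u$ corresponds on the target to an irreducible component $Y^{\mathrm{tor}}$ of $\tilde{\mathfrak{X}}(\boldsymbol{n},\boldsymbol{a})'$, and by Corollary \ref{Corstratumface}(b),(c) — compare the isomorphism $\tilde{\psi}_1^{\prime -1}(T)\cong T\times S$ in the proof of Proposition \ref{Stratumface} — $Y$ is the base change of $Y^{\mathrm{tor}}$ along the residue extension $S=\Spec\tilde{K}(S)\rightarrow\Spec\tilde{K}$, so $\tilde{\psi}_1'$ restricts to a finite flat morphism $Y\rightarrow Y^{\mathrm{tor}}$ of degree $[\tilde{K}(S):\tilde{K}]=\Deg(S)$. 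The projection formula then gives
\[
\Deg\!\left(c_1(\mathcal{O}(D))^n.Y\right)=\Deg(S)\cdot\Deg\!\left(c_1(\mathcal{O}(D^{\mathrm{tor}}))^n.Y^{\mathrm{tor}}\right),
\]
and it remains to evaluate the second factor. (If $\mathfrak{X}''$ fails to be admissible, one first base changes everything to $\mathbb{C}_K$; this leaves both sides of the claimed identity unchanged.)

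For the toric self-intersection, the component $Y^{\mathrm{tor}}$ is the complete toric variety $X_{\Sigma_u}$ whose fan $\Sigma_u$ is the star of $u$ in $\mathfrak{D}$ — complete precisely because $u$ is interior to $\bar\tau$ — and on it $\mathcal{O}(D^{\mathrm{tor}})$ is the toric line bundle with support function $g:=h(u+\,\cdot\,)-h(u)$, extended conically along $\Sigma_u$. Convexity of $h$ together with interiority of $u$ makes $g$ a convex, positively homogeneous, $\Sigma_u$-piecewise linear function, so $\mathcal{O}(D^{\mathrm{tor}})$ is nef, with associated polytope
\[
\{\,m : \langle m,v\rangle\leq g(v)\ \text{for all}\ v\,\}=\nabla g(0)=\nabla h(u),
\]
the last equality holding because a convex function supports its subgradient locally and globally alike. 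The classical volume formula for nef toric Cartier divisors (cf. the toric intersection theory underlying \cite[Theorem 5.18]{Gub2}) then gives
\[
\Deg\!\left(c_1(\mathcal{O}(D^{\mathrm{tor}}))^n.Y^{\mathrm{tor}}\right)=n!\cdot\lambda(\nabla h(u))=n!\cdot\MA(h)(u),
\]
where $\lambda$ is normalized by the lattice underlying the integral affine structure of $\bar\tau$, which is simultaneously the character lattice of $X_{\Sigma_u}$ and the normalization built into $\MA$. Combining the two displays proves the theorem.

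I expect the main obstacle to be the first step rather than the toric computation: one has to unwind Construction \ref{Formalmodel} far enough to recognize $Y^{\mathrm{tor}}$, and $D^{\mathrm{tor}}$ restricted to it, as genuine toric data on $X_{\Sigma_u}$; one must match the lattice of linear functions on the skeleton — through which both $h$ and $\MA$ are defined — with the character lattice of $X_{\Sigma_u}$ so that the two Lebesgue normalizations agree; and one must pin down the degree of $Y\rightarrow Y^{\mathrm{tor}}$ through the fibre product structure over the minimal stratum. The base change reductions and the toric volume formula itself are then routine.
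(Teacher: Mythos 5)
Your toric endgame coincides with the paper's: both identify the component attached to $u$ in the local model $\mathfrak{X}(\boldsymbol{n},\boldsymbol{a})'$ as a complete toric variety with fan the star of $u$ in $\mathfrak{D}$, read off the divisor as the one attached to $h(\cdot+u)-h(u)$, and conclude via the toric volume formula together with $P_{D'}=\nabla h(u)$. Where you genuinely diverge is the reduction from $Y$ to that toric component. The paper never pushes $Y$ forward along the chart: it runs an induction, using Lemma \ref{numericallyequivalent} to replace $D$ at each step by a numerically equivalent divisor (induced by $h-a$ for an affine function $a$ agreeing with $h$ on the relevant face) which vanishes on the stratum being cut, thereby reducing $\Deg(D^n.Y)$ to the degree of a zero-dimensional strata cycle; only then does it localize to $\mathfrak{U}''$, and the factor $\Deg(S)$ emerges from $\psi^{\ast}[\tilde{\boldsymbol{0}}]=[S]$ rather than from a degree of $Y$ over $Y^{\mathrm{tor}}$. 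Your route, if completed, is shorter and avoids Lemma \ref{numericallyequivalent} in this particular proof.

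However, the pivotal claim of your route --- that $\tilde{\psi}_1'$ restricts to a finite flat morphism $Y\rightarrow Y^{\mathrm{tor}}$ of degree $\Deg(S)$ --- is asserted, not proven, and it does not follow from the stratum-level isomorphism $\tilde{\psi}_1^{\prime-1}(T)\cong T\times S$ alone: that isomorphism describes only the dense open stratum $R$ of $Y$, not its closure, and $\tilde{\psi}_1'$ is merely \'etale, hence a priori neither finite nor even defined on all of $Y$ (a chart need not contain the closure of a stratum it meets). To repair this you need three specific facts: (i) $Y\subseteq\tilde{\iota}^{-1}(S)\subseteq\tilde{\mathfrak{U}}''$ --- the first inclusion because every face of $\mathfrak{D}$ having $u$ as a vertex has relative interior contained in $\tau$ (since $u$ lies in the relative interior of $\bar{\tau}$), the second because $S$ is a closed point lying in $\tilde{\mathfrak{U}}'$ --- so the map is defined on all of $Y$; (ii) $Y$ is proper over $\tilde{K}$ (it is closed in $\tilde{\iota}^{-1}(S)$ and $\iota$ is proper), so $Y\rightarrow Y^{\mathrm{tor}}$ is proper and quasi-finite, hence finite and surjective; (iii) $\tilde{\psi}'_{1\ast}[Y]=[\tilde{K}(S):\tilde{K}]\cdot[Y^{\mathrm{tor}}]$, which can be read off from the generic fibre $R\cong T\times_{\tilde{K}}\Spec\tilde{K}(S)$. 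Flatness is neither established nor needed: the projection formula for degrees only requires properness on the support of the cycle and the computation of its pushforward. With (i)--(iii) supplied your argument closes; as written, the step you yourself flag as the main obstacle is exactly where the proof is missing.
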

	\begin{proof}
		Note that $S$ is a closed point of $\tilde{\mathfrak{X}}'$ and hence proper over $\tilde{K}$. Therefore also $Y$ is proper over $\tilde{K}$ since it is a closed subset of $\tilde{\iota}^{-1}(S)$ and $\iota$ is proper by \cite[Corollary 4.4]{T1}. Let $D$ be the Cartier divisor on $\mathfrak{X}''$ induced by $h$ as in Proposition \ref{CartierDiv} such that $c_1\left(\mathcal{O}(h\circ p_{\mathfrak{X}'})\right)^{n}.Y=D^{n}.Y$. 
		We show by induction that for all $0\leq l\leq n$ there is a strata cycle $Y_l$ of dimension $n-l$ whose components are contained in $Y$ such that $\Deg(D^n.Y)=\Deg(D^{n-l}.Y_l)$. The case $l=0$ is clear by taking $Y_0:=Y$. Now let $l<n$ and $Y_l$ be as claimed. Let $Y'$ be a stratum of $Y_l$, such that $Y'$ is associated to an $l$-dimensional open face $\tau'$ of $\mathfrak{D}$, i.e. $Y'=\red_{\mathfrak{X}''}(p_{\mathfrak{X}''}^{-1}(\tau'))$ with $u\in\overline{\tau'}$ by the stratum face correspondence (Proposition \ref{Stratumface}). Using $\tau'\subseteq\tau\subseteq\mathbb{R}^n$, there is an affine linear function $a:\mathbb{R}^n\rightarrow\mathbb{R}$ such that $h\Big|_{\tau'}=a\Big|_{\tau'}$. Then $h-a\Big|_{\tau}$ defines a Cartier divisor $D_{Y'}$ on $\mathfrak{X}''$ by Proposition \ref{CartierDiv} which is numerically equivalent to $D$ on $Y$ by Lemma \ref{numericallyequivalent} and which is trivial on $Y'$ because $h-a\Big|_{\tau'}=0$. 
		Hence, as $\overline{Y'}$ is a strata subset, $D_{Y'}.\overline{Y'}$ is a strata cycle. Write $Y_l=\sum_{Y'}m_{Y'}\overline{Y'}$ where the sum ranges over a finite number of $n-l$-dimensional strata of $\tilde{\mathfrak{X}}''$ contained in $Y$. Then we can calculate:
		\begin{align*}
			\Deg(D^n.Y)&=\Deg\left(D^{n-l}.Y_l\right) \\
			&=\Deg\left(D^{n-l}.\sum_{Y'} m_{Y'} \overline{Y'}\right) \\
			&=\Deg\left(\sum_{Y'} m_{Y'} D^{n-l}.\overline{Y'}\right) \\
			&=\Deg\left(\sum_{Y'} m_{Y'} D^{n-l-1}.(D_{Y'}.\overline{Y'})\right) \\
			&=\Deg\left(D^{n-l-1}.\sum_{Y'} m_{Y'} D_{Y'}.\overline{Y'}\right)
		\end{align*}
		and $Y_{l+1}:=\sum_{Y'} m_{Y'} D_{Y'}.\overline{Y'}$ is a strata cycle as claimed. 
		We use this for $l=n$ to see that $\Deg(D^n.Y)=\Deg(Y_n)$ for a strata cycle $Y_n$ of dimension $0$ contained in $Y$. Its components are strata points $S_i$ of $\mathfrak{X}''$ which are mapped by $\iota$ to the point $S$ corresponding to $\tau$. Now let $\mathfrak{U}'\subseteq\mathfrak{X}'$ be a formal open subset with an étale morphism $\psi:\mathfrak{U}'\rightarrow\mathfrak{X}(\boldsymbol{n},\boldsymbol{a})$ such that $S$ is the distinguished stratum of $\mathfrak{U}'$ (cf. Proposition \ref{Gub2 Proposition 5.2}) and define $\mathfrak{U}'':=\iota^{-1}(\mathfrak{U}')$. Note that there is no factor $\mathfrak{X}(m)$ because $\tau$ is of maximal dimension. As the strata occurring in the intersection process correspond to open faces of $\mathfrak{D}$ with vertex $u$, their intersection with $\mathfrak{U}''$ is nonempty. Hence we may calculate the multiplicities of $Y_{n}$ locally on $\mathfrak{U}''$. The stratification of $\tilde{\mathfrak{U}}''$ is obtained by the preimages of the strata of $\tilde{\mathfrak{X}}(\boldsymbol{n},\boldsymbol{a})'$ (see proof of Proposition \ref{Stratumface}) with respect to the base change $\psi':\mathfrak{U}''\rightarrow\mathfrak{X}(\boldsymbol{n},\boldsymbol{a})'$ of $\psi$ (cf. Construction \ref{Formalmodel}). Let $Y_u=\overline{\tilde{\psi}'(\tilde{\mathfrak{U}}''\cap Y)}$ be the irreducible component in $\tilde{\mathfrak{X}}(\boldsymbol{n},\boldsymbol{a})'$ corresponding to $u$ and $D_u$ the Cartier divisor on $\mathfrak{X}(\boldsymbol{n},\boldsymbol{a})'$ whose pullback gives the Cartier divisor $D$ associated to $h$ on $\mathfrak{U}''$ (cf. proof of Proposition \ref{CartierDiv}). By applying the modifications of $D$ in the induction step also to $D_u$ we obtain a strata cycle $Y_{n}^t=\sum m_jP_j$ of $\tilde{\mathfrak{X}}(\boldsymbol{n},\boldsymbol{a})'$ whose pullback is $Y_{n}$ (as the intersection product is compatible with flat pullback by \cite[Proposition 2.3(d)]{Fu2}) and which has the same degree as $D_u^n.Y_u$ using Lemma \ref{numericallyequivalent}.
Now let 
		\begin{align*}
			\val:(\mathbb{G}_m^{\boldsymbol{n}})_K^{\textup{an}}&\rightarrow\mathbb{R}^{\boldsymbol{n}}, \\ 
			q&\mapsto(-\log q(x_{01}),...,-\log q(x_{0n_0}),...,-\log q(x_{p1}),...,-\log q(x_{pn_p}))
		\end{align*}
		and $\Sigma:=\left\{\boldsymbol{w}\in\mathbb{R}_{\geq 0}^{\boldsymbol{n}}\;\Big|\;w_{i1}+...+w_{in_i}\leq v(a_i),0\leq i \leq p\right\}$. As we have an isomorphism 
		\[
			\mathfrak{X}(\boldsymbol{n},\boldsymbol{a})^{\textup{an}}\tilde{\rightarrow}\val^{-1}(\Sigma)
		\]
		and using \cite[Corollary 6.15]{Gub3}, we find that $Y_u$ is a toric variety with fan given by the cones generated by $\Delta'-u$ for $\Delta'\in\mathfrak{D}$ with vertex $u$ (in fact we identify $\tau$ with $\Sigma$ by forgetting about the coordinate with index $0$ for each $i$). $D_u\Big|_{Y_u}$ is given up to multiplication by a constant by the divisor $D'_u$ on $Y_u$ associated to the linear function $h':=h(\cdot+u)-h(u)$. By \cite[3.4,5.3]{Fu1} we have
		\[
			\lambda(P_{D'_u})=\frac{\Deg({D'}_u^n.Y_u)}{n!},
		\] 
		where
		\[
			P_{D'_u}=\left\{y\in\mathbb{R}^n\;\Big|\;\langle z,y\rangle\leq\psi_{D'_u}(z)=h'(z)\;\forall z\in\mathbb{R}^n\right\}=\nabla h'(0)		
		\]
		and $\lambda$ denotes the standard Lebesgue measure. For the last term we get
		\begin{align*}
			\nabla h'(0)&=\left\{p\in\mathbb{R}^n\;\Big|\;\forall x\in\tau-u\;:\;h'(0)+\langle x,p\rangle\leq h'(x)\right\} \\
			&=\left\{p\in\mathbb{R}^n\;\Big|\;\forall x\in\tau-u\;:\;\langle x,p\rangle\leq h(x+u)-h(u)\right\} \\
			&=\left\{p\in\mathbb{R}^n\;\Big|\;\forall x\in\tau\;:\;h(u)+\langle x-u,p\rangle\leq h(x)\right\} \\
			&=\nabla h(u).
		\end{align*}
		Hence
		\[
			\frac{1}{n!}\Deg(Y_n^t)=\frac{\Deg({D'}_u^n.Y_u)}{n!}=\lambda(P_{D'_u})=\lambda(\nabla h(u))=\MA(h)(\{u\}).
		\]
		With $\iota'$ denoting the morphism $\mathfrak{X}(\boldsymbol{n},\boldsymbol{a})'\rightarrow\mathfrak{X}(\boldsymbol{n},\boldsymbol{a})$ we conclude
		\[
			\Deg(D^n.Y)=\Deg\left(Y_{n}\right)=\Deg\left(\psi^{\prime \ast}Y_n^t\right)=\Deg\left(\iota_\ast\psi^{\prime \ast}\sum m_jP_j\right).
		\]
		Using \cite[Proposition 1.7]{Fu2} this equals
		\[
			\Deg\left(\psi^{\ast}\iota'_{\ast}\sum m_jP_j\right)=\Deg\left(\psi^{\ast}\sum m_j[P_j:\{\tilde{\boldsymbol{0}}\}]\cdot\{\tilde{\boldsymbol{0}}\}\right).
		\]
		As $\psi^{-1}(\{\tilde{\boldsymbol{0}}\})=S$ is reduced since $\psi$ is smooth, this amounts to
		\[
			\Deg\left(\sum m_j\Deg(P_j)S\right)=\Deg(S)\cdot\Deg(Y_n^t)=\Deg(S)\cdot n!\cdot\MA(h)(\{u\}).
		\]
		This yields the equality we wanted to prove.
	\end{proof}
	\begin{Remark}
		Using the same arguments, one can show the following more general formula: In the situation of Theorem \ref{MAvertex} instead of only one function $h$ consider $h_1,...,h_n$ piecewise affine linear convex functions on $\bar{\tau}$. Refine the subdivision $\mathfrak{D}$ such that it suits every $h_i$. Then
		\[
			\Deg\left(\bigwedge_{i=1}^n c_1\left(\mathcal{O}(h_i\circ p_{\mathfrak{X}'})\right).Y \right)=\Deg(S)\cdot n!\cdot\MA(h_{1},...,h_{n})(u),
		\]
		where 
		\[
			\MA(h_{1},...,h_{n}):=\frac{1}{n!}\sum_{k=1}^n (-1)^{n-k}\cdot\sum_{1\leq i_1<...<i_k\leq n}\MA(h_{i_1}+...+h_{i_k})
		\] 
		denotes now the mixed Monge-Ampère measure of $h_{1},...,h_{n}$ (for details see \cite[§5]{PR}). 
	\end{Remark}
	\begin{Remark}
		In the situation of Theorem \ref{MAvertex} we denote by $\overline{\mathcal{O}}^{h\circ p_{\mathfrak{X}'}}$ the trivial line bundle on $\mathfrak{X}'^{\textup{an}}$ together with the metric which is given by $\|1\|=e^{-h\circ p_{\mathfrak{X}'}}$. After base change to the completion of an algebraic closure $\mathbb{C}_K$ of $K$ this becomes a formally metrized line bundle by Proposition \ref{CartierDiv}. So similarly as in Remark \ref{extend} we can define its non-archimedean Monge-Ampère measure by base change to $\mathbb{C}_K$.
	\end{Remark}
	\begin{Corollary}
		\label{Comparison}
		We have
		\[
			c_1\left(\overline{\mathcal{O}}^{h\circ p_{\mathfrak{X}'}}\right)^n=\Deg(S)\cdot n!\cdot\MA(h)
		\]
		on $p_{\mathfrak{X}'}^{-1}(\tau)$, where $\MA(h)$ is understood to be a measure on $\mathfrak{X}'^{\textup{an}}$ by pushforward with the inclusion $\tau\hookrightarrow \mathfrak{X}'^{\textup{an}}$.
	\end{Corollary}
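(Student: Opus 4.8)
The plan is to unwind the definition of the non-archimedean Monge-Amp\`ere measure and then apply Theorem \ref{MAvertex} one vertex of $\mathfrak{D}$ at a time. First I would reduce to the case where $K$ is algebraically closed: as explained in the remark immediately preceding the statement (cf.\ Remark \ref{extend}) both sides are compatible with base change to $\mathbb{C}_K$, so it suffices to prove the identity there. After this base change Proposition \ref{CartierDiv} shows that $\mathfrak{X}''$ is admissible and that $\overline{\mathcal{O}}^{h\circ p_{\mathfrak{X}'}}$ is exactly the formal metric on $\mathcal{O}_{\mathfrak{X}'^{\textup{an}}}$ attached to the line bundle $\mathcal{O}(D)$ on $\mathfrak{X}''$, where $D$ is the Cartier divisor induced by $h$. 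Moreover $\tilde{\mathfrak{X}}''$ is reduced by Corollary \ref{Corstratumface}(d), so the defining formula of \ref{measure} applies (decomposing $\mathfrak{X}'^{\textup{an}}$ into its finitely many irreducible components if necessary, which does not interfere since each $Y\in\irr(\tilde{\mathfrak{X}}'')$ lies over a single such component) and gives
\[
	c_1\left(\overline{\mathcal{O}}^{h\circ p_{\mathfrak{X}'}}\right)^n=\sum_{\substack{Y\in\irr(\tilde{\mathfrak{X}}'')\\ Y\text{ proper}}}\Deg\left(c_1(\mathcal{O}(D))^n.Y\right)\cdot\delta_{\zeta_Y}.
\]

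Next I would determine which Dirac masses lie over the open face $\tau$. By Corollary \ref{Corstratumface}(f) the assignment $Y\mapsto\zeta_Y$ is a bijection between $\irr(\tilde{\mathfrak{X}}'')$ and the vertices of $\mathfrak{D}$, and by Proposition \ref{Stratumface} applied to the generic point of $Y$ (allowed by its last sentence) one has $p_{\mathfrak{X}''}(\zeta_Y)=u$ whenever $Y$ corresponds to the vertex $u$; since $\zeta_Y$ lies on the skeleton this vertex $u$ \emph{equals} $\zeta_Y$ under the inclusion of the skeleton into $\mathfrak{X}'^{\textup{an}}$. Hence $\zeta_Y\in p_{\mathfrak{X}'}^{-1}(\tau)$ if and only if $u\in\tau$, and for such $Y$ Theorem \ref{MAvertex} (whose proof also yields that $Y$ is proper) gives $\Deg(c_1(\mathcal{O}(D))^n.Y)=\Deg(S)\cdot n!\cdot\MA(h)(\{u\})$. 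Components with $u\in\partial\tau$ do not matter for the restriction to $p_{\mathfrak{X}'}^{-1}(\tau)$.

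It then remains to identify the right-hand side as a discrete measure. Since $h$ is affine linear on every cell of $\mathfrak{D}$, the gradient image $\nabla h(x_0)$ is contained in an affine subspace of dimension $n-k$ whenever $x_0$ lies in the relative interior of a $k$-dimensional cell, so it has Lebesgue measure zero for $k\geq 1$; consequently $\MA(h)$ is the discrete measure $\sum_{u}\MA(h)(\{u\})\,\delta_u$, the sum running over the vertices $u$ of $\mathfrak{D}$ in $\tau$. Pushing this forward along $\tau\hookrightarrow\mathfrak{X}'^{\textup{an}}$ and using the identification $u=\zeta_Y$ from the previous step, both measures restrict on $p_{\mathfrak{X}'}^{-1}(\tau)$ to $\sum_{u\in\tau}\Deg(S)\cdot n!\cdot\MA(h)(\{u\})\,\delta_{\zeta_Y}$, which is the claim.

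I do not expect a serious obstacle: the analytic substance is entirely contained in Theorem \ref{MAvertex}, and what is left is the bookkeeping of matching the atoms of the two discrete measures. The only points needing care are the base change to $\mathbb{C}_K$ together with the harmless interaction with the decomposition into prime cycles, and the elementary observation that the real Monge-Amp\`ere measure of a piecewise affine linear convex function is supported on the vertices of its subdivision.
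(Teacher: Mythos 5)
Your proposal takes essentially the same route as the paper: Theorem \ref{MAvertex} supplies the masses at the vertices, the non-archimedean measure is by construction a sum of Dirac masses at the points $\zeta_Y=u$ for the vertices $u$ of $\mathfrak{D}$, and the real Monge-Amp\`ere measure of the piecewise affine convex function $h$ is likewise supported on the vertices. The one place where the paper is slightly more careful is the last step: since $\MA(h)(U)$ is the Lebesgue measure of the \emph{union} $\bigcup_{x_0\in U}\nabla h(x_0)$ over uncountably many points, one must note that for all $x_0$ in the relative interior of a fixed positive-dimensional cell the gradient images lie in a single proper affine subspace determined by the cell (via the linear part of $h$ there), so the union is contained in finitely many null sets --- your phrasing compresses this, but the observation you use implicitly is exactly the needed one.
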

	\begin{proof}
		 We already know by Theorem \ref{MAvertex} that the equation holds on the set of vertices. Furthermore it is clear from the definition, that $c_1\left(\overline{\mathcal{O}}^{h\circ p_{\mathfrak{X}'}}\right)^n$ is supported on the vertices of $\mathfrak{D}$. What remains to show is that this also holds for $\MA(h)$.
		 
		Let $U:=\tau\setminus\left\{u\in\tau\;\Big|\;u\text{ is a vertex of }\mathfrak{D}\right\}$. We want to show $\MA(h)(U)=0$. Let $\Delta_1,...,\Delta_r$ be the open faces of $\mathfrak{D}$ of dimension at least one. For every $j\in\{1,...,r\}$ there is a $v_j\in\mathbb{R}^n\setminus\{0\}$ such that for all $y\in\Delta_j$ there exists $\epsilon\in\mathbb{R}_+$ such that $y\pm\epsilon v_j\in\Delta_j$. Furthermore $h_j:=h\Big|_{\Delta_j}=\boldsymbol{m}_j\boldsymbol{x}+v(\alpha_j)$ for some $\boldsymbol{m}_j\in\mathbb{Z}^n$ and $\alpha_j\in K^\times$ and we define $h_j^{lin}:=\boldsymbol{m}_j\boldsymbol{x}$. Now let $y\in U$. Then there is an $i$ such that $y\in\Delta_i$. For $p\in\nabla h(y)$ and $\epsilon$ as above it follows
		\begin{align*}
			\epsilon\langle v_i,p\rangle &=h_i(y)+\langle y+\epsilon v_i -y,p\rangle -h_i(y) \\
			&\leq h_i(y+\epsilon v_i)-h_i(y) \\
			&=h_i^{lin}(\epsilon v_i) \\
			&=\epsilon h_i^{lin}(v_i),
		\end{align*}
		hence
		\[
			\langle v_i,p\rangle\leq h_i^{lin}(v_i).
		\]
		A similar argument shows 
		\[
			-\epsilon\langle v_i,p\rangle\leq -\epsilon h_i^{lin}(v_i)
		\]
		and hence
		\[
			\langle v_i,p\rangle \geq h_i^{lin}(v_i).
		\]
		We conclude $\langle v_i,p\rangle=h_i^{lin}(v_i)$ and $p$ lies in a hypersurface which depends on $i$ but not on $y$. Hence $\bigcup_{y\in U} \nabla h(y)$ is contained in the union of $r$ hypersurfaces. Therefore 
		\[
			\MA(h)(U)=\lambda\left(\bigcup_{y\in U}\nabla h(y)\right)=0,
		\]
		\end{proof}
		In the following we consider a proper algebraic variety $X$ over $K$ of dimension $n$.
		\begin{Proposition}
			\label{semipositive} Let $\mathfrak{X}$ be a strongly nondegenerate strictly polystable formal model of $X^{\textup{an}}$ over $K^\circ$ with associated skeleton $\Delta$, $\tau$ an $n$-dimensional open face of $\Delta$ and $h$ a rational piecewise affine linear convex function on $\tau$. Then the metric on $\mathcal{O}_{X^{\textup{an}}}\Big|_{p_{\mathfrak{X}}^{-1}(\tau)}$ given by $\|1\|=e^{-h\circ p_{\mathfrak{X}}}$ is a semipositive piecewise $\mathbb{Q}$-linear metric. 
		\end{Proposition}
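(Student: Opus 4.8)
The strategy is to reduce to the case where $h$ is \emph{affine} on $\bar\tau$, where Lemma \ref{numericallyequivalent} applies, and then to exploit the stability of semipositive piecewise linear metrics under taking minima (Proposition \ref{GM Proposition 3.12}). Since $h$ is piecewise affine linear and convex on $\bar\tau$, extending each linear piece of $h$ (cf. Definition \ref{Defpiecewiseaffinelinear}) to an affine function on $\mathbb{R}^n$ yields finitely many affine functions $a_1,\dots,a_N$, each with integral slopes and with constant term in $\Gamma$, such that $a_j\le h$ on $\bar\tau$ (supporting affine functions, by convexity) and $h=\max_{1\le j\le N} a_j$ on $\bar\tau$. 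Applying $e^{-(\cdot)\circ p_{\mathfrak{X}}}$ turns this maximum into a minimum, so the metric $\|\cdot\|$ with $\|1\|=e^{-h\circ p_{\mathfrak{X}}}$ equals the pointwise minimum of the metrics $\|\cdot\|_j$ with $\|1\|_j=e^{-a_j\circ p_{\mathfrak{X}}}$ on $p_{\mathfrak{X}}^{-1}(\tau)$. Each $a_j$ is in particular a rational piecewise affine linear function on $\bar\tau=S(\mathfrak{X}')$, so by Proposition \ref{CartierDiv} applied with the trivial subdivision --- for which Construction \ref{Formalmodel} gives back $\mathfrak{X}'$, which is admissible --- it induces a Cartier divisor $D_j$ on $\mathfrak{X}'$, trivial on the generic fibre, whose associated formal line bundle $\mathcal{O}(D_j)$ induces $\|\cdot\|_j$. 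Hence each $\|\cdot\|_j$ is a formal metric, in particular piecewise linear, and therefore so is their minimum $\|\cdot\|$; in particular $\|\cdot\|$ is piecewise $\mathbb{Q}$-linear. It only remains to show that $\|\cdot\|$ is semipositive in $p_{\mathfrak{X}}^{-1}(\tau)$.

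By Proposition \ref{GM Proposition 3.12} and induction on $N$ it suffices to show that each $\|\cdot\|_j$ is semipositive in every point of $p_{\mathfrak{X}}^{-1}(\tau)$; this is a local assertion, so one may first extend the $\|\cdot\|_j$ to piecewise linear metrics on $\mathcal{O}_{X^{\textup{an}}}$ (using that formal metrics on a compact strictly $K$-analytic domain extend), which does not affect semipositivity near $p_{\mathfrak{X}}^{-1}(\tau)$. Fix $x\in p_{\mathfrak{X}}^{-1}(\tau)$ and put $u:=p_{\mathfrak{X}}(x)\in\tau$. As $\tau$ is an open face of $\Delta$ and $\Gamma_{\mathbb{Q}}$-rational points are dense, one may choose a $\Gamma_{\mathbb{Q}}$-rational polytopal subdivision $\mathfrak{D}$ of $\bar\tau$ and a maximal cell $\sigma\in\mathfrak{D}$ such that $u$ lies in the relative interior of $\sigma$ and all vertices of $\sigma$ lie in $\tau$; after base change to $\mathbb{C}_K$ the associated model $\iota\colon\mathfrak{X}''\to\mathfrak{X}'$ of Construction \ref{Formalmodel} is admissible, and $a_j$ induces on $\mathfrak{X}''$ a Cartier divisor $D_j$ with $\mathcal{O}(D_j)$ inducing $\|\cdot\|_j$ (Proposition \ref{CartierDiv}). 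Take $W:=(\mathfrak{U}''_\sigma)^{\textup{an}}=p_{\mathfrak{X}}^{-1}(\sigma)$, a compact strictly $K$-analytic domain with $x\in\overset{\circ}{W}$, together with the formal model $\mathfrak{W}:=\mathfrak{U}''_\sigma$ and $\mathfrak{L}:=\mathcal{O}(D_j)|_{\mathfrak{W}}$, which induces $\|\cdot\|_j|_W$. A proper closed curve $C$ in $\tilde{\mathfrak{W}}$ is irreducible, hence contained in one irreducible component of $\tilde{\mathfrak{X}}''$, which by Corollary \ref{Corstratumface}(f) corresponds to a vertex $v$ of $\sigma$; since $v\in\tau$, Lemma \ref{numericallyequivalent} (applied with the affine function $a_j$ in place of $h$, in exactly the ``e.g.'' case stated there) gives $\Deg_{\mathfrak{L}}(C)=\Deg(D_j.C)=0\ge 0$. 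Thus $\|\cdot\|_j$ is semipositive in $x$, and hence in $p_{\mathfrak{X}}^{-1}(\tau)$.

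Putting the two steps together shows that $\|\cdot\|$ is a semipositive piecewise $\mathbb{Q}$-linear metric on $\mathcal{O}_{X^{\textup{an}}}|_{p_{\mathfrak{X}}^{-1}(\tau)}$, which is the claim. The substantive part is the second paragraph: choosing the subdivision $\mathfrak{D}$ and the affinoid domain $W$ so that the special fibre of the local model contains only curves lying over $\tau$ --- precisely the range in which Lemma \ref{numericallyequivalent} forces the vertical intersection number to vanish --- together with the routine but necessary bookkeeping of the base change to $\mathbb{C}_K$ ensuring admissibility of $\mathfrak{X}''$ and the compatibility of semipositivity with it. The reduction $h=\max_j a_j$ and the minimum-stability of semipositive metrics are then formal.
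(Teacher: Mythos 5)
Your overall strategy is the one the paper itself uses: write $h$ as a maximum of rational affine functions, observe that each of these induces (after passing to a multiple) a formal metric on the trivial bundle via Proposition \ref{CartierDiv}, deduce semipositivity of each such metric from the vanishing of intersection numbers in Lemma \ref{numericallyequivalent}, extend the metrics from a compact neighbourhood to $X^{\textup{an}}$ by \cite[Proposition 2.7]{GM}, and conclude with the min-stability of semipositivity (Proposition \ref{GM Proposition 3.12}). The one genuine divergence is in how you certify semipositivity of the metric attached to a single affine function $a_j$. The paper invokes \cite[Proposition 6.5]{GK1}: for the model $(\mathfrak{X}',\mathcal{O}(D_j))$ (trivial subdivision, admissible over $K^\circ$) it suffices to check $\Deg_{\mathcal{O}(D_j)}(Y)\ge 0$ for curves $Y\subseteq\red(p_{\mathfrak{X}'}^{-1}(\tau))$, which Lemma \ref{numericallyequivalent} settles. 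You instead manufacture by hand a compact domain $W=p_{\mathfrak{X}}^{-1}(\sigma)$ whose formal model $\mathfrak{U}''_\sigma$ has all irreducible components indexed by vertices of $\sigma$ lying in $\tau$, so that every proper closed curve in its special fibre is covered by Lemma \ref{numericallyequivalent}. That geometric idea is sound: since all vertices of $\sigma$ lie in the convex set $\tau$, all faces of $\sigma$ do too, and the whole special fibre of $\mathfrak{U}''_\sigma$ sits inside $\red(p_{\mathfrak{X}''}^{-1}(\tau))$.

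There is, however, a gap in that substitute step: admissibility of $\mathfrak{U}''_\sigma$ over $K^\circ$. You only arrange $\sigma$ to be $\Gamma_{\mathbb{Q}}$-rational, while Construction \ref{Formalmodel} yields an admissible formal scheme over $K^\circ$ only when the vertices are $\Gamma$-rational; if $\Gamma$ is discrete there may be no $\Gamma$-rational polytope with all vertices in the open face $\tau$ at all (e.g.\ the simplex $t_0+t_1=-\log|a_0|$ with $-\log|a_0|$ a generator of $\Gamma$ has no interior $\Gamma$-rational points). Your fix --- base change to $\mathbb{C}_K$ --- produces a formal model over $\mathbb{C}_K^\circ$ and a strictly $\mathbb{C}_K$-analytic domain, and therefore only verifies semipositivity of the base-changed metric, i.e.\ \emph{potential} semipositivity, whereas the Proposition asserts semipositivity over $K$; the paper is careful to keep these notions apart (Definition \ref{locallypotentiallysemipositive}; compare Proposition \ref{semipositiveII}, which explicitly assumes $K$ algebraically closed). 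The descent you dismiss as ``routine bookkeeping'' is exactly the point that needs an argument, and it is what the citation of \cite[Proposition 6.5]{GK1} supplies in the paper's proof, since there the relevant model is $\mathfrak{X}'$ itself and no subdivision is needed. The remainder of your argument --- the representation $h=\max_j a_j$, the identification of $\|\cdot\|$ as the minimum of the $\|\cdot\|_j$, and the appeal to Proposition \ref{GM Proposition 3.12} after extending to $X^{\textup{an}}$ --- matches the paper.
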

		\begin{proof} 
			Let $y\in p_{\mathfrak{X}}^{-1}(\tau)$ and $x:=p_{\mathfrak{X}}(y)\in\tau$. There is an open neighbourhood $U$ of $x$ in $\tau$ such that we can write $h\Big|_U=\max_{i=1,...,s} h_i\Big|_U$ for suitable rational affine linear functions $h_i$ on $\tau$. After passing to some multiple, each $h_i$ induces a formal metric on $\mathfrak{X}^{\prime an}$ by Proposition \ref{CartierDiv} where $\mathfrak{X}'$ is defined as in Remark \ref{Notation}. Therefore the $h_i$ induce piecewise $\mathbb{Q}$-linear metrics on $\mathcal{O}_{\mathfrak{X}^{\textup{an}}}\Big|_{p_{\mathfrak{X}}^{-1}(\tau)}$ since $p_{\mathfrak{X}}^{-1}(\tau)\subseteq\mathfrak{X}^{\prime an}$. Hence in the neighbourhood $p_{\mathfrak{X}}^{-1}(U)$ of $y$, the metric induced by $h$ is given as the minimum of the metrics corresponding to the $h_i$, which are semipositive at $y$ by Lemma \ref{numericallyequivalent}. Indeed let $(\mathfrak{X}''_i,\mathfrak{L}_i)$ be a formal model of the trivial bundle associated to $h_i$ as obtained by Proposition \ref{CartierDiv}. Then by \cite[Proposition 6.5]{GK1} (the proof of the implication we need does neither use that $K$ is algebraically closed nor that the generic fibre is algebraic) it is enough to show that $\Deg_{\mathfrak{L}_i}(Y)\geq 0$ for any closed curve $Y$ in $\tilde{\mathfrak{X}}''_i$ with $Y\subseteq\red(p_{\mathfrak{X}''_i}^{-1}(\tau))$ but by Lemma \ref{numericallyequivalent} we even have equality. Now we extend the metrics induced by the $h_i$ from a compact strictly $K$-analytic neighbourhood of $y$ to $X^{\textup{an}}$ by \cite[Proposition 2.7]{GM} and then it follows from Proposition \ref{GM Proposition 3.12} that $||\cdot||$ is semipositive at $y$.
		\end{proof}
		\begin{Corollary}
			\label{mainresult}
			Let $\mathfrak{X}$ be a strongly nondegenerate strictly polystable formal model of $X^{\textup{an}}$ over $K^\circ$ with associated skeleton $\Delta$. Let $\tau$ be an $n$-dimensional open face of $\Delta$ and $h$ a convex function on $\tau$. Denote by $\overline{\mathcal{O}}^{h\circ p_{\mathfrak{X}}}$ the trivial bundle on $p_{\mathfrak{X}}^{-1}(\tau)$ endowed with the metric given by $\|1\|=e^{-h\circ p_{\mathfrak{X}}}$. Then the latter is locally a semipositive metric (Definition \ref{locallypotentiallysemipositive}) and
			\[
				c_1\left(\overline{\mathcal{O}}^{h\circ p_{\mathfrak{X}}}\right)^n=\Deg(S)\cdot n!\cdot\MA(h)
			\]
			on $p_{\mathfrak{X}}^{-1}(\tau)$ where $S$ is the point in the special fibre of $\mathfrak{X}$ corresponding to $\tau$.
		\end{Corollary}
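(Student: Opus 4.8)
The plan is to reduce the statement to Corollary \ref{Comparison} and Proposition \ref{semipositive} by approximating an arbitrary convex $h$ by rational piecewise affine linear convex functions and passing to the limit, using the continuity of the real and of the non-archimedean Monge-Amp\`ere operators. For the local semipositivity, fix $y\in p_{\mathfrak{X}}^{-1}(\tau)$ and set $x:=p_{\mathfrak{X}}(y)$. On a small convex neighbourhood of $x$ in $\tau$ the convex function $h$ is a uniform limit of rational piecewise affine linear convex functions $h_k$ (write $h$ as the supremum of its affine minorants, lower and rationalise finitely many of them and take their maximum; since $K$ is non-trivially valued, the $\mathbb{Q}$-span of the value group $\Gamma$ is dense in $\mathbb{R}$, so the intercepts can be chosen admissible). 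By Proposition \ref{semipositive} each $e^{-h_k\circ p_{\mathfrak{X}}}$ is a semipositive piecewise $\mathbb{Q}$-linear metric on $\mathcal{O}_{X^{\textup{an}}}\big|_{p_{\mathfrak{X}}^{-1}(\tau)}$, and by continuity of $p_{\mathfrak{X}}$ these metrics converge uniformly to $e^{-h\circ p_{\mathfrak{X}}}$ near $y$; hence $e^{-h\circ p_{\mathfrak{X}}}$ is locally semipositive in the sense of Definition \ref{locallypotentiallysemipositive}, so that $c_1(\overline{\mathcal{O}}^{h\circ p_{\mathfrak{X}}})^n$ is a well-defined positive Radon measure on $p_{\mathfrak{X}}^{-1}(\tau)$.

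Next I would prove the identity when $h$ is rational piecewise affine linear convex on $\tau$. Since both sides are measures on $p_{\mathfrak{X}}^{-1}(\tau)$, it suffices to check equality on $p_{\mathfrak{X}}^{-1}(U)$ for arbitrarily small open $U\subseteq\tau$. Around a point $x_0$ one has $h\big|_U=\max_i h_i$ for finitely many rational affine functions $h_i$; extending each $h_i$ affinely to $\mathbb{R}^n$ and putting $g:=\max_i h_i$ yields a rational piecewise affine linear convex function on the closed face $\bar\tau$ in the sense of Remark \ref{Notation}, with $g=h$ on $U$. With $\mathfrak{X}'$ the formal open subscheme from Remark \ref{Notation} one has $p_{\mathfrak{X}}^{-1}(\tau)\subseteq\mathfrak{X}^{\prime\textup{an}}$ and $p_{\mathfrak{X}'}^{-1}(\tau)=p_{\mathfrak{X}}^{-1}(\tau)$. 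On $p_{\mathfrak{X}}^{-1}(U)$ the metrics $e^{-h\circ p_{\mathfrak{X}}}$ and $e^{-g\circ p_{\mathfrak{X}'}}$ coincide, so by Lemma \ref{local} their $n$-fold Monge-Amp\`ere measures agree there; Corollary \ref{Comparison} identifies the latter with $\Deg(S)\cdot n!\cdot\MA(g)$ on $p_{\mathfrak{X}'}^{-1}(\tau)$, and $\MA(g)\big|_U=\MA(g\big|_U)=\MA(h\big|_U)=\MA(h)\big|_U$ by locality of the real Monge-Amp\`ere measure. Covering $\tau$ by such $U$ settles this case (if $h$ has merely rational slopes and intercepts in $\mathbb{Q}\Gamma$, apply this to a suitable integer multiple and rescale).

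For a general convex $h$ I would then work locally on some $p_{\mathfrak{X}}^{-1}(U)$, with $U\subseteq\tau$ a small convex open, and choose rational piecewise affine linear convex $h_k$ on $U$ converging uniformly to $h$. By the previous step, $c_1(\overline{\mathcal{O}}^{h_k\circ p_{\mathfrak{X}}})^n=\Deg(S)\cdot n!\cdot\MA(h_k)$ on $p_{\mathfrak{X}}^{-1}(U)$. The metrics $e^{-h_k\circ p_{\mathfrak{X}}}$ converge uniformly on every compact subset of $p_{\mathfrak{X}}^{-1}(U)$ to $e^{-h\circ p_{\mathfrak{X}}}$ — here one uses that $p_{\mathfrak{X}}$ is continuous and that the preimage of a compact subset of $\tau$ is compact in the proper space $X^{\textup{an}}$ — so Proposition \ref{convergence} together with Remark \ref{independent} gives $c_1(\overline{\mathcal{O}}^{h_k\circ p_{\mathfrak{X}}})^n\to c_1(\overline{\mathcal{O}}^{h\circ p_{\mathfrak{X}}})^n$ weakly. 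On the other hand $\MA(h_k)\to\MA(h)$ weakly by continuity of the real Monge-Amp\`ere operator under pointwise convergence of convex functions. Passing to the limit gives the identity on $p_{\mathfrak{X}}^{-1}(U)$, hence on all of $p_{\mathfrak{X}}^{-1}(\tau)$.

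The main obstacle is the reconciliation of conventions in the middle step: Corollary \ref{Comparison} lives on the formal open subscheme $\mathfrak{X}'$ with $S(\mathfrak{U})=\bar\tau$ and is phrased for functions piecewise affine on the \emph{closed} face, whereas the present statement concerns the full model $\mathfrak{X}$ and a function defined, possibly with singularities, only on the \emph{open} face $\tau$. This gap is bridged by the locality of all three measures — through Lemma \ref{local} for the two non-archimedean ones and directly for the real one — together with the harmless extension of the local affine pieces of $h$ to $\bar\tau$; with that in place, the semipositivity and the limiting argument are routine applications of Propositions \ref{semipositive} and \ref{convergence} and of the continuity of the real operator.
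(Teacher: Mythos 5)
Your argument is correct and follows essentially the same route as the paper: approximate $h$ locally by rational piecewise affine linear convex functions (the paper exhausts $\tau$ by polytopes $\Delta_m$ and cites a result of Burgos Gil--Philippon--Sombra for the approximation, then extends the approximants to $\bar\tau$), invoke Proposition \ref{semipositive} for semipositivity, Corollary \ref{Comparison} for the identity on the approximants, and Proposition \ref{convergence} together with continuity of the real Monge-Amp\`ere operator to pass to the limit. Your intermediate step for rational piecewise affine linear $h$ on the open face and the compactness remark about $p_{\mathfrak{X}}^{-1}$ are harmless but unnecessary, since uniform convergence $h_k\to h$ on $U$ already gives uniform convergence of $h_k\circ p_{\mathfrak{X}}$ on all of $p_{\mathfrak{X}}^{-1}(U)$.
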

		\begin{proof}
			We can cover $\tau$ by polytopes $(\Delta_m)_{m\in\mathbb{N}}$ such that $\Delta_{m-1}\subseteq\Delta_m$. By \cite[Proposition 2.5.24]{BPS} for each $m$ there is a family of rational piecewise affine linear convex functions $(h_i^m)_{i\in\mathbb{N}}$ on $\Delta_m$ converging uniformly to $h\Big|_{\Delta_m}$ (note that after normalization we can assume that $\mathbb{Z}$ is contained in the value group of $K$). We extend these functions to rational piecewise affine linear convex functions on $\overline{\tau}$. Then by Proposition \ref{semipositive} the metrics induced by the $h_i^m$ are semipositive piecewise $\mathbb{Q}$-linear metrics on $p_{\mathfrak{X}}^{-1}(\tau)$ which implies that the metric induced by $h\Big|_{\Delta_m}$ is semipositive. By Corollary \ref{Comparison} we have
			\[
				c_1\left(\overline{\mathcal{O}}^{h_i^m\circ p_{\mathfrak{X}}}\right)^n=\Deg(S)\cdot n!\cdot\MA(h_i^m)
			\]
			for every $m,i\in\mathbb{N}$. Denoting the interior of $\Delta_m$ by $\Delta_m^\circ$ and using Proposition \ref{convergence} we find that for fixed $m$ the left hand side converges to $c_1\left(\overline{\mathcal{O}}^{h\circ p_{\mathfrak{X}}}\right)^n$ on $p_{\mathfrak{X}}^{-1}(\Delta_m^\circ)$. The right hand side converges to $\Deg(S)\cdot n!\cdot\MA(h)$ on $\Delta_m^\circ$ by continuity of the real Monge-Ampère operator. As this holds for any $m$ and the $\Delta_m$ cover $\tau$ this proves the corollary.
		\end{proof}
		\begin{Def}
			\label{convex} Let $\mathfrak{X}$ be a strongly nondegenerate polystable formal scheme with associated skeleton $\Delta$ and $\tau$ an open face of $\Delta$. A function $h:\tau\rightarrow\mathbb{R}$ is called convex if there exists a surjective étale morphism $\varphi:\mathfrak{X}'\rightarrow\mathfrak{X}$ with a strongly nondegenerate strictly polystable formal scheme $\mathfrak{X}'$ and an open face $\tau'$ of the skeleton associated to $\mathfrak{X}'$ with $\varphi^{\textup{an}}(\tau')=\tau$ such that $h\circ\varphi^{\textup{an}}:\tau'\rightarrow\mathbb{R}$ is convex. For such a convex function $h$ on $\tau$ we define $\MA(h):=\left(\varphi^{\textup{an}}\Big|_{p_{\mathfrak{X}'}^{-1}(\tau')}\right)_{\ast}\MA\left(h\circ\varphi^{\textup{an}}\Big|_{\tau'}\right)$. It will follow from Corollary \ref{mainpolystable} that this is independent of the choices.
		\end{Def}
		\begin{Proposition}
			\label{semipositiveII} Let $K$ be algebraically closed, $\mathfrak{X}$ a strongly nondegenerate polystable formal model of $X^{\textup{an}}$ over $K^\circ$ with associated skeleton $\Delta$, $\tau$ an $n$-dimensional open face of $\Delta$ and $h$ a rational piecewise affine linear convex function on $\tau$. Then the metric on $\mathcal{O}_{X^{\textup{an}}}\Big|_{p_{\mathfrak{X}}^{-1}(\tau)}$ which is given by $\|1\|=e^{-h\circ p_{\mathfrak{X}}}$ is a semipositive piecewise $\mathbb{Q}$-linear metric.
		\end{Proposition}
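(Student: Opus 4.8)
\textit{Proof plan.} The plan is to reduce everything to the strictly polystable case, which is Proposition \ref{semipositive}. Since $h$ is convex in the sense of Definition \ref{convex}, I would fix a strongly nondegenerate strictly polystable formal scheme $\mathfrak{X}'$ over $K^\circ$, a surjective étale morphism $\varphi:\mathfrak{X}'\rightarrow\mathfrak{X}$, and an open face $\tau'$ of $S(\mathfrak{X}')$ with $\varphi^{\textup{an}}(\tau')=\tau$ such that $g:=h\circ\varphi^{\textup{an}}\big|_{\tau'}$ is convex on $\tau'$. Since $\varphi^{\textup{an}}$ restricts to an integral affine isomorphism from $\tau'$ onto $\tau$ (étale morphisms are compatible with the integral affine structures on skeleta), the face $\tau'$ is again $n$-dimensional and $g$ is again rational piecewise affine linear; and as $K$ is algebraically closed, no base change is needed. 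Moreover, by the compatibility of the canonical retraction with étale morphisms, $\varphi^{\textup{an}}$ intertwines $p_{\mathfrak{X}'}$ with $p_{\mathfrak{X}}$, so it restricts to a surjective étale morphism $p_{\mathfrak{X}'}^{-1}(\tau')\rightarrow p_{\mathfrak{X}}^{-1}(\tau)$ and $(\varphi^{\textup{an}})^{\ast}\!\left(e^{-h\circ p_{\mathfrak{X}}}\right)=e^{-g\circ p_{\mathfrak{X}'}}$.

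Next I would apply Proposition \ref{semipositive} to $\mathfrak{X}'$ and $g$: its proof is purely local around a point of $p_{\mathfrak{X}'}^{-1}(\tau')$ and uses only that the generic fibre of $\mathfrak{X}'$ is a paracompact strictly $K$-analytic space, together with Proposition \ref{CartierDiv} and Lemma \ref{numericallyequivalent}, so it applies even though $\mathfrak{X}'^{\textup{an}}$ need not be algebraic. This yields that the metric $\|\cdot\|'$ on $\mathcal{O}_{\mathfrak{X}'^{\textup{an}}}\big|_{p_{\mathfrak{X}'}^{-1}(\tau')}$ with $\|1\|'=e^{-g\circ p_{\mathfrak{X}'}}$ is a semipositive piecewise $\mathbb{Q}$-linear metric; by the last display of the previous paragraph it is exactly the pullback of the metric $\|\cdot\|$ that we want to control.

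It then remains to descend both properties along the surjective étale map $\varphi^{\textup{an}}\big|_{p_{\mathfrak{X}'}^{-1}(\tau')}$. Being piecewise $\mathbb{Q}$-linear is a local condition and descends easily, using that $\varphi^{\textup{an}}$ is open and étale and that pullback along a surjection is injective on isometry classes of continuous metrics. For semipositivity I would argue pointwise: given $y\in p_{\mathfrak{X}}^{-1}(\tau)$, choose a preimage $y'\in p_{\mathfrak{X}'}^{-1}(\tau')$, take a compact strictly $K$-analytic domain $W'\ni y'$ with a formal model $(\mathfrak{W}',\mathfrak{L}')$ inducing $\|\cdot\|'$ and with $\Deg_{\mathfrak{L}'}(C')\geq0$ for every proper closed curve $C'$ in $\tilde{\mathfrak{W}}'$, and produce from this a compact strictly $K$-analytic domain $W\ni y$ with a formal model $(\mathfrak{W},\mathfrak{L})$ inducing $\|\cdot\|$; since the induced morphism of special fibres is étale and surjective, every proper curve $C$ of $\tilde{\mathfrak{W}}$ is dominated by a proper curve of $\tilde{\mathfrak{W}}'$ that is finite and surjective onto it, so $\Deg_{\mathfrak{L}}(C)\geq0$ follows from the inequality upstairs. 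The main obstacle is making this last descent precise — constructing the downstairs compact domain and formal model and controlling the resulting special‑fibre curves — and, secondarily, checking cleanly that $\varphi^{\textup{an}}$ restricts to a \emph{surjection} $p_{\mathfrak{X}'}^{-1}(\tau')\twoheadrightarrow p_{\mathfrak{X}}^{-1}(\tau)$, so that the single face $\tau'$ for which convexity was assumed suffices; once Corollary \ref{mainpolystable} is available one could instead invoke the face‑independence of the notion of convexity, but I would prefer to keep the argument self‑contained by appealing to étale‑descent for semipositive formal metrics.
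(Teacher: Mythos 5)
Your overall strategy --- reduce to the strictly polystable case along the surjective \'etale morphism $\varphi:\mathfrak{X}'\rightarrow\mathfrak{X}$ --- matches the paper's, but the step you yourself flag as ``the main obstacle'', the \'etale descent of semipositivity, is a genuine gap, and the mechanism you propose for it would fail. An \'etale morphism is not proper unless it is finite, so there is no reason why a proper closed curve $C$ in the special fibre of a formal model $\mathfrak{W}$ downstairs should be dominated by a \emph{proper} curve in the special fibre of $\mathfrak{W}'$: already for a Zariski cover by two opens the preimage of $C$ can be a non-proper open subcurve, so the inequality $\Deg_{\mathfrak{L}}(C)\geq 0$ cannot be pulled back as you describe. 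A second, related problem is that you apply Proposition \ref{semipositive} wholesale to $\mathfrak{X}'$; its proof invokes Proposition \ref{GM Proposition 3.12} (semipositivity of the minimum of two metrics), which is stated for a \emph{proper scheme} over $K$, and $\mathfrak{X}'^{\textup{an}}$ need not be algebraic, so that step is not as innocuous as ``purely local''.

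The paper sidesteps both issues with one fact missing from your plan: since $\tau$ is $n$-dimensional, the corresponding stratum is a closed point $q$ of $\tilde{\mathfrak{X}}$ with residue field $\tilde{K}$ (this is where algebraic closedness of $K$ enters), and by \cite[Proposition 2.9]{Gub1} the \'etale morphism $\varphi$ induces an \emph{isomorphism} $\red_{\mathfrak{X}'}^{-1}(q')\tilde{\rightarrow}\red_{\mathfrak{X}}^{-1}(q)=p_{\mathfrak{X}}^{-1}(\tau)$ of tubes for any $q'$ above $q$. This settles the surjectivity question you raise at the end and eliminates descent altogether: the upstairs picture is used only to write $h=\max_i h_i$ locally with $h_i$ affine linear and to see, via Proposition \ref{CartierDiv}, Lemma \ref{numericallyequivalent} and \cite[Proposition 6.5]{GK1} (which do not need algebraicity of the generic fibre), that the metric attached to each $h_i$ is semipositive at $y$. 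Through the isomorphism of tubes these metrics live on $p_{\mathfrak{X}}^{-1}(\tau)$ itself; they are then extended to global piecewise $\mathbb{Q}$-linear metrics on $X^{\textup{an}}$ by \cite[Proposition 2.7]{GM}, and the minimum is taken \emph{downstairs} on the proper algebraic $X$, where Proposition \ref{GM Proposition 3.12} applies. Without the isomorphism of tubes (or some substitute for it) your argument does not close.
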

		\begin{proof}
			Let $\mathfrak{X}'$ be a strongly nondegenerate strictly polystable formal scheme such that there is a surjective étale morphism $\varphi:\mathfrak{X}'\rightarrow\mathfrak{X}$. Let $q\in\tilde{\mathfrak{X}}$ be the closed point corresponding to $\tau$. By Proposition \ref{SFCBer} we have $\red_{\mathfrak{X}}^{-1}(q)=p_{\mathfrak{X}}^{-1}(\tau)$. Choose $q'\in\tilde{\mathfrak{X}'}$ with $\varphi(q')=q$. By \cite[Proposition 2.9]{Gub1} we have that $\varphi$ induces an isomorphism $\red_{\mathfrak{X}'}^{-1}(q')\tilde{\rightarrow}p_{\mathfrak{X}}^{-1}(\tau)$. Hence the pullback of $\left(\mathcal{O}_{X^{\textup{an}}}\Big|_{p_{\mathfrak{X}}^{-1}(\tau)},\|\cdot\|\right)$ is the trivial bundle on $\red_{\mathfrak{X}'}^{-1}(q')$ endowed with the metric $\|1\|'=e^{-h\circ p_{\mathfrak{X}}\circ\varphi}$. Since $p_{\mathfrak{X}}\circ\varphi=\varphi\circ p_{\mathfrak{X}'}$ it follows that $\|\cdot\|'$ is the metric associated to the function $h\circ\varphi$ on $\tau':=p_{\mathfrak{X}'}(\red_{\mathfrak{X}'}^{-1}(q'))$ which is again rational piecewise affine linear by \cite[Theorem 6.1.1]{Be3} and we may assume it is convex by definition.
			Let $y\in p_{\mathfrak{X}}^{-1}(\tau)$.  
			In a neighbourhood of $p_{\mathfrak{X}'}(y')$ where $y'\in p_{\mathfrak{X}'}^{-1}(\tau')$ with $\varphi^{\textup{an}}(y')=y$ we can write $h\circ\varphi^{\textup{an}}=\max_{i=1,...,s}h'_i$ for suitable affine linear functions $h'_i$ on $\tau'$. Now as $\varphi^{\textup{an}}:p_{\mathfrak{X}'}^{-1}(\tau')\rightarrow p_{\mathfrak{X}}^{-1}(\tau)$ is an isomorphism we have $h=\max_{i=1,...,s} h_i$ where $h_i$ are the piecewise affine linear functions on $\tau$ satisfying $h'_i=h_i\circ\varphi^{\textup{an}}$. Now the metrics associated to the $h'_i$ are piecewise $\mathbb{Q}$-linear and semipositive in $y'$ by the same argument as in the proof of Proposition \ref{semipositive}. Hence the piecewise $\mathbb{Q}$-linear metrics associated to the $h_i$ extend from a compact strictly $K$-analytic neighbourhood of $y$ to global metrics by \cite[Proposition 2.7]{GM} which are semipositive in $y$. Now as $\|\cdot\|$ is locally around $y$ given as the minimum of these metrics, also $\|\cdot\|$ is a piecewise $\mathbb{Q}$-linear metric which is semipositive in $y$ by Proposition \ref{GM Proposition 3.12}.
		\end{proof}
		\begin{Corollary}
			\label{mainpolystable} Let $\mathfrak{X}$ be a strongly nondegenerate polystable formal model of $X^{\textup{an}}$ over $K^\circ$ with associated skeleton $\Delta$. Let $\tau$ be an $n$-dimensional open face of $\Delta$ with corresponding point $S$ in the special fibre of $\mathfrak{X}$ and $h$ a convex function on $\tau$. Denote by $\overline{\mathcal{O}}^{h\circ p_{\mathfrak{X}}}$ the trivial bundle on $p_{\mathfrak{X}}^{-1}(\tau)$ endowed with the metric given by $\|1\|=e^{-h\circ p_{\mathfrak{X}}}$. Then $\|\cdot\|$ is locally a potentially semipositive metric and
			\[
				c_1\left(\overline{\mathcal{O}}^{h\circ p_{\mathfrak{X}}}\right)^n=\Deg(S)\cdot n!\cdot\MA(h)
			\]
			on $p_{\mathfrak{X}}^{-1}(\tau)$. 
		\end{Corollary}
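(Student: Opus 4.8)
The plan is to bootstrap from the strictly polystable case settled in Corollary \ref{mainresult} by means of the étale presentation provided by Definition \ref{convex}, and to obtain local (potential) semipositivity exactly as in Proposition \ref{semipositiveII}.

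I would first dispose of semipositivity. Since local potential semipositivity is by definition local semipositivity after base change to the completion $\mathbb{C}_K$ of an algebraic closure of $K$, I may assume $K=\mathbb{C}_K$. Then, copying the proof of Corollary \ref{mainresult}, I cover $\tau$ by polytopes $\Delta_m$ with $\Delta_{m-1}\subseteq\Delta_m$, approximate $h|_{\Delta_m}$ uniformly by rational piecewise affine linear convex functions via \cite[Proposition 2.5.24]{BPS} and extend them to $\overline{\tau}$; by Proposition \ref{semipositiveII} the metrics they induce on $p_{\mathfrak{X}}^{-1}(\tau)$ are semipositive piecewise $\mathbb{Q}$-linear, so $\|1\|=e^{-h\circ p_{\mathfrak{X}}}$ is a uniform limit of such metrics on every $p_{\mathfrak{X}}^{-1}(\Delta_m^{\circ})$ and hence locally semipositive.

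For the measure identity, choose as in Definition \ref{convex} a surjective étale morphism $\varphi\colon\mathfrak{X}'\to\mathfrak{X}$ with $\mathfrak{X}'$ strongly nondegenerate strictly polystable, an open face $\tau'$ of the skeleton of $\mathfrak{X}'$ with $\varphi^{\textup{an}}(\tau')=\tau$, and $h':=h\circ\varphi^{\textup{an}}$ convex on $\tau'$; let $S'$ be the zero-dimensional stratum of $\tilde{\mathfrak{X}}'$ corresponding to $\tau'$, so $\tilde{\varphi}(S')=S$. As in the proof of Proposition \ref{semipositiveII}, Proposition \ref{SFCBer} identifies $p_{\mathfrak{X}}^{-1}(\tau)$ with $\red_{\mathfrak{X}}^{-1}(S)$ and $p_{\mathfrak{X}'}^{-1}(\tau')$ with $\red_{\mathfrak{X}'}^{-1}(S')$, and by \cite[Proposition 2.9]{Gub1} the morphism $\varphi^{\textup{an}}$ restricts to an isomorphism $p_{\mathfrak{X}'}^{-1}(\tau')\xrightarrow{\,\sim\,}p_{\mathfrak{X}}^{-1}(\tau)$, inducing in particular an integral-affine isomorphism $\tau'\xrightarrow{\,\sim\,}\tau$. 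Because $p_{\mathfrak{X}}\circ\varphi^{\textup{an}}=\varphi^{\textup{an}}\circ p_{\mathfrak{X}'}$, this isomorphism pulls $\overline{\mathcal{O}}^{h\circ p_{\mathfrak{X}}}$ back to $\overline{\mathcal{O}}^{h'\circ p_{\mathfrak{X}'}}$, hence transports $c_1(\overline{\mathcal{O}}^{h'\circ p_{\mathfrak{X}'}})^n$ to $c_1(\overline{\mathcal{O}}^{h\circ p_{\mathfrak{X}}})^n$. Applying Corollary \ref{mainresult} to $(\mathfrak{X}',\tau',h')$ and pushing forward along this isomorphism yields
\[
c_1\!\left(\overline{\mathcal{O}}^{h\circ p_{\mathfrak{X}}}\right)^n=\Deg(S')\cdot n!\cdot\bigl(\varphi^{\textup{an}}\big|_{p_{\mathfrak{X}'}^{-1}(\tau')}\bigr)_{\!\ast}\MA(h')=\Deg(S')\cdot n!\cdot\MA(h),
\]
the last equality being the definition of $\MA(h)$, which moreover — via the integral-affine isomorphism $\tau'\cong\tau$ — is the measure $\MA(h)$ on $\tau$ pushed to $p_{\mathfrak{X}}^{-1}(\tau)$ along the inclusion; the independence of the choices in Definition \ref{convex} then follows a posteriori.

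The only point needing care is the passage from $\Deg(S')$ to $\Deg(S)$, since an étale morphism need not be trivial on residue fields. I would handle this by base change to $\mathbb{C}_K$: over $\mathbb{C}_K$ the residue field $\tilde{\mathbb{C}_K}$ is algebraically closed, so all the $\Deg$'s occurring in the argument above equal $1$, while the face $\tau$ of $\Delta$ acquires $\Deg(S)$ preimages in the skeleton of $\mathfrak{X}_{\mathbb{C}_K}$, each an integral-affine copy of $\tau$ carrying a stratum over $S$; running the previous paragraph over $\mathbb{C}_K$ on each preimage and summing the resulting measures under the proper pushforward $X^{\textup{an}}_{\mathbb{C}_K}\to X^{\textup{an}}$ reproduces exactly the factor $\Deg(S)$. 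Thus the main obstacle is the descent bookkeeping — checking that the non-archimedean Monge--Ampère measure and $\MA(h)$ are compatible with base change and that the number of faces of $S(\mathfrak{X}_{\mathbb{C}_K})$ over $\tau$ accounts for $\Deg(S)$ (dealing, in the inseparable case, with multiplicities in the fibre of $\tilde{\mathfrak{X}}_{\mathbb{C}_K}\to\tilde{\mathfrak{X}}$ over $S$); alternatively one can avoid base change altogether by comparing the residue fields $\widetilde{\mathcal{H}(\zeta_{\tau'})}\cong\widetilde{\mathcal{H}(\zeta_\tau)}$ at the Gauss points of $\tau'$ and $\tau$ to conclude $\tilde{K}(S')\cong\tilde{K}(S)$ directly.
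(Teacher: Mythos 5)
Your overall strategy is the paper's: reduce to the strictly polystable case via the \'etale presentation of Definition \ref{convex}, transport the measure through the identification $p_{\mathfrak{X}'}^{-1}(\tau')\cong p_{\mathfrak{X}}^{-1}(\tau)$, and account for the constant by base change to $\mathbb{C}_K$. However, the order in which you do this creates a genuine gap. Your main derivation is carried out over general $K$ and invokes the isomorphism $p_{\mathfrak{X}'}^{-1}(\tau')\xrightarrow{\sim}p_{\mathfrak{X}}^{-1}(\tau)$ from \cite[Proposition 2.9]{Gub1}; but that statement (and Proposition \ref{semipositiveII}, which you cite for it) is only available for $K$ algebraically closed. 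Over general $K$ the \'etale map can induce a nontrivial residue field extension $\tilde{K}(S')/\tilde{K}(S)$, the tubes are not isomorphic, and your intermediate identity $c_1(\overline{\mathcal{O}}^{h\circ p_{\mathfrak{X}}})^n=\Deg(S')\cdot n!\cdot\MA(h)$ is off by the factor $[\tilde{K}(S'):\tilde{K}(S)]$; in particular your proposed shortcut $\tilde{K}(S')\cong\tilde{K}(S)$ is false in general (already $\Spf(L^\circ)\to\Spf(K^\circ)$ for an unramified extension $L/K$ gives a counterexample). The base-change argument you relegate to the final paragraph is not optional bookkeeping: it must come \emph{first}. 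The paper opens the proof by observing that there are exactly $\Deg(S)$ faces of $S(\mathfrak{X}_{\mathbb{C}_K})$ over $\tau$, each mapping isomorphically to $\tau$, so that $\iota_{\ast}\MA(\iota^{\ast}h)=\Deg(S)\MA(h)$, and only then assumes $K=\mathbb{C}_K$, after which all residue degrees in the strictly polystable comparison equal $1$.

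A second, smaller issue: you apply Corollary \ref{mainresult} to $(\mathfrak{X}',\tau',h')$, but \ref{mainresult} is stated for a formal model of $X^{\textup{an}}$ with $X$ a proper algebraic variety, and the generic fibre of the \'etale cover $\mathfrak{X}'$ need not be algebraic (it could, for instance, be a disjoint union of affinoids). The paper circumvents this by applying the purely formal Corollary \ref{Comparison} to the piecewise affine linear approximants $h_i'$ of $h\circ\varphi^{\textup{an}}$, pushing the resulting identities forward to $p_{\mathfrak{X}}^{-1}(\tau)\subseteq X^{\textup{an}}$, and only then passing to the limit via Proposition \ref{convergence} (with locally uniform convergence), which is where algebraicity of the ambient space is actually needed. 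Your semipositivity argument is fine, and with the two steps reordered and \ref{mainresult} replaced by \ref{Comparison} plus the limit on $X^{\textup{an}}$, your proof becomes the paper's.
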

		\begin{proof}
			Let $\mathbb{C}_K$ be the completion of an algebraic closure of $K$. Then there are exactly $\Deg(S)$ points in the special fibre of $\mathfrak{X}_{\mathbb{C}_K}$ mapping to $S$, hence there are precisely $\Deg(S)$ open faces in the skeleton associated to $\mathfrak{X}_{\mathbb{C}_K}$ lying over $\tau$. As the base change induces an isomorphism of each of these faces with $\tau$, we have $\iota_{\ast}\MA(\iota^{\ast}h)=\Deg(S)\MA(h)$. 
			Using this and the invariance of the non-archimedean Monge-Ampère measure under base change we may assume $K=\mathbb{C}_K$. As in the proof of Proposition \ref{semipositiveII} we choose a strongly nondegenerate strictly polystable formal scheme $\mathfrak{X}'$ and a surjective étale morphism $\varphi:\mathfrak{X}'\rightarrow\mathfrak{X}$. Let $\tau'$ be an open face of the skeleton associated to $\mathfrak{X}'$ lying over $\tau$. As we have seen, $\varphi$ induces an isomorphism $p_{\mathfrak{X}'}^{-1}(\tau')\tilde{\rightarrow}p_{\mathfrak{X}}^{-1}(\tau)$. As in the proof of Corollary \ref{mainresult} there is a sequence of rational piecewise affine linear convex functions $(h'_i)_{i\in\mathbb{N}}$ on $\tau'$ converging locally uniformly to $h\circ\varphi^{\textup{an}}$. Let $h_i$ be the piecewise affine linear functions on $\tau$ such that $h_i\circ\varphi^{\textup{an}}=h'_i$. By Proposition \ref{semipositiveII} the metrics induced by the $h_i$ are semipositive piecewise $\mathbb{Q}$-linear metrics on $p_{\mathfrak{X}}^{-1}(\tau)$ which implies that the metric induced by $h$ is locally semipositive. As the restriction of $\varphi$ to $p_{\mathfrak{X}'}^{-1}(\tau')$ is an isomorphism onto $p_{\mathfrak{X}}^{-1}(\tau)$ we have
			\[
				c_1\left(\overline{\mathcal{O}}^{h_i\circ p_{\mathfrak{X}}}\right)^n=\left(\varphi\Big|_{p_{\mathfrak{X}'}^{-1}(\tau')}\right)_{\ast} c_1\left(\left(\varphi\Big|_{p_{\mathfrak{X}'}^{-1}(\tau')}\right)^{\ast}\overline{\mathcal{O}}^{h_i\circ p_{\mathfrak{X}}}\right)^n
			\]
			By Corollary \ref{Comparison} we have
			\[
				c_1\left(\left(\varphi\Big|_{p_{\mathfrak{X}'}^{-1}(\tau')}\right)^{\ast}\overline{\mathcal{O}}^{h_i\circ p_{\mathfrak{X}}}\right)^n=c_1\left(\overline{\mathcal{O}}^{h_i\circ\varphi^{\textup{an}}\circ p_{\mathfrak{X}'}}\right)^n=n!\cdot\MA\left(h_i\circ\varphi^{\textup{an}}\Big|_{\tau'}\right).
			\]
			Hence
			\[
				c_1\left(\overline{\mathcal{O}}^{h_i\circ p_{\mathfrak{X}}}\right)^n=\left(\varphi\Big|_{p_{\mathfrak{X}'}^{-1}(\tau')}\right)_{\ast}\left(n!\cdot\MA\left(h_i\circ\varphi^{\textup{an}}\Big|_{\tau'}\right)\right)=n!\cdot\MA(h_i).
			\]
			It is easily seen that in Proposition \ref{convergence} we can replace uniform convergence by locally uniform convergence. The claim follows from this fact and continuity of the real Monge-Ampère operator.  
		\end{proof}
		\section{Applications to regularity}
		\label{application} In this section we use the connection of the non-archimedean Monge-Ampère operator to the real one to transfer two known regularity results for the solutions of the real Monge-Ampère equation to the non-archimedean case. Again $K$ denotes a non-archimedean non-trivially valued field.
		\begin{Def}
			Let $\Omega\subseteq\mathbb{R}^n$ be an open subset and $k\in\mathbb{N}$. We write $C^k(\Omega)$ for the space of real valued, $k$ times continuously differentiable functions on $\Omega$.
			Furthermore we denote by $L_{loc}^1(\Omega)$ the space of locally integrable functions on $\Omega$ i.e. functions $f:\Omega\rightarrow\mathbb{R}$ such that the restriction of $f$ to any compact subset of $\Omega$ is integrable. Let $f,g\in L_{loc}^1(\Omega)$ and $\beta\in\mathbb{N}^n$. We say that $g$ is the $\beta$-th weak derivative of $f$ if for any test function $\varphi\in C^\infty(\Omega)$ with compact support we have
			\[
				\int_{\Omega}fD^\beta\varphi\;\boldsymbol{dx}=(-1)^{|\beta|}\int_{\Omega}g\varphi\;\boldsymbol{dx}
			\]
			where $\boldsymbol{dx}$ denotes the Lebesgue measure on $\mathbb{R}^n$. We denote by $W^{k,1}_{loc}(\Omega)$ the space of locally integrable functions on $\Omega$ whose weak derivatives exist up to order $k$.
		\end{Def}
		\begin{Proposition}
			Let $X$ be an $n$-dimensional proper variety over $K$ and $\overline{L}$ a line bundle with a fixed formal metric. Let $\mu$ be a positive Borel measure on $X^{\textup{an}}$ and $\varphi$ a continuous function on $X^{\textup{an}}$ such that the metric on $\overline{L}\otimes\overline{\mathcal{O}}^\varphi$ is semipositive and solving the equation
			\[
				c_1(\overline{L} \otimes \overline{\mathcal{O}}^\varphi)^n=\mu.
			\]
			Let $\tau$ be an $n$-dimensional open face of some skeleton $\Delta$ associated to a strongly nondegenerate strictly polystable formal model $\mathfrak{X}$ of $X^{\textup{an}}$. Suppose that $\mathfrak{X}$ is algebraic, $\overline{L}$ has a model on $\mathfrak{X}$ and $\lambda \cdot\boldsymbol{dx}\leq \mu\leq\Lambda\cdot\boldsymbol{dx}$ on $\tau$ for some $\lambda,\Lambda >0$ where $\boldsymbol{dx}$ denotes the Lebesgue measure on $\tau$. Assume that $\varphi=\varphi\circ p_{\mathfrak{X}}$. Then $\varphi\in W^{2,1}_{loc}(\tau)$.  
		\end{Proposition}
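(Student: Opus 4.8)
The plan is to use the comparison theorem to turn the non-archimedean Monge-Amp\`ere equation on $p_{\mathfrak{X}}^{-1}(\tau)$ into a real Monge-Amp\`ere equation on $\tau$, and then to quote interior regularity for the latter.

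\textbf{Step 1: reduction to the trivial bundle on $p_{\mathfrak{X}}^{-1}(\tau)$.} Since $\tau$ has dimension $n=\dim X^{\textup{an}}$, its associated stratum $S=\red(p_{\mathfrak{X}}^{-1}(\tau))$ is a closed point of $\tilde{\mathfrak{X}}$ and $p_{\mathfrak{X}}^{-1}(\tau)=\red_{\mathfrak{X}}^{-1}(S)$ (Proposition \ref{SFCBer}, as in the proof of Proposition \ref{semipositiveII}); in particular $p_{\mathfrak{X}}^{-1}(\tau)$ is open in $X^{\textup{an}}$. A formal model $\mathfrak{L}$ of $\overline{L}$ on $\mathfrak{X}$ is trivial on a formal open neighbourhood of the point $S$, hence $\overline{L}$ restricts on $p_{\mathfrak{X}}^{-1}(\tau)$ to the trivial line bundle with trivial metric; together with the hypothesis $\varphi=\varphi\circ p_{\mathfrak{X}}$ this produces an isometry $(\overline{L}\otimes\overline{\mathcal{O}}^\varphi)|_{p_{\mathfrak{X}}^{-1}(\tau)}\cong\overline{\mathcal{O}}^{\varphi\circ p_{\mathfrak{X}}}$. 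By the local nature of the measure (Lemma \ref{local}, in the form valid for locally potentially semipositive metrics, see Section \ref{measures}), the restriction of $\mu=c_1(\overline{L}\otimes\overline{\mathcal{O}}^\varphi)^n$ to $p_{\mathfrak{X}}^{-1}(\tau)$ equals $c_1(\overline{\mathcal{O}}^{\varphi\circ p_{\mathfrak{X}}})^n$.

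\textbf{Step 2: applying the comparison theorem.} The metric of $\overline{\mathcal{O}}^{\varphi\circ p_{\mathfrak{X}}}$ is semipositive, being the restriction of the semipositive metric of $\overline{L}\otimes\overline{\mathcal{O}}^\varphi$; hence $\varphi|_\tau$ is convex on the bounded open convex set $\tau$ (this is the implication of Corollary \ref{semipositive->convex} used to pass from Corollary \ref{mainresult} to Theorem \ref{main}). Since $\mathfrak{X}$ is strongly nondegenerate strictly polystable, Corollary \ref{mainpolystable} applies to $h=\varphi|_\tau$ and gives
\[
	c_1(\overline{\mathcal{O}}^{\varphi\circ p_{\mathfrak{X}}})^n=\Deg(S)\cdot n!\cdot\MA(\varphi|_\tau)
\]
on $p_{\mathfrak{X}}^{-1}(\tau)$. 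Restricting to $\tau\subseteq p_{\mathfrak{X}}^{-1}(\tau)$, on which the right-hand side is concentrated, combining with Step 1, and using $\lambda\cdot\boldsymbol{dx}\le\mu\le\Lambda\cdot\boldsymbol{dx}$ on $\tau$, we obtain
\[
	\frac{\lambda}{\Deg(S)\,n!}\cdot\boldsymbol{dx}\ \le\ \MA(\varphi|_\tau)\ \le\ \frac{\Lambda}{\Deg(S)\,n!}\cdot\boldsymbol{dx}\quad\text{on }\tau.
\]

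\textbf{Step 3: real regularity.} Finally I would invoke interior regularity for the real Monge-Amp\`ere equation: a convex function on a bounded open convex subset $\Omega$ of $\mathbb{R}^n$ whose Monge-Amp\`ere measure lies between two positive multiples of Lebesgue measure belongs to $W^{2,1}_{loc}(\Omega)$ (in fact to $W^{2,1+\varepsilon}_{loc}(\Omega)$ by the estimate of De Philippis--Figalli--Savin; under a continuity hypothesis on the density one may instead use Caffarelli's $W^{2,p}$ estimates). Applied to $\varphi|_\tau$ on $\tau$ this yields $\varphi|_\tau\in W^{2,1}_{loc}(\tau)$, which is the claim. The only genuinely external input is the real-analytic regularity of Step 3; Steps 1 and 2 merely assemble results already established in the paper, and the point that needs care is Step 2 — verifying that the hypotheses of the comparison theorem are met verbatim, i.e.\ that semipositivity of $\overline{\mathcal{O}}^{\varphi\circ p_{\mathfrak{X}}}$ indeed forces $\varphi|_\tau$ to be convex and that $\overline{L}$ contributes nothing over the zero-dimensional stratum $S$.
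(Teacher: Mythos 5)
Your proposal is correct and follows essentially the same route as the paper: triviality of $\overline{L}$ over the zero-dimensional stratum $S$, convexity of $\varphi\big|_{\tau}$ via Corollary \ref{semipositive->convex} (which is where the algebraicity of $\mathfrak{X}$ is used), the comparison $c_1(\overline{\mathcal{O}}^{\varphi\circ p_{\mathfrak{X}}})^n=\Deg(S)\cdot n!\cdot\MA(\varphi|_\tau)$, and then interior $W^{2,1}_{loc}$ regularity for the real Monge-Amp\`ere measure pinched between two positive multiples of Lebesgue measure. The only cosmetic differences are that the paper invokes Corollary \ref{mainresult} directly (the strictly polystable case) rather than Corollary \ref{mainpolystable}, and cites \cite{M} for the real regularity input.
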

		\begin{proof}
			By Corollary \ref{semipositive->convex} $\varphi$ is convex on every closed face of $\Delta$. Note that the metric on $\overline{L}$ is trivial on $p_{\mathfrak{X}}^{-1}(\tau)$. Hence we can apply Corollary \ref{mainresult} to get
			\[
				\mu=c_1(\overline{L} \otimes \overline{\mathcal{O}}^\varphi)^n =\Deg(S)\cdot n!\cdot\MA(\varphi)
			\]
			on $\tau$ where $S$ is the stratum of $\tilde{\mathfrak{X}}$ corresponding to $\tau$. Now the claim follows from the corresponding fact in the real case \cite[Theorem 1.2]{M}.
		\end{proof}
		\begin{Remark}
			The condition $\varphi=\varphi\circ p_{\mathfrak{X}}$ is not automatic as shown by a counterexample of Burgos and Sombra, see \cite[Appendix A]{GJKM}.
		\end{Remark}
		\begin{Proposition}
			\label{regularitycurves} Let $X$ be a smooth projective curve over $K$ and $\overline{L}$ a line bundle with a fixed formal metric. Let $\mu$ be a positive Borel measure on $X^{\textup{an}}$ and $\varphi$ a continuous function on $X^{\textup{an}}$ such that the metric on $\overline{L}\otimes\overline{\mathcal{O}}^\varphi$ is semipositive and solving the equation
			\[
				c_1(\overline{L} \otimes \overline{\mathcal{O}}^\varphi)=\mu.
			\]
			If $\tau$ is an open face of the skeleton $\Delta$ of a strictly semistable algebraic model $\mathscr{X}$ of $X^{\textup{an}}$ on which $\overline{L}$ has an algebraic model, $\mu$ is supported on $\Delta$ and $\mu=f\cdot\boldsymbol{dx}$ on $\tau$ for some positive function $f\in C^k(\tau)$ where $\boldsymbol{dx}$ denotes the Lebesgue measure on $\tau$ then $\varphi\in C^{k+2}(\tau)$.
		\end{Proposition}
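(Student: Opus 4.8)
The plan is to reduce the statement, via the comparison theorem, to one-dimensional regularity for the real Monge-Amp\`ere operator; the only point that is not purely formal is that $\varphi$ must factor through the retraction because $\mu$ is supported on the skeleton.

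First I would replace $\mathscr{X}$ by its admissible formal completion $\mathfrak{X}$, a strongly nondegenerate strictly semistable (hence strictly polystable) formal model of $X^{\mathrm{an}}$ with the same skeleton $\Delta$, retraction $p_{\mathfrak{X}}$, and on which $\overline{L}$ has a formal model. Since $\dim X=1$, the stratum $S$ associated to the one-dimensional open face $\tau$ is a closed point (a node of $\tilde{\mathfrak{X}}$), and by the stratum--face correspondence (Proposition \ref{SFCBer}, exactly as used in the proof of Proposition \ref{semipositiveII}) we have $p_{\mathfrak{X}}^{-1}(\tau)=\red^{-1}(S)=:A$, an open annulus with spine $\tau$. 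On a formal open neighbourhood of $S$ the model of $\overline{L}$ admits a frame, so the metric of $\overline{L}$ restricts to the trivial metric on $A$; hence $\overline{L}\otimes\overline{\mathcal{O}}^{\varphi}$ is isometric to $\overline{\mathcal{O}}^{\varphi}$ on $A$, and, using $c_1(\overline{L}\otimes\overline{\mathcal{O}}^{\varphi})=\mu$ and $A\cap\Delta=\tau$, the restriction of $c_1(\overline{\mathcal{O}}^{\varphi})$ to $A$ equals $f\,\boldsymbol{dx}$ on $\tau$ and vanishes on $A\setminus\tau$.

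Next I would show that $\varphi$ is constant on the fibres of $p_{\mathfrak{X}}$: by the potential theory of Berkovich curves, $c_1(\overline{\mathcal{O}}^{\varphi})|_{A}$ is the Laplacian of $\varphi|_{A}$, which is therefore harmonic on $A\setminus\tau$; each connected component of $A\setminus\tau$ is an open disc whose topological boundary in $A$ is a single point of $\tau$, so by the maximum principle a harmonic function on it that extends continuously to its boundary is the constant equal to its boundary value, whence $\varphi|_{A}=(\varphi|_{\tau})\circ p_{\mathfrak{X}}$. The same argument applied to all connected components of $X^{\mathrm{an}}\setminus\Delta$ (each an open disc with one-point boundary on $\Delta$ on which $\overline{L}$ is trivially metrized, since its reduction is a single closed point of $\tilde{\mathfrak{X}}$) gives $\varphi=\varphi\circ p_{\mathfrak{X}}$, so by Corollary \ref{semipositive->convex} the function $h:=\varphi|_{\tau}$ is convex. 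This is the place where the curve hypothesis is essential: in higher dimension such a factorization may fail (cf. the counterexample of Burgos and Sombra cited above).

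Now Corollary \ref{mainresult} applies to $h$, and since $\dim X=1$ it gives $c_1(\overline{\mathcal{O}}^{h\circ p_{\mathfrak{X}}})=\Deg(S)\cdot\MA(h)$ on $p_{\mathfrak{X}}^{-1}(\tau)$; combining this with the isometry $\overline{\mathcal{O}}^{\varphi}\cong\overline{\mathcal{O}}^{h\circ p_{\mathfrak{X}}}$ on $A$ and with $c_1(\overline{L}\otimes\overline{\mathcal{O}}^{\varphi})=\mu$ yields $\Deg(S)\cdot\MA(\varphi|_{\tau})=\mu|_{p_{\mathfrak{X}}^{-1}(\tau)}=f\,\boldsymbol{dx}$ on $\tau$, i.e. $\MA(\varphi|_{\tau})=\tfrac{1}{\Deg(S)}f\,\boldsymbol{dx}$ with $\tfrac{1}{\Deg(S)}f\in C^{k}(\tau)$ and positive. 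Finally I would invoke one-dimensional regularity for the real Monge-Amp\`ere equation: for a convex function $h$ of one variable, $\MA(h)$ is the Lebesgue--Stieltjes measure $dh'$ of the nondecreasing function $h'$, so $\MA(h)=g\,\boldsymbol{dx}$ with $g\in C^{k}(\tau)$ forces $h'$ to be a $C^{k+1}$ primitive of $g$ and therefore $h\in C^{k+2}(\tau)$; this is the one-variable case of the regularity theory used in the real situation, and it gives $\varphi|_{\tau}\in C^{k+2}(\tau)$. I expect the genuine difficulty to lie in the harmonic-function argument of the third paragraph, which needs the disc/annulus structure of Berkovich curves and a maximum principle rather than the formal-geometric machinery used elsewhere; everything else follows directly from Corollary \ref{mainresult} and elementary calculus.
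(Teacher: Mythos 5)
Your proof is correct and follows the paper's strategy for everything except the key step $\varphi=\varphi\circ p_{\mathfrak{X}}$, where you take a genuinely different route. The paper simply cites \cite[Proposition 1.2]{GJKM} for this factorization (which is exactly the curve-specific statement that a continuous semipositive solution whose Monge--Amp\`ere measure is supported on the skeleton is invariant under the retraction), whereas you prove it from scratch via potential theory on Berkovich curves: triviality of the model metric on each formal fibre, harmonicity of $\varphi$ on each connected component of $X^{\textup{an}}\setminus\Delta$ (an open disc with one-point boundary), and the maximum principle. Your argument is sound --- with a compact closure and a single boundary point, a non-constant harmonic function could attain neither its max nor its min, so it must be constant --- but it silently imports two facts that this paper never establishes and that are themselves nontrivial: the identification of the Chambert-Loir measure $c_1(\overline{\mathcal{O}}^{\varphi})$ (defined here via limits of intersection numbers on formal models) with the Laplacian of Thuillier/Baker--Rumely in dimension one, and the maximum principle for harmonic functions on Berkovich curves. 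With references for those two points (e.g.\ Thuillier's thesis) your argument is a self-contained replacement for the citation of \cite{GJKM}, and it makes transparent why the factorization holds on curves but can fail in higher dimension (the Burgos--Sombra counterexample mentioned in the preceding Remark). The remaining steps --- convexity of $\varphi|_{\tau}$ via Corollary \ref{semipositive->convex}, the identity $\mu=\Deg(S)\cdot\MA(\varphi|_{\tau})$ from Corollary \ref{mainresult} with $n=1$, and the elementary observation that a one-variable convex function with $\MA(h)=g\,\boldsymbol{dx}$, $g\in C^{k}$, is $C^{k+2}$ --- coincide with the paper's proof.
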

		\begin{proof}
			By \cite[Proposition 1.2]{GJKM} we have $\varphi=\varphi\circ p_{\mathfrak{X}}$. As in the previous result $\varphi$ is convex on $\tau$ and 
			\[
				\mu=c_1(\overline{L} \otimes \overline{\mathcal{O}}^\varphi)=\Deg(S)\cdot\MA(\varphi)
			\]
			on $\tau$. But a solution to the archimedean Monge-Ampère problem is given by a second antiderivative of $f$ and the solution is unique up to addition of a linear function. Hence $\varphi\in C^{k+2}(\tau)$ and $\Deg(S)\cdot \varphi''=f$.
		\end{proof}
		\begin{appendix}
			\section{Reduction of germs}
			In this appendix we will explain the reduction of germs due to Michael Temkin (see \cite{T1} and \cite{T2}). At the end we will use this theory to prove a generalization of \cite[Lemme 6.5.1]{ChD} proposed by Antoine Ducros which drops a separatedness assumption.
			\begin{Def}
				 \begin{enumerate}
				 		\item The category of \textit{punctual strictly $K$-analytic spaces} is the following: The objects are pairs $(X,x)$ where $X$ is a strictly $K$-analytic space and $x\in X$ is a point. A morphism $\varphi:(X,x)\rightarrow(Y,y)$ is a morphism $\varphi:X\rightarrow Y$ of strictly $K$-analytic spaces such that $\varphi(x)=y$.
				 		\item The category $(K\textrm{-Germs})$ of germs of a strictly $K$-analytic space at a point is defined to be the localization of the category of punctual strictly $K$-analytic spaces by the system of morphisms $\varphi:(X,x)\rightarrow(Y,y)$ which identify $X$ with an open neighbourhood of $y$ in $Y$. The germ induced by the punctual strictly $K$-analytic space $(X,x)$ is denoted by $X_x$.
				 		\item A germ $X_x$ is said to be \textit{good} if $x$ has a strictly $K$-affinoid neighbourhood in $X$. A morphism of germs $\varphi:X_x\rightarrow Y_y$ is said to be \textit{separated} resp. \textit{closed} if it is induced by a separated resp. boundaryless morphism $X'\rightarrow Y$ for an open neighbourhood $X'$ of $x$ in $X$ (recall that a morphism $\varphi:X\rightarrow Y$ of $K$-analytic spaces is called boundaryless if $X=\Int(X/Y)$, where the relative interior $\Int(X/Y)$ is defined to be the set of all $x\in X$ such that for any affinoid domain $V\subseteq Y$ with $\varphi(x)\in V$ there is an affinoid neighbourhood $U\subseteq\varphi^{-1}(V)$ of $x$ in $\varphi^{-1}(V)$ such that $x\in\Int(U/V)$).
				 \end{enumerate}
			\end{Def}
			\begin{Def}
				Let $k$ be a field and let $L$ be a field extension of $k$.
				\begin{enumerate}
					\item The Zariski-Riemann space $\boldsymbol{P}_{L/k}$ is the set of valuation rings in $L$ which contain $k$ and whose quotient field is $L$ endowed with the coarsest topology such that all sets of the form $\boldsymbol{P}_{L/k}\{f\}:=\left\{R\in\boldsymbol{P}_{L/k}\;\Big|\;f\in R\right\}$ with $f\in L$ are open.
					\item The category $(\textrm{bir}_k)$ is the following: The objects are triples $(X,L,\phi)$ where $X$ is a connected quasi-compact and quasi-separated topological space, $L$ is a field extension of $k$ and $\phi:X\rightarrow\boldsymbol{P}_{L/k}$ is a local homeomorphism. A morphism $(X,L,\phi)\rightarrow(Y,M,\psi)$ is a pair $(h,i)$ where $h:X\rightarrow Y$ is a continuous map and $i:M\rightarrow L$ is a morphism of field extensions of $k$ such that $\psi\circ h=i^\#\circ\phi$ where $i^\#:\boldsymbol{P}_{L/k}\rightarrow\boldsymbol{P}_{M/k}$ is the morphism induced by $i$.
					\item A morphism $(h,i):(X,L,\phi)\rightarrow(Y,M,\psi)$ is called \textit{proper} if the map $X\rightarrow Y\times_{\boldsymbol{P}_{M/k}}\boldsymbol{P}_{L/k}$ is bijective. 
				\end{enumerate}
			\end{Def}
			In \cite[§2]{T1} Temkin introduced a reduction functor $\red$ from $(K\textrm{-Germs})$ to $(\textrm{bir}_{\tilde{K}})$ sending a germ $X_x$ to its reduction $\tilde{X_x}$. It can be described as follows (see \cite[§4]{T2}): If $X_x$ is a good germ, we can assume $X=\mathscr{M}(A)$ for a strictly $K$-affinoid algebra $A$. Then the character $\chi_x:A\rightarrow\mathscr{H}(x)$ induces a morphism $\tilde{\chi_x}:\tilde{A}\rightarrow\widetilde{\mathscr{H}(x)}$. Then $\tilde{X_x}=(\boldsymbol{P}_{\widetilde{\mathscr{H}(x)}/\tilde{K}}\{\tilde{\chi_x}(\tilde{A})\},\widetilde{\mathscr{H}(x)},\iota)$ where $\boldsymbol{P}_{\widetilde{\mathscr{H}(x)}/\tilde{K}}\{\tilde{\chi_x}(\tilde{A})\}$ is the set of all $R\in\boldsymbol{P}_{\widetilde{\mathscr{H}(x)}/\tilde{K}}$ for which $\tilde{\chi}(\tilde{A})\subseteq R$ and $\iota$ is the canonical embedding. If $X_x$ is separated one covers $X_x$ by finitely many good germs $V_x^i$. Then the germs $V_x^i\cap V_x^j$ are good and one obtains an open embedding $\widetilde{V_x^i\cap V_x^j}\rightarrow \tilde{V_x^i}$. In fact this gives a glueing data and $\tilde{X}_x$ is the space obtained by glueing the $\tilde{V_x^i}$ along these open embeddings. Lastly if $X_x$ is arbitrary, one covers $X_x$ by finitely many separated germs $V_x^i$ and again gets open embeddings $\widetilde{V_x^i\cap V_x^j}\rightarrow\tilde{V_x^i}$ along which the $\tilde{V_x^i}$ are glued to $\tilde{X_x}$.
			\begin{Proposition}
				\label{interiorproper} Let $\mathfrak{X}$ be an admissible formal scheme and $x\in X:=\mathfrak{X}^{\textup{an}}$. Let $V$ be the closure of $\{\red(x)\}$ in the special fibre $\tilde{\mathfrak{X}}$. Then $V$ is proper if and only if the morphism $\tilde{X_x}\rightarrow\boldsymbol{P}_{\widetilde{\mathscr{H}(x)}/\tilde{K}}$ is bijective.
			\end{Proposition}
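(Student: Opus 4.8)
The plan is to describe $\widetilde{X_x}$ explicitly as a subspace of $\boldsymbol{P}_{\widetilde{\mathscr{H}(x)}/\tilde{K}}$ in terms of the local geometry of $\tilde{\mathfrak{X}}$ along $V$, and then to recognise the bijectivity as an instance of the valuative criterion of properness for $V$. By the construction of the reduction recalled above, the map $\widetilde{X_x}\to\boldsymbol{P}_{\widetilde{\mathscr{H}(x)}/\tilde{K}}$ is obtained by glueing the inclusions $\boldsymbol{P}_{\widetilde{\mathscr{H}(x)}/\tilde{K}}\{\tilde{\chi}_x(\tilde{B})\}\hookrightarrow\boldsymbol{P}_{\widetilde{\mathscr{H}(x)}/\tilde{K}}$, where $\mathscr{M}(B)$ ranges over the strictly $K$-affinoid neighbourhoods of $x$ in $X$; in particular it is injective, so bijectivity is equivalent to surjectivity. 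Write $Z:=\red(x)$ and give $V=\overline{\{Z\}}$ its reduced structure; then $V$ is integral and of finite type over $\tilde{K}$, and its function field $\tilde{K}(V)=\kappa(Z)$ embeds canonically into $\widetilde{\mathscr{H}(x)}$. For $R\in\boldsymbol{P}_{\widetilde{\mathscr{H}(x)}/\tilde{K}}$ set $R_0:=R\cap\kappa(Z)$, a $\tilde{K}$-valuation ring of $\tilde{K}(V)$.

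The technical core will be the claim that $R\in\widetilde{X_x}$ if and only if $R_0$ admits a center on $V$, i.e.\ there exists $\xi\in V$ with $\mathcal{O}_{V,\xi}\subseteq R_0$ and $\mathfrak{m}_{R_0}\cap\mathcal{O}_{V,\xi}=\mathfrak{m}_\xi$. For the "if" direction I would choose an affine formal open $\mathfrak{U}=\Spf(\mathcal{A})\subseteq\mathfrak{X}$ with $\xi\in\tilde{\mathfrak{U}}$; since $Z$ generises $\xi$ it lies in every open containing $\xi$, so $Z\in\tilde{\mathfrak{U}}$ and hence $x\in\red^{-1}(\tilde{\mathfrak{U}})=\mathfrak{U}^{\textup{an}}=\mathscr{M}(A)$ with $A=\mathcal{A}\otimes_{K^\circ}K$. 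The germ morphism $\mathscr{M}(A)_x\to X_x$ together with Temkin's functoriality exhibits $\widetilde{\mathscr{M}(A)_x}=\boldsymbol{P}_{\widetilde{\mathscr{H}(x)}/\tilde{K}}\{\tilde{\chi}_x(\tilde{A})\}$ as a subspace of $\widetilde{X_x}$, and since $\tilde{\chi}_x(\tilde{A})$ is finite over $\tilde{\mathcal{A}}/Z=\tilde{K}[V\cap\tilde{\mathfrak{U}}]$, whose localization at the maximal ideal cut out by $R_0$ is $\mathcal{O}_{V,\xi}$, integrality forces $\tilde{\chi}_x(\tilde{A})\subseteq R$, so $R\in\widetilde{X_x}$. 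For the "only if" direction, if $\tilde{\chi}_x(\tilde{B})\subseteq R$ for some affinoid neighbourhood $\mathscr{M}(B)$ of $x$ with formal model $\mathfrak{B}$, one compares $\red_{\mathfrak{B}}(x)$ with $Z$ — using the anti-continuity of $\red$ and the canonical relation of $\mathfrak{B}$ and $\mathfrak{X}$ near $Z$ — and transports the inclusion through this comparison to obtain a center of $R$ on $V$.

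Granting the claim, bijectivity means exactly that every $\tilde{K}$-valuation ring of $\tilde{K}(V)$ has a center on $V$, i.e.\ that $V$ is universally closed over $\tilde{K}$; since $V$ is of finite type and separated over $\tilde{K}$, this is equivalent to properness of $V$, which completes the argument.

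I expect the main obstacle to be the claim, and within it the fact that $\red$ is only anti-continuous, so that a formal chart $\mathfrak{U}^{\textup{an}}$ is in general a closed analytic domain rather than a topological neighbourhood of $x$ in $X$; one therefore cannot naively replace $X$ by $\mathfrak{U}^{\textup{an}}$, and it is precisely Temkin's germ-reduction formalism — which is insensitive to this distinction — that legitimises the identification of $\widetilde{\mathscr{M}(A)_x}$ with a subspace of $\widetilde{X_x}$. A variant would instead invoke Temkin's theorem that a morphism of germs is boundaryless if and only if its reduction is proper, apply it to the structural morphism of germs $X_x\to\mathscr{M}(K)$ (whose reduction has target the one-point birational space over $\boldsymbol{P}_{\tilde{K}/\tilde{K}}$, so the fibre product in the definition of properness collapses to $\boldsymbol{P}_{\widetilde{\mathscr{H}(x)}/\tilde{K}}$), and then identify "$X_x$ boundaryless over $K$" with "$V$ proper" by the same center-chasing.
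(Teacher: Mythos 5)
There is a genuine gap, and it sits exactly at the point this appendix was written to address. You assert that the map $\tilde{X_x}\rightarrow\boldsymbol{P}_{\widetilde{\mathscr{H}(x)}/\tilde{K}}$ is ``obtained by glueing inclusions \dots\ in particular it is injective, so bijectivity is equivalent to surjectivity,'' and later that $V$ is ``separated over $\tilde{K}$.'' Neither claim is justified, and both fail in general. The formal scheme $\mathfrak{X}$ (hence its special fibre, hence $V$) is not assumed separated --- the stated purpose of the appendix is to generalize \cite[Lemme 6.5.1]{ChD} by \emph{dropping} a separatedness assumption. Correspondingly, in Temkin's formalism the reduction of a non-separated germ is glued from the affine pieces $\boldsymbol{P}_{\widetilde{\mathscr{H}(x)}/\tilde{K}}\{\tilde{\chi}_x(\tilde{A_i})\}$ only along the reductions of the overlaps $V^i_x\cap V^j_x$, and two points of different charts lying over the same valuation ring $R$ are identified only when the corresponding centers agree; the result is a local homeomorphism onto $\boldsymbol{P}_{\widetilde{\mathscr{H}(x)}/\tilde{K}}$, not a subspace. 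If a valuation on $\tilde{K}(V)/\tilde{K}$ admits two distinct centers on $V$ (which happens precisely when $V$ is non-separated), the map is surjective but not injective, $V$ is universally closed but not proper, and your argument --- which would declare the map bijective and $V$ proper --- reaches a false conclusion. Your ``technical core'' claim only characterizes the \emph{image} of the map, i.e.\ it yields the equivalence ``surjective $\iff$ every valuation has at least one center''; it says nothing about fibres.

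The paper's proof handles exactly this: after your first two steps (which do match its first and second steps), it adds a third step showing that injectivity of $\tilde{X_x}\rightarrow\boldsymbol{P}_{\widetilde{\mathscr{H}(x)}/\tilde{K}}$ is equivalent to every valuation on $\tilde{K}(V)/\tilde{K}$ admitting \emph{at most} one center on $V$, by analysing which points are identified in the glueing along $\widetilde{V^i_x\cap V^j_x}$ (using that the affine pieces $Y_i\cap V$ are themselves separated). Combining the two steps recovers the full valuative criterion (unique center $\iff$ proper), rather than only the universally-closed half. To repair your argument you would need to supply this third step; alternatively, your closing ``variant'' via Temkin's theorem that a germ morphism is closed iff its reduction is proper is essentially what the paper's Corollary \ref{CDLemma} does, but it is a consequence of the Proposition, not a substitute for its proof.
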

			\begin{proof}
				Let $(Y_i)_{i\in I}$ be an open affine cover of $V$ and set $V^i:=\red^{-1}(Y_i)$. Then $V^i$ is strictly $K$-affinoid by \cite[Theorem 3.1]{Bo2} and hence $V^i_x$ is a good germ. Note that $(V^i_x)_{i\in I}$ is a cover of $X_x$. Hence $\tilde{X_x}$ is obtained by glueing the $\tilde{V^i_x}$ along the canonical maps $\widetilde{V^i_x\cap V^j_x}\rightarrow\tilde{V^i_x}$. Let $V^i=\mathscr{M}(A_i)$ for a strictly $K$-affinoid algebra $A_i$. Then $Y_i=\Spec{\tilde{A_i}}$ and the character $\chi_x:A_i\rightarrow\mathscr{H}(x)$ induces a morphism $\tilde{\chi_x}:\tilde{A_i}\rightarrow\widetilde{\mathscr{H}(x)}$. Let $\mathfrak{p}\subseteq\tilde{A_i}$ be the prime ideal corresponding to $\red(x)$ i.e. $\mathfrak{p}$ is the kernel of $\tilde{\chi_x}$. The induced morphism $\tilde{A_i}/\mathfrak{p}\rightarrow\widetilde{\mathscr{H}(x)}$ is injective and hence it extends to a morphism $\tilde{K}(V)\rightarrow\widetilde{\mathscr{H}(x)}$ where $\tilde{K}(V)=\Quot(\tilde{A_i}/\mathfrak{p})$ denotes the function field of $V$. This induces a morphism $\pi:\boldsymbol{P}_{\widetilde{\mathscr{H}(x)}/\tilde{K}}\rightarrow\boldsymbol{P}_{\tilde{K}(V)/\tilde{K}}$. \\[.5cm]
				\textit{First step:} We have that $\tilde{V^i_x}=\boldsymbol{P}_{\widetilde{\mathscr{H}(x)}/\tilde{K}}\{\tilde{\chi_x}(\tilde{A_i})\}$ is the preimage under $\pi$ of the set of valuation rings in $\tilde{K}(V)$ which admit a center on $Y_i\cap V$. \\[.5cm]
				Indeed if $R\in\boldsymbol{P}_{\widetilde{\mathscr{H}(x)}/\tilde{K}}$ is a valuation ring with $\tilde{\chi_x}(\tilde{A_i})\subseteq R$ then $\tilde{A_i}/\mathfrak{p}\subseteq R\cap \tilde{K}(V)$. Let $\mathfrak{m}_R$ be the maximal ideal of $R$ then $\mathfrak{p}':=\mathfrak{m}_R\cap\tilde{A_i}/\mathfrak{p}$ defines a point in $\Spec(\tilde{A_i}/\mathfrak{p})$ whose local ring is $(\tilde{A_i}/\mathfrak{p})_{\mathfrak{p}'}$ and we have $(\tilde{A_i}/\mathfrak{p})_{\mathfrak{p}'}\subseteq R$. Then $R\cap \tilde{K}(V)$ admits the center $\mathfrak{p}'$ on $Y_i\cap V$ as claimed. Conversely if $R\cap \tilde{K}(V)$ admits a center on $Y_i\cap V$ then there exists $\mathfrak{p}'\in\Spec(\tilde{A_i}/\mathfrak{p})$ such that $(\tilde{A_i}/\mathfrak{p})_{\mathfrak{p}'}\subseteq R\cap\tilde{K}(V)$ and hence obviously $\tilde{\chi_x}(\tilde{A_i})\subseteq R$. \\[.5cm]
		\textit{Second step:} The map $\tilde{X_x}\rightarrow\boldsymbol{P}_{\widetilde{\mathscr{H}(x)}/\tilde{K}}$ is surjective if and only if any valuation on $\tilde{K}(V)/\tilde{K}$ admits at least one center on $V$. \\[.5cm]
		Let $\tilde{X_x}\rightarrow\boldsymbol{P}_{\widetilde{\mathscr{H}(x)}/\tilde{K}}$ be surjective and $v$ a valuation on $\tilde{K}(V)/\tilde{K}$. Then $v$ extends to a valuation $\tilde{v}$ on $\widetilde{\mathscr{H}(x)}$. Let $R$ be the valuation ring of $\tilde{v}$. Then $R\in\boldsymbol{P}_{\widetilde{\mathscr{H}(x)}/\tilde{K}}$ and hence $R$ has a preimage $R'\in\tilde{X_x}$. Then there exists $i\in I$ such that $R'\in \tilde{V^i_x}$ hence the image of $R'$ in $\boldsymbol{P}_{\tilde{K}(V)/\tilde{K}}$ admits a center on $Y_i\cap V$ by the first step. But this image is $R\cap \tilde{K}(V)$ by construction which is the valuation ring of $v$. Hence $v$ admits a center on $V$. Conversely suppose that any valuation on $\tilde{K}(V)/\tilde{K}$ admits a center on $V$ and let $R\in\boldsymbol{P}_{\widetilde{\mathscr{H}(x)}/\tilde{K}}$ then the image of $R$ in $\boldsymbol{P}_{\tilde{K}(V)/\tilde{K}}$ induces a valuation on $\tilde{K}(V)/\tilde{K}$ which admits a center $z$ on $V$. Let $i\in I$ such that $z\in Y_i$ then $R\in\tilde{V^i_x}$ and the induced element in $\tilde{X_x}$ is a preimage of $R$. \\[.5cm]
				\textit{Third step:} The map $\tilde{X_x}\rightarrow\boldsymbol{P}_{\widetilde{\mathscr{H}(x)}/\tilde{K}}$ is injective if and only if every valuation on $\tilde{K}(V)/\tilde{K}$ admits at most one center on $V$. \\[.5cm]
				To see this we describe $\widetilde{V^i_x\cap V^j_x}$. In order to do so we cover $Y_i\cap Y_j$ by open affine subsets $Y_{i,j}^k$. Their preimages under $\red$ yield a cover of $V^i_x\cap V^j_x$ by good germs. As above their reductions can be described as the preimage of the set of valuation rings in $\tilde{K}(V)$ which admit a center on $Y_{i,j}^k\cap V$. The reduction of $V^i_x\cap V^j_x$ is then obtained by glueing these spaces. Now suppose that any valuation on $\tilde{K}(V)/\tilde{K}$ admits at most one center and let $R_1,R_2\in\tilde{X_x}$ which map to the same valuation ring $R\in\boldsymbol{P}_{\widetilde{\mathscr{H}(x)}/\tilde{K}}$. There exists $i,j$ such that $R_1\in\tilde{V^i_x},R_2\in\tilde{V^j_x}$. As we have seen in the first step, $R_1\cap\tilde{K}(V)$ and $R_2\cap\tilde{K}(V)$ admit centers $y_1\in\Spec(\tilde{A_i})\cap V$ respectively $y_2\in\Spec(\tilde{A_j})\cap V$. Then both are a center of $R\cap \tilde{K}(V)$. Hence $y_1=y_2\in Y_i\cap Y_j$ by our assumption. Therefore by the first step $R_1=R_2=R$ in $\widetilde{V^i_x\cap V^j_x}$. Hence in the glueing process, $R_1$ and $R_2$ are identified with each other. Conversely suppose that there is a valuation on $\tilde{K}(V)/\tilde{K}$ which admits two centers $y_1,y_2\in V$. Let $y_1\in Y_i$ and $y_2\in Y_j$. Choose an extension of the valuation to $\widetilde{\mathscr{H}(x)}$ and let $R$ denote its valuation ring. Then $R$ induces an element $R_1\in \tilde{V^i_x}$ as well as an element $R_2\in\tilde{V^j_x}$. Then $R_1$ and $R_2$ map to the same element $R$ in $\boldsymbol{P}_{\widetilde{\mathscr{H}(x)}/\tilde{K}}$ but they are not identified in the glueing process as $Y_i\cap V$ and $Y_j\cap V$ are separated and hence $R_1$ and $R_2$ admit at most one center in $\Spec(\tilde{A_i})\cap V$ respectively $\Spec(\tilde{A_j})\cap V$ which means in particular that they do not admit a center in $Y_i\cap Y_j\cap V$. Hence $\tilde{X_x}\rightarrow\boldsymbol{P}_{\widetilde{\mathscr{H}(x)}/\tilde{K}}$ is not injective. This proves the third step. \\[.5cm]
				Recall that $V$ is proper if and only if every valuation on $\tilde{K}(V)/\tilde{K}$ admits a unique center on $V$ (\cite[Ch. II, Ex. 4.5]{Ha}). Hence the claim follows from the second and third step.
			\end{proof}
			\begin{Corollary}
				\label{CDLemma} In the situation of Proposition \ref{interiorproper}, $x$ is an interior point of $X$ if and only if $V$ is proper.
			\end{Corollary}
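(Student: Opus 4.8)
The plan is to deduce the corollary directly from Proposition \ref{interiorproper} combined with Temkin's germ-reduction criterion for boundarylessness. First I would recall that for a strictly $K$-analytic space $X$ the set $\Int(X)$ of interior points in the sense of \cite[1.5]{Be4} is exactly the relative interior $\Int(X/\mathcal{M}(K))$ of the structure morphism $\pi\colon X\to\mathcal{M}(K)$; thus $x\in\Int(X)$ precisely when $\pi$ is boundaryless at $x$, i.e. when the induced morphism of germs $\pi_x\colon X_x\to\mathcal{M}(K)_{\ast}$ is closed in the sense recalled above.

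Next I would invoke Temkin's theorem that a morphism of germs is closed if and only if its reduction is a proper morphism in $(\textrm{bir}_{\tilde K})$ (\cite{T1}). Applying the reduction functor $\red$ to $\pi_x$ one obtains a morphism $\tilde{X_x}\to\widetilde{\mathcal{M}(K)_{\ast}}$, and here $\widetilde{\mathcal{M}(K)_{\ast}}=\boldsymbol{P}_{\tilde K/\tilde K}$ is the one-point space. Hence the fibre product appearing in the definition of properness of a $(\textrm{bir}_{\tilde K})$-morphism degenerates to $\boldsymbol{P}_{\widetilde{\mathscr{H}(x)}/\tilde K}$ itself, and properness of $\red(\pi_x)$ just amounts to the assertion that the canonical map $\tilde{X_x}\to\boldsymbol{P}_{\widetilde{\mathscr{H}(x)}/\tilde K}$ is bijective. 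Combining this with the previous paragraph, $x$ is an interior point of $X$ if and only if $\tilde{X_x}\to\boldsymbol{P}_{\widetilde{\mathscr{H}(x)}/\tilde K}$ is bijective.

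Finally, Proposition \ref{interiorproper} identifies the latter condition with properness of $V=\overline{\{\red(x)\}}$, which completes the proof. I do not expect a serious obstacle: the argument is a formal concatenation of Proposition \ref{interiorproper} with Temkin's criterion. The one place that needs a little care is the first step — checking that the abstract relative-interior notion of \cite{Be4} agrees with Temkin's germ-theoretic formulation of closedness, and that taking the reduction of the degenerate structure morphism over the point $\mathcal{M}(K)$ really does turn ``proper'' into ``bijective''.
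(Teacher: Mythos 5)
Your argument is correct and is essentially the paper's own proof: the paper likewise chains Proposition \ref{interiorproper} with Temkin's criterion (citing \cite[Theorem 5.2]{T2} for the equivalence between bijectivity of $\tilde{X_x}\rightarrow\boldsymbol{P}_{\widetilde{\mathscr{H}(x)}/\tilde{K}}$ and closedness of $X_x\rightarrow\mathscr{M}(K)$) and then identifies closedness of the structure germ morphism with $x$ being an interior point. Your extra remarks — that the reduction of $\mathscr{M}(K)$ is the one-point space so properness in $(\textrm{bir}_{\tilde K})$ degenerates to bijectivity — just unwind the content of the cited theorem and are fine.
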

			\begin{proof}
				By Proposition \ref{interiorproper}, $V$ is proper if and only if the map $\tilde{X_x}\rightarrow\boldsymbol{P}_{\widetilde{\mathscr{H}(x)}/\tilde{K}}$ is bijective which by \cite[Theorem 5.2]{T2} is equivalent to the map $X_x\rightarrow\mathscr{M}(K)$ being closed. But this is equivalent to $x$ being an interior point of $X$.
			\end{proof}
		\section{Convexity of psh-functions}
			In order to be able to use the results from section \ref{sectioncomparison} we need that semipositive metrics lead to convex functions on the faces of some skeleton. The proof of this is based on the proof of \cite[Proposition 7.5]{BFJ2}, where this is done for SNC models and discretely valued $K$ with residue characteristic zero, and unpublished work of Walter Gubler and Florent Martin.
		\begin{Lemma}
			\label{log|f|} Let $\mathfrak{X}$ be a strongly nondegenerate strictly polystable formal scheme with associated skeleton $\Delta$ and $f\in\mathcal{O}(\mathfrak{X}^{\textup{an}})$ such that $\left\{x\in\mathfrak{X}^{\textup{an}}\;\Big|\;f(x)=0\right\}$ is nowhere dense. Then for any $x\in\Delta$ we have $|f(x)|\neq 0$ and the function $\varphi:\mathfrak{X}^{\textup{an}}\rightarrow\mathbb{R}\cup\{-\infty\}$ given by $\varphi(x):=\log|f(x)|$ is piecewise affine linear and convex on each face of $\Delta$ and satisfies $\varphi\leq\varphi\circ p_{\mathfrak{X}}$.
		\end{Lemma}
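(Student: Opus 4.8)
All three assertions are local on $\Delta$, so the plan is to reduce to a single chart. For the statements about $\varphi$ on the faces it suffices to fix an open face $\tau$ of $\Delta$ and, using Proposition \ref{Gub2 Proposition 5.2} and the stratum--face correspondence, to choose a formal open $\mathfrak{U}$ of $\mathfrak{X}$ with an \'etale morphism $\psi\colon\mathfrak{U}\to\mathfrak{X}(\boldsymbol{n},\boldsymbol{a},m)$ whose distinguished stratum corresponds to $\tau$, so that $\psi^{\textup{an}}$ restricts to a homeomorphism from $S(\mathfrak{U})=\bar{\tau}$ onto a canonical polysimplex, which we parametrise by $\boldsymbol{t}$. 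For the inequality $\varphi\le\varphi\circ p_{\mathfrak{X}}$, which is pointwise in $y\in\mathfrak{X}^{\textup{an}}$, I would cover $\mathfrak{X}^{\textup{an}}$ by such charts and work in one containing $y$. Recall from Definition \ref{Defstrictlysemistable} that every element of the affinoid algebra $A$ of $\mathfrak{X}(\boldsymbol{n},\boldsymbol{a},m)^{\textup{an}}$ has a normal form $\sum_{\mu} a_{\mu}T^{\mu}$ with $a_{\mu}\in K\langle T_0^{\pm},\dots,T_m^{\pm}\rangle$, the exponents $\mu$ being $\ge\boldsymbol{0}$ with a vanishing coordinate in each block, $\|a_{\mu}\|\to 0$, and that by construction the skeleton point attached to $\boldsymbol{t}$ sends this series to $\max_{\mu}\{\|a_{\mu}\|\exp(-\boldsymbol{t}\cdot\mu)\}$.

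The first step, and the technically decisive one, would be to show that $f|_{\mathfrak{U}^{\textup{an}}}$ admits on a neighbourhood of $S(\mathfrak{U})$ an analogous convergent expansion $f=\sum_{\mu}c_{\mu}T^{\mu}$ (with $\psi^{\textup{an}}$ matching up the monomials), and that for $x\in S(\mathfrak{U})$ corresponding to $\boldsymbol{t}$ one has $|f(x)|=\max_{\mu}\{\|c_{\mu}\|\exp(-\boldsymbol{t}\cdot\mu)\}$. This is the point at which the \'etale --- rather than open-immersion --- nature of $\psi$ must be handled: it is immediate when $\mathfrak{X}=\mathfrak{X}(\boldsymbol{n},\boldsymbol{a},m)$, and for general strictly polystable $\mathfrak{X}$ one exploits that $\psi^{\textup{an}}$ is a homeomorphism on skeletons and is sufficiently well behaved near the skeleton (isomorphisms on completed residue fields at skeleton points, a compatible system of affinoid neighbourhoods), following the proof of \cite[Proposition 7.5]{BFJ2}, the analysis of \cite[\S5]{Gub2}, and the unpublished notes of Gubler and Martin. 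I expect this to be the main obstacle; everything afterwards is formal bookkeeping with maxima of affine functions.

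Granting the first step, I would argue as follows. Since $\{f=0\}$ is nowhere dense, $f$ is not identically zero on the open set $\mathfrak{U}^{\textup{an}}$, so some $c_{\mu_0}\ne 0$; on the compact polysimplex $\bar{\tau}$ one then has $\|c_{\mu_0}\|\exp(-\boldsymbol{t}\cdot\mu_0)\ge\|c_{\mu_0}\|\min_{\boldsymbol{t}\in\bar{\tau}}\exp(-\boldsymbol{t}\cdot\mu_0)>0$, hence $\varphi(x)=\log|f(x)|\in\mathbb{R}$ for every $x\in S(\mathfrak{U})$, and letting $\tau$ vary this gives $|f|\ne 0$ on all of $\Delta$. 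Next, since $\mu\ge\boldsymbol{0}$ and $\boldsymbol{t}\ge\boldsymbol{0}$ on $\bar{\tau}$ we get $\|c_{\mu}\|\exp(-\boldsymbol{t}\cdot\mu)\le\|c_{\mu}\|\to 0$ uniformly in $\boldsymbol{t}$, so the supremum $\varphi=\sup_{\mu}(\log\|c_{\mu}\|-\boldsymbol{t}\cdot\mu)$ is attained over a single finite set $M$ of exponents. Thus $\varphi|_{\bar{\tau}}=\max_{\mu\in M}(\log\|c_{\mu}\|-\langle\mu,\cdot\rangle)$ is a finite maximum of affine functions with slopes in $\mathbb{Z}^{\boldsymbol{n}+\boldsymbol{1}}$ and constant terms $\log\|c_{\mu}\|\in\Gamma$; this is exactly the shape prescribed in Definition \ref{Defpiecewiseaffinelinear}, so $\varphi$ is piecewise affine linear on $\bar{\tau}$, and as a maximum of affine functions on the convex set $\bar{\tau}$ it is convex. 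Running the same argument on every face of $\Delta$ gives the stated piecewise affine linearity and convexity.

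Finally, for $\varphi\le\varphi\circ p_{\mathfrak{X}}$ it suffices to treat $y\in\mathfrak{U}^{\textup{an}}$. The retraction $p_{\mathfrak{X}}(y)$ is the skeleton point whose coordinates are $t_{ij}=-\log|T_{ij}(y)|$, the relations $T_{i0}\cdots T_{in_i}=a_i$ forcing $\sum_j t_{ij}=v(a_i)$, so that this is a genuine point of $\bar{\tau}$. Using the ultrametric inequality in $f=\sum_{\mu}c_{\mu}T^{\mu}$ together with the estimate $|c_{\mu}(y)|\le\|c_{\mu}\|$ (valid because the torus coordinates have absolute value $1$, so a point of the affinoid domain is bounded by the Gauss norm) one obtains
\[
  |f(y)|\le\max_{\mu}|c_{\mu}(y)|\,|T^{\mu}(y)|\le\max_{\mu}\|c_{\mu}\|\prod_{i,j}|T_{ij}(y)|^{\mu_{ij}}=\max_{\mu}\|c_{\mu}\|\exp(-\boldsymbol{t}\cdot\mu)=|f(p_{\mathfrak{X}}(y))|,
\]
and taking logarithms, with the convention $\log 0=-\infty$, yields the claim. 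As noted, the genuine difficulty is the first step --- producing the monomial expansion of $f$ along the skeleton and the formula for $|f|$ on the skeleton in the presence of an \'etale chart.
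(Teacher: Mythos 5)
Your proof hinges entirely on the ``first step'' --- a global monomial expansion $f=\sum_\mu c_\mu T^\mu$ on a neighbourhood of $S(\mathfrak{U})$ with $|f(x)|=\max_\mu\|c_\mu\|\exp(-\boldsymbol{t}\cdot\mu)$ on the skeleton --- and you correctly identify it as the main obstacle, but you do not prove it, and as stated it is not available. For an \'etale chart $\psi:\mathfrak{U}\to\mathfrak{X}(\boldsymbol{n},\boldsymbol{a},m)$ that is not an open immersion, $\mathcal{O}(\mathfrak{U}^{\textup{an}})$ is a genuine extension of $\psi^{\ast}\mathcal{O}(\mathfrak{X}(\boldsymbol{n},\boldsymbol{a},m)^{\textup{an}})$, so $f$ simply has no expansion in the coordinates $T_{ij}$ with coefficients in $K\langle T_0^{\pm},\dots,T_m^{\pm}\rangle$. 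What Berkovich's structure theorem (\cite[Theorem 5.1.1]{Be3}, which the paper cites for the non-vanishing and the piecewise affine linearity) actually gives is only a \emph{local} statement: near each point of the skeleton, $|f|$ agrees with $|\lambda T^{\mu}u|$ for a single monomial and a unit $u$ with $|u|=1$ there. That yields piecewise affine linearity of $\varphi$ on the faces but says nothing about how the affine pieces fit together, i.e.\ it does not give the global ``max of affine functions'' shape from which you deduce convexity. So the convexity claim --- the only part of the lemma that is not a direct citation --- is exactly what your argument leaves unproved. (Your last paragraph, deriving $\varphi\le\varphi\circ p_{\mathfrak{X}}$ from the ultrametric inequality, also rests on the same unavailable expansion; the paper simply quotes \cite[Theorem 5.2]{Be2} for this.)

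The paper's route to convexity is quite different and avoids any such expansion. Arguing by contradiction with $\delta:=\varphi(tx+(1-t)y)-t\varphi(x)-(1-t)\varphi(y)>0$ for $x,y$ in a face, it subdivides the skeleton with $x,y$ as new vertices (Construction \ref{Formalmodel}), picks an affine open $U$ of the new special fibre meeting only the two irreducible components corresponding to $x$ and $y$, and uses that the Shilov boundary of the affinoid $\red^{-1}(U)$ is $\{x,y\}$, whence $|f(tx+(1-t)y)|\le\max\{|f(x)|,|f(y)|\}$. A normalization trick --- replacing $f$ by $f^Ng^m$ for a coordinate function $g$ with $|g(x)|\ne|g(y)|$, which changes $\varphi$ only by an affine function on the face --- arranges $|\varphi(x)-\varphi(y)|<\delta$ and produces $\varphi(tx+(1-t)y)>\max\{\varphi(x),\varphi(y)\}$, contradicting the maximum principle. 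If you want to salvage your approach you would have to supply the missing local-to-global analysis along the skeleton for \'etale charts (this is essentially the content of the unpublished Gubler--Martin notes you gesture at); as written, the proposal has a genuine gap at its central step.
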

		\begin{proof}
			By \cite[Theorem 5.1.1]{Be3} we know that $|f(x)|\neq 0$ and that $\varphi$ is piecewise affine linear on $\Delta$. By \cite[Theorem 5.2]{Be2} we have $\varphi\leq\varphi\circ p_{\mathfrak{X}}$. Assume there is a face $\tau$ of $\Delta$ on which $\varphi$ is not convex, i.e. there are $x,y\in \tau$ and $t\in(0,1)$ such that
			\[
				\delta:=\varphi(tx+(1-t)y)-t\varphi(x)-(1-t)\varphi(y)>0.
			\]
			By base change we can assume that $K$ is algebraically closed and then by density of the value group $\Gamma$ and continuity of $\varphi$ that the coordinates of $x$ and $y$ are in $\Gamma$. Choose a $\Gamma$-rational polytopal subdivision of $\Delta$ which only has $x$ and $y$ as additional vertices. By Construction \ref{Formalmodel} we get an admissible formal model $\mathfrak{X}''$ of $\mathfrak{X}^{\textup{an}}$ dominating $\mathfrak{X}$. Choose an affine open $U\subseteq\tilde{\mathfrak{X}}''$ which contains $\red(tx+(1-t)y)$. By the stratum face correspondence (Proposition \ref{Stratumface} and Corollary \ref{Corstratumface}) the vertices $x$ and $y$ correspond to irreducible components of $\tilde{\mathfrak{X}}''$. By taking out all other irreducible components we may assume that $U$ intersects only those corresponding to $x$ and $y$. Then $V:=\red^{-1}(U)$ is a strictly $K$-affinoid domain by \cite[Theorem 3.1]{Bo2}. By \cite[Proposition 1.4]{Be2} its canonical reduction has two irreducible components, namely those corresponding to $x$ and $y$. Hence the Shilov boundary of $V$ is the set $\{x,y\}$ by \cite[Proposition 2.4.4]{Be1} and we get $|f(tx+(1-t)y)|\leq\max\left\{|f(x)|,|f(y)|\right\}$. Since $x\neq y$, by restricting to a building block $\mathfrak{U}$, we can find a coordinate function $g\in\mathcal{O}(\mathfrak{U}^{\textup{an}})^\times$ such that $|g(x)|\neq |g(y)|$. Then we can find $N\in\mathbb{N}_{>0}$ and $m\in\mathbb{Z}$ such that
			\[
				\Big|\log|f^Ng^m(x)|-\log|f^Ng^m(y)|\Big|=\Big|N(\varphi(x)-\varphi(y))+m(\log|g(x)|-\log|g(y)|)\Big|< N\delta.
			\]
			Since $\log|g^m|$ is affine linear on $\tau$ we get
			\[
				\log|f^Ng^m(tx+(1-t)y)|-t\log|f^Ng^m(x)|-(1-t)\log|f^Ng^m(y)|=N\delta.
			\]
			Hence by replacing $f$ with $f^Ng^m$ and $\delta$ by $N\delta$ we can assume
			\[
				\delta:=\varphi(tx+(1-t)y)-t\varphi(x)-(1-t)\varphi(y)>0.
			\]
			and
			\[
				|\varphi(x)-\varphi(y)|<\delta.
			\]
			Then
			\[
				\varphi(tx+(1-t)y)=\delta+t\varphi(x)+(1-t)\varphi(y)> t\varphi(x)+(1-t)\varphi(y)+|\varphi(x)-\varphi(y)|.
			\]
			Now on the one hand we have
			\[
				t\varphi(x)+(1-t)\varphi(y)+|\varphi(x)-\varphi(y)|\geq t\varphi(x)+(1-t)\varphi(y)+t(\varphi(y)-\varphi(x))=\varphi(y)
			\]
			while on the other hand
			\[
				t\varphi(x)+(1-t)\varphi(y)+|\varphi(x)-\varphi(y)|\geq\varphi(x)+t(\varphi(x)-\varphi(y)).
			\]
			Together we get
			\[
				\varphi(tx+(1-t)y)>\max\left\{\varphi(x),\varphi(y)\right\}.
			\]
			But this violates our previous observation that $|f(tx+(1-t)y)|\leq\max\left\{|f(x)|,|f(y)|\right\}$. This finishes the proof.  
		\end{proof}
		\begin{Def}
			Let $\mathscr{X}$ be an algebraic scheme over $K^\circ$, $\mathfrak{a}$ a vertical coherent fractional ideal sheaf on $\mathscr{X}$ (i.e. $\mathfrak{a}$ is a coherent subsheaf of the sheaf of total quotient rings $\mathcal{K}_{\mathscr{X}}$ such that after multiplying with some element of $K^\circ\setminus\{0\}$ it becomes a vertical ideal sheaf) and $\red:\mathscr{X}^{\textup{an}}\rightarrow\tilde{\mathfrak{X}}$ the reduction map. We define the function $\log|\mathfrak{a}|:\mathscr{X}^{\textup{an}}\rightarrow\mathbb{R}$ by $\log|\mathfrak{a}|(x):=\sup\left\{\log|f(x)|\;\Big|\;f\in\mathfrak{a}_{\red(x)}\right\}$. The supremum is actually a maximum as for a set of generators $f_1,...,f_r$ of $\mathfrak{a}_{\red(x)}$ we have $\sup\left\{\log|f(x)|\;\Big|\;f\in\mathfrak{a}_{\red(x)}\right\}=\max\left\{\log|f_i(x)|\;\Big|\;1\leq i\leq r\right\}$.
		\end{Def}
		\begin{Lemma}
			\label{approx} Let $X$ be a proper scheme over $K$ and $\overline{L}$ a line bundle on $X$ with an algebraic metric $\|\cdot\|_{\overline{L}}$. Let $\|\cdot\|$ be a piecewise $\mathbb{Q}$-linear metric on $\mathcal{O}_{X^{\textup{an}}}$ such that $\|\cdot\|_{\overline{L}}\otimes\|\cdot\|$ is a semipositive piecewise $\mathbb{Q}$-linear metric. Let $\mathscr{X}$ be an algebraic model of $X$ such that $\overline{L}$ has a model $\mathscr{L}$ on $\mathscr{X}$ and set $\varphi:=-\log\|1\|$. Then there is a sequence $(\mathfrak{a}_n)_{n\in\mathbb{N}}$ of vertical coherent fractional ideals on $\mathscr{X}$ and a sequence $(d_n)_{n\in\mathbb{N}}$ of positive integers such that $\frac{1}{d_n}\log|\mathfrak{a}_n|$ converges uniformly to $\varphi$.
		\end{Lemma}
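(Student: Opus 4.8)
The idea is to produce the $\mathfrak{a}_n$ as base ideals of globally generated twists of a nef model of $\overline{L}\otimes\overline{\mathcal{O}}^\varphi$, and to read off $\varphi$ from them via the model metric. Set $\overline{N}:=\overline{L}\otimes\overline{\mathcal{O}}^\varphi$; this is a semipositive piecewise $\mathbb{Q}$-linear metrized line bundle with underlying bundle $N=L$ and metric $\|\cdot\|_{\overline{N}}=\|\cdot\|_{\overline{L}}e^{-\varphi}=\|\cdot\|_{\mathscr{L}}e^{-\varphi}$. After routine reductions we may assume that $X$ is projective, that $\mathscr{X}$ is projective over $K^\circ$, and that $L$ is ample: passing to a projective model uses Chow's lemma, \cite[Proposition 10.5]{Gub5} and the projection formula, while ampleness of the underlying bundle is achieved by tensoring $\overline{L}$ (and $\mathscr{L}$) with a sufficiently positive power of an ample line bundle carrying a semipositive algebraic metric on $\mathscr{X}$, which leaves $\varphi$ untouched.

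\smallskip
\emph{Producing a nef model, kept over $\mathscr{X}$.} By the criterion characterising semipositive formal metrics, $\|\cdot\|_{\overline{N}}^{\otimes e}$ (a formal, hence algebraic, semipositive metric on $L^{\otimes e}$ for suitable $e\ge 1$) is induced by a nef algebraic model $\mathscr{N}$ of $L^{\otimes e}$ on some model $\mathscr{X}'$ of $X$. Replacing $\mathscr{X}'$ by an admissible blow-up of $\mathscr{X}$ dominating it and pulling $\mathscr{N}$ back --- which preserves nefness and the induced metric --- we arrange that $\pi\colon\mathscr{X}'\to\mathscr{X}$ is the blow-up of a vertical ideal, so that $\mathcal{O}_{\mathscr{X}'}(-E)$ (with $E$ its vertical exceptional divisor) is $\pi$-ample. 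Fixing an ample $\mathscr{A}_0$ on $\mathscr{X}$, the line bundle $\mathscr{A}:=\pi^{\ast}\mathscr{A}_0^{\otimes r}\otimes\mathcal{O}_{\mathscr{X}'}(-E)$ is ample on $\mathscr{X}'$ for $r\gg 0$, and --- this is the point of making $\pi$ a vertical blow-up --- $\mathscr{A}|_X\cong(\mathscr{A}_0|_X)^{\otimes r}$ \emph{extends} to the given model $\mathscr{X}$, to $\mathscr{A}_{\mathscr{X}}:=\mathscr{A}_0^{\otimes r}$. Replacing $\mathscr{A}$ by a fixed very ample power and invoking Castelnuovo--Mumford regularity together with Fujita's vanishing theorem (on the proper $K^\circ$-scheme $\mathscr{X}'$, via Noetherian descent), we may assume $\mathscr{N}^{\otimes d}\otimes\mathscr{A}$ is globally generated for \emph{every} $d\ge 0$.

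\smallskip
\emph{The ideals and the comparison.} For each $d$ pick global sections $s_0,\dots,s_k$ ($k=k_d$) generating $\mathscr{N}^{\otimes d}\otimes\mathscr{A}$; they already generate $L^{\otimes de}\otimes(\mathscr{A}_{\mathscr{X}}|_X)$ on $X$, a bundle with the algebraic model $\mathscr{L}^{\otimes de}\otimes\mathscr{A}_{\mathscr{X}}$ on $\mathscr{X}$. Dividing the $s_i$ by a local frame $t$ of this model (normalised so that $\|t\|=1$ for its model metric), let $\mathfrak{a}_d\subseteq\mathcal{K}_{\mathscr{X}}$ be the coherent $\mathcal{O}_{\mathscr{X}}$-subsheaf generated locally by the $s_i/t$; it is a vertical coherent fractional ideal, because it is finitely generated and $\mathfrak{a}_d|_X=\mathcal{O}_X$ (the $s_i$ generate the bundle on $X$). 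For a globally generated model line bundle, its model metric satisfies $\max_i\|s_i(x)\|=1$ for all $x\in X^{\textup{an}}$. Writing this model metric as $\|\cdot\|_{\mathscr{N}}^{\otimes d}\otimes\|\cdot\|_{\mathscr{A}}$, substituting $\|\cdot\|_{\mathscr{N}}=\|\cdot\|_{\overline{N}}^{\otimes e}=\|\cdot\|_{\mathscr{L}}^{\otimes e}e^{-e\varphi}$, and noting that the two model metrics on $\mathscr{A}|_X$ induced by $\mathscr{A}$ (over $\mathscr{X}'$) and by $\mathscr{A}_{\mathscr{X}}$ (over $\mathscr{X}$) differ by a single bounded function $\psi$ independent of $d$, one computes $\max_i\|s_i(x)\|$ in terms of $\max_i|s_i/t|(x)=|\mathfrak{a}_d|(x)$ and obtains
\[
\log|\mathfrak{a}_d|=de\,\varphi+\psi\qquad\text{on }X^{\textup{an}}.
\]
Hence $\sup_{X^{\textup{an}}}\bigl|\varphi-\tfrac{1}{de}\log|\mathfrak{a}_d|\bigr|\le\|\psi\|_\infty/(de)\to 0$, so taking $d_n:=ne$ and $\mathfrak{a}_n$ the ideal just built for $d=n$ proves the lemma.

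\smallskip
\emph{Main obstacle.} The essential difficulty is bookkeeping: semipositivity naturally yields a nef model only on some blow-up $\mathscr{X}'$, whereas the conclusion must live on the fixed $\mathscr{X}$. This is resolved by forcing $\pi$ to be a blow-up along a vertical ideal (so the $\pi$-ample part of the twist is trivial on the generic fibre and descends back to $\mathscr{X}$) at the cost of obtaining only \emph{uniform} rather than exact approximation, the error being precisely the bounded discrepancy $\psi$ between the two model metrics on the twisting bundle, divided by $d$. A second point to be careful about is that ordinary Serre vanishing only gives global generation of $\mathscr{N}^{\otimes d}\otimes\mathscr{A}^{\otimes m(d)}$ with $m(d)\to\infty$, which is useless here; one really needs the uniform statement from Fujita vanishing / Castelnuovo--Mumford regularity. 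The remaining points --- the reductions to the projective, ample case, coherence and verticality of $\mathfrak{a}_d$, and the claimed equality $\max_i\|s_i(x)\|=1$ for globally generated model line bundles --- are routine.
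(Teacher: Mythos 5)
Your construction is correct in outline and lands on essentially the same formula as the paper, but the route to the relevant global generation statement is genuinely different.  The paper never leaves the relative setting: it shows that the vertical Cartier divisor $D=\Div(s)$ on the vertical blow-up $\mathscr{X}'\to\mathscr{X}$ is $\pi$-nef (a short projection-formula argument from the semipositivity hypothesis), takes $A=-E$ to be $\pi$-ample, invokes the relative Kleiman criterion to make $D+\epsilon A$ $\pi$-ample, and then applies the EGA~II characterisation of relative ampleness to get, for each $\epsilon=1/N$, an $m$ with $\pi^{\ast}\pi_{\ast}\mathcal{O}(m(D+\epsilon A))\twoheadrightarrow\mathcal{O}(m(D+\epsilon A))$; that surjection gives $\tfrac{1}{m}\log|\mathfrak{a}|=\varphi-\epsilon\log\|1\|_{\mathcal{O}(A)}$ directly, with no reduction to $X$ projective or $L$ ample. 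You instead push the nef model into an absolutely ample one on $\mathscr{X}'$ and use Fujita vanishing plus Castelnuovo--Mumford regularity to get genuine global generation of $\mathscr{N}^{\otimes d}\otimes\mathscr{A}$ for all $d$ simultaneously, which again produces $\log|\mathfrak{a}_d|=de\,\varphi+\psi$ with $\psi=\log\|1\|_{\mathcal{O}(E)}$ --- exactly the paper's error term with $\epsilon=1/(de)$.  Both reach the same endgame, but the paper's relative-ampleness route is lighter: it avoids the preliminary reductions (Chow's lemma, projective model, twisting to ample $L$, extending twisting bundles back to $\mathscr{X}$) that your version needs, and it replaces Fujita vanishing by the much softer EGA~II \S4.6.8.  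The place where you should be more careful is precisely the Fujita step: over a rank-one but non-discrete valuation ring $K^\circ$ is not Noetherian, so ``via Noetherian descent'' is carrying real weight --- you need to descend $\mathscr{X}'$, $\mathscr{N}$ and $\mathscr{A}$ to a finite-type $\mathbb{Z}$-subalgebra of $K^\circ$, apply Fujita/CM there, and argue that global generation persists after the base change back.  This is doable (standard EGA~IV limit arguments) but is a nontrivial technical layer that the paper's relative argument sidesteps, since EGA~II \S4.6.8 already applies over an arbitrary quasi-compact quasi-separated base.  The remaining points you flag as routine --- verticality and coherence of $\mathfrak{a}_d$, $\max_i\|s_i\|=1$ for model metrics of globally generated model bundles, independence of $\psi$ from $d$ --- are indeed routine and correct.
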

		\begin{proof}
			We may assume that $\|\cdot\|$ is a piecewise linear metric. Let $\mathscr{X}'$ be an algebraic model of $X$ on which $(\mathcal{O}_{X^{\textup{an}}},\|\cdot\|)$ has an algebraic model $\mathscr{M}$. The section $1$ of $\mathcal{O}_X$ extends to a meromorphic section $s$ of $\mathscr{M}$ and then $\mathscr{M}=\mathcal{O}(D)$ for the vertical Cartier divisor $D=\Div(s)$ on $\mathscr{X}'$. By \cite[Theorem 13.98]{GW} we may assume that $\mathscr{X}'$ is a vertical blowup of $\mathscr{X}$. Denote by $\pi$ the canonical map $\mathscr{X}'\rightarrow\mathscr{X}$. We show first that $D$ is $\pi$-nef, i.e. $\deg(D\cdot C)\geq 0$ for any closed curve $C\subseteq\tilde{\mathscr{X}}'$ which is contracted by $\pi$. \\
			So let $x\in\tilde{\mathscr{X}}$ be a closed point and $C\subseteq\pi^{-1}(x)$ a curve. Then by the semipositivity assumption $\deg((\mathcal{O}(D)+\pi^{\ast}\mathscr{L})\cdot C)\geq 0$. But since $\pi_{\ast}(\pi^{\ast}\mathfrak{L}\cdot C)=\mathscr{L}\cdot\pi_{\ast}(C)=0$ we have $\deg(\pi^{\ast}\mathscr{L}\cdot C)=0$ and hence $\deg(D\cdot C)\geq 0$. \\
			Now let $A$ be a $\pi$-ample vertical Cartier divisor on $\mathscr{X}'$, e.g. $A=-E$ for the exceptional divisor $E$ of the blowup (this is $\pi$-ample by \cite[Proposition 13.96]{GW}). Then $D+A$ is $\pi$-ample by the relative version of Kleiman's criterion (\cite[Remark 7.41]{D}). Furthermore, since $\mathcal{O}_{\mathscr{X}'}(D)$ and $\mathcal{O}_{\mathscr{X}'}(A)$ are coherent vertical fractional ideal sheaves, also $\mathfrak{a}:=\pi_{\ast}\mathcal{O}_{\mathscr{X}'}(m(D+A))$ is a coherent vertical fractional ideal sheaf on $\mathscr{X}$ for any $m\in\mathbb{N}_{>0}$ by \cite[Theorem 5.3]{U}. \\
			By the characterization of $\pi$-ampleness in \cite[Proposition 4.6.8]{EGAII} there exists some $m\in\mathbb{N}_{>0}$ such that $\pi^{\ast}\mathfrak{a}\rightarrow\mathcal{O}_{\mathscr{X}'}(m(D+A))$ is surjective. This implies 
			\[
				\log|\mathfrak{a}|=\log|\pi^{\ast}\mathfrak{a}|=\log|\mathcal{O}_{\mathscr{X}'}(m(D+A))|=m\cdot(\varphi-\log\|1\|_{\mathcal{O}_{\mathscr{X}'}(A)})
			\]
			and hence $\frac{1}{m}\log|\mathfrak{a}|=\varphi-\log\|1\|_{\mathcal{O}_{\mathscr{X}'}(A)}$. Since we can replace $A$ by $\epsilon A$ for arbitrary small $\epsilon\in\mathbb{Q}_{>0}$ this concludes the proof.	    
		\end{proof}
		\begin{Corollary}
			\label{semipositive->convex} In the situation of Lemma \ref{approx} suppose that the formal completion $\mathfrak{X}$ of $\mathscr{X}$ is strongly nondegenerate strictly polystable and denote by $\Delta$ the associated skeleton. Then $\varphi$ is convex on every face of $\Delta$ and satisfies $\varphi\leq\varphi\circ p_{\mathfrak{X}}$.
		\end{Corollary}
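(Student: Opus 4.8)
The strategy is to combine the approximation of $\varphi$ provided by Lemma \ref{approx} with the convexity statement of Lemma \ref{log|f|}, the latter applied on suitable formal affine opens of $\mathfrak{X}$ rather than on $\mathfrak{X}$ itself. By Lemma \ref{approx} there are vertical coherent fractional ideal sheaves $\mathfrak{a}_n$ on $\mathscr{X}$ and positive integers $d_n$ with $\frac{1}{d_n}\log|\mathfrak{a}_n|\to\varphi$ uniformly on $X^{\textup{an}}$. For a fixed face $\Delta_S$ of $\Delta$, being convex on $\Delta_S$ is a condition stable under pointwise limits, and so is the inequality $\psi\leq\psi\circ p_{\mathfrak{X}}$; hence it suffices to prove, for each $n$, that $\log|\mathfrak{a}_n|$ is convex on every face of $\Delta$ and satisfies $\log|\mathfrak{a}_n|\leq\log|\mathfrak{a}_n|\circ p_{\mathfrak{X}}$. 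Fix $n$, write $\mathfrak{a}:=\mathfrak{a}_n$, and after multiplying $\mathfrak{a}$ by a suitable $c\in K^\circ\setminus\{0\}$ (which only shifts $\log|\mathfrak{a}|$ by the constant $\log|c|$) assume $\mathfrak{a}$ is an honest vertical coherent ideal sheaf. We may also assume, as in the applications of this corollary, that $X$ is a variety, so that $X^{\textup{an}}$ is irreducible and reduced; in particular any nonzero analytic function on an open subset has nowhere dense vanishing locus.

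The main step is local. Let $\Delta_S$ be a face of $\Delta$ with relative interior $\sigma$, and let $\eta$ be the generic point of the stratum of $\tilde{\mathfrak{X}}$ corresponding to $\sigma$ (Proposition \ref{SFCBer}). Choose a formal affine open $\mathfrak{U}\subseteq\mathfrak{X}$ with $\eta\in\tilde{\mathfrak{U}}$. Then $\mathfrak{U}$ is again strongly nondegenerate strictly polystable, its skeleton is $S(\mathfrak{U})=\Delta\cap\mathfrak{U}^{\textup{an}}$, the retraction $p_{\mathfrak{X}}$ restricts to the retraction of $\mathfrak{U}$, and $\sigma=p_{\mathfrak{X}}(\red^{-1}(\eta))$ is the relative interior of a face of $S(\mathfrak{U})$ contained in $\Delta_S$ --- this uses the stratum--face correspondence (Proposition \ref{Stratumface}, together with its refinement that the stratum may be replaced by a nonempty subset) and that $\red^{-1}(\eta)\subseteq\mathfrak{U}^{\textup{an}}$. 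Since $\tilde{\mathfrak{U}}$ is affine, $\mathfrak{a}$ is generated over $\tilde{\mathfrak{U}}$ by finitely many nonzero sections $f_1,\dots,f_r\in\mathcal{O}(\mathfrak{U})$, and as they generate every stalk we get $\log|\mathfrak{a}|=\max_{1\leq i\leq r}\log|f_i|$ on $\mathfrak{U}^{\textup{an}}$. Each $f_i$ is a nonzero analytic function on the irreducible reduced space $\mathfrak{U}^{\textup{an}}$, so Lemma \ref{log|f|} applies to $(\mathfrak{U},f_i)$ and gives that $\log|f_i|$ is finite and convex on every face of $S(\mathfrak{U})$ and satisfies $\log|f_i|\leq\log|f_i|\circ p_{\mathfrak{U}}$. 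Taking the maximum over $i$, $\log|\mathfrak{a}|$ is convex on $\sigma$ and satisfies $\log|\mathfrak{a}|\leq\log|\mathfrak{a}|\circ p_{\mathfrak{X}}$ on $\mathfrak{U}^{\textup{an}}\supseteq p_{\mathfrak{X}}^{-1}(\sigma)$.

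It remains to pass from local to global and to the limit. The function $\log|\mathfrak{a}|$ is continuous and finite on $\Delta_S$, being locally a maximum of finitely many continuous functions $\log|f_i|$ that are finite on the skeleton; it is convex on the dense open convex subset $\sigma$; hence it is convex on all of $\Delta_S$. Since $\Delta_S$ was arbitrary, $\log|\mathfrak{a}|$ is convex on every face of $\Delta$. For the inequality, given any $x\in X^{\textup{an}}$ pick a formal affine open $\mathfrak{U}$ with $\red(x)\in\tilde{\mathfrak{U}}$; then $p_{\mathfrak{X}}(x)\in S(\mathfrak{U})\subseteq\mathfrak{U}^{\textup{an}}$, so the local statement gives $\log|\mathfrak{a}|(x)\leq\log|\mathfrak{a}|(p_{\mathfrak{X}}(x))$. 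Finally, since $\varphi$ is the uniform limit of the $\frac{1}{d_n}\log|\mathfrak{a}_n|$, it inherits convexity on each face of $\Delta$ and the inequality $\varphi\leq\varphi\circ p_{\mathfrak{X}}$, which proves the corollary.

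The content of the argument sits entirely in Lemmas \ref{approx} and \ref{log|f|}; what needs care here is the bookkeeping of the localization: checking that a formal affine open of $\mathfrak{X}$ remains strongly nondegenerate strictly polystable with compatibly induced skeleton and retraction, that each face of $\Delta$ appears, in the opens we choose, as (the closure of the relative interior of) a face of that open's skeleton so that Lemma \ref{log|f|} genuinely yields convexity of $\log|f_i|$ near $\sigma$, and the reduction of the fractional ideal sheaf to finitely many local generators with nowhere dense vanishing loci. These are routine given the framework already developed, but they are where the work lies.
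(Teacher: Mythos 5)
Your proposal is correct and follows essentially the same route as the paper: approximate $\varphi$ by $\frac{1}{d_n}\log|\mathfrak{a}_n|$ via Lemma \ref{approx}, write $\log|\mathfrak{a}_n|$ locally on formal affine opens as a maximum of $\log|f_i|$ over generators, apply Lemma \ref{log|f|}, and use stability of convexity and of $\psi\leq\psi\circ p_{\mathfrak{X}}$ under maxima and uniform limits. You simply spell out the localization bookkeeping (clearing denominators of the fractional ideal, nowhere-density of the zero loci, matching faces of $S(\mathfrak{U})$ with faces of $\Delta$) that the paper leaves implicit.
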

		\begin{proof}
			By Lemma \ref{approx} we may approximate $\varphi$ by functions of the form $\frac{1}{d_m}\log|\mathfrak{a}_m|$ for some vertical coherent fractional ideals $\mathfrak{a}_m$ on $\mathscr{X}$. On the generic fibre of a building block $\mathfrak{U}$, the function $\log|\mathfrak{a}_m|$ is given as the maximum of the functions $\log|f|$ where $f$ runs through a finite set of generators of $\mathfrak{a}_m\Big|_{\mathfrak{U}}$. Since the properties we are looking for are stable under taking the maximum, these functions have them by Lemma \ref{log|f|}. But they are also stable under uniform limits so we are done.
		\end{proof}
		\end{appendix} 
	\begingroup
	\enlargethispage{2\baselineskip}
	\bibliographystyle{alpha}
	\bibliography{mybib}
	\endgroup
		
\end{document}